\theoremstyle{plain}
\newtheorem{theorem}[equation]{Theorem}
\newtheorem{proposition}[equation]{Proposition}
\newtheorem{corollary}[equation]{Corollary}
\newtheorem{lemma}[equation]{Lemma}
\newtheorem*{claim*}{Claim}
\theoremstyle{definition}
\newtheorem{definition}[equation]{Definition}
\newtheorem{example}[equation]{Example}
\newtheorem{remark}[equation]{Remark}
\newtheorem{construction}[equation]{Construction}
\newtheorem{convention}[equation]{Convention}
\def\Z{\mathbb{Z}}
\def\R{\mathbb{R}}
\def\C{\mathbb{C}}
\def\Im{\operatorname{Im}}
\def\Hom{\operatorname{Hom}}
\def\Ext{\operatorname{Ext}}
\def\ba{\begin{array}}
\def\ea{\end{array}}
\def\wt{\widetilde}
\def\ol{\overline}
\def\setminus{\smallsetminus}
\def\sm{\setminus}
\let\oldsharp=\# \def\#{\mathbin{\oldsharp}}
\DeclareMathOperator{\im}{Im}
\DeclareMathOperator{\Id}{Id}
\DeclareMathOperator{\pr}{pr}
\newcommand{\CPI}{\mathbb{C}P^{\infty}}
\newcommand{\SC}{\mathcal{S}^{\rm st}}
\newcommand{\an}[1]{\langle{#1}\rangle}
\newcommand{\xra}{\xrightarrow}
\newcommand{\xla}{\xleftarrow}
\newcommand{\xhra}{\xhookrightarrow}
\newcommand{\xhla}{\xhookleftarrow}
\newcommand{\mcal}{\mathcal}
\newcommand{\bZ}{\mathbb Z}
\newcommand{\bC}{\mathbb C}
\newcommand{\mat}[1]{\begin{pmatrix} #1 \end{pmatrix}}
\newcommand{\mt}{\longmapsto}
\newcommand{\ra}{\longrightarrow}
\newcommand{\isora}{\xrightarrow{\cong}}
\newcommand{\wh}{\widehat}
\newcommand{\bsm}{\left(\begin{smallmatrix}}
\newcommand{\esm}{\end{smallmatrix}\right)}
\newcommand{\Prim}{\operatorname{pr}}
\newcommand {\del}{\partial}
\newcommand{\Ell}{\mcal{E}\ell}
\providecommand{\id}{\mathop{\rm Id}\nolimits}
\providecommand{\coker}{\mathop{\rm coker}\nolimits}
\providecommand{\Hom}{\mathop{\rm Hom}\nolimits}
\providecommand{\Iso}{\mathop{\rm Iso}\nolimits}
\providecommand{\Aut}{\mathop{\rm Aut}\nolimits}
\providecommand{\bIso}{\mathop{\rm bIso}\nolimits}
\providecommand{\bAut}{\mathop{\rm bAut}\nolimits}
\providecommand{\TwoPrim}{\mathop{\rm Prim}\nolimits}
\providecommand{\rTwoPrim}{\mathop{\rm rPrim}\nolimits}
\providecommand{\lTwoPrim}{\mathop{\rm \ell Prim}\nolimits}
\providecommand{\bTwoPrim}{\mathop{\rm bPrim}\nolimits}
\providecommand{\rIso}{\mathop{\rm rIso}\nolimits}
\providecommand{\lIso}{\mathop{\rm \ell Iso}\nolimits}
\providecommand{\rAut}{\mathop{\rm rAut}\nolimits}
\providecommand{\lAut}{\mathop{\rm \ell Aut}\nolimits}
\def\PE{\textup{PE}}
\def\PPE{\textup{PPE}}
\numberwithin{section}{chapter}
\numberwithin{equation}{chapter}
\begin{document}

\frontmatter

\title[Stably diffeomorphic manifolds and modified surgery obstructions]
{Stably diffeomorphic manifolds and the realisation of modified surgery obstructions \\[1em] 
Vari\'et\'es stablement difféomorphes et la r\'ealisation des obstructions 
en théorie de la chirugie modifi\'ee}

\author[A.~Conway]{Anthony Conway}
\address{The University of Texas at Austin, Austin TX 2515, United States}
\email{anthony.conway@austin.utexas.edu}

\author[D.~Crowley]{Diarmuid Crowley}
\address{School of Mathematics and Statistics, University of Melbourne, Parkville, VIC, 3010, Australia}
\email{dcrowley@unimelb.edu.au}

\author[M.~Powell]{Mark Powell}
\address{School of Mathematics and Statistics, University of Glasgow, United Kingdom}
\email{mark.powell@glasgow.ac.uk}

\author[J.~Sixt]{Joerg Sixt}

\def\subjclassname{\textup{2020} Mathematics Subject Classification}
\expandafter\let\csname subjclassname@1991\endcsname=\subjclassname
\expandafter\let\csname subjclassname@2000\endcsname=\subjclassname
\subjclass{Primary~57R65, 57R67. 
}
\keywords{Stable diffeomorphism, $\ell$-monoid, $4k$-manifold;\\  Difféomorphisme stable, $\ell$-monoïde, $4k$-variétés}

\begin{abstract}
Two compact, connected, smooth $2q$-manifolds~$M$ and $N$ with the same Euler characteristic are
\emph{stably diffeomorphic} if there exists a nonnegative integer~$g$ and a diffeomorphism
\[M \# W_g \to N \# W_g,\]
where $W_g := \#_g (S^q \times S^q)$ is the $g$-fold connected sum of $S^q \times S^q$ with itself.
We write~$M \cong_{\text{st}} N$ and define the {\em stable class of $M$} as
\[ \SC(M) := \{N  \mid N \cong_{\text{st}} M \}/\text{diffeomorphism}.\]

Kreck has reduced the problem of classifying manifolds up to stable diffeomorphism
to bordism theoretic questions in stable homotopy theory. The next stage in modified surgery programme for the classification of $2q$-manifolds is then to understand the manifolds within a given stable class.

In this direction, we shall show that stable classes can be arbitrarily complicated: for every $k \geq 2$ we construct infinitely many $4k$-dimensional manifolds that are all stably
diffeomorphic but pairwise not homotopy equivalent.  Each of these manifolds has hyperbolic intersection
form and is stably parallelisable.  In fact we construct infinitely many such infinite sets.

To achieve this we prove a realisation result for appropriate subsets of Kreck's modified surgery monoid
$\ell_{2q+1}(\Z[\pi])$, analogous to Wall's realisation of the odd-dimensional
surgery obstruction $L$-group~$L_{2q+1}^s(\Z[\pi])$.

We also develop the algebra of odd dimensional surgery monoids, an essential
tool for understanding the stable class.
Given quadratic forms $v$ and $v'$ over $\Z[\pi]$, under favourable circumstances,  a subset
$\ell_{2q+1}(v,v') \subseteq \ell_{2q+1}(\Z[\pi])$ is known to fit into an exact sequence
 \[L_{2q+1}^s(\Z[\pi]) \stackrel{}{\dashrightarrow} \ell_{2q+1}(v,v') \xrightarrow{} \operatorname{bIso}(\partial v, \partial v') \xrightarrow{} L_{2q}^s(\Z[\pi]),\]
where the boundary automorphism sets $\operatorname{bIso}(\partial v, \partial v')$ capture most of the complexity of the monoid.
We describe examples where $\operatorname{bIso}(\partial v, \partial v')$ is infinite.
We also give a simpler definition of $\operatorname{bIso}(\partial v, \partial v')$ in terms of what we call primitive embeddings.
We then use this new notion to show that infinitely many elements of $\operatorname{bIso}(\partial v, \partial v')$ can be realised, and to obstruct stably diffeomorphic manifolds from being homotopy equivalent.
\end{abstract}

\date{\today}

\maketitle

\noindent \textit{\textbf{R\'esum\'e.}} ---  Deux variétés lisses, compactes, connectées $M$ et $N$ de dimension $2q$ ayant la même caractéristique d’Euler sont \emph{stablement difféomorphes} s’il existe un entier positif $g$ et un difféomorphisme
\[M \# W_g \to N \# W_g,\]
où $W_g := \#_g (S^q \times S^q)$ est la somme connexe de $g$ copies de $S^q \times S^q$. Nous employons la notation~$M \cong_{\text{st}} N$ et définissons la \{em classe stable de $M$\} comme étant
\[ \SC(M) := \{N  \mid N \cong_{\text{st}} M \}/\text{difféomorphisme}.\]

Kreck a réduit l’étude de la classification des variétés à difféomorphisme stable près à des questions de cobordisme en théorie de l'homotopie stable. L’étape suivante du programme de chirurgie modifiée pour la classification des $2q$-variétés consiste ensuite à comprendre les variétés au sein d’une classe stable donnée.

Dans cette direction, nous montrons que les classes stables peuvent être arbitrairement compliquées: pour tout $k \geq 2$, nous construisons une infinité de $4k$-variétés qui sont toutes stablement difféomorphes mais qui, deux à deux, n’ont pas le même type d’homotopie. Chacune de ces variétés est stablement parallélisable et admet la forme hyperbolique standard comme forme d’intersection. De fait, nous construisons une infinité d’ensembles de telles variétés.

Afin d’atteindre cet objectif, nous prouvons un théorème de réalisation pour des sous-ensembles appropriés du monoïde $\ell_{2q+1}(\mathbb{Z}[\pi])$ de chirurgie modifiée introduit par Kreck. Ce résultat est un analogue du théorème de réalisation de Wall du groupe~$L_{2q+1}^s(\mathbb{Z}[\pi])$ en chirurgie classique.

Nous développons aussi l’algèbre sous-tendant ces monoïdes: il s’agit en effet un outil essentiel à la compréhension des classes stables. Etant donné deux formes quadratiques $v$ et $v’$ définie sur l’anneau $\mathbb{Z}[\pi]$, dans des circonstances favorables, des travaux préalables assurent qu’un sous-ensemble $\ell_{2q+1}(v,v') \subseteq \ell_{2q+1}(\mathbb{Z}[\pi])$ s’insère dans une suite exacte
 \[L_{2q+1}^s(\mathbb{Z}[\pi]) \stackrel{}{\dashrightarrow} \ell_{2q+1}(v,v') \xrightarrow{} \operatorname{bIso}(\partial v, \partial v') \xrightarrow{} L_{2q}^s(\mathbb{Z}[\pi]),\]
où les ensembles d’automorphismes-de-bord $\operatorname{bIso}(\partial v, \partial v')$ capturent l’essentiel de la complexité du monoïde.
Nous décrivons des exemples où $\operatorname{bIso}(\partial v, \partial v')$ est infini.
Nous donnons aussi une définition plus simple de $\operatorname{bIso}(\partial v, \partial v')$ en termes de ce que nous appelons des plongements primitifs.
Nous utilisons ensuite cette nouvelle notion afin de montrer qu’une infinité d’éléments de $\operatorname{bIso}(\partial v, \partial v')$ peuvent être réalisés, et afin d’obtenir une obstruction à ce que certaines variétés stablement difféomorphes aient le même type d’homotopie.

\tableofcontents

\mainmatter


\chapter{Introduction}

Let $W_g := \#_g (S^q \times S^q)$ be the $g$-fold connected sum of $S^q \times S^q$ with itself.
Two compact, connected, smooth $2q$-manifolds~$M_0$ and $M_1$ with the same Euler characteristic are \emph{stably diffeomorphic} if there exists a nonnegative integer~$g$ and a diffeomorphism
\[M_0 \# W_g \to M_1 \# W_g.\]
We write~$M_0 \cong_{\text{st}} M_1$.
Note that $S^q \times S^q$ admits an orientation reversing diffeomorphism for every $q$, using the antipodal map on one factor for $q$ even, and switching the factors for $q$ odd.  Hence the same
is true of $W_g$ and it follows that when the $M_i$ are orientable the diffeomorphism type of the
connected sum does not depend on a choice of orientations.

An effective procedure for classifying $2q$-dimensional manifolds $M$,
pioneered by Hambleton and Kreck in dimension $4$ ~\cite{HambletonKreckFiniteGroup, HambletonKreckSmoothStructures, HambletonKreckCancellation2, HambletonKreckCancellation3} and systematised by Kreck~\cite{KreckSurgeryAndDuality} in dimensions $2q \geq 4$, is to first classify manifolds up to stable diffeomorphism and then for a fixed manifold~$M$ to understand its \emph{stable class}, which is the set of diffeomorphism classes
\[ \SC(M) := \{N  \mid N \cong_{\text{st}} M \}/\text{diffeomorphism}.\]

The literature on stably diffeomorphic manifolds contains work on the stable classification of 4-manifolds, that is enumerating and detecting the possible stable classes, including~\cite{CavicchioliHegenbarthRepovs, Spaggiari, KasprowskiLandPowellTeichner,Hambleton-Hildum,KasprowskiPowellTeichner}.
There is also work showing that, under additional assumptions, a stable diffeomorphism type contains a unique diffeomorphism class~\cite{HambletonKreckTeichner, CrowleySixt, Khan}.
In addition Davis gave conditions under which the homotopy type of a $4$-manifold $M$ contains a unique stable class~\cite{DavisBorelNovikov}.

In this paper, for $k \geq 2$ we present new general methods to construct stably diffeomorphic manifolds, examples of $4k$-manifolds with infinite stable classes, and new invariants which distinguish stably diffeomorphic manifolds.
Before stating our main results we briefly review the history of the study of the stable class.

If $w_1 = w_1(M)$ is the orientation character of $M$, then classical surgery theory
provides an action of the surgery obstruction group
$L^s_{2q+1}(\Z[\pi_1(M)], w_1)$ on $\SC(M)$.  This action can give rise to
infinite stable classes, such as for $L \times S^1$, where $L$ is a $(2q{-}1)$-dimensional lens space~\cite{Weinberger-higher-rho}, and for $\mathbb{RP}^{2n} \# \mathbb{RP}^{2n}$~\cite{BDK-07}.
However the action of $L^s_{2q+1}(\Z[\pi_1(M)], w_1)$ necessarily preserves the homotopy type of $M$,
and in the simply-connected case $L_{2q+1}(\Z)$ is trivial.
We will investigate the stable class of $M$ beyond the action of the classical
$L$-group and so we define the \emph{homotopy stable class} of $M$,
\[ \SC_h(M_0)=\lbrace M_1 \mid M_1 \cong_{\text{st}} M_0  \rbrace / \text{homotopy equivalence}, \]
which is the set of homotopy types within the stable class of $M_0$, and is thus a quotient of $\SC(M_0)$.

As Kreck and Schafer point out \cite[I]{KreckSchafer},
examples of smooth $(4k{-}1)$-connected $8k$-manifolds $M$ with arbitrarily large homotopy class
have been implicit in the literature since Wall's classification of these manifolds up to
the action of exotic spheres~\cite{Wall-2n}.  These examples are distinguished by
their intersection forms $\lambda^\Z_M \colon H_{2k}(M; \Z) \times H_{2k}(M; \Z) \to \Z$.
%
Kreck and Schafer themselves provided examples of $4k$-manifolds $M$ with non-trivial
finite fundamental groups where the homotopy stable class of $M$ contained distinct
elements with isometric intersection forms and even isometric equivariant intersection forms.

In \cite{CCPS-short} we gave the first examples of simply-connected $4k$-manifolds, for $k \geq 2$, with hyperbolic intersection form and arbitrarily large stable class. In contrast to Kreck and Schafer, we exhibited these examples in infinitely many stable classes.    However as far as we know all of these stable classes are finite.
While this memoir is a sequel to \cite{CCPS-short}, extending the results there to infinite homotopy stable classes, the two works
can be read independently.

\section{Manifolds with infinite homotopy stable class} \label{ss:IHSC}
Our main theorem asserts the existence, for every $k \geq 2$, of infinitely many mutually non-stably diffeomorphic $4k$-manifolds,
each of which has \emph{infinite homotopy stable class}.
We start with relatively simple manifolds: given $i > 0$ we set $p = p(i) := -i(2k)!$ and $q = q(i) := 1-4p^2$,
and take the simply-connected $4k$-manifold $N_{pq, 1}$,
from \cite[Theorem 1.1]{CCPS-short}, which is reviewed in Section~\ref{subsection:Nab}.
As $i$ varies, our starting manifolds are the connected sums
\[ M_i := N_{pq, 1} \# (S^1 \times S^{4k-1}).\]
All of the $4k$-manifolds we construct, like the examples of \cite{KreckSchafer},
have isomorphic
intersection forms and 
isomorphic equivariant intersection forms.
They are also all {\em stably parallelisable}
because $N_{pq,1}$ is stably parallelisable, the connected sum of a manifold with a stably parallelisable manifold is stably parallelisable if and only if the original manifold is stably parallelisable. Thus each $M_i$ and any manifold stably diffeomorphic to it is stably parallelisable.

%

\begin{theorem} \label{thm:InfiniteStableClassPi1ZIntro}
Fix an integer $k \geq 2$.  There is a closed, connected oriented,  stably parallelisable, smooth $4k$-manifold $M$ with
$\pi_1(M) \cong \Z$ and $|\SC_h(M)| = \infty$.
Indeed, there are infinitely many stable diffeomorphism classes of such manifolds,
$\{[M_i]_{\rm st} \}_{i=1}^\infty$,
such that each $\SC_h(M_i)$ contains an infinite subset $\{M^j_i\}_{j=0}^\infty$, where $M^0_i = M_i$ and
each $M^j_i$ has hyperbolic intersection form and hyperbolic equivariant intersection form.

Moreover, $M_i^j \# W_1\cong M_i^{j'} \# W_1$ for all pairs $\{j, j'\}$,
but $M_i^j$ is homotopy equivalent to a connected sum $N \# (S^1 \times S^{4k-1})$ with $N$ a simply-connected
$4k$-manifold if and only if $j = 0$.
\end{theorem}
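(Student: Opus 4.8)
The plan is to combine Kreck's stable classification of $2q$-manifolds with a realisation theorem for suitable elements of the modified surgery monoid $\ell_{4k+1}(\Z[\Z])$ and a secondary homotopy invariant that obstructs splitting off an $S^1\times S^{4k-1}$ summand up to homotopy. First, since each $M_i=N_{pq,1}\#(S^1\times S^{4k-1})$ is stably parallelisable (hence spin) with $\pi_1(M_i)\cong\Z$, its normal $1$-type is the projection $\xi\colon B:=S^1\times B\mathrm{Spin}\to BO$. By Kreck's modified surgery theory, closed $4k$-dimensional $B$-manifolds are stably diffeomorphic precisely when they are $B$-bordant, and a $B$-bordism $W$ between two such manifolds carries an obstruction $\theta(W)\in\ell_{4k+1}(\Z[\Z])$ whose class, modulo the elementary (``stabilisation'') elements, records the diffeomorphism type of the ends, and modulo further the image of $L_{4k+1}(\Z[\Z])$ and the homotopy self-equivalences, records their homotopy type.

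Next I would run the realisation theorem. Applied to $M_i$ and to a carefully chosen family $\{\theta_{i,j}\}_{j\ge1}$ of elements in the subset of $\ell_{4k+1}(\Z[\Z])$ to which it applies, it produces $B$-bordisms $W_{i,j}$ from $M_i$ to new $B$-manifolds $M_i^j$ with $\theta(W_{i,j})=\theta_{i,j}$; set $M_i^0:=M_i$. Since each $M_i^j$ is $B$-bordant to $M_i$ it is stably diffeomorphic to $M_i$, is stably parallelisable, and has $\pi_1\cong\Z$; restricting attention to those $\theta_{i,j}$ whose realising bordism has middle-dimensional surgery kernel a standard hyperbolic $\Z[\Z]$-form keeps the (equivariant) intersection form free and hyperbolic. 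By arranging each $W_{i,j}$ to be built from $M_i\times I$ by attaching a single pair of middle-dimensional handles (after one preliminary $W_1$-stabilisation), the common middle level is $M_i\#W_1$ seen from both ends, so $M_i^j\#W_1\cong M_i\#W_1\cong M_i^{j'}\#W_1$. Finally, the stable classes $[M_i]_{\mathrm{st}}$ are pairwise distinct because $[M_i]=[N_{pq(i),1}]+[S^1\times S^{4k-1}]$ in the relevant spin bordism group, the second summand does not depend on $i$, and the classes $[N_{pq(i),1}]$ are pairwise distinct by \cite[Theorem 1.1]{CCPS-short}.

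The crux is the homotopy classification within each stable class. To a closed stably parallelisable $4k$-manifold $P$ with $\pi_1\cong\Z$ and free hyperbolic equivariant intersection form I would associate a secondary invariant: roughly, the $\Z[\Z]$-quadratic Poincar\'e structure carried by $H_{2k}(\widetilde P;\Z)$ — concretely the quadratic refinement of the equivariant intersection form together with the requisite Whitehead-torsion data — considered modulo the action of the homotopy self-equivalences of $P$. One then needs: (i) this invariant is a homotopy invariant of $P$; (ii) it vanishes exactly when $P$ is homotopy equivalent to a connected sum $N\#(S^1\times S^{4k-1})$ with $N$ simply connected, because such connected sums have a standard, split quadratic Poincar\'e structure; and (iii) its value on $M_i^j$ is the image of $\theta_{i,j}$ in the quotient of $\ell_{4k+1}(\Z[\Z])$ by the elementaries, the image of $L_{4k+1}(\Z[\Z])$, and the self-equivalences — which is exactly the quotient through which the realisation theorem shows $W_{i,j}$ acts on homotopy types. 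One then chooses the $\theta_{i,j}$, using a Shaneson-type splitting of $\ell_{4k+1}(\Z[\Z])$ to guarantee an infinite supply, so that their images in this quotient are pairwise distinct and nonzero for $j\ge1$; this yields, inside each $\SC_h(M_i)$, the infinitely many pairwise non-homotopy-equivalent manifolds $\{M_i^j\}_{j\ge0}$ with $M_i^0=M_i$, and $M_i^j$ homotopy equivalent to a connected sum as above iff $j=0$. I expect the main obstacle to be exactly items (i)--(iii): proving homotopy invariance of the secondary invariant and computing it on the realised manifolds — that is, showing the homotopy type of $M_i^j$ genuinely remembers $\theta_{i,j}$ and that a homotopy connected-sum decomposition forces it to be standard — which should require a careful analysis of $H_{2k}(\widetilde{M_i^j};\Z)$ as a $\Z[t^{\pm1}]$-module equipped with its quadratic intersection form, alongside the bookkeeping of the $\ell$-monoid.
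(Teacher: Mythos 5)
Your high-level skeleton is right — stable classification via Kreck's bordism criterion, a realisation theorem for $\ell_{4k+1}(\Z[\Z])$, and a secondary homotopy invariant whose vanishing characterises the connected-sum case — and this is indeed the strategy of the paper. However, two of your concrete choices would not work.

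\textbf{Wrong normal type.} You propose $B = S^1 \times B\mathrm{Spin}$, the normal $1$-type. But for stable diffeomorphism classification of $4k$-manifolds, Kreck's theorem requires bordism over the normal $(2k{-}1)$-type, and the map $\ol\nu\colon M_i \to B$ must be $2k$-connected. Since $M_i = N_{pq,1}\#(S^1\times S^{4k-1})$ has $\pi_2(M_i)\cong\Z$ (coming from the $\mathbb{CP}^\infty$ in its Postnikov tower, because $N_{pq,1}\to\mathbb{CP}^\infty$ is $2k$-connected), whereas $\pi_2(S^1\times B\mathrm{Spin})=0$, there is no $2k$-connected map $M_i\to S^1\times B\mathrm{Spin}$ when $k\geq 2$. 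The correct normal $(2k{-}1)$-type (Lemma~\ref{lem:Normal3TypeMab}) is $P_{2k-1}(M_i)\times BO\langle 4k\rangle$, and the degree-2 cohomology class of $P_{2k-1}(M_i)\simeq P_{2k-1}(\mathbb{CP}^\infty\vee S^1)$ is exactly what powers the invariant described next; with $S^1\times B\mathrm{Spin}$ there is no such class and the construction collapses.

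\textbf{The proposed secondary invariant is too coarse.} You suggest ``the quadratic refinement of the equivariant intersection form together with the requisite Whitehead-torsion data.'' Both ingredients are trivially isomorphic across the family: every $M_i^j$ has hyperbolic equivariant intersection form with its (unique, since $M$ is oriented and $2k$ is even) quadratic refinement, and $\Wh(\Z)=0$, so this data cannot tell $M_i^j$ from $M_i^{j'}$. The invariant that actually works is cohomological, not chain-theoretic: the paper records the primitive submodule $\Lambda\cdot z^k \subset H^{2k}(M_i^j;\Lambda)$ for a generator $z$ of $H^2(M_i^j;\Lambda)\cong\Lambda$, packaged either as the extended symmetric form $(H_{2k}(M;\Lambda),\lambda_M,\ol\nu_*)$ or equivalently as a $2$-sided primitive embedding $K_{2k}(M;\Lambda)\hookrightarrow H_{2k}(M;\Lambda)\hookleftarrow H^{2k}(B;\Lambda)$ (Definition~\ref{def:PrimitiveEmbeddingManifold}, Remark~\ref{rem:ItsAllinTheCup}). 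Homotopy invariance is established through the Postnikov primitive embedding (Section~\ref{sec:PPE}), and the infinite supply of realisable values comes not from a Shaneson splitting of $\ell_{4k+1}$ but from an explicit infinite family of boundary automorphisms $t_g$, $g\in\Z$, of the form $(\Lambda,pq)$ (Lemma~\ref{lem:Deftg}, Theorem~\ref{cor:infiniteAutHyperbolic}), shown to be pairwise distinct by mapping to unitary units in $\Lambda/(2p)\times\Lambda/q$. The characterisation of $j=0$ then follows from Example~\ref{ex:M_iPE}: the primitive embedding is extended over $\Z\hookrightarrow\Z[\Z]$ if and only if $g$ is the identity.

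So you should replace $S^1\times B\mathrm{Spin}$ by $P_{2k-1}(M_i)\times BO\langle 4k\rangle$, and replace the quadratic-refinement/torsion invariant by the extended intersection form (or $2$-sided primitive embedding); once those substitutions are made, the remaining steps of your outline track the paper's proof fairly closely.
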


To construct the manifolds $M_i^j$,
we generalise the surgery procedure underlying
the Wall Realisation of classical surgery obstructions in $L_{4k+1}(\Z[\pi_1(M_i)]) \cong L_{4k+1}(\Z[\Z])$
to the setting of modified surgery; see Theorem~\ref{thm:RealisationIntro} below.
When applied to $M_i$ this procedure constructs new elements of the stable class of~$M_i$,
which are related to $M_i$ by a bordism with
a given modified surgery obstruction lying in a certain subset of the modified surgery monoid
$\ell_{4k+1}(\Z[\Z])$.
We prove Theorem~\ref{thm:InfiniteStableClassPi1ZIntro} by finding infinitely many elements of
$\ell_{4k+1}(\Z[\Z])$ and proving that their realisation produces manifolds $M_i^j$ spanning infinitely many homotopy types.

To distinguish the manifolds $M_i^j$, we identify $\pi_1(M_i^j) = \Z$
and then compactly supported cohomology of
the universal cover of $M$ is a $\Z[\Z]$-module given by
$H^*(M_i^j;\Z[\Z]) \cong \Z[\Z]$ for $* = 2, \dots,2k{-}2,2k{+}2, \dots,4k-2$, with
$H^{2k}(M_i^j;\Z[\Z]) \cong \Z[\Z]^2$ and all odd degree cohomology trivial.
Moreover, for any generator $z \in H^2(M_i^j;\Z[\Z]) \cong  \Z[\Z]$, the $\Z[\Z]$-module generated
by $z^k \in H^{2k}(M_i^j;\Z[\Z]) \cong \Z[\Z]^2$ is a summand.
The homotopy invariant we use arises by considering the primitive submodule $\Z[\Z] z^k \subset H^{2k}(M_i^j;\Z[\Z])$ up to the isometries of the hyperbolic intersection pairing and automorphisms of $\pi_1(M_i^j)$;
see Remark~\ref{rem:ItsAllinTheCup}.

\section{Realising modified surgery obstructions}\label{sub:realising-mod-surgery-obstr-intro}

Since our realisation results apply generally,
we expand our discussion to consider smooth manifolds of dimension
$2q$, for $q \geq 3$, recall some classical surgery theory, and explain the tools we have developed in terms of the analogy between classical and modified surgery.  We will also mention when our results extend to dimension $2q=4$, in the topological category.


One aim of surgery theory is to decide whether two compact, smooth $n$-dimensional manifolds~$M_0$ and~$M_1$ are diffeomorphic.
To describe the surgery program in the briefest terms, assume that $\partial M_0 \cong \partial M_1$,
that $f \colon M_0 \to M_1$ is a homotopy equivalence, and that $(W^{n{+}1};M_0,M_1)$
is an oriented cobordism rel.\ boundary.
Classical surgery, as developed by Browder, Novikov, Sullivan, and Wall, associates an obstruction $\sigma(F) \in L_{n+1}^s(\Z[\pi_1(M_0)])$ to a degree one normal bordism $(F,\id,f) \colon (W;M_0,M_1) \to (M_0 \times [0,1];M_0,M_0)$.
This obstruction vanishes if, and for $n{+}1 \geq 5$ only if, $(W,F)$ is normally bordant rel.\ boundary to an $s$-cobordism. If this holds, then for $n \geq 5$, by the $s$-cobordism theorem, $M_0$ and $M_1$ are diffeomorphic.

If $M_0$ and $M_1$ are $2q$-dimensional, then the aforementioned surgery obstruction is (an equivalence class of) a $(-1)^q$-quadratic formation, i.e.\ a triple $((P,\psi);F,G)$, where $(P,\psi)$ is a $(-1)^q$-quadratic form, and~$F,G$ are lagrangians of $(P,\psi)$.
A classical result in surgery theory is \emph{Wall Realisation}~\cite[Theorem 6.5]{WallSurgery}.
We restrict to the oriented case here, since we shall do so throughout this article.

\begin{theorem}[Wall Realisation]
\label{thm:ClassicalWallRealisation}
Let $M_0$ be a compact, connected, oriented, smooth $2q$-dimensional manifold with $q \geq 3$.
Given $x \in L_{2q+1}^s(\Z[\pi_1(M_0)])$, there exists a degree one normal bordism rel.\ boundary
$(W^{2q+1};M_0,M_1) \to (M_0 \times [0,1];M_0 \times \lbrace 0 \rbrace , M_0 \times \lbrace 1 \rbrace)$
whose surgery obstruction equals~$x$.
\end{theorem}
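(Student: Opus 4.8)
The plan is to run the standard Wall-realisation construction: build $W$ from $M_0\times[0,1]$ by attaching handles only in the two middle dimensions $q$ and $q{+}1$, in a pattern dictated by a $(-1)^q$-quadratic formation representing $x$, and then read off the surgery obstruction directly from the handle data. First, writing $\Lambda:=\Z[\pi_1(M_0)]$, recall that $L^s_{2q+1}(\Lambda)$ is a quotient of the set of stable simple isomorphism classes of $(-1)^q$-quadratic formations; so after stabilising by hyperbolic summands we may represent $x$ by a based formation $(\H_{(-1)^q}(\Lambda^r);F,G)$ with $\H_{(-1)^q}(\Lambda^r)$ the standard hyperbolic form on $\Lambda^r\oplus\Lambda^r$, $F=\Lambda^r\oplus 0$ the standard lagrangian, and (after a further stabilisation) $G$ a lagrangian \emph{complementary} to $F$, so that the projection $G\to\H_{(-1)^q}(\Lambda^r)/F$ is a simple isomorphism. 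Enlarging $r$ costs nothing: geometrically it amounts to attaching a cancelling pair of trivial handles.

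Next, attach $r$ $q$-handles to $M_0\times[0,1]$ along disjoint embeddings $S^{q-1}\times D^{q+1}\hookrightarrow M_0\times\{1\}$ contained in a $2q$-ball. Extending the identity over these handles (into balls) gives a degree one normal bordism $(W',f')$ rel.\ boundary from $M_0$ to $M_0\#W_r$, where $W_r=\#_r(S^q\times S^q)$; here $f'$ is $q$-connected with surgery kernel $K_q(M_0\#W_r;\Lambda)\cong\H_{(-1)^q}(\Lambda^r)$, and the belt spheres of the attached handles span the lagrangian $F$ (surgering them returns to $M_0$). Now pick a basis $g_1,\dots,g_r$ of $G\subset K_q(M_0\#W_r;\Lambda)$. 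Because $f'$ is $q$-connected, each $g_i$ is realised by an immersion $S^q\looparrowright M_0\#W_r$ together with a null-homotopy of its image in $M_0\times[0,1]$; since $G$ is a lagrangian, $\lambda(g_i,g_j)=0$ and $\mu(g_i)=0$, so for $q\geq 3$ the Whitney trick upgrades these to $r$ disjoint embeddings with trivial normal bundle, still null-homotopic downstairs. Attaching $(q{+}1)$-handles along these framed embeddings and extending $f'$ over them using the null-homotopies yields the desired degree one normal bordism $(W,F_W)$ rel.\ boundary, with $M_1:=\partial_+W$.

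It remains to identify $\sigma(F_W)$. The relative chain complex $C_*(W,M_0;\Lambda)$ is concentrated in degrees $q$ and $q{+}1$, with boundary map the projection $G\to\H_{(-1)^q}(\Lambda^r)/F$; by the complementarity arranged above this is a simple isomorphism, so $M_1\to M_0\times\{1\}$ is a simple homotopy equivalence and $\sigma(F_W)\in L^s_{2q+1}(\Lambda)$ is defined. Moreover $W$ is built from $M_0\times[0,1]$ using only $q$- and $(q{+}1)$-handles, hence $(W,F_W)$ is already in the standard form from which the odd-dimensional surgery obstruction is computed: its underlying form is the middle kernel $\H_{(-1)^q}(\Lambda^r)$, the lagrangian coming from the $M_0$-end is $F$, and the lagrangian dying into the $M_1$-end is $G$. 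With the usual sign conventions this gives $\sigma(F_W)=[(\H_{(-1)^q}(\Lambda^r);F,G)]=x$.

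The main obstacle is the geometric step: passing from the algebraic vanishing of $\lambda$ and $\mu$ on $G$ to an honest disjoint framed embedding of $q$-spheres representing a basis of $G$, each carrying a null-homotopy of its image in $M_0\times[0,1]$. This is exactly where the hypothesis $q\geq 3$ enters (through the Whitney trick), and it also carries the bulk of the book-keeping --- checking that the normal-bundle data extends over the new handles, that no sub-middle-dimensional surgeries are reintroduced, and that the resulting obstruction is $x$ rather than $-x$. The algebraic inputs --- the normalised representative of the first paragraph and the chain-level identification in the third --- are formal consequences of the algebra of formations.
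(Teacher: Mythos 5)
The paper does not prove Theorem~\ref{thm:ClassicalWallRealisation}; it is stated as a citation to Wall. So there is no internal proof to compare against, but your proposal has a genuine gap in the very first normalisation step, and that gap propagates through the rest of the argument.

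You claim that, after stabilisation, $x$ can be represented by a formation $(H_{(-1)^q}(\Lambda^r);F,G)$ with $G$ a lagrangian \emph{complementary} to $F$, i.e.\ $F\oplus G = H_{(-1)^q}(\Lambda^r)$. This cannot be arranged in general without changing the class $x$. If $G$ is complementary to $F=\Lambda^r\oplus 0$, then $G$ is the graph $\Gamma_\chi=\{(\chi(y),y):y\in F^*\}$ of a homomorphism $\chi\colon F^*\to F$, and the requirement that $G$ be a \emph{lagrangian} (not merely a direct summand on which the symmetric form vanishes, but one on which the quadratic refinement vanishes) forces $\chi$ to be even, i.e.\ $\chi = \nu - (-1)^q\nu^*$ for some $\nu$. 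But then the shearing map $\bsm 1 & -\nu \\ 0 & 1 \esm$ is a simple isometry of $H_{(-1)^q}(\Lambda^r)$ fixing $F$ and carrying $G=\Gamma_\chi$ onto $F^*$, so $(H_{(-1)^q}(\Lambda^r);F,G)$ is simply isomorphic to the trivial formation $(H_{(-1)^q}(\Lambda^r);F,F^*)$ and represents $0\in L^s_{2q+1}(\Lambda)$. In other words, your normalisation is only available when $x=0$; accepting it would imply $L^s_{2q+1}(\Lambda)=0$, which is false (e.g.\ $L^s_{4k+1}(\Z[\Z])\cong\Z$ by the Shaneson splitting, which is precisely the case used in this paper).

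The same difficulty appears from the geometric side, and makes the contradiction plain. As you observe, with $F$ and $G$ complementary the boundary map $C_{q+1}(W,M_0;\Lambda)\to C_q(W,M_0;\Lambda)$ is an isomorphism, so $W\searrow M_0$ is a deformation retract and $W$ is a simple $h$-cobordism. But then $F_W\colon W\to M_0\times I$ is itself a simple homotopy equivalence of pairs, so $\sigma(F_W)=0$. You then go on to claim $\sigma(F_W)=[(H_{(-1)^q}(\Lambda^r);F,G)]=x$, which can only hold if $x=0$.

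The fix is to drop the complementarity requirement entirely: Wall's construction surgers the $(q{+}1)$-handles along framed embedded spheres representing \emph{any} simple lagrangian $G$ (equivalently, the image of the standard lagrangian under a stable simple isometry $\alpha$). The outgoing manifold $M_1$ is then simple-homotopy-equivalent to $M_0$ not because $C_*(W,M_0;\Lambda)$ is acyclic (it usually is not), but because $G$ is a lagrangian, so $K_q(M_1)\cong G^\perp/G=0$, and simplicity comes from the simplicity of the formation. And the surgery obstruction is not read off from $C_*(W,M_0;\Lambda)$: it is computed, as in Wall's \S6 and as in the paper's proof of the modified-surgery analogue Theorem~\ref{thm:Realisation}, from a handlebody $U\subseteq W$ around a generating set of $K_q(W)$, with the two lagrangians $H_{q+1}(U,\partial U;\Lambda)=F$ and $H_{q+1}(W\setminus\mathring U,M_0\sqcup\partial U;\Lambda)=G$. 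Your intuition about which spheres carry $F$ and which carry $G$ is correct; it is the attempt to shortcut the identification via the complementarity assumption and a simple chain contraction that breaks.
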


The same holds for $q=2$ for topological manifolds provided $\pi_1(M_0)$ is a good group \cite{FQ}.
In Theorem~\ref{thm:RealisationIntro} below, we present
an analogue of Theorem~\ref{thm:ClassicalWallRealisation} for
Kreck's
modified surgery theory~\cite{KreckSurgeryAndDuality}.
The starting point of modified surgery theory is to weaken the homotopy theoretic
assumption that~$M_0^{2q}$ and~$M_1^{2q}$ are homotopy equivalent
and to strengthen the bundle-theoretic input as follows.
First we fix a fibration $\xi \colon B \to BO$.  A \emph{normal $j$-smoothing} over $(B, \xi)$
is a pair~$(M,\overline{\nu})$, where $M$ is a compact, smooth manifold,
and~$\overline{\nu} \colon M \to B$ is a $(j{+}1)$-connected map that lifts the classifying map
$\nu_M \colon M \to BO$ of the stable normal bundle.
%
%

We assume that $M_0$ and $M_1$ are connected, oriented, and admit normal $(q{-}1)$-smoothings into $(B, \xi)$,
which together bound a normal $(B, \xi)$-bordism $(W,\overline{\nu})$.
By \cite[Theorem 4]{KreckSurgeryAndDuality} there is a \emph{modified surgery obstruction}
$$\Theta(W,\overline{\nu})  \in \ell_{2q+1}(\Z[\pi]),$$
where $\pi := \pi_1(B)$ and
the \emph{$\ell$-monoid} $\ell_{2q+1}(\Z[\pi])$ consists of certain equivalences classes of quasi-formations.
As we recall in Chapter~\ref{sec:FormsAndFormations}, a \emph{quasi-formation} is a triple $((P,\psi);F,V)$,
where for $\varepsilon = (-1)^q$, $(P,\psi)$ is an $\varepsilon$-quadratic form over
the ring $\Z[\pi]$, $F$ is a lagrangian of $(P,\psi)$
and $V \subseteq P$ is a half rank direct summand. 
The \emph{induced form} of a quasi-formation $((P,\psi);F,V)$ is the
$\epsilon$-quadratic form $(V,\theta)$ obtained by restricting $\psi$ to $V$.

When $\Theta(W, \ol \nu) = [(P, \psi); F, V]$, we now explain how the induced form $(V, \theta)$
relates to the topology of $(W, \ol \nu)$.
The kernel group $K\pi_q(M_0):=\ker \bigl( \overline{\nu}_{0*} \colon \pi_q(M_0) \to \pi_q(B) \bigr)$
of the normal $(q{-}1)$-smoothing $(M_0,\overline{\nu}_0)$, is a finitely
generated $\Z[\pi]$-module, and intersections and self-intersections endow it
with the structure of a quadratic form $(K \pi_q(M_0), \theta_{M_0})$, called the \emph{Wall form} of~$(M_0, \overline{\nu}_0 )$.
If $S$ is the finitely generated free $\Z[\pi]$-module on a set of generators for $K\pi_q(M_0)$,
then the {\em $0$-stabilised Wall form} of $(M_0, \ol \nu_0)$ is
\[ [v(\ol \nu_0)]_0 := [S, \mu]_0, \]
where $\mu$ is the $\varepsilon$-quadratic form induced on $S$ and $[S, \mu]_0$ indicates the $0$-stable isometry
class of a form, as defined in Section~\ref{ss:ZSFs}.
By \cite[Proposition~8]{KreckSurgeryAndDuality}, $[v(\ol \nu_0)]_0$ is well-defined and $[v(\ol \nu_0)]_0 = [V, \theta]_0$.



We can now state our realisation result for modified surgery obstructions, which is the second
main result of this paper; (see Theorem~\ref{thm:Realisation} for a more detailed version
of Theorem~\ref{thm:RealisationIntro}).

\begin{theorem}[Realisation of modified surgery obstructions] \label{thm:RealisationIntro}
For $q \geq 3$, let $(M_0^{2q},\overline{\nu}_0 )$ be a normal $(q{-}1)$-smoothing and
let $x = ((P, \psi); F, V)$
be a quasi-formation such that $[v(\ol \nu_0)]_0 = [V, \theta]_0$.
Then there exists a normal $(B, \xi)$-cobordism $(W,\overline{\nu})$ with
$\partial_-(W,\overline{\nu})=(M_0,\overline{\nu}_0 )$ and 
surgery obstruction
\[ [\Theta(W,\overline{\nu})]=[x] \in \ell_{2q+1}(\Z[\pi]). \]
\end{theorem}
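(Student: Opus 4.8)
The plan is to mimic the proof of classical Wall Realisation (Theorem~\ref{thm:ClassicalWallRealisation}), carrying out surgeries to build a cobordism with prescribed algebraic data, but replacing the homotopy-theoretic bookkeeping of the $L$-group argument with the bundle-theoretic bookkeeping appropriate to normal $(q{-}1)$-smoothings. First I would reduce to a normal form for the quasi-formation $x = ((P,\psi);F,V)$: since $F$ is a lagrangian of $(P,\psi)$, choose a complementary lagrangian so that $(P,\psi)$ is identified with the hyperbolic form $H_\varepsilon(F)$, write $V$ as the graph (or near-graph, after a further elementary modification) of a morphism built from an $\varepsilon$-quadratic form, and extract from this the "stabilising datum" — essentially the difference between $V$ and $F$ encoded as a $(-1)^q$-quadratic form together with a choice of embedded framed spheres representing a generating set. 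The point is that the induced form $(V,\theta)$ is, by hypothesis, $0$-stably isometric to the $0$-stabilised Wall form $[v(\ol\nu_0)]_0 = [S,\mu]_0$ of $(M_0,\ol\nu_0)$, and this is exactly the compatibility needed to realise $V$ geometrically inside $M_0$.

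Next I would construct $W$ in two stages. Stage one: form the trace $W'$ of a sequence of trivial surgeries on $M_0$ (i.e. attach handles on embedded trivial framed spheres, or equivalently take a boundary connected sum with $S^q \times D^{q+1}$'s) so that the kernel module of the upper boundary acquires a free summand matching $F$; because these surgeries are trivial, $(W', \ol\nu')$ inherits a normal $(B,\xi)$-structure extending $\ol\nu_0$, still a $(q{-}1)$-smoothing on the lower boundary, and with controlled connectivity. Stage two: inside the upper boundary, represent $V$ by a system of framed immersed $q$-spheres whose equivariant intersection and self-intersection data realise $(V,\theta)$ — here the $0$-stable isometry of the induced form with the Wall form of $(M_0,\ol\nu_0)$ is what lets us realise the algebra geometrically (possibly after further $0$-stabilisation, which only changes $W$ by trivial handles and is harmless at the level of $\ell_{2q+1}$) — and then do surgery on these spheres to obtain $M_1$ and the full cobordism $W$. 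One must arrange that $\ol\nu$ extends over the new handles: this is where the fibration $\xi\colon B \to BO$ and the obstruction theory for lifts enter, and the $(q{-}1)$-connectivity hypotheses guarantee the relevant obstruction groups vanish in the range that matters.

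Finally I would compute the modified surgery obstruction of the resulting $(W,\ol\nu)$ and identify it with $[x]$. By \cite[Theorem 4]{KreckSurgeryAndDuality} the obstruction $\Theta(W,\ol\nu) \in \ell_{2q+1}(\Z[\pi])$ is represented by the quasi-formation whose form is the (hyperbolic) intersection form on the middle-dimensional kernel of $W$ relative to its boundary, whose lagrangian $F$ comes from $M_0$, and whose half-rank summand $V$ comes from $M_1$; by our construction these are, up to the equivalence relation defining $\ell_{2q+1}$ (stabilisation and isometry), precisely $((P,\psi);F,V)$. The use of \cite[Proposition 8]{KreckSurgeryAndDuality}, which says $[v(\ol\nu_0)]_0 = [V,\theta]_0$ holds automatically for any quasi-formation arising as a modified surgery obstruction with lower boundary $(M_0,\ol\nu_0)$, shows that the compatibility hypothesis in the statement is not only sufficient but necessary, so no generality is lost.

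\textbf{Main obstacle.} The hard part will be Stage two: ensuring that the framed immersed spheres representing $V$ in the upper boundary can be chosen so that (a) their geometric self-intersection and intersection data realise the prescribed $\varepsilon$-quadratic form $(V,\theta)$ \emph{on the nose} rather than merely stably, and (b) the normal $(B,\xi)$-structure $\ol\nu'$ extends across the corresponding handle attachments while keeping the lower boundary a $(q{-}1)$-smoothing. Part (a) is exactly where the hypothesis $[v(\ol\nu_0)]_0 = [V,\theta]_0$ is used, but converting a $0$-stable isometry into an actual geometric realisation requires first performing enough trivial ($0$-)stabilisations on $M_0$ — and one must check that these stabilisations, and the ambient surgeries, do not disturb the $(q{-}1)$-connectivity of $\ol\nu$ nor the identification of the kernel module with $F$. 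Managing the interaction between the bundle data and the geometry simultaneously, in the metastable range $2q \geq 6$ where the relevant Whitney tricks and obstruction-theoretic vanishing hold, is the technical heart of the argument; the detailed version is deferred to Theorem~\ref{thm:Realisation}.
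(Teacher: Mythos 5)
Your overall strategy matches the paper's (Theorem~\ref{thm:Realisation}): Stage one attaches trivial $q$-handles to $M_0 \times [0,1]$ to create a free lagrangian summand, Stage two attaches $(q{+}1)$-handles, and the obstruction is then computed from the kernel data. But there is a genuine conceptual gap in how you describe Stage two, and it concerns precisely the step you single out as the ``main obstacle''.

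You propose to realise $V$ by framed $q$-spheres in the upper boundary ``whose equivariant intersection and self-intersection data realise $(V,\theta)$ on the nose''. This cannot work as stated: to perform surgery on a collection of framed embedded $q$-spheres representing classes in the kernel of a normal smoothing, the Wall quadratic refinement must \emph{vanish} on those classes (this is~\cite[Lemma~2, Propositions~6~and~7]{KreckSurgeryAndDuality}), whereas you are asking for the spheres to have self-intersection form $\theta$, which is generally nonzero. So either your spheres can be embedded and framed (in which case $\theta=0$), or they carry the form $\theta$ and the surgery step fails. The hypothesis $[v(\ol\nu_0)]_0=[V,\theta]_0$ does not let you ``convert a $0$-stable isometry into an actual geometric realisation'' of $(V,\theta)$ on spheres -- that is the wrong target.

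What the paper actually does is different, and this is the key idea your plan is missing. After the trivial surgeries, the Wall form of the new upper boundary is
$(K\pi_q(M_0),-\theta_{M_0}) \oplus H_\varepsilon(K)$. Choose a surjection $\pi\colon S \twoheadrightarrow K\pi_q(M_0)$ giving a free Wall form $(S,\mu)$, and an isometry $h\colon (S,\mu)\oplus(\Lambda^r,0)\to(V,\theta)$, which exists because $[v(\ol\nu_0)]_0=[V,\theta]_0$. The $(q{+}1)$-handles are attached along classes in the \emph{diagonal} submodule
\[
\operatorname{im}\begin{pmatrix}\pi\\ h\end{pmatrix} \subseteq K\pi_q(M_0)\oplus V \subseteq K\pi_q(M_0)\oplus(K\oplus K^*),
\]
and the short computation $\bsm \pi\\h\esm^*(-\theta_{M_0}\oplus H_\varepsilon)\bsm\pi\\h\esm = -(\mu\oplus 0) + (\mu\oplus 0) = 0$ shows this is a \emph{sublagrangian}: the self-intersections cancel between the $M_0$-component and the $V$-component. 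That vanishing -- not a geometric realisation of $\theta$ -- is what makes the framed embedded spheres exist, and it is where the hypothesis $[v(\ol\nu_0)]_0=[V,\theta]_0$ is genuinely used. The summand $V$ then appears algebraically, after the fact, as $H_{q+1}(W\setminus\mathring U,M_0\sqcup\partial U;\Lambda)$ in the kernel quasi-formation, because the attaching classes project onto a basis of $V\subseteq K\oplus K^*$. Without this diagonal cancellation, the obstacle you identify in part (a) of your last paragraph is not merely technical -- the surgeries you need simply cannot be performed.
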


Again, the same result holds for $q=2$, provided we are in the topological category and $\pi$ is a good group.

\begin{remark}
The condition $[V, \theta]_0 = [v(\ol \nu_0)]_0$ in Theorem~\ref{thm:RealisationIntro} cannot be avoided:
by \cite[Proposition 8]{KreckSurgeryAndDuality}, it is a necessary condition for a quasi-formation
to be realised under the hypotheses of the theorem.
Theorem~\ref{thm:RealisationIntro} is in this sense the best possible realisation result.
\end{remark}

Given a $(q{-}1)$-smoothing $(M_0^{2q},\overline{\nu}_0 )$ and~$[x] \in \ell_{2q+1}(\Z[\pi])$ whose $0$-stabilised induced form is isomorphic to the $0$-stabilised Wall form of $(M_0,\overline{\nu}_0)$, Theorem~\ref{thm:Realisation} produces another~$(q{-}1)$-smoothing $(M_1,\overline{\nu}_1)=\partial_+(W,\overline{\nu})$.
Since $(M_0, \ol \nu_0)$ and $(M_1, \ol \nu_1)$ are normal $(q{-}1)$-smoothings over
$(B, \xi)$,~\cite[Corollary 3]{KreckSurgeryAndDuality} ensures that $M_0$ and
$M_1$ are stably diffeomorphic.

With $q=2k$, Theorem~\ref{thm:RealisationIntro} gives rise to a procedure to realise elements of $\ell_{4k+1}(\Z[\pi]) \cong \ell_{4k+1}(\Z[\pi_1(M_0)])$ by a bordism based on a $4k$-manifold $M_0$, as alluded to after the statement of Theorem~\ref{thm:InfiniteStableClassPi1ZIntro}.  The manifold at the other end of such a bordism is an element in
the same stable class as~$M_0$.

\begin{remark}\label{remark:what-we-dont-know}
Theorem \ref{thm:RealisationIntro} can be re-stated as follows.
We define a {\em $(2q{+}1)$-dimensional modified surgery problem} $(W, \xi)$ to be a normal
$(B, \xi)$-bordism from a fixed $(q{-}1)$-smoothing $\del_-(W, \ol \nu) = (M_0, \ol \nu_0)$ to some $(q{-}1)$-smoothing
$\del_+(W, \ol \nu)$ and let
\[ \mcal{W}_{2q+1, q}(M_0, \ol \nu_0) : =
\bigl\{ \, [W, \ol \nu] \, \mid \, \text{$\del_-(W, \ol \nu) = (M_0, \ol \nu_0)$,
$\del_+(W, \ol \nu)$ is a $(q{-}1)$-smoothing} \bigr\} \]
be the set of rel.\ boundary bordism classes of
$(2q{+}1)$-dimensional modified surgery problems over $(B, \xi)$ with
incoming boundary $(M_0, \ol \nu_0)$ and outgoing boundary a $(q{-}1)$-smoothing.
We have seen that there is a well-defined surgery obstruction map
\begin{align*}
  \Theta \colon \mcal{W}_{2q+1, q}(M_0, \ol \nu_0) &\to  \ell_{2q+1}([v(\ol \nu_0)]_0) \\ 
 [W, \ol \nu] &\mapsto
\Theta(W, \ol \nu),
\end{align*}
where $\ell_{2q+1}([v(\ol \nu_0)]_0) \subseteq \ell_{2q+1}(\Z[\pi])$ is the subset of elements $[(P, \psi); F, V]$,
which satisfy $[V, \theta]_0 = [v(\ol \nu_0)]_0$.
Theorem~\ref{thm:RealisationIntro} states that the surgery obstruction map $\Theta$ is onto.


The analogous statement for Wall Realisation also says that the
surgery obstruction is injective, in the appropriate sense, and this allows one to
formulate Wall Realisation as an action of the group $L_{2q+1}^s(\Z[\pi_1(M)])$ on an appropriate
structure set of $M$.
Calculations of the fourth author lead us to expect that $\Theta$ is not in general injective
and this limits our current ability to use modified surgery to provide upper bounds
on the stable class.  However, we expect for an appropriate refinement of the $\ell$-monoid,
that a refined surgery obstruction can be defined for which the surgery obstruction map
is injective and we plan to pursue this in further work.
\end{remark}

\section{Boundary isomorphisms and primitive embeddings}
\label{subsec:intro-prim-emb-bdy-aut}
Now we describe the algebraic structures we use to distinguish the manifolds of Theorem~\ref{thm:InfiniteStableClassPi1ZIntro},
as well as their relevance to modified surgery theory.
First, we return to the $\ell$-monoid $\ell_{2q+1}(\Z[\pi])$, and recall a structural result from~\cite{CrowleySixt}.
Given a quasi-formation~$((P,\psi);F,V)$ over a ring $R$ with involution, there are induced $\epsilon$-quadratic forms~$(V,\theta)$ and~$(V^\perp,\theta^\perp)$, where $\theta$ is the restriction of~$\psi$ to~$V$, and~$\theta^\perp$ is the restriction of~$\psi$ to the orthogonal complement $V^\perp$ of $V \subseteq P$.
Under favourable conditions (see e.g.~\cite[Lemma 6.7]{CrowleySixt}),
given $\epsilon$-quadratic forms~$v,v'$,~\cite[Theorem 5.11]{CrowleySixt} fits the set~$\ell_{2q+1}(v,v')$ of quasi-formations whose induced forms are ($0$-stably) $v$ and~$v'$ into an ``exact sequence'' (of sets):
\begin{equation}
\label{eq:ExactSequenceCS}
L_{2q+1}^s(R) \stackrel{\rho}{\dashrightarrow} \ell_{2q+1}(v,v') \xrightarrow{~\delta~} \operatorname{bIso}(\partial v, \partial v') \xrightarrow{~\kappa~} L_{2q}^s(R).
\end{equation}
This means that fibres of $\delta$ equal orbits under the action $\rho$, and $\Im(\delta) = \kappa^{-1}(0)$.
We focus on the \emph{boundary isomorphism set}  $\operatorname{bIso}(\partial v, \partial v')$.
Briefly, $\partial v$ and $\partial v'$ denote (split) \emph{boundary formations} associated to $v$ and~$v'$, and $\operatorname{bIso}(\partial v, \partial v')$ consists of
equivalence classes
of stable isomorphisms between $\partial v$ and $\partial v'$; we refer to Section~\ref{sub:bAut} for details.
The map $\kappa$ assigns to
a stable isomorphism~$f \colon \partial v \to \partial v' $ the element of~$L_{2q}^s(R)$ represented by the \emph{union} $v \cup_f -v'$,
which is a nonsingular $\epsilon$-quadratic form whose definition is recalled in Section~\ref{sub:AlgebraicGluing}.

Next, we describe the relevance of the exact sequence displayed in~\eqref{eq:ExactSequenceCS} to modified surgery theory.
In classical surgery theory, if a surgery obstruction $\sigma(F)$ is trivial in $L_{2q+1}^s(\Z[\pi_1(M_0)])$, then the degree one normal map $F$ is normally bordant to a simple homotopy equivalence.
In modified surgery theory, if a modified surgery obstruction $\Theta(W,\overline{\nu})$ is ``elementary", then the $(B, \xi)$-cobordism~$(W,\overline{\nu})$ is~$(B, \xi)$-bordant rel.\ boundary to an $s$-cobordism~\cite[Theorem 3]{KreckSurgeryAndDuality}.
A quasi-formation is \emph{elementary} if it is stably isomorphic (after adding trivial formations $(H_\epsilon(F);F,F^*)$, with $H_{\epsilon}(F)$ the standard hyperbolic form on $F \oplus F^*$) to a quasi-formation of the form $(H_{(-1)^q}(P),P,\bsm1 \\ \phi \esm(P))$ for some homomorphism $\phi \colon P \to P^*$.

We point out that elementary quasi-formations are not generally trivial.
Indeed for every quadratic form $v$, by \cite[Corollary 5.3(ii)]{CrowleySixt} there is a unique elementary element of $\ell_{2q+1}(v, v)$
(and if $[v]_0 \neq [v']_0$, then no element of $\ell_{2q+1}(v, v')$ is elementary).
Since the induced form of the trivial formation is the zero form, for $v$ not the zero form these elementary elements of $\ell_{2q+1}(v, v)$ cannot be trivial.
The exactness of~\eqref{eq:ExactSequenceCS} thus entails that~$[x]$ is elementary modulo the action of $L_{2q+1}^s(\Z[\pi])$, represented by the dashed arrow labelled~$\rho$ in~\eqref{eq:ExactSequenceCS}, if and only if~$\delta([x])=1$; see~\cite[Corollary 5.12]{CrowleySixt}.


When $v = v'$, the boundary isomorphism set becomes the boundary automorphism set
$\bAut(\del v) := \operatorname{bIso}(\del v, \del v)$  and the exact sequence~\eqref{eq:ExactSequenceCS}
shows that computing $\bAut(\del v)$ is crucial for determining whether quasi-formations are elementary.
One of the important algebraic innovations of this paper is the discovery of quadratic forms with infinite boundary automorphism
sets in Theorem~\ref{cor:infiniteAutHyperbolic}.
The other algebraic innovation in this paper is to give a new definition of~$\operatorname{bIso}(\partial v, \partial v')$ in terms
of {\em $2$-sided primitive embeddings}, as we now explain.


Given $\epsilon$-quadratic forms $v,v'$ and a nonsingular $\epsilon$-quadratic form $m$,
a 2-sided primitive embedding is a pair
\begin{equation}
\label{eq:TwoPrimIntro}
v \xhra{~j~} m \xhla{~j'~} v'
\end{equation}
of split isometric injections such that $j(v)^\perp=j'(v')$ and we define $\operatorname{bPrim}(v,v')$ to be the set of all isomorphism classes of 2-sided primitive embeddings
involving $v$ and $v'$.
Given an equivalence class of boundary isomorphisms $[f] \in \bIso(\partial v,\partial v')$, the union $v \cup_f -v'$ comes with maps
$v \hookrightarrow (v \cup_f -v') \hookleftarrow -v'$ which form a 2-sided primitive embedding.
We thus obtain a map
\begin{equation} \label{eq:Pr}
\Pr \colon \bIso(\partial v, \partial v')  \to \bTwoPrim(v,-v'),
\quad [f] \mapsto [v \hookrightarrow (v \cup_f -v') \hookleftarrow -v'],
\end{equation}
and we prove the following result
(see Theorem~\ref{thm:bisoembprp} for a more detailed statement).

\begin{theorem} \label{thm:bisoembprpIntro}
Let $v$ and $v'$ be $\varepsilon$-quadratic forms. 
The map~$\Pr$ of \eqref{eq:Pr} is a bijection.
\end{theorem}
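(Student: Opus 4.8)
The plan is to construct an explicit inverse to $\Pr$, roughly as follows. Given a $2$-sided primitive embedding $v \xhookrightarrow{j} m \xhookleftarrow{j'} v'$, I want to produce a stable isomorphism $f\colon \partial v \to \partial v'$ whose union $v\cup_f -v'$ recovers (the isomorphism class of) $m$ together with the two given inclusions. The first step is to recall from Section~\ref{sub:AlgebraicGluing} the precise relationship between the boundary formation $\partial v$, the form $v$, and unions: there should be a canonical identification, for any sublagrangian-type data, of $m$ with a union along a boundary isomorphism. Concretely, a split isometric injection $j\colon v \hookrightarrow m$ with image having orthogonal complement $j'(v')$ exhibits $m$ as built from $v$ and $-v'$ by gluing; the gluing data is exactly an isomorphism of the boundary formations, and this is the promised inverse map $\Pr^{-1}([v\xhookrightarrow{j}m\xhookleftarrow{j'}v']) := [f_{j,j'}]$. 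So the real content is checking that these two assignments are mutually inverse and well-defined on isomorphism classes.

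The key steps, in order, would be: (1) unpack the definition of $\partial v$ as a split formation and of $v \cup_f -v'$ from the cited sections, and record the standard fact (surely implicit in \cite{CrowleySixt}) that $v \hookrightarrow v\cup_f -v' \hookleftarrow -v'$ is a $2$-sided primitive embedding — this is what makes $\Pr$ well-defined, and is already asserted in the excerpt. (2) Given a $2$-sided primitive embedding into $m$, decompose $m$ compatibly with the splittings: use that $j$ and $j'$ are \emph{split} isometric injections to write $m$, as a module, as $v \oplus v'^* \oplus \dots$, extracting from the pairing between a complement of $j(v)$ and $j'(v') = j(v)^\perp$ a nonsingular pairing that encodes a stable isomorphism $f\colon \partial v \to \partial v'$. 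This is the step that genuinely uses the hypothesis $j(v)^\perp = j'(v')$ and the nonsingularity of $m$. (3) Verify $\Pr \circ \Pr^{-1} = \id$: reconstructing the union from $f_{j,j'}$ gives back $m$ with its two inclusions, up to isomorphism — this is essentially a bookkeeping computation with the explicit matrices for hyperbolic-type forms and for $v\cup_f -v'$. (4) Verify $\Pr^{-1}\circ \Pr = \id$ on $\bIso(\partial v, \partial v')$: starting from $[f]$, form the union, read off its primitive-embedding structure, and recover $[f]$; here one must be careful that the stabilisation (adding trivial formations $(H_\epsilon(F);F,F^*)$) built into the definition of $\bIso$ matches the ``split'' stabilisation allowed in $\bPrim$, so that the two notions of equivalence agree. (5) Check that both maps respect the equivalence relations defining $\bIso$ and $\bPrim$ (isomorphisms of formations on one side, isomorphisms of primitive embeddings on the other), so everything descends to the isomorphism classes.

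I expect the main obstacle to be step~(4), specifically matching up the equivalence relations: $\bIso(\partial v, \partial v')$ is defined as \emph{stable} isomorphisms of boundary formations modulo a suitable equivalence (Section~\ref{sub:bAut}), while $\bPrim(v,-v')$ quotients by isomorphisms of $2$-sided primitive embeddings, and one must show these correspond under $\Pr$ — in particular that two boundary isomorphisms give isomorphic primitive embeddings if and only if they are equivalent, and that every stabilisation on the $\bPrim$ side is induced by one on the $\bIso$ side. A secondary subtlety is confirming that an arbitrary $2$-sided primitive embedding really does arise as a union; this amounts to showing that $j(v)^\perp = j'(v')$ forces $m$ to split off a ``hyperbolic-like'' summand pairing $v$ against $v'$ in the prescribed way, which should follow from the splitness of the injections together with nonsingularity of $m$, but requires care in the quadratic (as opposed to merely symmetric) setting since one is tracking the quadratic refinements $\psi$ throughout.
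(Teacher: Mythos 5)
Your plan is correct in structure and follows essentially the same route as the paper: construct the inverse map $\delta$ sending a $2$-sided primitive embedding $v \xhookrightarrow{j} m \xhookleftarrow{j'} v'$ to the stable isomorphism $f_j$ obtained from the split injection $j$ (the paper imports this construction wholesale from \cite[Proposition~4.8]{CrowleySixt}, stated here as Proposition~\ref{prop:CSProp48}), then verify that $\Pr$ and $\delta$ each descend to the quotient sets (Proposition~\ref{prop:PrDescends}) and are mutually inverse (Theorem~\ref{thm:bisoembprp}, via the explicit component formulas of Proposition~\ref{prop:Componentsfj} and Example~\ref{ex:CompofjUnion}).

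One small correction to your diagnosis of the ``main obstacle'': there is no stabilisation built into $\bTwoPrim(v,-v')$ to be matched against the stabilisation in $\bIso$. The set $\bTwoPrim$ quotients only by genuine isomorphisms of $2$-sided primitive embeddings and by $\Aut(v)\times\Aut(v')$; the stable-homotopy relation on the $\bIso$ side disappears under $\Pr$ because the union $v\cup_f -v'$ lives on the fixed module $V\oplus V'^*$ regardless of how $f$ is stabilised. The genuine subtlety, which your outline does not anticipate, is a sign: the compositions $\Pr\circ\delta$ and $\delta\circ\Pr$ are \emph{not} the identity at the level of $\Iso$ and $\TwoPrim$, but differ by $\partial(-1)\oplus 1$ (respectively multiplication by $-1$ on one factor). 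The paper handles this by proving the identities in the one-sided quotients $\lIso,\rIso$ (resp.\ $\lTwoPrim,\rTwoPrim$), which in particular gives the stated bijection on $\bIso\to\bTwoPrim$. If you fill in your step~(3), expect to need this sign correction before the ``bookkeeping'' closes up.
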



Theorem~\ref{thm:bisoembprpIntro} provides a new point of view
on the boundary isomorphism set, which in certain cases
allows us to chart the effect of realising modified surgery obstructions
in Theorem~\ref{thm:RealisationIntro} on the algebraic invariants of the manifolds involved.
Next we outline how 2-sided primitive embeddings and boundary isomorphisms are used
in the construction of the manifolds of Theorem~\ref{thm:InfiniteStableClassPi1ZIntro}.

The first step is carried out in Chapter~\ref{sec:RealisationPrimitive}, where for $q=2k$ we associate a
boundary isomorphism $\delta_{\overline{\nu}}$ to every ``even, split-free" normal $(2k{-}1)$-smoothing $(M^{4k},\overline{\nu})$.
Under some technical assumptions, Theorem~\ref{thm:Realisation2} provides a realisation result for these boundary isomorphisms, allowing us to change $\delta_{\overline{\nu}}$ by $\delta([x])$ for certain $[x] \in \ell_{4k+1}(\Z[\pi])$ and $\delta$ the map \eqref{eq:ExactSequenceCS}.

Next we take the $4k$-manifolds $\lbrace M_1,M_2,\ldots \rbrace$ introduced in
Section~\ref{ss:IHSC}, which are pairwise not stably diffeomorphic.
They are built by taking $N_{pq, 1}$ which we constructed in \cite{CCPS-short}
and which we recall in Proposition~\ref{prop:ManifoldNab}.
As mentioned above, this paper can be read independently of \cite{CCPS-short},
provided one accepts the properties of the manifolds $N_{pq, 1}$.
Recalling that $p = p(i) = -i(2k)!$ and $q = q(i) = 1-4p^2$,
we define
\[M_i := N_{pq, i}\# (S^1 \times S^{4k-1}).\]
Then for each $i$ we use our realisation result on boundary isomorphisms,
Theorem~\ref{thm:Realisation2}, to construct sets of normal $(2k{-}1)$-smoothings
$\{(M_i^j, \ol \nu_i^j)\}_{j=0}^{\infty}$ with distinct associated boundary isomorphisms
$\{\delta_{\overline{\nu}_i^j}\}_{j=0}^{\infty}$, but which are all stably diffeomorphic to one another.
%
Chapter~\ref{sec:PPE} shows that for these manifolds the~$\delta_{\overline{\nu}_i^j}$ are invariants of the homotopy type, and so
for fixed $i$ the manifolds in the $\{M_i^j\}_{j=0}^\infty$ are pairwise homotopically inequivalent.
The proof once again goes via $2$-sided primitive embeddings.


\section{Extended symmetric forms} \label{subsec:ESFs}
To close the introduction we define a natural invariant of oriented normal $(q{-}1)$-smoothings,
which in the relatively simple cases we consider associates a primitive embedding to a normal smoothing.

Let $\overline \nu \colon M \to B$ a normal $(q{-}1)$-smoothing with $q \geq 2$, where as above $M$ is a closed, connected, smooth, oriented $2q$-dimensional manifold.
Since $\overline \nu$ is a $q$-equivalence, we use
the map induced by $\overline \nu$ to identify the fundamental groups of $M$ and $B$
and set $\Lambda := \Z[\pi_1(B)]$.
For simplicity, we assume that $q = 2k$ is even.
The {\em extended symmetric form} of $\overline \nu \colon M \to B$ is the triple
$$ \ol \lambda(M, \overline \nu) := (H_{2k}(M; \Lambda), \lambda_{M}, \overline \nu_*),$$
where
$\lambda_{M} \colon H_{2k}(M; \Lambda) \times H_{2k}(M; \Lambda) \to \Lambda$
is the equivariant intersection form of the manifold $M$ and
$\overline \nu_* \colon H_{2k}(M; \Lambda) \to
H_{2k}(B; \Lambda)$ is the map induced by $\overline \nu$.

More generally, fix a unital ring with involution $R$.
An \emph{extended symmetric form over $R$} is a triple
$(H, \lambda, \nu)$, where $\lambda \colon H \times H \to R$ is a sesquilinear form
and $\nu \colon H \to Q$ is an $R$-module homomorphism.
For example, for any $Q$,  the standard hyperbolic form $(H_g, \lambda_g)$ on $\Lambda^{2g}$ and the zero homomorphism $H_g \to Q$ define the extended symmetric form $(H_g, \lambda_g, 0)$.
Two extended symmetric forms $(H, \lambda, \nu)$ and $(H', \lambda', \nu')$ are \emph{isometric} if there is an isometry $f \colon (H,\lambda) \to (H',\lambda')$ and an isomorphism $h \colon Q \to Q'$ such that $\nu' f=h\nu$.
If $H$ and $H'$ have the same rank, a {\em stable isometry} is an isometry
\[ (H, \lambda, \nu) \oplus (H_g, \lambda_g, 0) \xrightarrow{\cong} (H', \lambda', \nu') \oplus (H_g, \lambda_g, 0) \]
for some $g \geq 0$.  In this case we write $(H, \lambda, \nu) \cong_{\text{st}} (H', \lambda', \nu')$.

We define the \emph{stable class} of an extended symmetric form $(H, \lambda, \nu)$ to be the set of isometry classes of extended symmetric forms over $\Lambda$ that become isometric to
$(H, \lambda, \nu)$ after stabilisation with $(H_g, \lambda_g, 0)$ for some $g \geq 0$:
\[ \SC(H, \lambda, \nu) := \{(H', \lambda', \nu') \mid (H', \lambda', \nu') \cong_{\text{st}} (H, \lambda, \nu) \}/\text{isometry}.\]
If we stabilise $(M, \overline \nu)$ by adding $(W_g, \overline \nu_g)$, where $\overline \nu_g$ is the trivial $B$-structure on $W_g$, then the effect on the extended symmetric form is to stabilise
by $(H_g, \lambda_g, 0)$:
\[ \ol \lambda(M \# W_g, \overline \nu \# \overline \nu_g) = \ol \lambda(M, \overline \nu) \oplus (H_g, \lambda_g, 0).\]

Next we introduce some simplifying assumptions on the normal smoothings, which will all be satisfied for the examples
we build in Chapter~\ref{sec:examples} for the proof of Theorems~\ref{thm:InfiniteStableClassPi1ZIntro}.
Recall that there is a homotopy functor $P_{2k-1}(-)$ sending a $CW$-complex to a model for its
$(2k{-}1)$st Postnikov type obtained by attaching cells of dimension $2k{+}1$ and higher.  We make the following
assumptions on $\ol \nu \colon M \to B$:
\begin{enumerate}[(i)]
  \item The $\Lambda$-modules $H_{2k}(M; \Lambda)$, $H_{2k}(B; \Lambda)$, and $H^{2k}(B; \Lambda)$ are finitely generated and free.
\item The intersection form on $H_{2k}(M;\Lambda)$ is even.
\item There is a map $B \to P_{2k-1}(M)$
 inducing  an isomorphism
 $H_{2k}(B;\Lambda) \xrightarrow{\cong}  H_{2k}(P_{2k-1}(M);\Lambda)$,
such that the composition $M \xrightarrow{\overline{\nu}}$ $ B \to P_{2k-1}(M)$ agrees with the
Postnikov slice $M \to P_{2k-1}(M)$.
\end{enumerate}

Since the equivariant intersection form $\lambda_M$ uses the orientation of $M$, we introduce
oriented versions of the stable class and homotopy stable class:
write $\SC_+(M)$ and $\SC_{h,+}(M)$ for the set of oriented manifolds stably diffeomorphic to $M$ modulo orientation-preserving diffeomorphisms and modulo orientation-preserving homotopy equivalences respectively, and equipped with an identification of their fundamental groups with $\pi$.
Assumption (iii) above ensures that there is a map
\[ \ol \lambda \colon \SC_+(M) \to \SC(\ol \lambda(M,\ol{\nu})), \]
where 
we replace $H_{2k}(B;\Lambda)$ with $H_{2k}(P_{2k-1}(M);\Lambda)$ in the r\^{o}le of~$Q$.
Since orientation-preserving homotopy equivalences induce an isometry of intersection forms
and since $P_{2k-1}(-)$ is functorial, $\ol \lambda$ factors through the forgetful map to $\SC_{h,+}(M)$. Moreover Wall Realisation produces homotopy equivalent, stably diffeomorphic manifolds with the same normal $(2k{-}1)$-type, so we obtain a factorisation:
\[ \ol \lambda \colon \SC_+(M) \to \SC_+(M)/L_{4k+1}(\Lambda) \to \SC_{h,+}(M) \to \SC(\ol \lambda(M,\ol{\nu})).\]
We have the following refinement of Theorem \ref{thm:InfiniteStableClassPi1ZIntro}, which can be compared with \cite[Problem~11]{Matrix-annals-problem-list}.

%

\begin{theorem}\label{thm:Extended}
Fix $k \geq 2$.  For the $4k$-dimensional manifolds~$\{M_i\}_{i=1}^{\infty}$ with $\pi_1(M_i) \cong \Z$, constructed in Chapter~\ref{sec:examples} for the proof of Theorem \ref{thm:InfiniteStableClassPi1ZIntro},
the composition
\[ \ol \lambda \colon \SC_+(M_i)/L_{4k+1}(\Lambda) \to \SC_{h,+}(M) \to
\SC(\ol \lambda(M_i,\ol{\nu}_i))\]
is surjective and has infinite image.  Moreover $\Aut(\Z) = \{\pm 1\}$ is finite, so after further identifying two extended symmetric forms related by changing the identification of the fundamental group with $\Z$, the image remains infinite.
\end{theorem}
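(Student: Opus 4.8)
The plan is to run the construction of the manifolds $M_i^j$ from Section~\ref{sec:examples} (i.e., apply the realisation result Theorem~\ref{thm:Realisation2} to the boundary isomorphisms $\delta_{\ol\nu_i^j}$ built from $M_i = N_{pq,i}\#(S^1\times S^{4k-1})$) and then track the effect of that construction on the extended symmetric form $\ol\lambda(-,-)$. First I would record the extended symmetric form of $M_i$ explicitly: since $\pi_1(M_i)\cong\Z$ and $\Lambda=\Z[\Z]$, the data is $(H_{2k}(M_i;\Lambda),\lambda_{M_i},\ol\nu_{i*})$ with $\lambda_{M_i}$ hyperbolic of rank $2$ and $\ol\nu_{i*}\colon H_{2k}(M_i;\Lambda)\to H_{2k}(P_{2k-1}(M_i);\Lambda)\cong\Lambda$ given, on the hyperbolic basis $e,f$, by $z^k(e)=-p+2p^2$, $z^k(f)=1+2p$ (the formulae quoted after Theorem~\ref{thm:InfiniteStableClassPi1ZIntro}); for $M_i^j$ the same form but with $z^k(e)=-p+2pt^j$, $z^k(f)=1+2pt^j$ where $t$ generates $\pi_1$. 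Since each $M_i^j$ lies in $\SC(M_i)$ and has the stated normal $(2k{-}1)$-type (Assumption (iii) is arranged in the construction), the classes $[M_i^j]$ map under $\ol\lambda$ to $\ol\lambda(M_i^j,\ol\nu_i^j)\in\SC(\ol\lambda(M_i,\ol\nu_i))$, which proves surjectivity onto the image generated by these classes and, a fortiori, that $\ol\lambda$ is surjective onto its image — the genuinely substantive content is that the image is \emph{infinite}.

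To prove infiniteness I would show that $\{\ol\lambda(M_i^j,\ol\nu_i^j)\}_{j=0}^\infty$ contains infinitely many stable isometry classes of extended symmetric forms, even after identifying forms related by the action of $\Aut(\Z)=\{\pm1\}$. The key step is to pass from the extended symmetric form to the associated $2$-sided primitive embedding via the cup-square map: the primitive submodule $\Lambda z^k\subseteq H^{2k}(M_i^j;\Lambda)$, together with its orthogonal complement under the (dual) hyperbolic pairing, is exactly the primitive embedding encoded by $\delta_{\ol\nu_i^j}$ under the bijection $\Pr$ of Theorem~\ref{thm:bisoembprpIntro}. A stable isometry of extended symmetric forms $\ol\lambda(M_i^j,\ol\nu_i^j)\cong_{\mathrm{st}}\ol\lambda(M_i^{j'},\ol\nu_i^{j'})$ (allowing also $t\mapsto t^{-1}$) induces an isomorphism of the corresponding $2$-sided primitive embeddings, hence by Theorem~\ref{thm:bisoembprpIntro} an equality $\delta_{\ol\nu_i^j}=\delta_{\ol\nu_i^{j'}}$ in $\bIso$, which by the construction in Section~\ref{sec:RealisationPrimitive} and the invariance argument of Section~\ref{sec:PPE} forces $j=j'$. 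Concretely, I expect the distinguishing computation to come down to: the ideal of $\Lambda=\Z[t,t^{-1}]$ generated by the $2\times 2$ minors / evaluations of the map $\ol\nu_{i*}$ restricted to a hyperbolic summand, or equivalently the element $1+2pt^j\in\Lambda$ up to units $\pm t^n$ and the involution $t\mapsto t^{-1}$, is a stable-isometry invariant, and $1+2pt^j$ for distinct $j\ge 0$ (with $p=p(i)\ne 0,\pm 1$) are pairwise inequivalent under $\{\pm t^n\}$ and $t\mapsto t^{-1}$ because $2p$ has absolute value $>1$ and the support $\{0,j\}$ of the polynomial pins down $j$.

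Two points deserve care. First, \emph{stability}: stabilising by $(H_g,\lambda_g,0)$ adds a hyperbolic summand on which $\nu$ vanishes, so it does not change the "interesting" part of the invariant; I would formalise this by defining the invariant at the level of the $0$-stable class of the induced form / primitive embedding (as in Section~\ref{ss:ZSFs} and the map $\delta$ of \eqref{eq:ExactSequenceCS}), which is manifestly stabilisation-invariant, and only then read off the polynomial $1+2pt^j$. Second, the \emph{automorphism-of-$\pi_1$} issue: since $\Aut(\Z)=\{\pm1\}$ is finite, identifying forms under it only collapses the countable family $\{j\}$ into finitely-to-one classes ($j$ and its image under $t\mapsto t^{-1}$), so infiniteness survives; this is exactly the last sentence of the theorem. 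The main obstacle I anticipate is not any single computation but the bookkeeping that the abstract algebraic invariant $\delta_{\ol\nu_i^j}\in\bIso(\partial v,\partial v)$ — proved to be a homotopy invariant of $M_i^j$ in Section~\ref{sec:PPE} via Theorem~\ref{thm:bisoembprpIntro} — coincides with the invariant of the extended symmetric form $\ol\lambda(M_i^j,\ol\nu_i^j)$ that factors through $\SC(\ol\lambda(M_i,\ol\nu_i))$; once that identification (essentially Remark~\ref{rem:ItsAllinTheCup} made functorial in stable isometries) is in place, infiniteness of the image follows from infiniteness of $\{\delta_{\ol\nu_i^j}\}_{j}$, which was already established for the proof of Theorem~\ref{thm:InfiniteStableClassPi1ZIntro}.
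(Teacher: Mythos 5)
Your infiniteness argument is essentially the paper's: the paper encodes this in Proposition~\ref{prop:t}, which shows that $(H,\lambda,\nu)\mapsto(\ker\nu\hookrightarrow H\hookleftarrow Q^*)$ gives a well-defined map $t$ from isomorphism classes of extended symmetric forms to $\bTwoPrim^{\Z}$, and then observes that $t(\ol\lambda(M,\ol\nu))$ is the $2$-sided primitive embedding associated to $(M,\ol\nu)$; infiniteness of $\{t(\ol\lambda(M_i^j))\}_j$ modulo $\Aut(\Z)$ was already established for Theorem~\ref{thm:InfiniteStableClassPi1ZIntro}, and the two-to-one collapse under $\Aut(\Z)=\{\pm1\}$ preserves infiniteness. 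The ``bookkeeping'' you flag — that a stable isometry of extended symmetric forms induces an isomorphism of the corresponding $2$-sided primitive embeddings — is exactly what Proposition~\ref{prop:t} verifies; your proposed polynomial invariant $1+2pt^j$ read off from $z^k$ is a plausible concrete manifestation but is not needed once one delegates to the already-proved distinctness of the $\PE(M_i^j,\ol\nu_i^j)$.

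There is, however, a genuine gap in your treatment of surjectivity. You read ``surjective'' as ``surjective onto its image,'' which is vacuous; the theorem asserts that $\ol\lambda$ is surjective onto the whole target $\SC(\ol\lambda(M_i,\ol\nu_i))$, that is, that \emph{every} extended symmetric form stably isometric to $\ol\lambda(M_i,\ol\nu_i)$ arises as the extended symmetric form of some manifold in $\SC_+(M_i)$. This is a nontrivial realisation statement, and the paper proves it by a surgery argument analogous to \cite[Lemma 4.1]{HambletonKreckFiniteGroup}: given $(H,\lambda,\nu)$ in $\SC(\ol\lambda(M_i,\ol\nu_i))$, one stabilises $M_i$ by trivial $(2k{-}1)$-surgeries to match ranks, uses the stable isometry to identify a hyperbolic summand $G$ in the stabilised intersection form which maps trivially to $H_{2k}(B_i;\Lambda)$, performs $2k$-surgery on a lagrangian of $G$ (possible precisely because these classes lie in the kernel of the reference map to $B_i$), and observes that the resulting manifold $M'_i$ has extended symmetric form isomorphic to $(H,\lambda,\nu)$ and is cobordant to $M_i$ over $B_i$, hence stably diffeomorphic to $M_i$ by \cite[Theorem~C]{KreckSurgeryAndDuality}. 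Without some such argument, your proof does not establish the surjectivity claimed in the theorem.
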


We conjecture that the map
$\ol \lambda \colon \SC(M_i)/L_{4k+1}(\Lambda) \to \SC(\ol \lambda(M_i, \ol \nu_i))$
of Theorem \ref{thm:Extended} is also injective.
All of the manifolds we construct to realise this infinite image have isometric intersection forms,  so in other words forgetting $\nu \colon H \to Q$ yields isomorphic (non-extended) symmetric forms.
We give the proof of Theorem~\ref{thm:Extended} in Chapter~\ref{section:proof-theorem-extended}.
The proof that the image is infinite follows by relating extended symmetric forms and 2-sided primitive embeddings,
which uses assumptions (i) and (ii) above.
Since in Chapter~\ref{sec:examples} we use 2-sided primitive embeddings to distinguish homotopy types of manifolds in the same stable class,
in Chapter~\ref{section:proof-theorem-extended} we use this to show that the image of
$\ol \lambda$ is infinite.

The extended intersection form also recognises when the manifolds
$M_i^j$ are homotopy equivalent to a connected sum $N \# (S^1 \times S^{4k-1})$,
for a simply-connected $4k$-manifold $N$.  
If $M_i^j \simeq N \# (S^1 \times S^{4k-1})$, then
the extended intersection form of $M_i^j$ will be extended over the inclusions of
rings $\iota \colon \Z \to \Z[\Z]$ in the obvious way,
but using Example~\ref{ex:M_iPE}, we deduce that this happens
if and only if $j = 0$.
This situation
is analogous to a situation discovered by Hambleton and Teichner in \cite{HambletonTeichner},
where they found a closed oriented topological
$4$-manifold $X$ with $\pi_1(X) \cong \Z$,
whose equivariant intersection form is not extended over $\iota$.

\subsection*{Organisation}
In Chapter~\ref{sec:FormsAndFormations}, we
recall the algebra of quadratic forms and formations we will need.
Chapter~\ref{sec:Realisation} reviews some aspects of modified surgery and proves Theorem~\ref{thm:RealisationIntro} on the realisation of suitable elements of $\ell_{2q+1}(\Z[\pi_1(B)])$.
In Chapter~\ref{sec:BoundaryAutomorphisms},
we review the definitions of the boundary isomorphism set and the boundary automorphism set $\bAut$.
Chapter~\ref{sec:InfinitebAut}, exhibits quadratic forms over $\Z[\Z]$ with infinite $\bAut$.
In Chapter~\ref{sec:PrimitiveEmbeddings}, we introduce 2-sided primitive embeddings and prove Theorem~\ref{thm:bisoembprpIntro}, relating boundary automorphisms and $2$-sided primitive embeddings.
In Chapter~\ref{sec:RealisationPrimitive}, we associate a boundary isomorphism to a normal smoothing and prove a second realisation result in Theorem~\ref{thm:Realisation2}.
Chapter~\ref{sec:PPE} interprets primitive embeddings as a homotopy invariant of manifolds in the same stable class.
In Chapter~\ref{sec:examples}, we combine realisation, the algebraic theory of boundary automorphisms and primitive embeddings, and the resulting homotopy invariant, to prove Theorem~\ref{thm:InfiniteStableClassPi1ZIntro}.
Finally, Chapter~\ref{section:proof-theorem-extended} discusses extended symmetric forms and proves Theorem~\ref{thm:Extended}.

\subsection*{Conventions}
Throughout the article, manifolds are by default assumed to be compact, oriented, and connected.

The symbol $\Lambda$ will always denote a unital ring with involution. In topological settings
$\Lambda = \Z[\pi]$, the group ring of a fundamental group.
If $M$ is a left $\Lambda$-module then
$M^*$ or $\ol M$ denotes the dual of $M$, $\mathrm{Hom}(M, \Lambda)$,
which is given a left $\Lambda$-module structure using the involution on $\Lambda$
in the usual way.

Let $f \colon X \to Y$ be a morphism in a category $\mathcal{C}$, and let $F \colon \mathcal{C} \to \mathcal{D}$ be a contravariant functor. Assume that $F(f)$ is invertible.  We shall often write the induced morphism in $\mathcal{D}$ as $f^* = F(f)$,  and denote its inverse by $f^{-*} := (f^*)^{-1} = F(f)^{-1}$.

\subsection*{Acknowledgements}
The first three authors thank the Max Planck Institute for Mathematics in Bonn,
where they were all visitors at some point during the long life of this project.
MP was partially supported by EPSRC New Investigator grant EP/T028335/1 and EPSRC New Horizons grant EP/V04821X/1.
%
%

\chapter{Forms and formations} \label{sec:FormsAndFormations}
In this chapter, we review some background material on forms and formations.
From now on, we fix a positive integer $q \geq 2$, we set $\varepsilon=(-1)^q$ and let $R$ be a weakly finite unital ring with an involution~$x \mapsto \overline{x}$.
We refer to~\cite[p.143]{CohnVol2} for the definition of a weakly finite ring. For now we note that finitely generated modules over such rings have a well defined rank~\cite[Chapter~4, Prop.~4.6, p.~143]{CohnVol2}.
We abbreviate ``finitely generated" by ``f.g." throughout.
In Section~\ref{sub:FormsFormations}, we review forms and formations and in Section~\ref{sub:quasi-formations} we review quasi-formations and some of the structure of the $\ell$-monoids.

\begin{lemma}\label{lem:group-ring-weakly-finite}
  Let $\pi$ be a group. Then the group ring $\Z[\pi]$ is weakly finite.
\end{lemma}

\begin{proof}
   By \cite[Chapter~4, Prop.~4.6, p.~143]{CohnVol2}, a ring $R$ is weakly finite if and only if for all $n \in \mathbb{N}$ and for all $A,B \in R^n$ with $AB = I$, we have $BA=I$. Montgomery~\cite{Mon69} showed that this holds for the group algebra $R=\mathbb{C}[\pi]$.  Next, \cite[Chapter~4, Theorem.~4.7~(iv), p.~144]{CohnVol2} states that if $R$ is weakly finite then so is any subring of~$R$. Since $\Z[\pi] \subseteq \mathbb{C}[\pi]$ is a subring, we deduce that $\Z[\pi]$ is weakly finite, as desired.
\end{proof}

\section{Forms and formations}
\label{sub:FormsFormations}
In this section, we briefly review forms and formations and fix our notational conventions.
References include~\cite{RanickiAlgebraicAndGeometric, CrowleySixt, WallSurgery}.
\medbreak

Let $\widetilde{K}_1(R)$ be the reduced $K$-group of $R$.
The involution on $R$ induces an involution $\ast \colon \wt K_1(R) \to \wt K_1(R)$
and we fix $Z \subseteq \wt K_1(R)$ a $\ast$-invariant subgroup.
In topological applications, $R = \Z[\pi]$ will be the integral group
ring of a group $\pi$ and
\[Z = \lbrace [g]  \mid g \in \pi \rbrace \subseteq~\widetilde{K}_1(\Z[\pi]).\]
An \emph{$s$-basis} for a stably free f.g.\ left $R$-module $M$ is a basis of some f.g.\ free left module $M \oplus~R^m$.
We say that two bases are \emph{equivalent} if the change of basis matrix represents an element in $Z$.
A \emph{based module} is a stably f.g.\ free left module together with an equivalence class of $s$-bases.
An isomorphism~$f$ of based modules is \emph{simple} if, with respect to some representatives for the $s$-bases, the 
 torsion of $f \oplus \id$ lies in $Z$.

An $s$-basis of $M$ induces a dual $s$-basis on the dual module $M^*$ and
for $\varepsilon = \pm 1$, we consider the involution
\begin{align*}
\varepsilon T \colon \Hom_R(M,M^*) &\to \Hom_R(M,M^*) \\
\phi &\mapsto (x \mapsto (y \mapsto \varepsilon \overline{\phi(y)(x)})).
\end{align*}

\noindent
We then set $Q_\varepsilon(M):=\coker(1-\varepsilon T)$.

\begin{definition}
\label{def:QuadraticForm}
 An \emph{$\varepsilon$-quadratic form} is a pair $(M,\psi)$, where $M$ is a based module and $\psi \in Q_\varepsilon(M)$.
\end{definition}

An $\varepsilon$-quadratic form is \emph{nonsingular} (resp.\ \emph{simple}) if its \emph{symmetrization} $(1+\varepsilon T)\psi$ is an isomorphism (resp.\ a simple isomorphism).
An \emph{isometry} $h \colon (M,\psi) \to (M',\psi')$ between quadratic forms consists of an isomorphism $h \colon M \to M'$ such that $h^*\theta'h=\theta \in Q_\varepsilon(M)$.
Unless stated otherwise,  we use the term `isometry' as a shorthand for `simple isometry'.
For a based module~$L$, the standard \emph{hyperbolic} $\varepsilon$-quadratic form is defined as follows:
\[ H_\varepsilon(L)=\left( L \oplus L^*, \left[ \begin{pmatrix} 0 & 1 \\ 0 & 0 \end{pmatrix} \right] \right).\]
The module $L^*$ is based using a dual $s$-basis, and using this we have a canonical equivalence class of $s$-bases for $L \oplus L^*$,
with respect to which $(1+\varepsilon T)\psi$ is the simple isomorphism
$\bsm 0 & 1 \\ \varepsilon & 0 \esm$.
A form $(M, \psi)$ is called {\em hyperbolic} if it is isometric to $H_\varepsilon(L)$ for some $L$.

A \emph{sublagrangian} of an $\varepsilon$-quadratic form $(M,\psi)$ is a direct summand $j \colon L \hookrightarrow M$ which is a based module, and satisfies~$j^*\psi j=0 \in Q_\varepsilon(L)$.
A sublagrangian $j \colon L \hookrightarrow M$ satisfies $j(L) \subseteq L^\perp$, where the (unbased) module $L^\perp=\ker(j^*(1+\varepsilon T)\psi)$ is called the \emph{annihilator} of $j \colon L \hookrightarrow M$.
A sublagrangian~$j \colon L \hookrightarrow M$ is a  \emph{lagrangian} if it satisfies the condition $L=L^\perp$, as unbased modules.
Setting $\phi=(1+\varepsilon T)\psi$, observe that a lagrangian gives rise to an exact sequence
\begin{equation}
\label{eq:LagrangianExact}
0 \to L \xra{~~j~~}  M \xra{~j^*\phi~} L^* \to 0.
\end{equation}
A lagrangian $L$ for $(M,\psi)$ is \emph{simple} if the $s$-basis of $M$ is equivalent to an $s$-basis on $M$ induced by the $s$-basis for $L$, the dual $s$-basis for $L^*$, and the exact sequence in~\eqref{eq:LagrangianExact}. If this is the case we say that~\eqref{eq:LagrangianExact} is \emph{based exact}.
The second main definition of this section is the following.

\begin{definition}
\label{def:Formation}
An \emph{$\varepsilon$-quadratic formation} is a triple $((M,\psi);F,G)$, where $(M,\psi)$ is a simple $\varepsilon$-quadratic form and $F,G$ are simple lagrangians of $(M,\psi)$.
\end{definition}

In the literature, what we call a formation is often referred to as a nonsingular formation.
For concision, we drop this adjective.
Finally, note that if $((M,\psi);F,G)$ is a formation,  then~$(M,\psi)$ is hyperbolic as it is nonsingular and admits a Lagrangian~\cite[Proposition~12.3]{RanickiAlgebraicAndGeometric}.

\section{Quasi-formations and the \texorpdfstring{$\ell$}{l}-monoid.}
\label{sub:quasi-formations}

In this section, we describe quasi-formations and the $\ell$-monoid $\ell_{2q+1}(R)$.  Recall that $\varepsilon = (-1)^q$.
The monoid $\ell_{2q+1}(R)$ was first defined by Kreck \cite{KreckSurgeryAndDuality}.
The definition we present here is a reformulation from \cite{CrowleySixt}.

\begin{definition}
\label{def:quasi-formation}
An $\varepsilon$-quadratic \emph{quasi-formation} is a triple $x = ((M,\psi);F,V)$, where $(M,\psi)$ is a simple $\varepsilon$-quadratic form, $F$ is a simple lagrangian of $(M,\psi)$, and $V$ is a half rank based direct summand of~$M$.  A quasi-formation $x$ is called {\em elementary} if the canonical map
$F \oplus V \to M$ is a simple isomorphism.
\end{definition}

If $x=((M,\psi);F,V)$ is a quasi-formation in which $V$ is a simple lagrangian of $(M,\psi)$, then~$x$ is a formation.
In particular, all formations are quasi-formations and a \emph{trivial (quasi)-formation} is a formation of type~$(H_\varepsilon(P);P,P^*)$, for some f.g.\ stably free $R$-module $P$.
Two quasi-formations $((M,\psi);F,V)$ and $((M',\psi');F',V')$ are \emph{isomorphic} if there is an isometry~$h \colon (M,\psi) \to (M',\psi')$ such that~$h(F)=F'$ and $h(V)=~V'$, and such that the induced isomorphisms $F \xrightarrow{\cong} F'$ and $V \xrightarrow{\cong} V'$ are simple.
Next, two quasi-formations~$((M,\psi);F,V)$ and~$((M',\psi');F',V')$ are \emph{stably isomorphic} if they become isomorphic after the addition of trivial formations.
We use $\cong$ to denote the isomorphism relation and~$\cong_s$ to denote the stable isomorphism relation.

\begin{definition} \label{def:LMonoid}
The \emph{$\ell$-monoid} $\ell_{2q+1}(R) = (\{ [x] \}, \oplus)$ is the unital abelian monoid of stable isomorphism classes of $\varepsilon$-quadratic quasi-formations modulo the further relation generated by
$$ ((M,\psi);F,G) \oplus ((M,\psi);G,V) \sim ((M,\psi);F,V),$$
where $F$ and $G$ are both simple lagrangians.  Addition is by direct sum and the zero element is the class of the trivial formations.

The {\em elementary submonoid} $\Ell_{2q+1}(R) \subseteq \ell_{2q+1}(R)$ is the submonoid
of elements represented by elementary quasi-formations.
\end{definition}

Note that Kreck's original definition of $\ell_{2q+1}(R)$ does not involve quasi-formations \cite[page~733]{KreckSurgeryAndDuality}: it  makes use of classes of pairs $(H_\varepsilon(\Lambda^k),V)$, where $V \subseteq \Lambda^{2k}$ is a based free direct summand of rank $k$.
The equivalence between Kreck's definition and Definition~\ref{def:LMonoid} was proven in~\cite[Lemma~3.11]{CrowleySixt}.
Finally, if one is working up to isomorphism of quasi-formations, then without loss of generality one can write quasi-formations as $(H_\varepsilon(F);F,V)$.

\section{$0$-stabilised quadratic forms and the structure of $\ell_{2q+1}(R)$} \label{ss:ZSFs}
In this section we briefly recall some of the structure of $\ell_{2q+1}(R)$, as discussed in
\cite{CrowleySixt}.
Let $x = ((M, \psi); F, V)$ be an $\varepsilon$-quadratic quasi-formation with symmetric form
$\lambda = \psi + \varepsilon \psi^*$.
The \emph{orthogonal complement} of $V$ is the summand $V^\perp \subseteq M$
of elements orthogonal to $V$:
%
\[ V^\perp := \{ m \in M \, \mid \, \lambda(m, v) = 0 \text{ for all } v \in V\}. \]
Note that we take $V^{\perp}$ with respect to the symmetrisation $\lambda$.
The $\varepsilon$-quadratic form $\psi$ induces $\varepsilon$-quadratic forms on the summands $V$
and $V^\perp$, which we denote by $v = (V, \theta)$ and $v^\perp = (V^\perp, \theta^\perp)$ respectively.
Since stabilisation with trivial quasi-formations has the effect of
adding a zero form on a free module to the induced form, we shall consider quadratic
forms up to {\em $0$-stabilisation}, where two $\varepsilon$-quadratic forms $(N, \chi)$ and
$(N',\chi')$ are called \emph{$0$-stably equivalent} if there is an isometry
\[ (N,\chi) \oplus (P,0) \cong (N', \chi') \oplus (Q,0) \]
for some free modules $P$ and $Q$.
The $0$-stable equivalence class of a form $(N,\chi)$ is denoted $[N,\chi]_0$
and such an equivalence class is called a \emph{0-stabilised $\varepsilon$-quadratic form}.
Often we abbreviate $(N, \chi)$ by $n$ and $[N, \chi]_0$ by $[n]_0$.
The set of $0$-stabilised $\varepsilon$-quadratic forms,
\[ \mcal{F}^{\rm zs}_\varepsilon(R) := \big(\{ [n]_0 \}, \oplus \big),\]
forms an abelian monoid under orthogonal sum, with zero the $0$-stable class of the $0$-form.
We write $-[n] = [-n]$.
There are monoid maps~\cite[Definition 5.2]{CrowleySixt}, called {\em boundary maps},
\[ b_- \colon \ell_{2q+1}(R) \to \mcal{F}^{\rm zs}_\varepsilon(R),
\quad [(M, \psi); F, V] \mapsto [v]_0 \]
%
and
\[ b_+ \colon \ell_{2q+1}(R) \to \mcal{F}^{\rm zs}_\varepsilon(R),
\quad [(M, \psi); F, V] \mapsto -[v^\perp]_0.\]
%
To characterise the image of the monoid map $b : = b_- \oplus b_+ \colon \ell_{2q+1}(R) \to
\mcal{F}^{\rm zs}_\varepsilon(R) \times \mcal{F}^{\rm zs}_\varepsilon(R)$
we recall that two $0$-stabilised quadratic forms $[N_0, \chi_0]_0$ and $[N_1, \chi_1]_0$
are called {\em stably
equivalent} if they become equal after the addition of $0$-stabilised hyperbolic forms, so that
\[ [N_0, \chi_0]_0 \oplus [H_\varepsilon(\Lambda^r)]_0 = [N_1, \chi_1]_0
\oplus [H_\varepsilon(\Lambda^s)]_0 \]
for some $r, s$.
In this case we write $[N_0, \chi_0]_0 =_{\rm st} [N_1, \chi_1]_0$ and we define the
{\em stable diagonal}
\[ \Delta_{\rm st}
: = \big\{ \big( [n_0]_0, [n_1]_0 \big) \, \mid \, [n_0]_0 =_{\rm st} [n_1]_0 \big\}
\subseteq \mcal{F}^{\rm zs}_\varepsilon(R) \times \mcal{F}^{\rm zs}_\varepsilon(R).
\]

\begin{proposition}[cf.\ {\cite[Corollary 5.3]{CrowleySixt}}] \label{prop:ell}
\hfill
\begin{enumerate}[(i)]
\item\label{item-prop-ell-i} $b(\ell_{2q+1}(R)) = \Delta_{\rm st}$;
\item\label{item-prop-ell-ii} $b_-|_{\Ell_{2q+1}} = -b_+|_{\Ell_{2q+1}}$ and $b_-|_{\Ell_{2q+1}} \colon \Ell_{2q+1} \to \mcal{F}^{\rm zs}_\varepsilon(R)$ is an isomorphism. \qed
\end{enumerate}
\end{proposition}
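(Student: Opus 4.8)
The plan is to prove the two parts of Proposition~\ref{prop:ell} separately, deriving as much as possible from the cited \cite[Corollary 5.3]{CrowleySixt} while carefully tracking the passage from forms over $R$ to $0$-stabilised forms.

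\medskip

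\textbf{Part (i).} First I would establish the inclusion $b(\ell_{2q+1}(R)) \subseteq \Delta_{\rm st}$. Given a quasi-formation $x = ((M,\psi);F,V)$, we have $b_-(x) = [v]_0$ and $b_+(x) = -[v^\perp]_0$, so I must show $[v]_0 =_{\rm st} -[v^\perp]_0$, i.e.\ $[v]_0 \oplus [v^\perp]_0$ becomes a $0$-stabilised hyperbolic form after adding $0$-stabilised hyperbolic forms. The key observation is that $V$ and $V^\perp$ together span $M$ up to finite index in a controlled way: more precisely, since $V$ is a half-rank direct summand of the nonsingular form $(M,\lambda)$, the inclusion $V \oplus V^\perp \hookrightarrow M$ has cokernel a torsion module (over $\Q$ after tensoring, $V^{\perp\perp} = V$ forces rank considerations), and one checks that $(M,\psi)$ restricted to $V \oplus V^\perp$ compares to $v \oplus v^\perp$. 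Since $(M,\psi)$ is hyperbolic (it admits the lagrangian $F$), one gets $[v]_0 \oplus [v^\perp]_0 =_{\rm st} [H_\varepsilon(\cdot)]_0$ up to stabilisation, which is exactly the statement $[v]_0 =_{\rm st} -[v^\perp]_0$. For the reverse inclusion $\Delta_{\rm st} \subseteq b(\ell_{2q+1}(R))$, given $([n_0]_0, [n_1]_0) \in \Delta_{\rm st}$ I would invoke \cite[Corollary 5.3]{CrowleySixt} directly: that result (stated over $R$, before $0$-stabilisation) characterises the image of the analogous boundary map, and the $0$-stabilised statement follows because the $0$-stabilisation map is surjective onto $\mcal{F}^{\rm zs}_\varepsilon(R)$ and is compatible with $b_\pm$ by construction (the boundary maps $b_\pm$ of \cite[Definition 5.2]{CrowleySixt} are defined precisely so as to land in $0$-stabilised forms). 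Concretely, realise $[n_0]_0 =_{\rm st} [n_1]_0$ by honest forms $n_0, n_1$ with $n_0 \oplus H_\varepsilon(\Lambda^r) \cong n_1 \oplus H_\varepsilon(\Lambda^s)$, and then build a quasi-formation with $V = $ (a representative of) $n_0$ sitting inside the hyperbolic form carrying $F$, arranging $V^\perp \cong -n_1$; this is the content of the \cite{CrowleySixt} construction.

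\medskip

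\textbf{Part (ii).} For the first assertion $b_-|_{\Ell_{2q+1}} = -b_+|_{\Ell_{2q+1}}$: if $x = ((M,\psi);F,V)$ is elementary, then $F \oplus V \to M$ is a simple isomorphism, so $V$ is a complement to the lagrangian $F$. In that case $(M,\psi) \cong H_\varepsilon(F)$ with $V$ the graph of some $\phi\colon F \to F^*$, and a direct computation of $V^\perp$ (the orthogonal complement of the graph of $\phi$ in the hyperbolic form) shows $v^\perp \cong -v$ up to $0$-stabilisation—indeed, the orthogonal complement of a graph-type summand in a hyperbolic form is again graph-type and carries the negative of the induced quadratic form. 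Hence $b_+(x) = -[v^\perp]_0 = [v]_0 = b_-(x)$. For the isomorphism claim: I would first check that $b_-|_{\Ell_{2q+1}}$ is surjective, since every $0$-stabilised form $[n]_0$ is the induced form of the elementary quasi-formation $(H_\varepsilon(P); P, \bsm 1 \\ \phi \esm(P))$ for a suitable $\phi$ realising $n$ on $P$ (using that the induced form on the graph of $\phi$ is $\phi + \varepsilon\phi^*$-related, and any form arises this way up to $0$-stabilisation after enlarging $P$). Injectivity: by \cite[Corollary 5.3(ii)]{CrowleySixt} (the non-$0$-stabilised statement, or its direct consequence) there is a \emph{unique} elementary element with a given induced form, which is precisely the statement that $b_-$ restricted to $\Ell_{2q+1}$ is injective; alternatively, reconstruct $x$ from $v$ by the formula $x = (H_\varepsilon(F); F, \text{graph})$ and observe this is well-defined on $0$-stable classes. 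Combining surjectivity and injectivity gives the monoid isomorphism (it is a monoid map because $b_-$ is, by \cite[Definition 5.2]{CrowleySixt}).

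\medskip

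\textbf{Main obstacle.} The delicate point is the bookkeeping in Part (i) relating the form $(M,\psi)$ on all of $M$ to the orthogonal sum $v \oplus v^\perp$ of its restrictions to $V$ and $V^\perp$: because $V$ is only a half-rank \emph{direct summand} (not a lagrangian), $V \oplus V^\perp$ need not equal $M$, and one must argue that the discrepancy is absorbed after $0$-stabilisation and stabilisation by hyperbolics. I expect the cleanest route is to avoid re-proving this by hand and instead cite \cite[Corollary 5.3]{CrowleySixt} for the substantive content, with the present proposition being the translation of that result into the language of the monoid $\mcal{F}^{\rm zs}_\varepsilon(R)$ and the maps $b_\pm$ fixed in this section; so the real work is checking that the definitions here match those of \cite{CrowleySixt} on the nose, which I would spell out explicitly.
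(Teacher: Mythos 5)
The paper gives no proof of this proposition: the statement carries a \verb|\qed| and is presented as a citation of \cite[Corollary~5.3]{CrowleySixt}, so the justification intended by the authors is entirely the reference. Your proposal ultimately takes the same stance (``avoid re-proving this by hand and instead cite \cite[Corollary 5.3]{CrowleySixt} for the substantive content''), so in spirit the approaches coincide. Your graph computation for Part~(ii) — that for $V=\operatorname{graph}(\phi)\subset H_\varepsilon(P)$ one finds $V^\perp=\operatorname{graph}(-\varepsilon\phi^*)$ with induced form representing $-\phi$ in $Q_\varepsilon(P)$ — is correct and is exactly the right verification.

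However, the sketch you offer for the inclusion $b(\ell_{2q+1}(R))\subseteq\Delta_{\rm st}$, i.e.\ for showing $[v]_0 =_{\rm st} -[v^\perp]_0$, does not hold up. First, the argument via ``cokernel a torsion module over $\Q$'' presupposes that $V\oplus V^\perp\to M$ is injective, but $V\cap V^\perp$ is the radical of $\lambda|_V$ and need not vanish. Second, the reasoning is $\Z$-specific: the standing hypothesis is that $R$ is an arbitrary weakly finite ring with involution (e.g.\ $\Z[\pi]$ with $\pi$ infinite), and there is no ambient $\Q$ to tensor with. Third, even granting an embedding $v\oplus v^\perp\hookrightarrow m$ into a hyperbolic form with well-behaved cokernel, this would not by itself show that $v\oplus v^\perp$ is stably hyperbolic — half-rank direct summands of hyperbolic forms need not be stably hyperbolic. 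The correct mechanism underlying the claim, which is what \cite{CrowleySixt} actually uses, is the gluing isometry $(M,\psi)\cong v\cup_{f_j}v^\perp$ of Proposition~\ref{prop:CSProp48}, whose underlying module is $V\oplus (V^\perp)^*$ rather than $V\oplus V^\perp$; deriving $[v]_0 =_{\rm st} -[v^\perp]_0$ from this requires the bookkeeping of the union construction, not a naive orthogonal-sum comparison. Since you flagged this as ``the delicate point'' yourself and conceded you would ultimately cite the reference, the right move is simply to do that, as the paper does, rather than attempt the by-hand argument.
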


For future use we make the following definitions.
In light of Proposition~\ref{prop:ell}~\eqref{item-prop-ell-ii}, we let
\[ [e]_{[n]_0} \in \Ell_{2q+1}(R) \]
denote the unique elementary element with $b_-([e]_{[n]_0}) = [n]_0$.
For a fixed $0$-stabilised $\varepsilon$-quadratic form $[n]_0$
we define
\[ \ell_{2q+1}([n]_0) :=  b_-^{-1}([n]_0)  \subseteq \ell_{2q+1}(R)\]
to be the subset represented by quasi-formations $((M, \psi); L, V)$
with $[v]_0 = [n]_0$.
%
%
Similarly we define
\[ \ell_{2q+1}([n]_0,[m]_0) :=  b_-^{-1}([n]_0) \cap b_{+}^{-1}([m]_0)  \subseteq \ell_{2q+1}(R).\]
This appears as $\ell_{2q+1}(v,v')$ in the sequence \eqref{eq:ExactSequenceCS}, where the 0-stable equivalence class was dropped from the notation.

\chapter[Realising modified surgery obstructions]{Realising odd dimensional modified surgery obstructions} \label{sec:Realisation}
As discussed in the introduction, Wall showed that under suitable conditions, elements of the quadratic $L$-group~$L_{2q+1}^s(\Z[\pi])$ can be realised as surgery obstructions of degree one normal maps~\cite[Theorem 6.5]{WallSurgery}.
The goal of this chapter is to prove an analogue of Wall's realisation result for Kreck's odd-dimensional modified surgery obstructions.  Unless otherwise stated, manifolds are assumed to be compact, connected, oriented, and smooth.
This chapter is organised as follows: in Section~\ref{sub:NormalSmoothings} we recall the definition of normal smoothings and normal bordism, in Section~\ref{sub:ModifiedObstruction} we review the definition of the odd-dimensional modified surgery obstruction, and in Section~\ref{sub:Realisation} we prove our realisation result, Theorem~\ref{thm:Realisation}.

\section{Normal smoothings, normal bordisms and Wall forms} \label{sub:NormalSmoothings}
In this section, we briefly review the notions of normal smoothing and normal bordisms.
The main reference is~\cite{KreckSurgeryAndDuality}.
\medbreak
We fix a fibration $\xi \colon B \to BO$.
An \emph{$n$-dimensional $(B, \xi)$-manifold} consists of a pair
$(M,\overline{\nu})$, where $M$ is a compact, connected, oriented, smooth~$n$-dimensional manifold
and $\overline{\nu} \colon M \to B$ is a lift of the stable normal bundle $\nu \colon M \to BO$ of $M$,
meaning that $\nu=\xi \circ \overline{\nu}$.

A~\emph{$(B, \xi)$-null-bordism} for a closed~$n$-dimensional~$(B, \xi)$-manifold~$(M_0,\overline{\nu}_0)$ is an $(n{+}1)$-dimensional $(B, \xi)$-manifold $(W,\overline{\nu})$ for which~$\partial (W,\overline{\nu})=(M_0,\overline{\nu}_0)$.
A~\emph{$(B, \xi)$-cobordism} between two $n$-dimensional $(B, \xi)$-manifolds $(M_0,\overline{\nu}_0)$
and~$(M_1,\overline{\nu}_1)$ consists of a diffeomorphism $f \colon \partial M_0 \to \partial M_1$
which is compatible with the $(B, \xi)$-structures and
an~$(n{+}1)$-dimensional $(B, \xi)$-null bordism of $(M_0 \cup_f -M_1, \ol \nu')$
where $\ol \nu'|_{M_0} = \ol \nu_0$ and $\ol \nu'|_{M_1} = -\ol \nu_1$.
We write
\[ \partial_-(W, \ol \nu) = (M_0, \ol \nu_0)
\quad \text{and}
\quad \partial_+(W, \ol \nu) = (M_1, \ol \nu_1) .\]
%
%


Modified surgery theory is a flexible theory in that one can use any fibration
$\xi \colon B \to BO$, however for classification results one needs the
maps $\ol \nu_i \colon M_i \to B$ to be roughly $n/2$-connected.

\begin{definition}
\label{def:NormalSmoothing}
Let $\xi \colon B \to BO$ be a fibration and let $(M,\overline{\nu})$ be a $(B, \xi)$-manifold.
\begin{enumerate}[(i)]
\item If $\ol \nu$ is $(k{+}1)$-connected then $(M, \ol \nu)$ is called a \emph{normal $k$-smoothing}.
\item If $(M, \ol \nu)$ is a normal $k$-smoothing and $\xi \colon B \to BO$ is
$(k{+}1)$-coconnected, then by \cite[p.711]{KreckSurgeryAndDuality}
the fibre homotopy type of $\xi$ is uniquely determined and called a \emph{normal $k$-type} of $M$.
Moreover, the existence of the Moore-Postnikov factorisation~\cite{Baues-obstruction-theory} of $\nu \colon M \to BO$ ensures that such fibrations exist for all $k \geq 0$.
\end{enumerate}
\end{definition}

\begin{remark}\label{remark:change-of-B}
While the normal $k$-types provide an important class of targets
in modified surgery, the theory applies to any fibration $\xi \colon B \to BO$.
However, the universal properties of the Moore-Postnikov factorisation
entail that if $\ol \nu \colon M \to B$ is a normal $(k{+}1)$-smoothing, then
$\xi \colon B \to BO$ factors through the normal $k$-type of $M$, $\xi \colon B \to B^{q-1}(M) \to BO$.
%
%
%
\end{remark}

Whenever $\ol \nu_*$ is $2$-connected, we will use
$\overline{\nu}_* \colon \pi_1(M) \to \pi_1(B)$ to identify $\pi_1(M) = \pi_1(B)$.
For concision, we let
\[ \Lambda := \Z[\pi_1(B)] \]
denote the group ring of $\pi_1(B)$ and we will use $\Lambda$ and $\Z[\pi_1(B)]$
interchangeably, depending on whether we wish to emphasise topology or algebra.

Now suppose that $M$ is a $2q$-dimensional manifold, with a normal $(q{-}1)$-smoothing $(M,\overline{\nu})$.
We set
\[ K \pi_q(M):=\ker \big( \overline{\nu}_* \colon \pi_q(M) \to \pi_q(B) \big) \]
and $\varepsilon := (-1)^q$.
As explained in~\cite[Section 5]{KreckSurgeryAndDuality},
intersection and self-intersections of framed immersions of $q$-spheres in $M$
equip $K\pi_q(M)$ with a quadratic refinement.
For $q \neq 1, 3, 7$ or for $q = 3, 7$ and $w_{q+1}(\xi)(\pi_{q+1}(B)) = 0$,
the quadratic refinement is given by an $\varepsilon$-quadratic form~$(K\pi_q(M),\theta)$ over $\Lambda$.
For $q = 3, 7$ and $w_{q+1}(\xi)(\pi_{q+1}(B) \neq 0$, the quadratic refinement
takes values in a quotient of $Q_-(\Lambda)$.
Since the main examples of this paper are $4k$-dimensional, so that $q = 2k$ is
even, we shall leave aside the subtleties of the case $q = 3, 7$ and
$w_{q+1}(\xi)(\pi_{q+1}(B)) \neq 0$
and make the following definition.

\begin{definition}(Standard pair) \label{def:standard}
A pair $((B, \xi), q)$, consisting of a fibration over $BO$ and a dimension, is
{\em standard} if $q \neq 3, 7$ or $w_{q+1}(\xi)(\pi_{q+1}(B)) = 0$. In addition we require that $B$ has the homotopy type of a CW-complex with finite $(q+1)$-skeleton.
\end{definition}

\noindent
{\em Henceforth we restrict our attention to standard pairs $((B, \xi), q)$.}  
We call the $\varepsilon$-quadratic form
$(K\pi_q(M), \theta)$ the {\em Wall form} of $\ol \nu \colon M \to B$.
Standard surgery arguments ensure that one can perform surgery on $x \in K \pi_q(M)$ so that the normal $(B, \xi)$-structure on~$M$
extends over the trace and effect of the surgery if and only if $\theta(x)=0$~\cite[Lemma~2, Propositions~6~and~7]{KreckSurgeryAndDuality}.

It turns out that the $0$-stabilisation of the Wall form is the invariant which
appears in the algebra of modified surgery.
While $K\pi_q(M)$ need not be a free $\Lambda$-module, it is finitely generated.

\begin{lemma}\label{lemma:fg}
  The $\Lambda$-module $K\pi_q(M)$ is finitely generated.
\end{lemma}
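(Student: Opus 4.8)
The statement to prove is that $K\pi_q(M) = \ker(\overline{\nu}_* \colon \pi_q(M) \to \pi_q(B))$ is a finitely generated $\Lambda$-module, where $\Lambda = \Z[\pi_1(B)] = \Z[\pi_1(M)]$ and $M$ is a $2q$-dimensional manifold with a normal $(q{-}1)$-smoothing $(M, \overline{\nu})$ over a standard pair $((B,\xi), q)$. The plan is to reduce everything to homology via the Hurewicz theorem and then use finiteness of the relevant chain complexes, which is where the ``standard pair'' hypothesis (in particular, $B$ has finite $q$-skeleton) enters.

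\medbreak
First I would pass to universal covers. Let $\widetilde{M} \to M$ and $\widetilde{B} \to B$ be the universal covers, and let $\widetilde{\nu} \colon \widetilde{M} \to \widetilde{B}$ be the lift of $\overline{\nu}$; since $\overline{\nu}$ is $q$-connected (being a normal $(q{-}1)$-smoothing, hence $(q{-}1{+}1) = q$-connected) and $q \geq 2$, the induced map $\overline{\nu}_* \colon \pi_1(M) \to \pi_1(B)$ is an isomorphism, so $\widetilde{\nu}$ is a map of simply-connected spaces which is $q$-connected. By the relative Hurewicz theorem applied to the mapping cylinder of $\widetilde{\nu}$, the first nonvanishing relative homotopy group is $\pi_{q+1}(\widetilde{B}, \widetilde{M}) \cong H_{q+1}(\widetilde{B}, \widetilde{M}; \Z)$, and the long exact sequences of the pair in homotopy and homology, together with the five lemma, identify $K\pi_q(M) = \ker(\pi_q(\widetilde{M}) \to \pi_q(\widetilde{B}))$ with the image of $\partial \colon \pi_{q+1}(\widetilde{B}, \widetilde{M}) \to \pi_q(\widetilde{M})$, which under Hurewicz is the image of $H_{q+1}(\widetilde{B}, \widetilde{M};\Z) \to H_q(\widetilde{M};\Z) = H_q(M;\Lambda)$. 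In particular $K\pi_q(M)$ is a quotient (as $\Lambda$-module) of $H_{q+1}(\widetilde{B}, \widetilde{M}; \Z)$, so it suffices to show the latter is finitely generated over $\Lambda$.

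\medbreak
Next I would arrange that $B$ is a CW-complex with finite $q$-skeleton: by the standard pair hypothesis, $B$ has finite $q$-skeleton, and by attaching cells of dimension $\geq q{+}1$ (which does not change $\pi_i$ for $i \leq q{-}1$ nor the relevant low-dimensional structure) we may compute $H_*(\widetilde{B}, \widetilde{M})$ from a cellular chain complex. Since $\overline{\nu}$ is $q$-connected, up to homotopy we may take $M$ (or rather a CW model for it — $M$ is a closed manifold hence has the homotopy type of a finite CW complex) to have its $q$-skeleton map into $B^{(q)}$, and then $H_{q+1}(\widetilde{B}, \widetilde{M}; \Z)$ is computed by a chain complex of finitely generated free $\Lambda$-modules in the relevant degrees: the relative cellular chain groups $C_*(\widetilde{B}, \widetilde{M})$ are free $\Lambda$-modules on the cells of the pair $(B, M)$, which in degrees $\leq q{+}1$ are finite in number because both $B^{(q)}$ and a finite CW model of $M$ contribute finitely many cells and we only need finitely many $(q{+}1)$-cells to compute $H_{q+1}$. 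Since $\Lambda = \Z[\pi_1(B)]$ need not be Noetherian, I must be careful: a submodule of a finitely generated free module need not be finitely generated. The fix is that $H_{q+1}(\widetilde{B},\widetilde{M})$ is a \emph{quotient} of $\ker(C_{q+1} \to C_q)$ — still possibly not f.g. — so instead I would use that $C_{q+1}(\widetilde{B},\widetilde{M})$ itself is finitely generated (taking a CW model where the pair has finitely many cells through dimension $q{+}1$, which is possible since $B$ has finite $q$-skeleton and $M$ is a finite complex and $H_{q+1}$ is detected by finitely many $(q{+}1)$-cells), and then $H_{q+1}(\widetilde{B},\widetilde{M})$ is a quotient of $C_{q+1}$, hence finitely generated, hence so is its image $K\pi_q(M)$ in $H_q(M;\Lambda)$.

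\medbreak
The main obstacle is the non-Noetherian issue just flagged: over a general group ring $\Lambda$, submodules of finitely generated modules are not automatically finitely generated, so the argument must be set up to realise $K\pi_q(M)$ as a \emph{quotient} of an evidently finitely generated module rather than as a submodule of one. The Hurewicz-and-relative-pair approach above does exactly this, exhibiting $K\pi_q(M)$ as a quotient of $C_{q+1}(\widetilde{B}, \widetilde{M})$ (or of $H_{q+1}(\widetilde{B},\widetilde{M})$, itself a quotient of $C_{q+1}$), and the ``finite $q$-skeleton'' clause in Definition~\ref{def:standard} of a standard pair is precisely what guarantees that $C_{q+1}(\widetilde{B},\widetilde{M})$ can be taken finitely generated. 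One routine point to check is that a suitable CW model exists in which the $q$-connected map $\overline{\nu}$ is cellular with the finiteness properties above; this follows from cellular approximation together with the fact that $M$, being a closed smooth manifold, is homotopy equivalent to a finite CW complex.
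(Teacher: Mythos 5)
Your first reduction matches the paper's: by the relative Hurewicz theorem, $\pi_{q+1}(B,M) \cong H_{q+1}(B,M;\Lambda)$, and the long exact sequence of the pair exhibits $K\pi_q(M)$ as the image of the boundary map, hence as a quotient of $H_{q+1}(B,M;\Lambda)$. You also correctly flag the central difficulty: $\Lambda = \Z[\pi]$ is generally not Noetherian, so a submodule of a finitely generated free module need not be finitely generated, and $H_{q+1}$ is a priori only a quotient of the submodule $\ker(d_{q+1})\subseteq C_{q+1}$.

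The proposed fix, however, is erroneous and in fact contradicts the sentence preceding it. You write that $H_{q+1}$ is a quotient of $\ker(C_{q+1}\to C_q)$, which may fail to be f.g., ``so instead'' you would use that $C_{q+1}$ itself is finitely generated, and ``then $H_{q+1}$ is a quotient of $C_{q+1}$''. This last claim is false: $H_{q+1}=\ker(d_{q+1})/\operatorname{im}(d_{q+2})$ is a \emph{subquotient} of $C_{q+1}$, not a quotient; there is no natural surjection $C_{q+1}\twoheadrightarrow H_{q+1}$. So finite generation of $C_{q+1}$ by itself buys you nothing, for exactly the Noetherian reason you had just articulated. The auxiliary assertion that ``we only need finitely many $(q{+}1)$-cells to compute $H_{q+1}$'' is likewise unjustified: a $(q{+}1)$-cycle may a priori involve all $(q{+}1)$-cells, and passing to a sub-chain-complex with finitely many $(q{+}1)$-cells would change the homology.

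The ingredient you are missing is Schanuel's lemma. The paper takes the algebraic mapping cone $D_*$ of $\overline{\nu}_*$ and observes that $H_i(D_*)=0$ for $0\le i\le q$ together with $D_i$ f.g.\ free for $0\le i\le q$ (using compactness of $M$ and the finite $q$-skeleton of $B$) yields an exact sequence
\[0 \to \ker(d_{q+1}) \to D_{q+1}\xrightarrow{d_{q+1}} D_q \to \cdots \to D_0 \to 0.\]
Schanuel's lemma (equivalently, iterated splitting of the short exact sequences $0\to Z_i\to D_i\to Z_{i-1}\to 0$, each of which splits because $Z_{i-1}$ is inductively projective) lets one conclude that $\ker(d_{q+1})$ is finitely generated projective, and therefore $H_{q+1}(B,M;\Lambda)$, a quotient of it, is finitely generated. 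This syzygy argument — deducing finite generation of a \emph{kernel} from finite generation and projectivity of the low-degree chain groups together with the vanishing of low-degree homology — is exactly what is needed to get around the non-Noetherian issue, and it is the step your proposal lacks.
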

\begin{proof}
Since $\ol{\nu} \colon M \to B$ is $q$-connected, $\pi_{q+1}(B,M) \cong H_{q+1}(B,M;\Lambda)$ by the Hurewicz theorem. Then $K\pi_q(M)$ is a quotient of $\pi_{q+1}(B,M)$, so it suffices to see that $H_{q+1}(B,M;\Lambda)$ is finitely generated. The algebraic mapping cone of the map $\ol{\nu}_*$ between the cellular chain complexes gives a chain complex $D_*$ computing $H_*(B,M;\Lambda)$. Since $M$ is compact and $B$  has the homotopy type of a CW-complex with $(q+1)$-skeleton,
in degrees $0 \leq i \leq q+1$ this chain complex consists of f.g.\ free $\Lambda$-modules.
Since $H_i(D_*)=0$ for $0 \leq i \leq q+1$, we have an exact sequence
\[0 \to \ker d_{q+1} \to D_{q+1} \xrightarrow{d_{q+1}} D_q \to \cdots \to D_0 \to 0.\]
It follows from Schanuel's lemma~\cite[Corollary~5.5]{Lam99} that $\ker(d_{q+1})$ is f.g.\ projective. Then $H_{q+1}(B,M;\Lambda)$ is a quotient of $\ker(d_{q+1})$, so is finitely generated.
\end{proof}

Thus there is a surjection~$\pi \colon S \twoheadrightarrow K \pi_q(M)$ from a f.g.\ free $\Lambda$-module~$S$.
The pull-back of~$(K \pi_q(M), \theta_{})$ along $\pi$ is denoted $(S, \mu)$ and called
a \emph{free Wall form} of~$(M, \overline{\nu})$.
By~\cite[Proposition 8]{KreckSurgeryAndDuality} the $0$-stable class of $(S, \mu)$ is well-defined.
%

\begin{definition}
The \emph{$0$-stabilised Wall form} of a $(q{-}1)$-smoothing $(M^{2q},\overline{\nu} )$ is defined by
\[ [v(\ol \nu)]_0 : = [S, \mu]_0. \]
\end{definition}
%


\section{The modified surgery obstruction} \label{sub:ModifiedObstruction}
We recall the definition of the odd-dimensional modified surgery obstruction
and give a unified presentation of foundational results of Kreck about this obstruction.
From now on, all normal smoothings will be to a fixed fibration $\xi \colon B \to BO$.
The main reference is~\cite[Section 6]{KreckSurgeryAndDuality}; see also \cite[Section 2]{CrowleySixt}.
\medbreak
Recall that a {\em $(2q{+}1)$-dimensional modified surgery prolem} $(W, \xi)$ is a normal
$(B, \xi)$-bordism where $(M_0, \ol \nu_0) = \del_-(W, \ol \nu)$
and $(M_1, \ol \nu_1) = \del_+(W, \ol \nu)$ are $(q{-}1)$-smoothings.
We define
\[ \mcal{W}_{2q+1, q}(M, \ol \nu_0) : =
\{ \, [W, \ol \nu] \, \mid \, \text{$\del_-(W, \ol \nu) = (M_0, \ol \nu_0)$,
$\del_+(W, \ol \nu)$ is a $(q{-}1)$-smoothing} \} \]
to be the set of rel.\ boundary bordism classes of
$(2q{+}1)$-dimensional modified surgery problems over $(B, \xi)$ with
incoming boundary $(M_0, \ol \nu_0)$ and outgoing boundary a $(q{-}1)$-smoothing.
An important problem in surgery is to determine when modified surgery problem $(W, \ol \nu)$ is $(B, \xi)$-bordant
rel.\ boundary to an $s$-cobordism and more generally to give an algebraic
classification of $\mcal{W}_{2q+1, q}(M_0, \ol \nu_0)$.
The following theorem combines
\cite[Theorem 4 \& Proposition 8]{KreckSurgeryAndDuality}, two fundamental results of
Kreck on this problem.

\begin{theorem}[The modified surgery obstruction in odd dimensions]
\label{thm:Theta}
If $((B, \xi), q)$ is standard, then there is a map
%
\begin{align*}
  \Theta \colon \mcal{W}_{2q+1, q}(M_0, \ol \nu_0) &\to  \ell_{2q+1}([v(\ol \nu_0)]_0) \subseteq \ell_{2q+1}(\Lambda) \\
 [W, \ol \nu] &\mapsto
\Theta(W, \ol \nu),
\end{align*}
%
 such that $\Theta(W, \ol \nu) \in \Ell_{2q+1}(\Lambda)$
if and only if
$(W, \ol \nu)$ is $(B, \xi)$-bordant rel.\ boundary to an $s$-cobordism.
Moreover the boundary maps on $\Theta(W, \ol \nu)$ satisfy the equations
$b_-(\Theta(W, \ol \nu)) = [v(\ol \nu_0)]_0$
and $b_+(\Theta(W, \ol \nu)) = -[v(\ol \nu_1)]_0$.
\end{theorem}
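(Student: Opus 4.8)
The plan is to establish Theorem~\ref{thm:Theta} by assembling three separate pieces: (1) well-definedness of the map $\Theta$ on bordism classes, (2) the characterisation of $\Ell_{2q+1}(\Lambda)$-valued obstructions in terms of $s$-cobordisms, and (3) the boundary formulae. Piece (1) requires recalling Kreck's construction: given a modified surgery problem $(W, \ol \nu)$ with both ends $(q{-}1)$-smoothings, one performs surgery below the middle dimension on $W$ rel.\ boundary to make $\ol \nu$ a $q$-equivalence, which can be done since both $M_0$ and $M_1$ are already $(q{-}1)$-smoothings; then the kernel module $K \pi_q(W) := \ker(\pi_q(W) \to \pi_q(B))$, together with a choice of presentation, produces the quasi-formation $((M,\psi); F, V)$ underlying $\Theta(W, \ol \nu)$. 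The content of \cite[Theorem 4]{KreckSurgeryAndDuality} is precisely that the resulting class in $\ell_{2q+1}(\Lambda)$ is independent of the choices made during the surgeries and depends only on the rel.\ boundary bordism class; here the standard-pair hypothesis is needed so that the quadratic refinement genuinely takes values in $Q_\varepsilon(\Lambda)$ (rather than a quotient, as flagged in the discussion preceding Definition~\ref{def:standard}) and the form is actually an $\varepsilon$-quadratic form. First I would invoke this to get the map, and simultaneously note that by construction the induced form of the quasi-formation is a free Wall form of $(M_0, \ol \nu_0)$, so the image lies in $\ell_{2q+1}([v(\ol \nu_0)]_0)$.

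Next I would treat the $s$-cobordism characterisation, which is the heart of the statement. One direction: if $(W, \ol \nu)$ is $(B, \xi)$-bordant rel.\ boundary to an $s$-cobordism $W'$, then since $\Theta$ is a bordism invariant we may compute on $W'$; for an $s$-cobordism one can take the kernel module $K \pi_q(W')$ to vanish (an $s$-cobordism is already "surgered", $\ol \nu'$ being a homotopy equivalence below dimension $q{+}1$ in the strong sense), so the associated quasi-formation is stably a trivial formation after adding hyperbolics, hence $\Theta(W', \ol \nu') \in \Ell_{2q+1}(\Lambda)$ — indeed it equals $[e]_{[v(\ol \nu_0)]_0}$, the unique elementary element with the correct lower boundary by Proposition~\ref{prop:ell}\eqref{item-prop-ell-ii}. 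Conversely, if $\Theta(W, \ol \nu)$ is elementary, one uses that an elementary quasi-formation $((M,\psi);F,V)$ has $F \oplus V \to M$ a simple isomorphism, which geometrically says the kernel of $\ol \nu$ in the middle dimension can be killed by surgery on $W$ rel.\ boundary (the lagrangian $F$ providing the spheres to surger, $V$ the complementary summand detecting that no obstruction in $L_{2q}^s$ remains); after this surgery $W$ becomes an $s$-cobordism, using the Whitehead-torsion bookkeeping built into the word "simple" throughout. This is exactly \cite[Theorem 4]{KreckSurgeryAndDuality} and I would cite it for the precise surgery-theoretic argument rather than reproduce it.

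Finally, the boundary formulae $b_-(\Theta(W,\ol\nu)) = [v(\ol\nu_0)]_0$ and $b_+(\Theta(W,\ol\nu)) = -[v(\ol\nu_1)]_0$. The first is immediate from the construction in Piece (1): the map $b_-$ sends $[(M,\psi);F,V]$ to $[v]_0$ where $v = (V,\theta)$ is the induced form, and $V$ is by construction a presentation of $K\pi_q(M_0)$, so $[v]_0 = [v(\ol\nu_0)]_0$ by definition of the $0$-stabilised Wall form and well-definedness from \cite[Proposition 8]{KreckSurgeryAndDuality}. The second formula follows by the symmetry of the bordism: reversing the roles of the two ends (reading $W$ "backwards" as a bordism from $-M_1$ to $-M_0$, with the orientation-reversal accounting for the sign) interchanges $b_-$ and $b_+$ up to the sign $-[-]$, and one identifies the orthogonal complement $V^\perp$ with a presentation of $K\pi_q(M_1)$; this is the other half of \cite[Proposition 8]{KreckSurgeryAndDuality}. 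I expect the main obstacle to be stating Piece (2) cleanly: the delicate point is keeping track of simple structures (Whitehead torsion) so that "elementary" on the algebra side matches "$s$-cobordism" and not merely "$h$-cobordism" on the geometry side — but since the paper has set up all forms, lagrangians, and isometries as simple by default (see Definitions~\ref{def:QuadraticForm}, \ref{def:Formation}, \ref{def:quasi-formation}), this bookkeeping is already absorbed into the definitions, and the proof reduces to carefully citing Kreck's two theorems and matching notation.
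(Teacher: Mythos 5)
Your proposal is correct and mirrors the paper's treatment: Theorem~\ref{thm:Theta} is stated in the paper as a direct combination of \cite[Theorem 4 \& Proposition 8]{KreckSurgeryAndDuality}, with no independent proof given, so your three-piece reduction to Kreck's results is exactly how the theorem is meant to be read.

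One slip in piece (2) is worth fixing. For an $s$-cobordism $(W',\ol\nu')$ one has $K\pi_q(W')\cong K\pi_q(M_0)$, which need not vanish, and the resulting kernel quasi-formation is elementary but \emph{not} stably trivial: the paper emphasises this right after Definition~\ref{def:LMonoid}, pointing out that the induced form of a trivial formation is the zero form, so the unique elementary element $[e]_{[v]_0}$ of $\ell_{2q+1}(v,v)$ is the monoid zero only when $[v]_0 = 0$. Your sentence ``the associated quasi-formation is stably a trivial formation after adding hyperbolics'' therefore contradicts your own (correct) conclusion $\Theta(W',\ol\nu') = [e]_{[v(\ol\nu_0)]_0}$. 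Rather than trying to see triviality directly, the clean argument for this direction is just bordism invariance of $\Theta$ together with Kreck's theorem, which pins down the class as the unique elementary element with lower boundary $[v(\ol\nu_0)]_0$.
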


\begin{remark}
This theorem has been used to prove a truly vast array of classification results,
often by showing that $\ell_{2q+1}([v(\ol \nu_0)]_0) = \{[e]_{[v(\ol \nu_0)]_0} \} \subseteq \Ell_{2q+1}(\Lambda)$,
using the notation introduced just after Proposition~\ref{prop:ell}.
However, to the best of our knowledge
the image of~$\Theta$ has not been systematically discussed in the literature,
nor have the sets $\Theta^{-1}([x])$, for $[x] \neq [e]_{[v(\ol \nu_0)]_0}$.
In this paper, we shall show that $\Theta$ is surjective; see Theorem \ref{thm:Realisation} below.
We plan to investigate the pre-images $\Theta^{-1}([x])$ in future work.
\end{remark}

\begin{remark}
The appropriate analogue of Theorem~\ref{thm:Theta} holds in the non-standard
cases.  The proof again follows from \cite[Theorem 4 \& Proposition 8]{KreckSurgeryAndDuality},
but some care is needed to interpret \cite[Proposition 8]{KreckSurgeryAndDuality}
in non-standard cases.
\end{remark}

We now recall the definition of the modified surgery obstruction
$\Theta(W, \ol \nu)$ for a bordism $(W, \ol \nu)$.
By surgery below the middle dimension~\cite[Proposition~4]{KreckSurgeryAndDuality}, we assume that $\ol \nu \colon W \to B$
is $q$-connected.  It follows by the same argument as that in Lemma~\ref{lemma:fg} that $K\pi_q(W)$ is finitely generated.
Choose a set $\omega$ of generators for $K\pi_q(W)$
and disjoint embeddings~$\varphi_i \colon S^q \times D^{q+1} \hookrightarrow W$ compatible with the $(B, \xi)$-structure representing these generators, and set
$$ U:=\bigcup_i \varphi_i(S^q \times D^{q+1}). $$
Let $\psi$ denote the quadratic form on $H_q(\partial U;\Z[\pi_1(B)])$. Since $\partial U=\bigcup_i S^q \times S^q$, we deduce that~$(H_q(\partial U;\Z[\pi_1(B)]),\psi)$ is a hyperbolic quadratic form.
For concision, we refer to the pair~$(\omega,\varphi)$ as a \emph{set of embedded generators for~$K \pi_q(W)$}, we set $\Lambda:=\Z[\pi_1(B)]$, and consider the $\Lambda$-modules
\[F:=H_{q+1}(U,\partial U;\Lambda) \text{ and } V:=~H_{q+1}(W \setminus \mathring{U},M_0 \sqcup \partial U;\Lambda).\]
Via the boundary maps in the long exact sequences of the appropriate pairs,~$F$ and $V$ can be identified with submodules of~$P:=~H_q(\partial U;\Lambda)$. In addition, $F$ is a lagrangian and~$V$ is a half rank direct summand~\cite[~p.~734]{KreckSurgeryAndDuality}.
The \emph{kernel quasi-formation} is then defined as
$$\Sigma(W,\overline{\nu},\omega,\varphi):= ((P,\psi);F,V).$$
The class of this quasi-formation in $\ell_{2q+1}(\Lambda)$ only depends on the $(B, \xi)$-bordism class rel.\ boundary of the normal $(B, \xi)$-null-bordism $(W,\overline{\nu})$~\cite[Theorem 4]{KreckSurgeryAndDuality}.
This leads to the following definition, which is due to Kreck~\cite[p.~734]{KreckSurgeryAndDuality}; see also~\cite[p.~494]{CrowleySixt}.

\begin{definition} \label{def:SurgeryObstruction}
Let $(M_0^{2q},\overline{\nu}_0 ),(M_1^{2q},\overline{\nu}_1)$ be two $(q{-}1)$-smoothings.
Let $$f \colon \partial M_0^{2q} \to \partial M_1^{2q}$$ be a diffeomorphism that is compatible with the smoothings.
Assume that $(W^{2q+1},\overline{\nu})$ is a normal~$(B, \xi)$-null-bordism for $M_0 \cup_f -M_1$.
The \emph{modified surgery obstruction} of $(W,\overline{\nu})$ is
$$ \Theta(W,\overline{\nu}):=[(\Sigma(W',\overline{\nu}',\omega',\varphi'))] \in \ell_{2q+1}(\Lambda),$$
where $(W',\overline{\nu}')$ is a normal $(q{-}1)$-smoothing over $(B, \xi)$, which is $(B, \xi)$-bordant
rel.\ boundary to $(W, \ol \nu)$
and~$(\omega',\varphi')$ is a system of embedded generators for $K \pi_q(W')$.
\end{definition}

As stated in Theorem \ref{thm:Theta}, the class $\Theta(W,\overline{\nu}) \in \ell_{2q+1}(\Lambda)$ only depends on the $(B, \xi)$-bordism class rel.\ boundary of the $(B, \xi)$-null-bordism $(W,\overline{\nu})$: it is independent of the choice of the~$(q{-}1)$-smoothing~$(W',\overline{\nu}')$ and of the subsequent choice of a system of embedded generators for~$K \pi_q(W')$.

\section{Realising modified surgery obstructions}
\label{sub:Realisation}
We recall that $((B, \xi), q)$ is a standard pair (Definition~\ref{def:standard}) consisting of a fibration $(B,\xi)$ and a dimension~$q$.
In this section, we state and prove our realisation theorem, Theorem~\ref{thm:Realisation},
for the odd-dimensional modified surgery obstruction.
For a $(q{-}1)$-smoothing $(M_0, \ol \nu_0)$,
this theorem implies that the surgery obstruction map
\[ \Theta \colon \mcal{W}_{2q+1, q}(M_0, \ol \nu_0) \to \ell_{2q+1}([v(\ol \nu_0)]_0) \]
of Theorem~\ref{thm:Theta} is onto.  In fact we prove a realisation result for
quasi-formations of sufficiently large rank.

\medbreak
Let $\pi \colon S \to K\pi_q(M_0)$ be a surjection from a free $\Lambda$-module
with $(S,\mu)$ the associated free Wall form of $(M_0, \ol \nu_0)$.
Given $[x] \in \ell_{2q+1}([v(\ol \nu]_0)$ we may assume that it is represented by
a quasi-formation $x=(H_\varepsilon(K);K,V)$ of large enough rank so that
for some $r$ there is an isometry of quadratic forms
\begin{equation}
\label{eq:0StableCondition}
 h \colon (S,\mu) \oplus (\Lambda^r,0) \xrightarrow{\cong} (V,\theta).
\end{equation}


\begin{theorem} \label{thm:Realisation}
Let $q \geq 3$, let $(M_0^{2q},\overline{\nu}_0 )$ be a
normal $(q{-}1)$-smoothing and let $[x] \in \ell_{2q+1}([v(\ol \nu_0]_0)$
be represented by $x=(H_\varepsilon(K);K,V)$,
where $(V, \theta)$ is $0$-stably isomorphic to a free Wall form $(S,\mu)$ of~$(M_0,\overline{\nu}_0 )$ as in~\eqref{eq:0StableCondition}.
Then there is a $(B, \xi)$-cobordism $(W,\overline{\nu})$ with $\partial_-(W,\overline{\nu})=(M_0,\overline{\nu}_0 )$ such that
$\Sigma(W,\overline{\nu},\omega,\varphi) = (H_\varepsilon(K);K,V)$ for certain choices of $\omega$ and $\varphi$.
Hence
%
$$ \Theta(W,\overline{\nu})=[x].$$
The same statement holds for $q=2$ in the topological category if $\pi_1(B)$ is a good group.
\end{theorem}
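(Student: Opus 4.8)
The plan is to reverse-engineer the construction of the modified surgery obstruction. We are handed a quasi-formation $x = (H_\varepsilon(K); K, V)$ together with an isometry $h \colon (S,\mu) \oplus (\Lambda^r, 0) \xrightarrow{\cong} (V, \theta)$, and we must cook up a cobordism $(W, \overline\nu)$ starting at $(M_0, \overline\nu_0)$ whose kernel quasi-formation, for a suitable choice of embedded generators, is exactly $x$. The key geometric input is the following: the trivial summand $(\Lambda^r, 0)$ and the free part $S$ can be realised by attaching handles to $M_0 \times [0,1]$. First I would choose a set of embedded generators $(\omega, \varphi)$ for $K\pi_q(M_0)$, giving framed embeddings $\varphi_i \colon S^q \times D^{q+1} \hookrightarrow M_0$. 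Then I would build $W$ as a union of pieces: the product cobordism $M_0 \times [0,1]$, a collection of $(q{+}1)$-handles attached along the $\varphi_i$ (these contribute the $S$-summand of $V$), a collection of $q$-handle/$(q{+}1)$-handle cancelling pairs or trivial handle attachments producing the $\Lambda^r$-summand and the lagrangian $K$, and a middle piece realising the prescribed quadratic form $\psi$ on $P = H_\varepsilon(K)$ via linking/self-intersection data of the attaching spheres.

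More concretely, the standard strategy (mirroring Wall's proof of Theorem~\ref{thm:ClassicalWallRealisation}) is: first do surgery/handle attachment on $M_0$ to produce an intermediate manifold $M_0'$ with $K\pi_q(M_0') \cong K\pi_q(M_0) \oplus (\text{extra free summand})$, thereby arranging that the free Wall form is literally $(S,\mu) \oplus (\Lambda^r, 0) \cong (V,\theta)$ on the nose; this uses the hypothesis $q \geq 3$ so that the Whitney trick and the embedding theorems for $q$-spheres in the $2q$-manifold are available, and uses \cite[Lemma 2, Propositions 6 and 7]{KreckSurgeryAndDuality} to keep track of when the normal $(B,\xi)$-structure extends. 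Next, attach $(q{+}1)$-handles to $M_0' \times \{1\}$ along a system of framed $q$-spheres whose $\Lambda$-valued intersection and self-intersection data realise the remaining data of $x$, namely the lagrangian $K$ inside $P$ and the half-rank summand $V$ with the form $\psi$. The boundary $\partial U$ of the handlebody neighbourhood $U$ is then a connected sum of copies of $S^q \times S^q$, so $(H_q(\partial U; \Lambda), \psi)$ is automatically hyperbolic and equal to $H_\varepsilon(K)$, while $F = H_{q+1}(U, \partial U; \Lambda)$ is the lagrangian $K$ and $V = H_{q+1}(W \setminus \mathring U, M_0 \sqcup \partial U; \Lambda)$ is the half-rank summand, by the same homological computation Kreck uses on \cite[p.~734]{KreckSurgeryAndDuality}. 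Verifying that $(W, \overline\nu)$ extends the normal structure and that $\partial_-(W,\overline\nu) = (M_0, \overline\nu_0)$ is then a matter of connectivity bookkeeping.

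The main obstacle I expect is the \emph{compatibility between the algebraically prescribed form $\psi$ on $K \oplus K^*$ and the geometrically realisable intersection data} — i.e.\ showing that an arbitrary $\varepsilon$-quadratic form of the type $H_\varepsilon(K)$ with an arbitrary half-rank direct summand $V$ can be realised by a framed link of $q$-spheres in the relevant $2q$-manifold. This is exactly the place where Wall's realisation proof does its real work (realising elements of $L^s_{2q+1}$), and the subtlety here is that $V$ is only a direct summand, not a lagrangian, so one is realising a quasi-formation rather than a formation; one must check that the extra freedom (no self-annihilation condition on $V$) does not obstruct the handle construction, and that the choices of embedded generators $(\omega, \varphi)$ for $K\pi_q(W)$ can be arranged to reproduce precisely $((P,\psi); K, V)$ rather than something merely stably isomorphic to it. Finally, the case $q = 2$: here the Whitney trick fails smoothly, so one passes to the topological category and invokes the disc embedding theorem of Freedman--Quinn \cite{FQ}, valid when $\pi_1(B)$ is a good group; all the handle attachments above become topological handle attachments and the homological computations are insensitive to the category, so the same argument goes through verbatim once the embedded framed spheres are produced topologically.
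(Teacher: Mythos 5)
Your overall architecture—trivial $q$-handle attachments to enlarge the surgery kernel, followed by $(q{+}1)$-handle attachments, as in Wall realisation—is the right one, and you have also correctly put your finger on the point where the argument must do something new: \emph{$V$ is only a half-rank direct summand, not a lagrangian, so the quadratic form $\theta = \psi|_V$ is in general nonzero}. But you leave this flagged as a concern rather than resolving it, and the unresolved concern is in fact a fatal obstruction to the plan as you have written it. If you attach $(q{+}1)$-handles along embedded framed spheres "whose intersection data realise $V$'', you are asking for representatives of a basis of $V$ by framed embedded $q$-spheres in $\partial_+W_0$; but a class with $\theta \neq 0$ cannot be so represented (that is precisely the content of the quadratic form). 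Wall's proof does not encounter this issue because in the $L_{2q+1}$ case the corresponding submodule $G$ is a genuine lagrangian with vanishing quadratic form, so the analogy with Wall does not tell you how to proceed. Your first paragraph also claims that after the trivial surgeries the free Wall form becomes "literally $(S,\mu)\oplus(\Lambda^r,0)\cong(V,\theta)$ on the nose''; this is not the case—after attaching trivial $q$-handles the Wall form of $\partial_+W_0$ is $(K\pi_q(M_0),-\theta_{M_0})\oplus H_\varepsilon(K)$, a strictly larger module—and this misstatement is a symptom of the missing step.

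The paper's resolution is a \emph{diagonal sublagrangian}. Rather than trying to embed $V$ inside the new hyperbolic summand $K\oplus K^*$, one attaches the $(q{+}1)$-handles along spheres representing the image of
$$\bsm\pi\\ h\esm\colon S\oplus\Lambda^r \longrightarrow K\pi_q(M_0)\oplus V \subseteq K\pi_q(M_0)\oplus(K\oplus K^*)=K\pi_q(\partial_+W_0),$$
where $\pi\colon S\twoheadrightarrow K\pi_q(M_0)$ is a surjection defining the free Wall form and $h\colon(S,\mu)\oplus(\Lambda^r,0)\xrightarrow{\cong}(V,\theta)$ is the given $0$-stable isometry. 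Because $\partial_+W_0$ carries $-\theta_{M_0}$ on the $K\pi_q(M_0)$ factor (the sign comes from the outgoing boundary), and $h$ is an isometry onto $(V,\theta)$, the pulled-back form on $S\oplus\Lambda^r$ is $-(\mu\oplus 0)+(\mu\oplus 0)=0$. So the image \emph{is} a sublagrangian, and for $q\geq 3$ Kreck's \cite[Proposition~6.7]{KreckSurgeryAndDuality} gives disjoint framed embedded representatives. The summand $V$ then emerges as $H_{q+1}(W\setminus\mathring U, M_0\sqcup\partial U;\Lambda)$ through a homological bookkeeping (a Mayer--Vietoris/long-exact-sequence computation, not quite "the same computation Kreck uses'' but adapted to this specific handle decomposition), because the attaching spheres project onto a basis of $V\subseteq K\oplus K^*$. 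This diagonal construction is the novel ingredient; without it the rest of your outline, including the correct treatment of $q=2$ via the sphere embedding theorem, has no hold on the key step.
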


%
%

The proof of Theorem \ref{thm:Realisation} occupies the remainder of the chapter.
First we use $(M_0, \ol \nu_0)$ and the data $x, \pi$, and $h$ to
construct a $(B, \xi)$-cobordism $(W,\overline{\nu})$ based on $(M_0, \ol \nu_0)$.
Then we prove a lemma, then in the proof environment we show that $\Theta(W, \ol \nu) = [x]$.

Note that since the $\Lambda$-modules $V$ and $K$ have the same rank, we deduce that $K$ also has rank~$s{+}r$, where $s$ is the rank of $S$.
%
The first step is to perform trivial surgeries on the lagrangian~$K$ that appears in the quasi-formation $x=(H_\varepsilon(K);K,V)$.
Namely, we let $W_0$ be the cobordism obtained from~$M_0 \times [0,1]$ by adding trivial $q$-handles along~$M_0 \times \{1\}$ with trivial framing, one handle for each element of a basis for $K$.
In other words, we consider the following boundary connected sum:
\[ W_0 = \big( M_0 \times [0,1] \big) \natural \big(\natural_{i=1}^{s+r} S^q \times D^{q+1} \big).\]
The boundary components of $W_0$ are $\partial_- W_0 =M_0$ and $\partial_+ W_0 = -M_0 \# (\#_{i=1}^{s+r} S^q \times S^q)$.
We turn~$W_0$ into a $(B, \xi)$-manifold $(W_0, \ol \nu_{W_0})$ by taking the trivial extension of the normal structure $\overline{\nu}_0 $ to $W_0$ (i.e.\ the $(B, \xi)$-structure obtained by restricting a null-homotopic map $D^{q+1} \times D^q \to B$ on the handles to the~$S^q \times D^{q+1}$).
This concludes the first~step in the construction of~$W$.

Next we wish to add $(q{+}1)$-handles to $W_0$ along $\partial_+ W_0$ in order to complete the construction of the $(B, \xi)$-cobordism $(W,\overline{\nu})$.
The attaching maps for these handles must lie in $K \pi_q(\partial_+ W_{0})$, the domain of the Wall form of $\partial_+ W_0$.
The identification $\partial_+ W_0 = -M_0 \# (\#_{i=1}^{s+r} S^q \times S^q)$~gives
$$ (K  \pi_q(\partial_+ W_{0}), \theta_{\partial_+ W_{0}}) \cong (K\pi_q(M_0), -\theta_{M_0}) \oplus H_\varepsilon(K). $$
Recall that the form $\mu$ on $S$ was obtained by pulling back the Wall form on $K\pi_q(M_0)$ via the surjection~$\pi \colon S \twoheadrightarrow K\pi_q(M_0)$.
By pre-composing with the projection $S \oplus \Lambda^r \twoheadrightarrow S$ onto the first component,
we obtain a surjection $(\pi,0) \colon (S \oplus \Lambda^r,\mu \oplus 0) \twoheadrightarrow (K\pi_q(M_0),\theta_{M_0})$ of quadratic forms.
Recall furthermore that we fixed an isometry~$h \colon (S, \mu) \oplus (\Lambda^r, 0) \to~(V, \theta)$, where~$(V,\theta)$ is the restriction of the hyperbolic form on $H_\varepsilon(K)$ to $V$; in particular $V \subseteq K \oplus K^*$.
We assert that we obtain a sublagrangian of $(K  \pi_q(\partial_+ W_{0}), \theta_{\partial_+ W_{0}})$ by considering the image of~$S \oplus \Lambda^r$ under the map
\begin{align}
\label{eq:qAttachingq+1Handle}
\begin{pmatrix} \pi \\ h \end{pmatrix} :=\begin{pmatrix} (\pi,0) \\  h \end{pmatrix} \colon  S \oplus \Lambda^r  &
 \to K\pi_q(M_0)  \oplus V
 \subseteq  K\pi_q(M_0)  \oplus (K \oplus K^*)
=K  \pi_q(\partial_+ W_{0})  \\
\begin{pmatrix} y_1 \\ y_2 \end{pmatrix} & \longmapsto \begin{pmatrix} (\pi(y_1), 0) \\ h\bsm y_1 \\ y_2 \esm \end{pmatrix}. \nonumber
\end{align}
%
The fact that $\operatorname{im} \bsm \pi \\ h \esm$ is a summand follows from the fact that $V \subseteq K \oplus K^*$ is a summand.
Note that $\pi^* \theta_{M_0} \pi =\mu $ (by definition of~$\mu $) and  $h^*\theta h=\mu \oplus 0$ (since $h$ is an isometry).
Using these facts, the following computation establishes the assertion that the image of $S \oplus \Lambda^r$ is a sublagrangian:
\begin{align*}
\bsm \pi  \\ h \esm^* (-\theta_{M_0} \oplus H_\varepsilon) \bsm \pi  \\ h \esm
&=-(\pi,0)^* \theta_{M_0} (\pi,0) +h^*H_\varepsilon h \\
&=-(\mu \oplus 0) +(\mu \oplus 0)=0.
\end{align*}
We now use the sublagrangian $\operatorname{im} \bsm \pi \\ h \esm $ to provide data
with which to attach $(q{+}1)$-handles along $\partial_+ W_0$.
Fix a basis $\{\overline{x}_1, \dots, \overline{x}_{s+r} \}$ for $S \oplus \Lambda^r$.
Since the elements $x_i : =\bsm \pi  \\ h \esm(\overline{x}_i) \in K \pi_q(\partial_+ W_{0})$ lie in a sublagrangian
of the Wall form of $(\del_+W_0, \ol \nu_{W_0}|_{\del_+{W_0}})$
for $q \geq 3$, it follows from \cite[Proposition~6.7]{KreckSurgeryAndDuality},
that these classes can be represented by disjoint framed embedded spheres.
We discuss the case $q = 2$ in Remark \ref{remark:4D} below.

For $i=1,\dots, s+r$, we choose pairwise disjoint
framed embeddings $\phi_i \colon S^q \times D^q \hookrightarrow \del_+ W_0$
representing $x_i$ and also lifts $\omega_i \in \pi_{q+1}(B, \del_+W_0)$ of the $x_i$.
%
We then attach~$(q{+}1)$-handles (which we denote by $h_i^{q+1}$) along the set of disjoint, framed embeddings
$\phi = (\phi_1, \dots, \phi_{s+r})$ to obtain the cobordism.  Extend the $(B, \xi)$
structure from $(W_0, \ol \nu_0)$ using the classes $\omega = (\omega_1, \dots, \omega_{r+s})$ to obtain the $(B, \xi)$-bordism
\begin{equation} \label{eq:W}
W = W_{x, \pi, h, \phi, \omega} := W_0 \cup \bigcup_{i=1}^{s+r} h_i^{q+1}.
\end{equation}
Since we performed surgery along elements of~$K \pi_q(\partial_+W_0)$, there is a canonical
normal $(B, \xi)$-structure $(W, \ol \nu)$ which extends the
normal $(B, \xi)$-structure on $W_0$; see the end of Section~\ref{sub:NormalSmoothings}.
A key property of this construction is that the
\begin{equation}\label{eqn:key-prop-of-xi}
x_i \in K\pi_q(\del_+ W_0) \cong K\pi_q(M_0) \oplus K \oplus K^*,
\end{equation}
for $i=1,\dots,s+r$, project to a basis for $V \subseteq K \oplus K^*$.

This concludes the construction of the $(B, \xi)$-cobordism $(W,\overline{\nu})$
based on~$\partial_-(W,\overline{\nu})=(M_0,\overline{\nu}_0 )$.

\begin{remark}\label{remark:4D}
We discuss the additional requirements of the $4$-dimensional case $q=2$ in this remark.  In this case we pass to the topological category, and assume that the fundamental group $\pi_1(B)$ is good.  In order to deduce that the homology classes $\{x_i\}$ we wish to surger can be represented by $\pi_1$-negligible framed embedded 2-spheres, we need to use the \emph{sphere embedding theorem} of Freedman-Quinn~\cite[Theorem~5.1B]{FQ}.  Here \emph{$\pi_1$-negligible} means that removing the embedded spheres, and therefore the process of performing surgery on them, does not change the fundamental group. This is equivalent to the existence of a collection of geometrically dual 2-spheres $\{y_i\}$ immersed in $\partial_+ W_0$, intersecting the $\{x_i\}$ transversely, and such that $x_i \cap y_j$ is a single point if $i=j$ and is empty for $i \neq j$.  The existence of such a collection is guaranteed by \cite{Powell-Ray-Teichner:2020}.

The extra input needed in the 4-dimensional case, which is not needed in high dimensions, is that the classes $\{x_i\}$ must be represented by an immersed collection of 2-spheres that admit a collection of framed algebraically dual spheres. Then the sphere embedding theorem applies to realise the $\{x_i\}$ by $\pi_1$-negligible framed embedded 2-spheres as desired. So to extend our argument to the case $q=2$, we must show that such a dual collection exists.  Since the hyperbolic form on $K \oplus K^*$ is even, if we find the dual spheres their self-intersection numbers will be even. A cusp homotopy of an immersed 2-sphere in a 4-manifold adds a self-intersection point and changes the Euler number of the normal bundle by $\pm 2$. So apply these to trivialise the normal bundle, and choose a trivialisation, to obtain framed dual spheres.

It remains to find dual spheres for the spheres $\{x_i\}$. Let $f_i\in V$ be the image of $x_i$ under the projection $K\pi_q(M_0) \oplus K \oplus K^* \to K \oplus K^*$.
By the key property of the $\{x_i\}$, the $\{f_i\}$ form a basis for the direct summand  $V \subseteq K \oplus K^*$.
 We will use that the hyperbolic form on $K \oplus K^*$ is nonsingular, and we will find dual spheres $\{g_i\}$ to the $\{f_i\}$ in $K \oplus K^*$. Write $H:= K \oplus K^*$ temporarily.  Since $V \subseteq H$ is a direct summand, then the dual restriction map $H^* \to V^*$ is surjective. As $V^*$ is free we may choose a splitting $s \colon V^* \to H^*$.  Let $\lambda_H \colon H \to H^*$ be the symmetrised hyperbolic form on $H$, which is an isomorphism, and consider $\lambda_H^{-1}(s(V^*)) \subseteq H$.  Choose a basis $g_1,\dots,g_n$ for this submodule that maps to the duals $\lambda_H(g_i) = s(f_i^*) \in H^*$.  The $\{g_i\}$ are represented by immersed framed spheres in $\partial_+ W_0$, algebraically dual to the $x_i$.  As explained above we may therefore apply the sphere embedding theorem to represent the $\{x_i\}$ by locally flat embedded spheres, to which we attach handles for the completion of the construction of the cobordism~$W$.
\end{remark}

The next lemma shows that $(W,\overline{\nu})$ is a $(q{-}1)$-smoothing and describes its $q$-th homotopy group.
Recall that $x \in \ell_{2q+1}(\Lambda)$ is a fixed element represented by a quasi-formation
$(H_\varepsilon(K);K,V)$
such that there is a surjection $\pi \colon S \twoheadrightarrow K\pi_q(M_0)$, where~$S$ is a free module of rank~$s$.
Let $\operatorname{proj}_1 \colon K \oplus K^* \to K$ denote the canonical projection, and use $\widehat{h}$ to denote the composition
$$ \widehat{h} \colon S \oplus \Lambda^r \xrightarrow{h} V \hookrightarrow K \oplus K^* \xrightarrow{\operatorname{proj}_1} K. $$

\begin{lemma} \label{lem:HomotopyW}
Let $(M_0^{2q},\overline{\nu}_0 )$ be a normal $(q{-}1)$-smoothing and let
$[x] \in \ell_{2q+1}(\Lambda)$.
If the induced form of $x=(H_\varepsilon(K);K,V)$ is $0$-stably isomorphic to a free Wall form $(S,\mu)$ of~$(M_0,\overline{\nu}_0 )$ as in~\eqref{eq:0StableCondition}, then the $(B, \xi)$-cobordism $(W,\overline{\nu})$ constructed in \eqref{eq:W} satisfies the following.
\begin{enumerate}[(i)]
\item The $(B, \xi)$-cobordism $(W,\overline{\nu})$ is a normal $(q{-}1)$-smoothing.
\item There is an isomorphism
$\varphi \colon \pi_q(W)\xrightarrow{\cong} (\pi_q(M_0) \oplus K)/\im \bsm \pi \\ \wh h \esm$.
\item The isomorphism $\varphi$ induces a surjection $K \twoheadrightarrow K \pi_q(W) \subseteq \pi_q(W)$.
\end{enumerate}
\end{lemma}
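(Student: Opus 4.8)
The plan is to compute the homotopy groups of $W$ directly from the handle decomposition $W = W_0 \cup \bigcup_{i=1}^{s+r} h_i^{q+1}$, keeping track of the $(B,\xi)$-structure, rather than invoking any $\ell$-monoid formalism. First I would analyse $W_0 = (M_0 \times [0,1]) \natural (\natural_{i=1}^{s+r} S^q \times D^{q+1})$. Since this is $M_0 \times [0,1] \simeq M_0$ with $s+r$ trivially attached $q$-handles, the pair $(W_0, M_0)$ is, up to homotopy, obtained from $M_0$ by attaching $s+r$ cells of dimension $q$ along null-homotopic maps; hence $\pi_1(W_0) = \pi_1(M_0) = \pi_1(B)$, the relative Hurewicz theorem gives $\pi_q(W_0, M_0) \cong H_q(\wt{W_0}, \wt{M_0}) \cong \Lambda^{s+r}$ with the handle cores as generators, the connecting map $\pi_q(W_0, M_0) \to \pi_{q-1}(M_0)$ vanishes, and the cores split the sequence. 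Identifying $\Lambda^{s+r}$ with the lagrangian $K$ via the basis of $K$ used to build $W_0$, this gives $\pi_i(W_0) \cong \pi_i(M_0)$ for $i < q$ and $\pi_q(W_0) \cong \pi_q(M_0) \oplus K$. Because $\ol\nu_{W_0}$ is the \emph{trivial} extension of $\ol\nu_0$ over the handles, each core sphere is null-homotopic in $B$, so the restriction of $\ol\nu_{W_0*}$ to $\pi_q(W_0)$ equals $\ol\nu_{0*} \oplus 0$ and $K\pi_q(W_0) = K\pi_q(M_0) \oplus K$.

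Next I would identify, inside $\pi_q(W_0)$, the classes along which the $(q{+}1)$-handles are attached. These attaching spheres represent the elements $x_i = \bsm \pi \\ h \esm(\ol x_i)$ of $K\pi_q(\partial_+ W_0) \cong K\pi_q(M_0) \oplus (K \oplus K^*)$, and I need the effect of $\partial_+ W_0 \hookrightarrow W_0$ on $\pi_q$. Geometrically: the summand $K$ is spanned by the longitude spheres $S^q \times \{\mathrm{pt}\} \subset \partial(S^q \times D^{q+1})$, which are homotopic to the cores and so map isomorphically onto $K \subset \pi_q(W_0)$; the summand $K^*$ is spanned by the meridian spheres $\{\mathrm{pt}\} \times S^q$, each of which bounds the disc $\{\mathrm{pt}\} \times D^{q+1}$ in $W_0$ and so maps to $0$; and $K\pi_q(M_0) = K\pi_q(-M_0)$ includes into the $\pi_q(M_0)$-summand. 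Combining this with the defining formula for $\bsm\pi\\h\esm$ and the definition $\wh h = \operatorname{proj}_1 \circ h$, the image of $x_i$ in $\pi_q(W_0) = \pi_q(M_0) \oplus K$ is exactly $\bsm \pi \\ \wh h \esm(\ol x_i)$.

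Finally I would pass to $W$ and read off the three assertions. Attaching the $(q{+}1)$-handles along disjoint embeddings representing the $x_i$ amounts, up to homotopy, to coning off the classes $x_i$ (the framings do not affect $\pi_q$), and $(W, W_0)$ is $q$-connected; hence $\pi_i(W) \cong \pi_i(W_0)$ for $i < q$ and $\pi_q(W) \cong \pi_q(W_0)/\sum_i x_i \Lambda \cong (\pi_q(M_0) \oplus K)/\im \bsm \pi \\ \wh h \esm$, which is the isomorphism $\varphi$ of (ii). For (i): $\pi_i(W) \cong \pi_i(M_0)$ for $i < q$, on which $\ol\nu_*$ is an isomorphism since $\ol\nu_0$ is $q$-connected, while on $\pi_q$ the map $\ol\nu_*$ is the one induced on the quotient by $\ol\nu_{0*} \oplus 0$ (well-defined because the chosen lifts $\omega_i \in \pi_{q+1}(B,\partial_+ W_0)$ say precisely that the $x_i$ vanish in $\pi_q(B)$), and it is onto because $\ol\nu_{0*}$ is; thus $\ol\nu\colon W \to B$ is $q$-connected, i.e.\ $(W,\ol\nu)$ is a normal $(q{-}1)$-smoothing. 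For (iii): since $\pi$ takes values in $K\pi_q(M_0)$ we have $\im \bsm \pi \\ \wh h \esm \subseteq K\pi_q(M_0) \oplus K$, so under $\varphi$ the kernel $K\pi_q(W) = \ker \ol\nu_*$ becomes $(K\pi_q(M_0) \oplus K)/\im \bsm \pi \\ \wh h \esm$; the composite $K \hookrightarrow \pi_q(M_0) \oplus K \twoheadrightarrow \pi_q(W)$ lands in $K\pi_q(W)$ and surjects onto it, using that $\pi\colon S \twoheadrightarrow K\pi_q(M_0)$ is onto: given $[v,k]$ with $v \in K\pi_q(M_0)$, pick $y_1$ with $\pi(y_1) = v$ and subtract $\bsm \pi \\ \wh h \esm(y_1,0)$ to land in $\{0\} \oplus K$.

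I expect the main obstacle to be the middle step: correctly matching Kreck's algebraic splitting $K\pi_q(\partial_+ W_0) \cong K\pi_q(M_0) \oplus H_\varepsilon(K)$ with the geometry of the longitude and meridian spheres of the $q$-handles, so that the composite of $\bsm\pi\\h\esm$ with $\pi_q(\partial_+ W_0) \to \pi_q(W_0)$ is genuinely $\bsm\pi\\\wh h\esm$ and not a twisted variant of it. Everything else is routine bookkeeping with the long exact sequences of the handle attachments.
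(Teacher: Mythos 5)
Your proof is correct and follows essentially the same route as the paper's: compute $\pi_q(W_0) \cong \pi_q(M_0) \oplus K$, observe that $\pi_q(\partial_+ W_0) \to \pi_q(W_0)$ is $\mathrm{Id} \oplus \operatorname{proj}_1$, quotient by the images of the $(q{+}1)$-handle attaching maps, and then read off $q$-connectivity of $\ol\nu$ and the surjection $K \twoheadrightarrow K\pi_q(W)$ exactly as you do. The one place you add something the paper treats tersely is the geometric justification that longitudes $S^q \times \{\mathrm{pt}\}$ survive to $K$ while meridians $\{\mathrm{pt}\} \times S^q$ die in $W_0$; the paper simply asserts that $\pi_q(\partial_+ W_0) \to \pi_q(W_0)$ is $\mathrm{Id} \oplus \operatorname{proj}_1$, so your unpacking is a helpful elaboration rather than a different approach.
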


\begin{proof}
We start by proving the first assertion, but along the way we will prove the second.
First, we check that $\overline{\nu} \colon W \to B$ induces an isomorphism on the first $q{-}1$ homotopy groups.
Since we defined $W_0$ as~$W_0=\big( M_0 \times [0,1] \big)\natural \big( \natural_{i=1}^{r+s} S^q \times D^{q+1} \big)$, we deduce that $\pi_i(W_0)=\pi_i(M_0)$ for $i \leq q{-}1$.
Adding $(q{+}1)$-handles does not affect the first $q{-}1$ homotopy groups, by the Hurewicz theorem applied to the universal covers, and therefore $\pi_i(W)=\pi_i(M_0)$ for $i\leq q{-}1$.
Since~$(M_0,\overline{\nu}_0 )$ is a normal $(q{-}1)$-smoothing, and since $\overline{\nu} \colon W \to B$ restricts to $\overline{\nu}_0 $ on $M_0=\partial_-W_0$, we deduce that $\overline{\nu}_* \colon \pi_i(W)=\pi_i(M_0) \to \pi_i(B)$ is an isomorphism for $i \leq q{-}1$.

Next, we prove the second assertion and check that $\overline{\nu}$ $\colon \pi_q(W) \to \pi_q(B)$ is surjective.
As $W_0=\big( M_0 \times [0,1] \big) \natural \big( \natural_{i=1}^{r+s} S^q \times~D^{q+1} \big)$, we deduce that $\pi_q(W_0)=\pi_q(M_0) \oplus K$.
Since $(M_0,\overline{\nu}_0 )$ is a normal $(q{-}1)$-smoothing, we know that~$\pi_q(M_0) \to \pi_q(B)$ is surjective and therefore so is $\pi_q(W_0) \to \pi_q(B)$.
Next, we compute $\pi_q(W)$.
Up to homotopy equivalence, adding $(q{+}1)$-handles is the same as adding $(q{+}1)$-cells and
in order to compute the effect on $\pi_q$ of attaching
 we must mod out by the image of the
 $(s{+}r)$ $(q{+}1)$-cells along the maps $S^q \times \{0\} \hookrightarrow \partial_+ W_0$.
The homotopy classes of these attaching maps were specified by
\begin{align*}
\begin{pmatrix} \pi \\ h \end{pmatrix} \colon S \oplus \Lambda^r &\to  K \pi_q(M_0) \oplus V \subseteq  K \pi_q(M_0) \oplus (K \oplus K^*)=K \pi_q(\partial_+W_0) \\
 \begin{pmatrix} x_1 \\ x_2 \end{pmatrix} & \mapsto \begin{pmatrix} \pi(x_1) \\ \wh h\bsm x_1 \\ x_2 \esm \end{pmatrix} .
\end{align*}
Since the induced map $\pi_q(\partial_+W_0) \to \pi_q(W_0)$ is
$\mathrm{Id} \oplus \operatorname{proj}_1 \colon \pi_q(M_0) \oplus K \oplus K^* \to
\pi_q(M_0) \oplus K$, it follows that $\pi_q(W) \cong (\pi_q(M_0) \oplus K) /\im \bsm \pi \\ \wh h \esm$ as claimed.
This concludes the computation of $\pi_q(W)$ and thus the proof of the second assertion of the lemma.
It remains to show that $\overline{\nu}_* \colon \pi_q(W) \to \pi_q(B)$ is surjective.
The computation of $\pi_q(W)$ shows that every element of $\pi_q(W)$ is represented by a pair
$(y_1,y_2) \in \pi_q(M_0) \oplus K$ and that $\ol \nu_*([y_1, y_2]) = \ol \nu_{0*}(y_1)$.
Since the $(B, \xi)$-structure on $W$ was obtained by first extending the one on $M_0$ and imposing the trivial $(B, \xi)$-structure on $S^q \times S^q$, it follows that for any element $[(y_1,y_2)] \in \pi_q(W)$ we have
$\overline{\nu}_*[(y_1, y_2)]=\overline{\nu}_{0*} (y_1) \in \pi_q(B)$.
Since $\overline{\nu}_{0*} \colon \pi_1(M_0) \to \pi_q(B)$ is surjective, so is~$\overline{\nu}_*\colon \pi_q(W) \to \pi_q(B)$, as desired.

It remains to prove the last assertion, namely that \[\operatorname{proj}_2 \colon K \to K \pi_q(W) \subseteq \pi_q(W) \cong (\pi_q(M_0) \oplus K) /\im \bsm \pi \\ \wh h \esm,\,\, k \mapsto [(0,k)]\] is a surjection.
Let $[(x_1,x_2)] \in K\pi_q(W)$ with $x_1 \in \pi_q(M_0)$ and $x_2 \in K$.
We saw above that $\ol \nu_*([y_1, y_2]) = \ol \nu_{0*}(y_1)$ for all $[(y_1, y_2)] \in \pi_q(W)$.
This implies both that $\operatorname{proj}_2$ is defined (i.e.\ its image lies in $K \pi_q(W)$ as asserted) and that, for $[(x_1,x_2)] \in K \pi_q(W)$, we have $x_1 \in K \pi_q(M_0)$.
Next, since~$\pi \colon S \twoheadrightarrow K \pi_q(M_0)$ is surjective,  there exists $z \in S$ such that~$x_1=\pi(z)$.
Let
Using our computation \[\pi_q(W) = \pi_q(M_0) \oplus K / \left(\bsm \pi \\ \wh h \esm \colon (S \oplus \Lambda^r) \to \pi_q(M_0) \oplus K \right),\]
we obtain
\[[(x_1,0)]=[(\pi(z),0)] =[(0,\widehat{h}(w))]\]
for  $w := (-z,0) \in  S \oplus \Lambda^r$. 
 We thus obtain that $[(x_1,x_2)]=[(0,x_2 +\widehat{h}(w))]$, so that
 $\operatorname{proj}_2(x_2 +\widehat{h}(w))=[(x_1,x_2)]$.
 Since $[(x_1, x_2)]$ was arbitrary, we are done.
\end{proof}


%
%

\begin{proof}[Proof of Theorem \ref{thm:Realisation}]
We must show that the modified surgery obstruction $\Theta(W, \ol \nu)$ of the $(B, \xi)$-bordism $(W,\overline{\nu})$
of \eqref{eq:W} lies in the equivalence class~$[x] = [H_\varepsilon(K); K, V] \in \ell_{2q+1}(\Z[\pi_1(B)])$ of the quasi-formation with which we started.

The definition of $\Theta(W,\overline{\nu})$ requires that we choose a set of generators for $K \pi_q(W)$ and represent them by framed embedded~$q$-spheres in $W$.
We will use $U \subseteq W$ to denote the resulting handlebody, and $\psi$ to denote the hyperbolic quadratic form on $H_q(\partial U;\Lambda)$.
Recall from Section~\ref{sub:ModifiedObstruction} that the modified surgery obstruction of a $(q{-}1)$-normal smoothing~$(W,\overline{\nu})$ is given by
\begin{equation}
\label{eq:RecallObstruction}
\Theta(W,\overline{\nu})=\big[\big( (H_q(\partial U;\Lambda),\psi),H_{q+1}(U,\partial U;\Lambda),H_{q+1}(W \setminus \mathring{U};M_0 \sqcup \partial U;\Lambda)\big)\big].
\end{equation}
Lemma~\ref{lem:HomotopyW} established the existence of  a surjection $K \twoheadrightarrow K\pi_q(W)$.
Consequently, a basis for $K$ provides a generating set for $K \pi_q(W)$.
After representing these generators by framed embedded~$q$-spheres, we obtain $r{+}s$
embeddings $S^q \times D^{q+1} \hookrightarrow W$ whose images we join by thickened paths.
This leads to a solid handlebody $U$ as in~\eqref{eq:RecallObstruction}.  We can and shall assume that $U \subseteq W_0$, since $U$ represents geometrically a basis for $K$.

We will show that the surgery obstruction stemming from $U$ is given by $x$.
For the rest of this proof, homology and cohomology groups are understood to have $\Lambda$-coefficients.
Since it is clear that~$((H_q(\partial U),\psi);H_{q+1}(U,\partial U)) \cong (H_\varepsilon(K);K)$, it remains to show that the map
\[H_{q+1}(W \setminus \mathring{U};M_0 \sqcup \partial U;\Lambda) \to H_q(\partial U) \xrightarrow{\cong} K \oplus K^*\]
has image $V$.
%
\begin{claim*}
The following assertions hold.
\begin{enumerate}[(i)]
\item
The inclusion induces an isomorphism $H_{q+1}(W_0 \setminus \mathring{U})\xrightarrow{\cong} H_{q+1}(W_0)$.
\item There is an isomorphism $H_q(W_0 \setminus \mathring{U}) \cong K \oplus K^* \oplus H_q(M_0)$.
\item We have $H_{i}(W_0 \setminus \mathring{U},M_0 \sqcup \partial U)=0$ for $i=q,q{+}1$.
\end{enumerate}
\end{claim*}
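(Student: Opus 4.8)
The plan is to compute everything from the handle decomposition $W_0 = (M_0 \times [0,1]) \natural (\natural_{i=1}^{s+r} S^q \times D^{q+1})$, together with the fact that $U \subseteq W_0$ is the solid handlebody carved out by $r{+}s$ disjoint framed embedded $q$-spheres joined by thickened paths, geometrically representing a basis for $K$. The crucial geometric input from Lemma~\ref{lem:HomotopyW} is that these $q$-spheres generate $K\pi_q(W)$, and that $U$ can be taken inside $W_0$; concretely, each $q$-sphere can be isotoped into the core $S^q \times \{0\}$ of one of the summands $S^q \times D^{q+1}$, so that $W_0 \setminus \mathring U$ deformation retracts onto $M_0$ with $r{+}s$ copies of $S^q \times S^q$ boundary-connect-summed on (each coming from the complement of a tubular neighbourhood of $S^q \times \{0\}$ inside $S^q \times D^{q+1}$, which is $S^q \times S^q$ minus a ball, homotopy equivalent to $S^q \vee S^q$). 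First I would make this retraction precise: $W_0 \setminus \mathring U \simeq M_0 \vee \bigvee_{i=1}^{s+r}(S^q \vee S^q)$, with the wedge of $2(s{+}r)$ spheres accounting for the classes in $K \oplus K^*$.

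From this homotopy equivalence assertion (i) is immediate in the relevant range: since $M_0$ is $2q$-dimensional and the attached wedge of $q$-spheres contributes only to $H_q$, we get $H_{q+1}(W_0 \setminus \mathring U) \cong H_{q+1}(M_0) \cong H_{q+1}(W_0)$, the last isomorphism because $W_0 \simeq M_0 \vee \bigvee_{i=1}^{s+r} S^q$ kills nothing in degree $q{+}1$. Assertion (ii) is likewise read off: $H_q(W_0 \setminus \mathring U) \cong H_q(M_0) \oplus \Lambda^{2(s+r)} = H_q(M_0) \oplus K \oplus K^*$, where I identify one copy of the $S^q$-summands with $K$ (the meridian/core classes already visible in $W_0$) and the complementary copy with $K^*$ (the longitude classes that become trivial in $W_0$ but are nontrivial in the complement); the pairing of these against the hyperbolic form on $H_q(\partial U) \cong H_\varepsilon(K)$ justifies the $K \oplus K^*$ labelling. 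Assertion (iii) then follows from the long exact sequence of the triple, or of the pair $(W_0 \setminus \mathring U, M_0 \sqcup \partial U)$: I would use excision/Mayer--Vietoris to see that $(W_0 \setminus \mathring U, M_0 \sqcup \partial U)$ is homotopy equivalent, rel the identifications above, to a relative CW-pair with cells only in degrees $\le q$ and $\ge q{+}2$ once the boundary pieces are collapsed — more carefully, since $\partial U \simeq \bigsqcup (S^q \times S^q)$ surjects onto the $K \oplus K^*$ summand of $H_q(W_0 \setminus \mathring U)$ and $M_0$ accounts for the rest, the map $H_q(M_0 \sqcup \partial U) \to H_q(W_0 \setminus \mathring U)$ is onto, and the map $H_{q+1}(W_0 \setminus \mathring U) \to H_{q+1}(W_0 \setminus \mathring U, M_0 \sqcup \partial U)$ is onto with the next term $H_q(M_0 \sqcup \partial U)$ injecting, forcing $H_{q+1}$ and $H_q$ of the pair to vanish.

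The main obstacle I anticipate is (iii), specifically getting the bookkeeping of the two boundary components right: one must check that $\partial U$ — which is a disjoint union of $S^q \times S^q$'s after smoothing the joined handlebody, or more precisely the boundary of a tubular neighbourhood of a wedge of $q$-spheres — maps onto exactly the $K \oplus K^*$ part of $H_q(W_0 \setminus \mathring U)$, while $M_0$ supplies the $H_q(M_0)$ part, with no overlap and no deficiency. This is where one genuinely uses that the spheres representing $K$ sit in $W_0$ disjointly with trivial normal framing (so their tubular neighbourhoods are honest products $S^q \times D^{q+1}$, giving $\partial U \cong \#(S^q \times S^q)$ rather than something twisted) and the connectivity statements from Lemma~\ref{lem:HomotopyW}. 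I would handle it by writing the Mayer--Vietoris sequence for $W_0 = U \cup (W_0 \setminus \mathring U)$ glued along $\partial U$, plugging in $H_*(U) \cong H_*(\bigvee S^q)$ (so $H_q(U) \cong K$, $H_{q+1}(U) = 0$), $H_*(\partial U) \cong H_*(\#(S^q\times S^q))$ (so $H_q(\partial U) \cong K \oplus K^*$), and $H_*(W_0)$ known; solving the resulting exact sequence pins down $H_q(W_0 \setminus \mathring U)$ and its map from $H_q(\partial U)$, and then the triple sequence delivers (iii). Everything else is formal once the retraction in the first paragraph is established.
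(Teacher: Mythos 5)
Your overall strategy---decomposing $W_0$ in terms of $M_0 \times I$ and the $S^q \times D^{q+1}$ summands and reading off homology---is in the right spirit, and your final paragraph (Mayer--Vietoris for $W_0 = U \cup (W_0 \setminus \mathring{U})$ plus the long exact sequence of the pair) is exactly the computation the paper carries out. However, the homotopy model on which you build assertions (i) and (ii) is not correct.

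You claim $W_0 \setminus \mathring{U} \simeq M_0 \vee \bigvee_{i=1}^{s+r}(S^q \vee S^q)$, with the justification that the complement of a tubular neighbourhood of $S^q \times \{0\}$ in $S^q \times D^{q+1}$ is ``$S^q \times S^q$ minus a ball, homotopy equivalent to $S^q \vee S^q$.'' This local computation is wrong: that complement is $S^q \times \bigl(D^{q+1} \setminus \mathring{D}^{q+1}_{1/2}\bigr) \cong S^q \times S^q \times [\tfrac12,1]$, which deformation retracts onto $S^q \times S^q$, not onto $S^q \vee S^q$ and not onto $S^q \times S^q$ minus a ball (the latter is a $2q$-manifold, not a $(2q{+}1)$-manifold, and is not homotopy equivalent to either of the others). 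The two homotopy types $S^q \times S^q$ and $S^q \vee S^q$ differ only in degree $2q$, so your \emph{homology} claims in degrees $q$ and $q{+}1$ still come out right---but that is an accident of the degree range, and the asserted homotopy equivalence is simply false. It is worth noting that even the ``corrected'' model $M_0 \vee \bigvee (S^q \times S^q)$ does not give the right $H_{2q}$, because $U$ is a single joined solid handlebody (so $\partial U$ is a connected $\#_{r+s}(S^q \times S^q)$, contributing only $\Lambda$ in degree $2q$, not $\Lambda^{r+s}$). There is no simple wedge model for $W_0 \setminus \mathring{U}$.

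The paper sidesteps this entirely by never asserting a homotopy model: (i) comes from the long exact sequence of $(W_0, W_0 \setminus \mathring{U})$ together with the excision $H_{q+2}(W_0, W_0 \setminus \mathring{U}) = H_{q+2}(U, \partial U) = 0$; (ii) then comes from Mayer--Vietoris once the connecting map $H_{q+1}(W_0) \to H_q(\partial U)$ is known to vanish; and (iii) from the long exact sequence of $(W_0 \setminus \mathring{U}, M_0 \sqcup \partial U)$. This is exactly the computation you describe in your last paragraph, and it is self-contained. So the advice is: drop the homotopy model from the first paragraph and derive (i) and (ii) from those sequences directly. Finally, a small gap in your sketch of (iii): to conclude $H_{q+1}(W_0 \setminus \mathring{U}, M_0 \sqcup \partial U) = 0$, you note that $j \colon H_{q+1}(W_0 \setminus \mathring{U}) \to H_{q+1}(\text{pair})$ is onto (from injectivity of $H_q(M_0 \sqcup \partial U) \to H_q(W_0 \setminus \mathring{U})$), but you also need $j = 0$, which requires $H_{q+1}(M_0 \sqcup \partial U) \to H_{q+1}(W_0 \setminus \mathring{U})$ to be surjective; similarly, vanishing of $H_q$ of the pair requires injectivity of $H_{q-1}(M_0 \sqcup \partial U) \to H_{q-1}(W_0 \setminus \mathring{U})$. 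Both hold, but they need to be stated.
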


\noindent
{\em Proof of Claim.}
In order to prove the first assertion, we will consider the long exact sequence of the pair~$(W_0,W_0 \setminus \mathring{U})$.
By excision, we have $H_{q+2}(W_0,W_0 \setminus \mathring{U})=H_{q+2}(U,\partial U)=0$.
We deduce that the inclusion induced map $\iota \colon H_{q+1}(W_0 \setminus \mathring{U}) \to H_{q+1}(W_0)$ is injective.
Since $W_0= \bigl( M_0 \times [0,1]\bigr) \natural \big( \natural_{i=1}^{r+s} S^q \times D^{q+1}\big) \simeq M_0 \vee^{r+s} S^q$, we deduce that
$H_{q+1}(M_0) \cong H_{q+1}(W_0)$, again induced by  inclusion.
This factors as $H_{q+1}(M_0) \to H_{q+1}(W_0 \sm \mathring{U}) \xrightarrow{\iota} H_{q+1}(W_0)$,
so $\iota$ is also surjective.
This concludes the proof of the first assertion.  We note for later that also $H_{q+1}(M_0) \cong H_{q+1}(W_0 \sm \mathring{U})$.

In order to prove the second assertion, we consider the Mayer-Vietoris sequence associated to the decomposition $W=(W \setminus \mathring{U})\cup U$.
The portion of interest is
\begin{align}
\label{eq:MVRealisation}
 \cdots &\to H_{q+1}(W_0 \setminus \mathring{U}) \oplus H_{q+1}(U) \to H_{q+1}(W_0) \xrightarrow{0} H_q(\partial U)  \\
& \to H_q(W_0 \setminus \mathring{U}) \oplus H_q(U) \to H_q(W_0) \to H_{q-1}(\partial U) \to \cdots.  \nonumber
 \end{align}
 Since
the map $H_{q+1}(W_0 \setminus \mathring{U})\xrightarrow{\cong} H_{q+1}(W_0)$ is an isomorphism (by the first assertion), we deduce that~$ H_{q+1}(W_0)\to H_q(\partial U)$ is the zero map as shown.
Next, we use that $H_{q-1}(\partial U)=0$ as well as the fact that $H_q(\partial U)=K \oplus K^*$ surjects onto $H_q(U)=K$ and $H_q(W_0) \cong H_q(M_0)\oplus K$ to obtain that the sequence displayed in~\eqref{eq:MVRealisation} determines the following short exact sequence:
\begin{equation}
\label{eq:BrokenDownSequence}
0 \to K^* \to H_q(W_0 \setminus \mathring{U}) \to H_q(M_0) \oplus K \to 0.
\end{equation}
The inclusion $M_0 \sqcup \partial U \hookrightarrow W_0 \setminus \mathring{U} $ gives rise to a splitting of this short exact sequence.
This concludes the proof of the second assertion.
In order to prove the third assertion, we consider the long exact sequence of the pair $(W_0 \setminus \mathring{U},M_0 \sqcup \partial U)$:
\begin{align}
\label{eq:MV2Realisation}
\cdots & \to H_{q+1}(M_0 \sqcup \partial U)
\to H_{q+1}(W_0 \setminus \mathring{U})
\xrightarrow{j} H_{q+1}(W_0 \setminus \mathring{U},M_0 \sqcup \partial U)
\\
& \to H_{q}(M_0 \sqcup \partial U)
\to H_{q}(W_0 \setminus \mathring{U})
\to H_{q}(W_0 \setminus \mathring{U},M_0 \sqcup \partial U)
\to \cdots. \nonumber
 \end{align}
 We show that $H_{q+1}(W_0 \setminus \mathring{U},M_0 \sqcup \partial U)=0$.
Note that $H_{q+1}(M_0 \sqcup \partial U) =H_{q+1}(M_0)$ and, as we showed in the first assertion, the inclusion induced map $H_{q+1}(M_0) \to H_{q+1}(W_0 \sm \mathring{U})$ is an isomorphism.
We deduce that $H_{q+1}(W_0 \setminus \mathring{U}) \to H_{q+1}(W_0)$ is an isomorphism.
It follows that the second map in~\eqref{eq:MV2Realisation} is surjective,
 from which we deduce that~$j \colon H_{q+1}(W_0 \setminus \mathring{U})
\to H_{q+1}(W_0 \setminus \mathring{U},M_0 \sqcup \partial U)$ is the zero map.
We now show that $j$ is surjective, from which it follows that  $H_{q+1}(W_0 \setminus \mathring{U},M_0 \sqcup \partial U)=0$.
That $j$ is surjective follows because  $H_{q}(M_0 \sqcup \partial U) \to H_{q}(W_0 \setminus \mathring{U})$ is an isomorphism. To see this latter fact, apply the five lemma to the following commutative diagram
\[
\xymatrix@R0.5cm{
0 \ar[r]&K^* \ar[d]^{\id}_=\ar[r]^-{\operatorname{incl}_3}&H_q(M_0 \sqcup \partial U)\ar[r]^-{\operatorname{proj}} \ar[d]^-{\operatorname{incl}}& H_q(M_0) \oplus K \ar[r]\ar[d]^-{\operatorname{incl}}_\cong&0 \\
0\ar[r]&K^*\ar[r]^-{\operatorname{incl}_1}&H_q(W_0 \setminus \mathring{U}) \ar[r]^-{\operatorname{proj}}& H_q(M_0) \oplus K \ar[r]&0,
}
\]
where we recall that $H_q(M_0 \sqcup \partial U)=H_q(M_0) \oplus K \oplus K^*$, and the bottom (split) exact sequence was described in~\eqref{eq:BrokenDownSequence}.
The maps $\operatorname{incl}_1$ and $\operatorname{incl}_3$ are inclusions of summands whereas the vertical map labeled $\operatorname{incl}$ is induced by the inclusion $M_0 \sqcup \partial U \subset W_0 \setminus \mathring{U}$.
We have therefore shown that $j$ is both surjective and the zero map, proving that $H_{q+1}(W_0 \setminus \mathring{U},M_0 \sqcup \partial U)=0$, as desired.
Along the way, we have established that~\eqref{eq:MV2Realisation} gives the following exact sequence:
\begin{equation}
\label{eq:MV3}
0
\to H_{q}(M_0 \sqcup \partial U)
\xrightarrow{\cong} H_{q}(W_0 \setminus \mathring{U})
\xrightarrow{0} H_{q}(W_0 \setminus \mathring{U},M_0 \sqcup \partial U)
\to \cdots
\end{equation}
It remains to prove that $H_{q}(W_0 \setminus \mathring{U},M_0 \sqcup \partial U)=0$.
The isomorphism displayed in~\eqref{eq:MV3} implies that the map $H_{q}(W_0 \setminus \mathring{U})
\to H_{q}(W_0 \setminus \mathring{U},M_0 \sqcup \partial U)$ is zero, as shown.
It therefore suffices to show that the next map is also zero.
Extending the exact sequence~\eqref{eq:MV3} to the right, this is equivalent to proving that $H_{q-1}(M_0) \cong H_{q-1}(M_0 \sqcup \partial U) \to H_{q-1}(W_0 \setminus \mathring{U})$ is an injection.
A Mayer-Vietoris argument shows that the map $H_{q-1}(W_0 \setminus \mathring{U}) \to H_{q-1}(W_0)$ induced by the inclusion is an isomorphism.
Since we have~$H_{q-1}(W_0) \cong H_{q-1}(M_0)$, we deduce that $H_{q-1}(M_0 \sqcup \partial U) \to H_{q-1}(W_0 \setminus \mathring{U})$ is an isomorphism, as claimed.
We have therefore shown that $H_{i}(W_0 \setminus \mathring{U},M_0 \sqcup \partial U)=0$ for $i=q,q+1$ and this concludes the proof of the Claim.

Using the Claim, we can now show that
$H_{q+1}(W \setminus \mathring{U},M_0 \sqcup \partial U)$ is sent to $V \subseteq H_q(\partial U) \cong H_{\varepsilon}(K)$.
Consider the long exact sequence of the triple $(W \setminus \mathring{U},W_0 \setminus \mathring{U},M_0 \sqcup \partial U)$.
The third assertion of the Claim 
implies that
$ H_{q+1}(W \setminus \mathring{U}, M_0 \sqcup \partial U) \cong H_{q+1}(W \setminus \mathring{U},W_0 \setminus \mathring{U})$.
By excision, this latter module is isomorphic to $\bigoplus_{i=1}^{r+s} H_{q+1}(D^{q+1},S^q)$. Here, the $D^{q+1}$ correspond to the cores of the handles $h^{q+1}_i$.
By definition of the attaching maps of the handles $h^{q+1}_i$, the image of $\bigoplus_{i=1}^{r+s} H_{q+1}(D^{q+1},S^q)$ under the composition
\begin{align*}
  \bigoplus_{i=1}^{r+s} H_{q+1}(D^{q+1},S^q) &\xrightarrow{\cong} H_{q+1}(W \setminus \mathring{U}, M_0 \sqcup \partial U) \to H_q(M_0) \oplus H_q(\partial U) \\  &\to H_q(\partial U) \cong K \oplus K^*.
\end{align*}
is the second rank summand in $\Theta(W,\ol{\nu})$, and is the submodule of $K \oplus K^*$ generated by the attaching maps of $(q+1)$-handles. But we chose these attaching maps to project to a basis of $V$ in $K \oplus K^*$; see the sentence containing~\eqref{eqn:key-prop-of-xi}.
It follows that the surgery obstruction $(W, \ol \nu)$ is represented by
$x = (H_\varepsilon(K); K, V)$, which concludes the proof of
Theorem \ref{thm:Realisation}.
\end{proof}

%

\chapter{Boundary isomorphisms}
\label{sec:BoundaryAutomorphisms}

This algebraic chapter recalls the notion of a boundary isomorphism from~\cite{CrowleySixt}.
Informally, these can be thought of as an algebraic analogue of the group of
diffeomorphisms of the $(2q{-}1)$-dimensional boundary of a compact $2q$-manifold modulo
those diffeomorphisms that are isotopic to the restriction of a diffeomorphism of the $2q$-manifold.

It is perhaps surprising that such an algebraic phenomenon should arise for closed $2q$-manifolds,
but in fact a normal $(q{-}1)$-smoothing $\ol{\nu} \colon M \to B$ of a $2q$-manifold $M$, satisfying certain algebraic conditions, induces a splitting of the intersection quadratic form as a union over two forms defined in terms of $B$ and $\ol{\nu}$; details are given in Chapter~\ref{sec:RealisationPrimitive}.

In Sections~\ref{sub:StableIsomorphisms} and~\ref{sub:bAut}, we review stable isomorphisms of split quadratic formations and homotopies between them, culminating in the definition of the boundary isomorphism set, while the gluing process for forms is reviewed in Section~\ref{sub:AlgebraicGluing}.

We briefly recall our general strategy for the proof of Theorem~\ref{thm:InfiniteStableClassPi1ZIntro}: elements of the boundary isomorphism sets will be the invariants we use to distinguish $4k$-manifolds. In Chapter~\ref{sec:RealisationPrimitive} we will show, via the notion of primitive embeddings to appear in Chapter~\ref{sec:PrimitiveEmbeddings}, that the realisation results from Chapter~\ref{sec:Realisation} allow us to obtain infinitely many non-trivial values of the boundary isomorphism invariant.

%
%
%

\section{Stable isomorphisms of split formations}
\label{sub:StableIsomorphisms}

In this section, we review the notion of a split quadratic formation.
References include~\cite[Section 1.6]{RanickiExact},~\cite[Section 12.5]{RanickiAlgebraicAndGeometric}, as well as~\cite[Section 4.1]{CrowleySixt}.

\begin{definition}
\label{def:SplitFormation}
A \emph{simple split $\varepsilon$-quadratic formation} $(F,\bsm \gamma \\ \delta \esm (G,\theta))$ consists of a based $R$-module~$F$, a~$(-\varepsilon)$-quadratic form $(G,\theta)$, and morphisms $\gamma \colon G \to F$ and $\delta \colon G \to F^*$ such that
\begin{enumerate}[(i)]
\item the \emph{Hessian} $\theta$
satisfies
$\gamma^* \delta=\theta-\varepsilon \theta^* \colon G \to G^*;$
\item the map
$\bsm \gamma \\ \delta \esm \colon (G,0) \to H_\varepsilon(F)$ is the inclusion of a simple lagrangian.
\end{enumerate}
\end{definition}

The first examples of split quadratic formations are \emph{the trivial split quadratic formations}:
$$ (F,F^*):=\left (F, \bsm 0 \\ 1 \esm (F^*,0)\right).$$
Observe that for a split quadratic formation $(F,\bsm \gamma \\ \delta \esm (G,\theta))$, the triple $$(H_\varepsilon(F);F,\bsm \gamma \\ \delta \esm G)$$ is a simple quadratic formation.
In fact, quadratic formations also give rise to split quadratic formations: writing $\gamma$ and $\delta$ for the inclusions of the lagrangians, the map $\gamma^* \delta \colon G \to G^*$ is a~$(-\varepsilon)$-symmetric form and admits a quadratic refinement~\cite[Section 12.5]{RanickiAlgebraicAndGeometric}.
The choice of this quadratic refinement is remembered in a split quadratic formation, but not in a formation.

\begin{definition}
\label{def:Isomorphism}
An \emph{isomorphism}
$$(\alpha,\beta,\nu) \colon (F_1,\bsm \gamma_1 \\ \delta_1 \esm(G_1,\theta_1)) \to (F_2,\bsm \gamma_2 \\ \delta_2 \esm (G_2,\theta_2))$$
 between split $\varepsilon$-quadratic formations
consists of isomorphisms $\alpha \colon F_1 \to F_2$ and $\beta \colon G_1 \to G_2$, as well as a~$(-\varepsilon)$-quadratic form $(F_1^*,\nu)$ such that
\begin{enumerate}[(i)]
\item the equality $\theta_1 +\delta_1^*\nu \delta_1=\beta_1^*\theta_2\beta_1$ holds in $Q_{-\varepsilon}(G_1)$;
\item the map $f:=\bsm \alpha& \alpha(\nu-\varepsilon \nu^*) \\ 0 &(\alpha^*)^{-1} \esm \colon H_\varepsilon(F_1) \to H_\varepsilon(F_2)$ defines an isomorphism of quadratic formations or, equivalently, the following diagram commutes:
$$
\xymatrix @R+0.3cm @C+0.2cm{
G_1 \ar[d]^{\bsm \gamma_1 \\ \delta_1 \esm} \ar[r]^-{\beta} & G_2  \ar[d]^{\bsm \gamma_2 \\ \delta_2 \esm} \\
F_1 \oplus F_1^* \ar[r]^-{f,\cong}  & F_2 \oplus {F_2}^*
}
$$
\end{enumerate}
A \emph{stable isomorphism} of split $\varepsilon$-quadratic formations is an isomorphism
$$(\alpha,\beta,\nu) \colon (F_1,\bsm \gamma_1 \\ \delta_1 \esm(G_1,\theta_1)) \oplus (H_1,H_1^*)  \xrightarrow{\cong} (F_2,\bsm \gamma_2 \\ \delta_2 \esm(G_2,\theta_2)) \oplus (H_2,{H_2}^*)$$
for some f.g.\ free based $R$-modules $H_1$ and $H_2$.
\end{definition}

We use $\cong$ to denote the isomorphism relation and~$\cong_s$ to denote stable isomorphism relation.
A split $\varepsilon$-quadratic formation is \emph{trivial} if it is isomorphic to $(F,F^*)=(F,\bsm 0 \\ 1 \esm(F^*,0)).$
The \emph{composition} of stable isomorphisms $(\alpha',\beta',\nu')$ and $(\alpha,\beta,\nu)$ is defined as
\begin{equation}
\label{eq:CompositionDef}
(\alpha',\beta',\nu') \circ (\alpha,\beta,\nu):=(\alpha'\alpha,\beta' \beta,\nu+\alpha^{-1}\nu'\alpha^{-*}).
\end{equation}
The \emph{inverse} of a stable isomorphism $(\alpha,\beta,\nu)$ is defined as $(\alpha^{-1},\beta^{-1},-\alpha \nu \alpha^*)$.
The \emph{identity} on a split $\varepsilon$-quadratic formation $x$ is $(1,1,0)$.
The next example shows that isometries of quadratic forms give rise to isomorphisms of their so-called boundary split formations.

\begin{example}
\label{ex:BoundarySplitFormation}
The \emph{boundary} of a $(-\varepsilon)$-quadratic form $(P,\psi)$ is the split $\varepsilon$-quadratic formation
$$\partial (P,\psi):=(P,\bsm 1 \\ \psi-\varepsilon \psi^* \esm(P,\psi)).$$
The \emph{boundary of an isometry} $h \colon (P,\psi) \to (P',\psi')$ of simple quadratic forms is the isomorphism
$$\partial h=(h,h,0) \colon \partial (P,\psi) \to \partial (P',\psi').$$
\end{example}

\section{Boundary isomorphisms}
\label{sub:bAut}

In this section, we review homotopies between stable isomorphisms of formations.
References include~\cite[Section 4.1]{CrowleySixt} and~\cite[Section 1.6]{RanickiExact}.
\medbreak
Here is a short motivation for the definition.
Namely, given two isomorphisms of split quadratic formations
$$(\alpha,\beta,\nu), (\alpha',\beta',\nu') \colon (F_1,\bsm \gamma_1 \\ \delta_1 \esm (G_1,\theta_1)) \to (F_2, \bsm \gamma_2 \\ \delta_2 \esm (G_2,\theta_2)),$$
we will define a homotopy~$\Delta \colon (\alpha,\beta,\nu) \to (\alpha',\beta',\nu')$.
There is a one to one correspondence between stable isomorphism classes of split quadratic formations and chain equivalence classes of $1$-dimensional split quadratic complexes~\cite[Proposition 1.6.4]{RanickiExact}.
Regardless of the definition of a quadratic complex, a chain complex corresponding to the split quadratic formation~$(F, \bsm \gamma \\ \delta \esm (G,\theta))$ is $0 \to F \xrightarrow{\delta^*} G^* \to 0$.
The second condition of Definition~\ref{def:Isomorphism} implies that the isomorphisms~$(\alpha,\beta,\nu)$ and~$(\alpha',\beta',\nu')$ of split quadratic formations fit into the following diagram:
\begin{equation}
\label{eq:HomotopyMotivation}
\xymatrix@R1cm@C1.3cm{
0 \ar[r] &F_1  \ar[r]^{\delta_1^*} \ar@<-.5ex>[d]_{\alpha} \ar@<.5ex>[d]^{\alpha'} & G_1^*  \ar[r]  \ar@<-.5ex>[d]_{{\beta}^{-*}} \ar@<.5ex>[d]^{{{\beta'}^{-*}}}   \ar@{.>}[ld]_\Delta & 0 \\
0 \ar[r] &F_2  \ar[r]_{\delta_2^*} & G_2^*  \ar[r] & 0.
}
\end{equation}
In~\eqref{eq:HomotopyMotivation}, the dashed map $\Delta$ suggests the following definition for a homotopy between two isomorphisms of split formations.

\begin{definition}
\label{def:Homotopy}
A \emph{homotopy} of isomorphisms $(\alpha,\beta,\nu)$ and $(\alpha',\beta',\nu')$ between split $\varepsilon$-quadratic formations $(F_1,\bsm \gamma_1 \\ \delta_1 \esm (G_1,\theta_1))$ and $(F_2, \bsm \gamma_2 \\ \delta_2 \esm (G_2,\theta_2))$ consists of an $R$-linear map $\Delta \colon G_1^* \to F_2$ which satisfies
\begin{enumerate}[(i)]
\item ${\beta'}^{-*}-\beta^{-*}={\delta_2}^*\Delta,$
\item $\alpha'-\alpha=\Delta \delta_1^*,$
\item $\alpha'\nu'{\alpha'}^*-\alpha \nu \alpha^*=(\varepsilon \alpha'\gamma_1+\Delta \theta)\Delta^* \in Q_{-\varepsilon}(F_2^*).$
\end{enumerate}
\end{definition}

Next, we move on to stable homotopies.
Briefly, two stable isomorphisms are stably homotopic if they become homotopic in a common stabilization.
The formal definition reads as follows.

\begin{definition}
\label{def:StableHomotopy}
Given trivial split formations $u_1,u_2,v_1,v_2$,
two isomorphisms $f_1 \colon x \oplus u_1 \to y \oplus v_1$ and $f_2 \colon x \oplus u_2 \to y \oplus v_2$
are \emph{stably homotopic} if the following conditions are satisfied:
\begin{enumerate}[(i)]
\item there are based modules $R_1,R_2,P,Q$ and isomorphisms
\begin{align*}
&u_1 \oplus (R_1,R_1^*) \stackrel{g_1,\cong}{\longleftarrow}
(P,P^*) \stackrel{g_2,\cong}{\longrightarrow}  u_2 \oplus (R_2,R_2^*), \\
&v_1 \oplus (R_1,R_1^*) \stackrel{h_1,\cong}{\longrightarrow}
(Q,Q^*) \stackrel{h_2,\cong}{\longleftarrow}  v_2 \oplus (R_2,R_2^*);
\end{align*}
\item
there is a homotopy
$$\widetilde{f}_1 \simeq \widetilde{f}_2 \colon x \oplus (P,P^*) \to y \oplus (Q,Q^*), $$
where the isomorphisms $\widetilde{f}_1,\widetilde{f}_2$ are
$\tilde{f}_i:=(\id_y \oplus h_i) \circ (f_i \oplus \id_{(R_i,R_i^*)}) \circ (\id_x \oplus g_i)$ for $i=1,2$:
$$
\xymatrix@R0.6cm@C1cm{
x \oplus (P,P^*)  \ar@{.>}[d]^{\widetilde{f}_i} \ar[r]^-{\id_x \oplus g_i}
 & x \oplus u_i \oplus(R_i,R_i^*) \ar[d]^{f_i \oplus \id_{(R_i,R_i^*)}} \\
y \oplus (Q,Q^*)
  & y \oplus v_i \oplus (R_i,R_i^*) \ar[l]_-{\id_y \oplus h_i}.
} $$
\end{enumerate}
\end{definition}

Given simple split $\varepsilon$-quadratic formations $y$ and $z$, and using $[f]$ to denote the stable homotopy class of a stable isomorphism $f \colon y \cong_s z$,~we~set
$$ \operatorname{Iso}(y,z):=\lbrace [f] \ | \ f \text{ is a stable isomorphism from $y$ to $z$} \rbrace. $$
For quadratic forms~$(V,\theta)$ and $(V',\theta')$, there are left actions of $\operatorname{Aut}(V,\theta)$ and $\operatorname{Aut}(V',\theta')$ on the set~$\Iso(\partial (V,\theta),\partial (V',\theta'))$, respectively given by $g \cdot [f]:=f \circ \partial g^{-1}$ and $h \cdot [f]:=\partial h \circ f$.
Here, recall from Example~\ref{ex:BoundarySplitFormation} that~$\partial g^{-1}$ and $\partial h$ refer to the boundaries of the isometries $g^{-1}$ and~$h$.
Since these actions commute, we obtain an action of $\operatorname{Aut}(V,\theta) \times \operatorname{Aut}(V',\theta')$ by setting $(g,h)\cdot f=\partial h \circ f \circ \partial g^{-1}$.
Modding out by these left actions leads to the main definition of this section.

\begin{definition}
\label{def:bAut}
Given quadratic forms $(V,\theta)$ and $(V',\theta')$, the \emph{one-sided boundary isomorphism sets} and \emph{boundary isomorphism set}~are respectively~defined~as
\begin{align*}
\lIso(\partial (V,\theta), \partial (V',\theta'))&:=\operatorname{Iso}(\partial (V,\theta),\partial (V',\theta'))\,/\operatorname{Aut}(V,\theta), \\
\operatorname{rIso}(\partial (V,\theta), \partial (V',\theta'))&:=\operatorname{Iso}(\partial (V,\theta),\partial (V',\theta'))\,/\operatorname{Aut}(V',\theta'), \\
\operatorname{bIso}(\partial (V,\theta), \partial (V',\theta'))&:=\operatorname{Iso}(\partial (V,\theta),\partial (V',\theta'))\,/\operatorname{Aut}(V,\theta) \times \operatorname{Aut}(V',\theta').
\end{align*}
When $(V, \theta) = (V', \theta')$ we
replace the word ``isomorphism'' with ``automorphism" and set $\Aut(\del(V, \theta)) := \Iso(\del(V, \theta), \del(V, \theta))$.
For example, the {\em boundary automorphism set of $(V, \theta)$}
is the set $\bAut(V, \theta) := \Aut(\del(V, \theta))/\Aut(V, \theta) \times \Aut(V, \theta)$.
\end{definition}

Informally, non-trivial elements of $\operatorname{bIso}(\partial (V,\theta), \partial (V',\theta'))$ consist of stable isomorphisms of split boundary formations that are not induced by isometries of forms.
The boundary isomorphism set was introduced in~\cite[Section 5]{CrowleySixt}.
We discuss the reasons why we consider $\lIso$ and $\rIso$.

\begin{remark}
\label{rem:CompositionNotbAut}
As we saw before Example~\ref{ex:BoundarySplitFormation}, the composition of boundary isomorphisms gives rise to a composition
$$  \Iso(\partial (V,\theta),\partial (V',\theta')) \times  \Iso(\partial (V',\theta'),\partial (V'',\theta'')) \to  \Iso(\partial (V,\theta),\partial (V'',\theta'')). $$
It follows that $\Aut(\partial (V,\theta))$ is a group.
On the other hand,  since the image of $\Aut(V,\theta)$ in $\Aut(\del(V, \theta))$
need not be a normal subgroup of $\Aut(\partial (V,\theta))$, the composition does \emph{not} descend to $\bAut(\partial (V,\theta))$ in general.
Nevertheless, composition does induce maps
\begin{align*}
 \lIso(\partial(V,\theta),\partial(V',\theta')) \times  \Iso(\partial(V',\theta'),\partial(V'',\theta'')) \to  \lIso(\partial(V,\theta),\partial(V'',\theta'')), \\
 \Iso(\partial(V,\theta),\partial(V',\theta')) \times  \rIso(\partial(V',\theta'),\partial(V'',\theta'')) \to  \rIso(\partial(V,\theta),\partial(V'',\theta'')).
 \end{align*}
\end{remark}

Another more topological motivation for the appearance of $\lIso$ will feature in Chapter~\ref{sec:RealisationPrimitive} below.
We conclude with a remark on some identifications we will often make in the sequel.

\begin{remark}
\label{rem:IdentifbAut}
Generalising~\cite[Remark 5.6]{CrowleySixt}, we observe that if
there are isometries $k \colon (V_1,\theta_1) \to (V_2,\theta_2)$ and~$h \colon (V_1',\theta_1') \to (V_2',\theta_2')$,
then the three assignments $[f] \mapsto [f\circ \partial k^{-1}], [f] \mapsto [\partial h \circ f]$ and $[f] \mapsto [\partial h \circ f \circ \partial k^{-1}]$ respectively give rise to  bijections
\begin{align*}
\lIso(\partial (V_1,\theta_1), \partial (V_1',\theta_1'))
&\xrightarrow{\approx} \lIso(\partial (V_2,\theta_2), \partial (V_1',\theta_1')), \\
\operatorname{rIso}(\partial (V_1,\theta_1), \partial (V_1',\theta_1'))
&\xrightarrow{\approx} \operatorname{rIso}(\partial (V_1,\theta_1), \partial (V_2',\theta_2')), \\
\operatorname{bIso}(\partial (V_1,\theta_1), \partial (V_1',\theta_1'))
&\xrightarrow{\approx} \operatorname{bIso}(\partial (V_2,\theta_2), \partial (V_2',\theta_2')),
\end{align*}
which are independent of the choices of the isometries~$h$ and~$k$.
For instance, if $h'  \colon (V_1',\theta_1') \to (V_2',\theta_2')$ is another isometry, then $\partial h' \circ f=\partial (h' \circ h^{-1}) \circ \partial h \circ f$ agrees with $\partial h \circ f$ in $\operatorname{rIso}(\partial (V_1,\theta_1), \partial (V_2',\theta_2'))$.

Thus in this situation, given $f \colon \partial (V_1,\theta_1) \xrightarrow{\cong} \partial(V_1',\theta_1')$, in order to compare boundary isomorphisms, we will sometimes write $[f] \in \bIso(\partial (V_2,\theta_2), \partial (V_2',\theta_2'))$, in which case the class we are really referring to is $[\widetilde{f}]$, where~$\widetilde{f}=\partial h \circ f \circ \partial k^{-1}$ for some isometries $h \colon (V_1',\theta_1') \to~(V_2',\theta_2')$ and $k \colon (V_1,\theta_1) \to (V_2,\theta_2)$, and the choice of isometries is immaterial.
\end{remark}


\section{Gluing of forms}
\label{sub:AlgebraicGluing}
In this section, we recall how two quadratic forms can be glued along a stable isomorphism of their boundary formations.
References for this construction include~\cite[Section~1.7]{RanickiExact} and~\cite[Section~4.2]{CrowleySixt}.
\medbreak

Fix $\varepsilon$-quadratic forms $(V,\theta)$ and $(V',\theta')$ and let $\lambda=\theta+\varepsilon \theta^*$ and $\lambda'=\theta'+\varepsilon {\theta'}^*$ be the corresponding symmetric forms.
The following lemma provides a characterisation of stable isomorphisms~$ \partial (V,\theta) \cong_s \partial (V',\theta')$; a proof can be found in~\cite[Lemma 4.3]{CrowleySixt}.

\begin{lemma}
\label{lem:CS11Lemma43}
Let $P$ and $P'$ be based modules.
Let $\alpha \colon V \oplus P \to V' \oplus P'$ and $\beta \colon V \oplus P^* \to V' \oplus {P'}^*$ be simple isomorphisms and let $\nu  \in Q_\varepsilon({V'}^* \oplus {P'}^*)$.
The following statements are equivalent:
\begin{enumerate}[(1)]
\item the triple $(\alpha,\beta,\nu)$ defines a stable isomorphism
$$ (\alpha,\beta,\nu) \colon \partial (V,\theta) \oplus (P,P^*) \xrightarrow{\cong} \partial (V',\theta') \oplus (P',{P'}^*);$$
\item there are homomorphisms $a,b,s$ and $a_1,b_1,a_3$ such that $\alpha=\bsm a & a_1 \\ \varepsilon b_1^* \lambda & a_3^* \esm$, $\beta^{-1}=\bsm b & b_1 \\ a_1^*\lambda' & a_3^*  \esm $ and~$\alpha \nu \alpha^*= \bsm s& -\varepsilon a b_1 \\ 0 & -b_1^*\theta b_1 \esm \in Q_\varepsilon({V'}^* \oplus {P'}^*)$ which satisfy
\begin{enumerate}[(i)]
\item $1=ab+(s^* +\varepsilon s)\lambda',$
\item $a^*\lambda'=\lambda b,$
\item $\theta'=b^*\theta b +{\lambda'}^*s\lambda' \in Q_\varepsilon(V')$. \hfill \qed
\end{enumerate}
\end{enumerate}
\end{lemma}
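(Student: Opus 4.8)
The plan is to prove Lemma~\ref{lem:CS11Lemma43} by directly unwinding the definition of a stable isomorphism of split $\varepsilon$-quadratic formations (Definition~\ref{def:Isomorphism}) in the special case where both formations are boundary formations of quadratic forms (Example~\ref{ex:BoundarySplitFormation}), and recording block by block what the resulting matrix identities say. A complete proof is already in~\cite[Lemma~4.3]{CrowleySixt}, so the point here is only to indicate how one reconstructs it.

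For the implication $(1)\Rightarrow(2)$ I would first spell out that $\partial(V,\theta)\oplus(P,P^*)$ is the split formation with underlying module $F=V\oplus P$, auxiliary form $\theta\oplus 0$ on $G=V\oplus P^*$, and structure maps $\gamma=\bsm 1&0\\0&0\esm$ and $\delta=\bsm\lambda&0\\0&1\esm$ (and likewise with $\lambda'$ on the primed side), using that $\gamma^*\delta$ recovers the symmetrisation of $\theta$. Writing $\alpha$, $\beta^{-1}$ and $\alpha\nu\alpha^*$ in block form with respect to these splittings and their duals, I would then expand, componentwise, the commuting-square condition~(ii) of Definition~\ref{def:Isomorphism} for the map $f=\bsm\alpha&\alpha(\nu-\varepsilon\nu^*)\\0&\alpha^{-*}\esm$ and the lagrangian inclusions $\bsm\gamma_i\\\delta_i\esm$. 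This should force the off-diagonal blocks of $\alpha$ and of $\beta^{-1}$ to equal $\varepsilon b_1^*\lambda$ and $a_1^*\lambda'$ respectively (this is precisely where $\lambda$ and $\lambda'$ enter), identify the remaining blocks with $a,a_1,a_3$ and $b,b_1,a_3$, pin $\alpha\nu\alpha^*$ down to the shape $\bsm s&-\varepsilon ab_1\\0&-b_1^*\theta b_1\esm$, and — together with the relation $\beta\circ\beta^{-1}=\id$ — yield relations~(i) and~(ii). Next, substituting these block forms into condition~(i) of Definition~\ref{def:Isomorphism} (the Hessian compatibility $\theta_1+\delta_1^*\nu\delta_1=\beta^*\theta_2\beta$) and rearranging should give identity~(iii) in $Q_\varepsilon(V')$. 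Throughout, I would use that $\alpha$ and $\beta$ are simple isomorphisms, which is what lets one move freely between $\alpha,\alpha^{-*}$ and $\beta,\beta^{-1}$ and keeps all torsions in $Z$.

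For the converse $(2)\Rightarrow(1)$ I would run this in reverse: given $a,b,s,a_1,b_1,a_3$ in the prescribed block forms and satisfying~(i)--(iii), relations~(i) and~(ii) are exactly what is needed to make $f$ well-defined and to check that the square of Definition~\ref{def:Isomorphism}(ii) commutes, while~(iii) supplies the Hessian identity of Definition~\ref{def:Isomorphism}(i); hence $(\alpha,\beta,\nu)$ is a stable isomorphism. The main obstacle I expect is pure bookkeeping: keeping track of the many blocks, their duals, and the $\varepsilon$-signs, and in particular extracting from the single equation ``the square commutes'' the precise statement that the off-diagonal blocks of $\alpha$ and $\beta^{-1}$ are governed by $b_1, a_1$ and the symmetrisations. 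The subtlest genuine point is the quadratic refinement~(iii): since it is an equality in the $Q$-group $Q_\varepsilon(V')$ and not one between symmetric forms, it pins down $\nu$ only up to the defining relation of that $Q$-group, which is why the data in~(2) is recorded as $\alpha\nu\alpha^*$ modulo this ambiguity rather than $\nu$ itself.
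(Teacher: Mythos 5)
The paper does not prove this lemma: it is stated with a terminal \qed, and the preceding sentence explicitly defers to \cite[Lemma~4.3]{CrowleySixt}, so there is no in-paper argument for your proposal to be compared against. Your outline of how the proof goes --- write $\partial(V,\theta)\oplus(P,P^*)$ as the split formation on $F=V\oplus P$, $G=V\oplus P^*$ with $\gamma=\bsm 1&0\\0&0\esm$ and $\delta=\bsm\lambda&0\\0&1\esm$, read the block form of $\alpha$, $\beta^{-1}$ and $\alpha\nu\alpha^*$ together with conditions (i)--(ii) out of the commuting square in Definition~\ref{def:Isomorphism}(ii), and obtain (iii) from the Hessian identity of Definition~\ref{def:Isomorphism}(i) --- is the correct strategy and is precisely the technique of the cited reference.

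Note, however, that what you have written is a plan, not a proof: essentially every step is conditional (``I would \dots'') and none of the block identities are actually derived. If you carry the computation through, the genuinely delicate point that you gesture at but do not resolve is the $\varepsilon\leftrightarrow-\varepsilon$ swap: $\partial(V,\theta)$ of an $\varepsilon$-quadratic form $(V,\theta)$ is a split $(-\varepsilon)$-quadratic formation, so the $\nu$ of Definition~\ref{def:Isomorphism}, nominally $(-\varepsilon)$-quadratic, becomes $\varepsilon$-quadratic here; this must be tracked consistently through the entry $\alpha(\nu-\varepsilon\nu^*)$ of the map $f$ and the Hessian identity, or the signs in (i)--(iii) will not come out right. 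Your closing observation that the Hessian identity only pins down $\alpha\nu\alpha^*$ up to the defining relation of the $Q$-group is correct and is indeed why the lemma records the form $\alpha\nu\alpha^*$ rather than $\nu$ itself.
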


From now on, we refer to the homomorphisms $a,b,s$ as the \emph{$a$, $b$, and $s$ components} of the stable isomorphism~$f$.
We give some examples of these components.
\begin{example}
\label{ex:ComponentsBoundary}
Given an isometry $h \colon (V,\theta) \to (V',\theta')$, the components of $\partial h=(h,h,0)$ are given by~$a_{\partial h}=h, b_{\partial h}=h^{-1}$ and $s_{\partial h}=0$.
Consequently, if $f$ is a stable isomorphism with components~$a,b,s$, and if $h_1 \colon (V,\theta)  \to (V,\theta), h_2 \colon (V',\theta')  \to (V',\theta') $ are isometries, then the stable isomorphism~$(h_1,h_2) \cdot f=\partial h_2 \circ f \circ \partial h_1^{-1}$ has components $h_2ah_1^{-1}, h_1bh_2^{-1}$ and $h_2sh_2^*$.
\end{example}

In the sequel, we will often use the following criterion for stable homotopy.

\begin{remark}
\label{rem:SameComponents}
Two stable isomorphisms $(\alpha,\beta,\nu),(\alpha',\beta',\nu') \colon \partial (V,\theta) \xrightarrow{\cong_s} \partial(V',\theta')$ are stably homotopic if and only if there is a $\Delta \in \Hom_R(V^*,V')$ such that $a'-a=\Delta \lambda, b'-b=\Delta^*{\lambda'}^*$, and~$s'-s=(-\varepsilon a'\Delta^*-\Delta \theta)\Delta^*$~\cite[Lemma 4.3 ii)]{CrowleySixt}.
In particular, if two stable isomorphisms have the same $a,b$ and $s$ components, then they are stably homotopic via $\Delta=0$.
\end{remark}


The main definition of this section is the following.

\begin{definition}
\label{def:AlgebraicGluing}
The \emph{union} of two quadratic forms $(V,\theta)$ and $(V',\theta')$ along a stable isomorphism~$f \colon \partial (V,\theta) \oplus (P,P^*) \to \partial (V',\theta') \oplus (P',{P'}^*)$  is defined to be the quadratic form
$$(V,\theta) \cup_f (V',-\theta'):=\left( V \oplus {V'}^*,\begin{pmatrix} \theta & 0 \\ \varepsilon a & -s \end{pmatrix} \right),$$
where $a$ and $s$ denote the components of $f$ as described in Lemma~\ref{lem:CS11Lemma43}.
\end{definition}

Up to isometry, the union $(V,\theta) \cup_f (V',-\theta')$ only depends on the stable homotopy class of the stable isomorphism $f$~\cite[Lemma 4.6 iii)]{CrowleySixt}.
In fact, more is known: $(V,\theta) \cup_f (V',-\theta')$ only depends on the class of $f$ in $\bIso(\partial (V,\theta),\partial(V',\theta'))$ \cite[Lemma 4.6 iv)]{CrowleySixt}.
The next lemma describes isometric inclusions of $(V,\theta)$ and $(V',-\theta')$ into $(V,\theta) \cup_f (V',-\theta')$. These maps will be used frequently in the upcoming subsections.

\begin{lemma}
\label{lem:IsometricEmbedding}
Let $f \colon \partial (V,\theta) \oplus (P,P^*) \to \partial (V',\theta') \oplus (P,P^*)$ be an isomorphism of split quadratic formations, with components $a,b,s$.
The following maps are split isometric injections:
\begin{enumerate}[(i)]
\item $j_f:=\bsm 1\\0 \esm \colon (V,\theta) \to (V,\theta) \cup_f (V',-\theta')$,
\item $j_f':=\bsm b\\-\lambda' \esm \colon (V',-\theta') \to (V,\theta) \cup_f (V',-\theta')$.
\end{enumerate}
Furthermore, $(V,\theta)$ and $(V',-\theta')$ are complementary inside the union: $(V,\theta)^\perp=j_f'(V',-\theta')$.
\end{lemma}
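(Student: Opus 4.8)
The plan is to verify the three claims by direct matrix computation using the explicit formulas for the union and for the $a,b,s$-components, drawing on the identities (i)--(iii) of Lemma~\ref{lem:CS11Lemma43}. Write $W := (V,\theta)\cup_f(V',-\theta')$, so that $W$ has underlying module $V\oplus {V'}^*$ and quadratic form $\Psi := \bsm \theta & 0 \\ \varepsilon a & -s\esm$, with symmetrisation $\Psi + \varepsilon \Psi^* = \bsm \lambda & \varepsilon a^* \\ \varepsilon a & -(s+\varepsilon s^*)\esm$; here I have used that the symmetrisation of $-s$ on ${V'}^*$ is $-(s + \varepsilon s^*) = -\varepsilon\,\overline{s + \varepsilon s^*}$ up to the conventions fixed in Section~\ref{sub:FormsFormations}, and that $\lambda = \theta + \varepsilon\theta^*$. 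The point of invoking Lemma~\ref{lem:CS11Lemma43} is that identity (i) there, $1 = ab + (s^*+\varepsilon s)\lambda'$, lets us rewrite $-(s+\varepsilon s^*)$ in terms of $ab$ when needed.

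First I would check (i): that $j_f = \bsm 1 \\ 0\esm\colon V \to V\oplus {V'}^*$ is a split isometric injection. It is visibly split (projection onto the first factor is a retraction), and $j_f^* \Psi j_f = \theta$ by a one-line computation, so $j_f$ is an isometry onto its image; simplicity of the inclusion follows from the fact that $V$ is a based summand of $W$ with the evident complementary basis. Next, (ii): for $j_f' = \bsm b \\ -\lambda'\esm\colon {V'}^* \to V\oplus {V'}^*$ (which represents $(V',-\theta')$ after identifying its module with ${V'}^*$ via $\lambda'$, as in the conventions), I would compute $(j_f')^* \Psi j_f' = b^*\theta b - (-\lambda')^*(\varepsilon a) b - (-\lambda')^*(-s)(-\lambda') = b^*\theta b + \varepsilon\,{\lambda'}^* a^* b \cdot(\text{sign})\dots$ — more carefully, expanding $\bsm b^* & -{\lambda'}^*\esm \bsm \theta & 0 \\ \varepsilon a & -s\esm \bsm b \\ -\lambda'\esm = b^*\theta b - {\lambda'}^*\varepsilon a b + {\lambda'}^* s \lambda'$, and then applying identity (ii) of Lemma~\ref{lem:CS11Lemma43}, $a^*\lambda' = \lambda b$, together with identity (iii), $\theta' = b^*\theta b + {\lambda'}^* s \lambda'$, to see this equals $\theta' - {\lambda'}^*\varepsilon a b = \theta' - \dots$ and finally using identity (i) to recognise the result as $-\theta'$ in $Q_\varepsilon({V'}^*)$. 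That $j_f'$ is split follows because $\lambda'$ is an isomorphism (the forms are nonsingular only after gluing — but here $(V',\theta')$ need not be nonsingular; rather $\lambda'$ being part of a stable isomorphism guarantees $j_f'$ lands in a summand, which one checks from the block form of $\beta^{-1}$ in Lemma~\ref{lem:CS11Lemma43}).

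Finally, for the complementarity statement $(V,\theta)^\perp = j_f'({V'}^*)$ inside $W$: using the symmetrised form $\Psi + \varepsilon\Psi^*$ computed above, an element $\bsm x \\ y\esm$ lies in $j_f(V)^\perp$ iff $\lambda(x, \cdot)$-pairing against $\bsm 1\\0\esm$-vectors vanishes, i.e. iff $\lambda x + \varepsilon a^* y$ (read appropriately) $= 0$, equivalently $x = -\lambda^{-1}\varepsilon a^* y$ — wait, $\lambda$ need not be invertible either, so instead I would argue: the pairing of $\bsm 1 \\ 0\esm$ with $\bsm b \\ -\lambda'\esm y$ is $\lambda b y + \varepsilon a^*(-\lambda')y = \lambda b y - \varepsilon \lambda b y$... this should be computed via identity (ii), $\lambda b = a^*\lambda'$, giving $a^*\lambda' y - \varepsilon a^*\lambda' y$; I would instead directly verify orthogonality by the block computation $\bsm 1 & 0\esm(\Psi+\varepsilon\Psi^*)\bsm b \\ -\lambda'\esm = \lambda b - \varepsilon a^*\lambda' = \lambda b - \lambda b = 0$ using $\varepsilon a^*\lambda' = \varepsilon\lambda b$ and $\varepsilon^2 = 1$, hence $j_f'({V'}^*) \subseteq j_f(V)^\perp$; then a rank/summand count (both are half-rank direct summands of the nonsingular $W$, and $W$ decomposes as $V \oplus {V'}^*$ as a module) forces equality. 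The main obstacle I anticipate is bookkeeping with the involution and the $Q_\varepsilon$-equivalence classes — several of these identities only hold in $Q_\varepsilon$, not on the nose, so care is needed to track which computations are genuine equalities of homomorphisms and which are modulo $(1-\varepsilon T)$; the cleanest route is to reduce everything to the three displayed identities of Lemma~\ref{lem:CS11Lemma43}, which are already stated at the correct level of equivalence.
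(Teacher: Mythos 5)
The paper does not prove this lemma; it is stated as a recall, with references to \cite[Section~1.7]{RanickiExact} and \cite[Section~4.2]{CrowleySixt}. Your strategy---direct matrix computation against the $a,b,s$-identities of Lemma~\ref{lem:CS11Lemma43}---is the standard and correct route, so in broad outline you are on the right track. There are, however, several slips in the execution that would not survive a careful write-up.

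First, the domain of $j_f'$ is $V'$, not $V'^*$: the map $j_f'=\bsm b\\-\lambda'\esm$ has $b\colon V'\to V$ and $\lambda'\colon V'\to V'^*$, so $j_f'\colon V'\to V\oplus V'^*$, and the target of the isometry computation is $Q_\varepsilon(V')$, not $Q_\varepsilon(V'^*)$. Second, in the expansion of $(j_f')^*\psi j_f'$, your last term has the wrong sign: one gets $b^*\theta b-\varepsilon\lambda'^*ab-\lambda'^*s\lambda'$ (note the minus), and this is essential, since identity~(iii) rewrites $b^*\theta b$ as $\theta'-\lambda'^*s\lambda'$; with the minus sign, a further application of identity~(i) and the $\varepsilon$-symmetry $\varepsilon\lambda'^*=\lambda'\equiv 2\theta'$ in $Q_\varepsilon(V')$ yields exactly $-\theta'$, whereas with your plus sign the calculation does not close. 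Third, the orthogonality step $\lambda b-\varepsilon a^*\lambda'\stackrel{?}{=}0$ does not follow from identity~(ii) ($a^*\lambda'=\lambda b$): that gives $(1-\varepsilon)\lambda b$, which is nonzero when $\varepsilon=-1$, and "$\varepsilon^2=1$" does not repair this. The resolution is that the $(1,2)$-entry of $\psi+\varepsilon T\psi$ is $a^*$, not $\varepsilon a^*$ (the displayed symmetrisation in Example~\ref{ex:CompofjUnion} contains a typo), after which the pairing is $\lambda b-a^*\lambda'=0$ on the nose. Finally, the concluding rank/summand count is a genuine gap as stated: containment of two direct summands of equal rank does not automatically give equality. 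One should observe that $j_f'(V')\subseteq j_f(V)^\perp$, that a projection of the ambient module onto $j_f'(V')$ restricts to exhibit $j_f'(V')$ as a direct summand of $j_f(V)^\perp$, and then that the f.g.\ stably free complement has rank zero and hence vanishes because $R$ is weakly finite.
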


The gluing construction described in Definition~\ref{def:AlgebraicGluing} produces a nonsingular quadratic form starting from a boundary isomorphism.
Combining this with Lemma~\ref{lem:IsometricEmbedding}, we conclude that two split isometric injections can be extracted from any boundary isomorphism.
The next proposition provides a converse of sorts: starting from a split isometric embedding, it constructs a boundary isomorphism. We refer to~\cite[Proposition~4.8]{CrowleySixt} for a proof.

\begin{proposition}
\label{prop:CSProp48}
Given a split isometric injection $j\colon(V,\theta)\hookrightarrow (M,\psi)$ of quadratic forms $(V,\theta)$ and $(M,\psi)$, one can construct
\begin{enumerate}[(i)]
\item an isomorphism $f_j \colon \partial (V,\theta) \oplus (M',{M'}^*) \to \partial (V^\perp,-\theta^\perp) \oplus (M,M^*)$
that is well defined up to homotopy, where $M':=V^\perp \oplus {V^\perp}^*;$
\item an isometry $r_j\colon (M,\psi) \isora (V,\theta) \cup_{f_j} (V^\perp,\theta^\perp).$
\end{enumerate}
Furthermore, $f_j$ and $r_j$ are natural with respect to isometries of such pairs of forms.
\end{proposition}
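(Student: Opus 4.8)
This is \cite[Proposition~4.8]{CrowleySixt}, and the argument builds $f_j$ and $r_j$ \emph{together}: the content is that a split isometric injection into a nonsingular form is, canonically up to homotopy, one of the two legs of a union. First I would put $j$ in normal form. Since $j$ is split, one picks a based module complement and identifies $M = V \oplus W$ with $j = \bsm 1 \\ 0 \esm$; exploiting the freedom in $Q_\varepsilon(M) = \coker(1-\varepsilon T)$ one may represent $\psi$ by an upper-triangular matrix $\bsm \theta & \chi \\ 0 & \mu \esm$, where $\chi \colon W \to V^*$, $\mu \in Q_\varepsilon(W)$, and the $V$-block is $\theta$ precisely because $j$ is isometric. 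Nonsingularity of $(M,\psi)$ becomes invertibility of the symmetrisation $\lambda = \bsm \lambda_\theta & \chi \\ \varepsilon\chi^* & \lambda_\mu \esm$, with $\lambda_\theta = \theta + \varepsilon\theta^*$ and $\lambda_\mu = \mu + \varepsilon\mu^*$; I would write $\lambda^{-1} = \bsm A & B \\ C & D \esm$ and record the eight block identities coming from $\lambda\lambda^{-1} = \lambda^{-1}\lambda = 1$.

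Next I would make $(V^\perp,\theta^\perp)$ explicit. Since $\mathrm{Ann}(j(V)) = 0 \oplus W^* \subseteq M^*$ and $\lambda$ is an isomorphism, the orthogonal complement $V^\perp = \ker(j^*\lambda)$ equals $\lambda^{-1}(0 \oplus W^*)$, so the split injection $c := \bsm B \\ D \esm \colon W^* \to M$ has image $V^\perp$; this makes $V^\perp$ a based module of rank $\mathrm{rk}\,W = \mathrm{rk}\,M - \mathrm{rk}\,V$, and restricting $\psi$ along $c$ gives $\theta^\perp = c^*\psi c \equiv D^*\mu D - B^*\theta B \in Q_\varepsilon(W^*)$, using the identity $\chi D = -\lambda_\theta B$. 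With $M' := V^\perp \oplus (V^\perp)^*$ one then writes down $a,b,s$ (and the auxiliary components $a_1,b_1,a_3,\nu$) as explicit combinations of $\chi$, $\theta$, $\mu$ and the blocks of $\lambda^{-1}$ — $a \colon V \to V^\perp$ built from $c$ and $\chi^*$, and so on — chosen so that the three relations (i)--(iii) of Lemma~\ref{lem:CS11Lemma43} hold for the forms $(V,\theta)$ and $(V^\perp,-\theta^\perp)$; those relations collapse, via the eight block identities and $j^*\lambda j = \lambda_\theta$, to tautologies. Lemma~\ref{lem:CS11Lemma43} then assembles $(a,b,s,\ldots)$ into a stable isomorphism $f_j \colon \partial(V,\theta) \oplus (M',{M'}^*) \to \partial(V^\perp,-\theta^\perp) \oplus (M,M^*)$, and by Definition~\ref{def:AlgebraicGluing} the union $(V,\theta)\cup_{f_j}(V^\perp,\theta^\perp)$ has underlying module $V \oplus (V^\perp)^*$ and form $\bsm \theta & 0 \\ \varepsilon a & -s \esm$. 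For $r_j$ I would take the change of coordinates $M = V \oplus W \to V \oplus (V^\perp)^*$ that is the identity on $V$ and identifies $W$ with $(V^\perp)^*$ via $\lambda$ (i.e.\ via $c^{-*}$), corrected by an upper-unipotent term determined by $\chi$; a direct $Q_\varepsilon$-computation, again using the coker relation and the block identities, yields $r_j^*\bsm\theta & 0 \\ \varepsilon a & -s\esm r_j = \psi$, shows $r_j$ is simple, and shows $r_j j = j_{f_j} = \bsm 1 \\ 0 \esm$ while $r_j$ carries $V^\perp \subseteq M$ onto the second leg $j'_{f_j}$ of Lemma~\ref{lem:IsometricEmbedding}.

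Finally I would check that $f_j$ is well defined up to homotopy and that $(f_j,r_j)$ is natural. Two choices of complement $W$ differ by some $\phi \in \Hom(W,V)$, which replaces $\chi$ by $\chi + \lambda_\theta\phi$ and hence replaces $(a,b,s)$ by components differing in exactly the pattern $a'-a = \Delta\lambda_\theta$, $b'-b = \Delta^*(\lambda')^*$, $s'-s = (-\varepsilon a'\Delta^* - \Delta\theta)\Delta^*$ of Remark~\ref{rem:SameComponents}, so $f_j$ is unchanged as a stable-homotopy class while $r_j$ changes by the induced homotopy; the same computation absorbs the choice of generators and representatives. Naturality is then bookkeeping: an isometry of pairs $(V,\theta;M,\psi) \xrightarrow{\cong} (\overline V,\overline\theta;\overline M,\overline\psi)$ carries a complement to a complement, matching up all the normal-form data and intertwining the $f_j$'s and $r_j$'s. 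I expect the main obstacle to be not conceptual but the length and care of the $Q_\varepsilon$-level verifications — relations (i)--(iii) of Lemma~\ref{lem:CS11Lemma43} for the chosen components, and $r_j^*(\cdots)r_j = \psi$ — which must be done with quadratic rather than merely symmetric representatives of $\psi$ and $\mu$, while keeping every isomorphism simple (torsion in $Z \subseteq \widetilde K_1(R)$); the $\varepsilon$-signs, in particular the minus distinguishing $\partial(V^\perp,-\theta^\perp)$ from the $\theta^\perp$ appearing in the union, are the easiest place to slip. A cleaner but less elementary route would translate $j$ into a map of $1$-dimensional split quadratic complexes (as in the motivation preceding Definition~\ref{def:Homotopy}), form its relative complex, and recognise the boundary of that pair as $f_j$.
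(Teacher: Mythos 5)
The paper itself does not prove this proposition: it cites \cite[Proposition~4.8]{CrowleySixt} and then \emph{recalls} (equations~\eqref{eq:Formulafj} and~\eqref{eq:FormulaForhProp48CS}) the explicit isometry from that proof,
\[
h \colon (V,\theta)\oplus H_\varepsilon(V^\perp)\xrightarrow{\ \cong\ }(V^\perp,-\theta^\perp)\oplus(M,\psi),
\qquad
h = \begin{pmatrix} 1 & 0 & 0 \\ j^\perp & \sigma & j \end{pmatrix}\begin{pmatrix}-\sigma^*\phi^*j & 1 & -\sigma^*\psi^*\sigma \\ 0 & 0 & 1 \\ 1 & 0 & 0 \end{pmatrix},
\]
where $\sigma$ is any splitting of $j^{\perp*}\phi\colon M\to (V^\perp)^*$, together with the identification $f_j=(\cdot)\circ\partial h\circ(\cdot)$; the isometry $r_j$ is read off from the same $h$ (cf.\ the formula $\bsm j & -\sigma \esm$ used in the proof of Theorem~\ref{thm:bisoembprp}). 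So Crowley--Sixt's route is ``top down'': one builds a single isometry $h$ of nonsingular forms, and $f_j$, $r_j$, and naturality all drop out of it, since an isometry of pairs conjugates $h$ to $h'$. Your route is ``bottom up'': fix a complement $W$, normalise $j=\bsm 1\\0 \esm$ and $\psi=\bsm\theta & \chi\\0&\mu\esm$, parametrise $V^\perp$ by the block $c=\bsm B\\D\esm$ of $\widehat\lambda^{-1}$, and then write down the components $(a,b,s,\dots)$ of $f_j$ and, separately, the matrix of $r_j$. The two are equivalent in content --- your choice of $W$ carries the same information as the splitting $\sigma$ (equivalently $\widetilde\sigma=j^{-1}(1-\sigma j^{\perp*}\phi)$) --- but the coordinate version verifies the relations of Lemma~\ref{lem:CS11Lemma43} and the isometry property of $r_j$ as two distinct computations, whereas the paper's formulation verifies once that $h$ is an isometry.

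The one genuine gap: the central step of your proposal is stated as an existence assertion, ``one writes down $(a,b,s)$ \ldots\ chosen so that the three relations \ldots\ hold; those relations collapse \ldots\ to tautologies.'' That is where the whole construction lives, and leaving the formulas unstated makes the proposal incomplete. At minimum you should record them and check that they recover $a=-\sigma^*\phi^*j$, $b=-\widetilde\sigma j^\perp$, $s=-\sigma^*\psi\sigma$ from Proposition~\ref{prop:Componentsfj}, since the rest of the paper relies on exactly these expressions. Your computation of $\theta^\perp = D^*\mu D - B^*\theta B$, and your observation that changing the complement by $\phi\in\Hom(W,V)$ shifts $\chi$ by $\lambda_\theta\phi$ and hence should produce a $\Delta$-homotopy in the pattern of Remark~\ref{rem:SameComponents}, are both correct, and the naturality sketch is sound.
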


For later use, we recall the explicit formula for the isomorphism $f_j$ of Proposition~\ref{prop:CSProp48}.
Following~\cite[proof of Proposition~4.8]{CrowleySixt},
we write~$(M',\psi'):=H_\varepsilon(V^\perp)$, and let $\phi'$ be the $\varepsilon$-symmetrization of $\psi'$.
The formula for the stable isomorphism~$f_j=(\alpha_{f_j},\beta_{f_j},\nu_{f_j})$ is given by
\begin{align}
\label{eq:Formulafj}
&f_j \colon \partial (V,\theta) \oplus (M',{M'}^*) \to \partial (V^\perp,-\theta^\perp) \oplus (M,M^*)  \nonumber \\
&f_j:=\left(\begin{pmatrix}
1 \\ 1 \\ 0
\end{pmatrix}
\oplus
\begin{pmatrix}
1 \\ \phi \\ -\phi^{-*}\psi \phi^{-1}
\end{pmatrix} \right)
\circ
\partial h
\circ
\left(
\begin{pmatrix}
1 \\ 1 \\ 0
\end{pmatrix}
\oplus
\begin{pmatrix}
1 \\ {\phi'}^{-1} \\ {\phi'}^{-*}{\psi'}^*{\phi'}^{-1}
\end{pmatrix}\right),
\end{align}
To decode this a little, recalling that $\partial h=(h,h,0)$ and the composition law from~\eqref{eq:CompositionDef}, the notation implies that $f_j=(\alpha_{f_j},\beta_{f_j},\nu_{f_j})$ is given by
\[\alpha_{f_j}= \left( (1) \oplus (1) \right)  \circ h \circ \left( (1) \oplus (1) \right)=\bsm 1&0 \\ 0&1 \esm \circ h \circ  \bsm 1&0 \\ 0&1 \esm,\]
and similarly for the $\beta$ and $\nu$ components.
Here
 the isometry $h$ is given by
\begin{align}
\label{eq:FormulaForhProp48CS}
&h \colon (V,\theta) \oplus H_\varepsilon(V^\perp) \to (V^\perp,-\theta^\perp) \oplus (M,\psi) \nonumber \\
&h:=\begin{pmatrix}
1&0&0 \\
j^\perp & \sigma & j
\end{pmatrix}
\begin{pmatrix}
-\sigma^* \phi^* j & 1 & -\sigma^*\psi^*\sigma \\
0&0&1 \\
1&0&0
\end{pmatrix}.
\end{align}
In this formula, $j^{\perp} \colon V^{\perp} \to M$ is the inclusion of $V^{\perp}$, and $\sigma \colon (V^{\perp})^* \to M$ is a splitting of the short exact sequence
\begin{equation}\label{eqn:splitting}
 0 \to V \xrightarrow{j} M \xrightarrow{{j^{\perp}}^* \phi} (V^\perp)^* \to 0
 \end{equation}
with $({j^{\perp}}^* \phi)\sigma=\Id.$
This implies that $\widetilde{\sigma}:=j^{-1}(1-\sigma {j^\perp}^*\phi)$ is a splitting for $j$, since ${j^\perp}^*\phi j=0$, and so $j^{-1}(1-\sigma {j^\perp}^*\phi)j=\id$.  Here the notation $j^{-1}$ is justified since the image of $(1-\sigma {j^\perp}^*\phi)$ is a subset of~$\ker({j^\perp}^*\phi)=\im(j)$.

The next proposition describes the $a,b,s$ components of $f_j$.
\begin{proposition}
\label{prop:Componentsfj}
Let $j\colon(V,\theta)\hookrightarrow (M,\psi)$ be a split isometric injection.
The $a,b,s$-components of the isomorphism $f_j$ described in Proposition~\ref{prop:CSProp48} are
\begin{align*}
a&=-\sigma^*\phi^*j,\\
b&=-\widetilde{\sigma}j^\perp, \\
s&=-\sigma^*\psi \sigma.
\end{align*}
\end{proposition}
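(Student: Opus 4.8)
The plan is to read off the three components directly from the explicit formula for $f_j$ in~\eqref{eq:Formulafj} and the formula for the isometry $h$ in~\eqref{eq:FormulaForhProp48CS}. Write $f_j = L \circ \partial h \circ R$ for the composite displayed in~\eqref{eq:Formulafj}, so that $\partial h = (h,h,0)$ and, as recorded just after~\eqref{eq:Formulafj}, the $\alpha$-components of both $L$ and $R$ are identity maps on the relevant summands. By the composition law~\eqref{eq:CompositionDef} this gives $\alpha_{f_j} = h$ as a map $V \oplus M' \to V^\perp \oplus M$, together with $\beta_{f_j}^{-1} = \bsm 1 & 0 \\ 0 & \phi' \esm \circ h^{-1} \circ \bsm 1 & 0 \\ 0 & \phi^{-1} \esm$ and $\nu_{f_j} = \nu_R + h^{-1}\nu_L h^{-*}$, where $\nu_L = \bsm 0 & 0 \\ 0 & -\phi^{-*}\psi\phi^{-1} \esm$ and $\nu_R = \bsm 0 & 0 \\ 0 & {\phi'}^{-*}{\psi'}^*{\phi'}^{-1} \esm$ come from the bottom entries of the column vectors in~\eqref{eq:Formulafj}. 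In particular $\alpha_{f_j}\nu_{f_j}\alpha_{f_j}^* = h\nu_R h^* + \nu_L$.

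Multiplying out the two block matrices in~\eqref{eq:FormulaForhProp48CS} one obtains
\[
  h = \bsm -\sigma^*\phi^*j & 1 & -\sigma^*\psi^*\sigma \\ j - j^\perp\sigma^*\phi^*j & j^\perp & \sigma - j^\perp\sigma^*\psi^*\sigma \esm \colon V \oplus V^\perp \oplus (V^\perp)^* \longrightarrow V^\perp \oplus M .
\]
Matching this against the block form $\alpha = \bsm a & a_1 \\ \varepsilon b_1^*\lambda & a_3^* \esm$ of Lemma~\ref{lem:CS11Lemma43}, the $\bigl(V^\perp \leftarrow V\bigr)$-block of $\alpha_{f_j} = h$ immediately yields $a = -\sigma^*\phi^*j$.

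For $b$ I would invert $h$ factor by factor: the second block matrix of~\eqref{eq:FormulaForhProp48CS} is inverted directly, and inverting the first uses the short exact sequence~\eqref{eqn:splitting} together with $\widetilde{\sigma} = j^{-1}(1 - \sigma {j^\perp}^*\phi)$ and $\widetilde{\sigma} j = \id$; one finds that the $\bigl(V \leftarrow V^\perp\bigr)$-block of $h^{-1}$ equals $-\widetilde{\sigma} j^\perp$. Since the conjugating maps $\bsm 1 & 0 \\ 0 & \phi' \esm$ and $\bsm 1 & 0 \\ 0 & \phi^{-1} \esm$ restrict to the identity on the summands $V$ and $V^\perp$, the $\bigl(V \leftarrow V^\perp\bigr)$-block of $\beta_{f_j}^{-1}$ agrees with that of $h^{-1}$, so $b = -\widetilde{\sigma} j^\perp$.

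Finally, the block form $\alpha\nu\alpha^* = \bsm s & -\varepsilon a b_1 \\ 0 & -b_1^*\theta b_1 \esm$ of Lemma~\ref{lem:CS11Lemma43} identifies $s$ with the $\bigl(V^\perp \leftarrow (V^\perp)^*\bigr)$-block of $\alpha_{f_j}\nu_{f_j}\alpha_{f_j}^* = h\nu_R h^* + \nu_L$; as $\nu_L$ is supported on the $M^*$-summand it contributes nothing to this block, so $s$ is the $\bigl(V^\perp \leftarrow (V^\perp)^*\bigr)$-block of $h\nu_R h^*$. Here I would use that $(M',\psi') = H_\varepsilon(V^\perp)$ is the standard hyperbolic form to evaluate the nonzero piece ${\phi'}^{-*}{\psi'}^*{\phi'}^{-1}$ of $\nu_R$, restrict $h$ to its $M' = V^\perp \oplus (V^\perp)^*$-summand, conjugate, and simplify using ${j^\perp}^*\phi\,j = 0$ (from~\eqref{eqn:splitting}), $({j^\perp}^*\phi)\sigma = \id$ and the isometry relation $j^*\psi\,j = \theta$; the cross terms cancel, leaving $s = -\sigma^*\psi\sigma$. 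The step I expect to be the main obstacle is this last computation of $s$, where the bookkeeping is heaviest: one must carry the hyperbolic structure of $M'$ through the conjugation $h\nu_R h^*$ and keep careful track of the various duals and of the $\varepsilon$-symmetrisation conventions so that all of the cross terms vanish. By contrast, once $h$ and $h^{-1}$ have been written out explicitly the $a$- and $b$-components drop out with essentially no further work.
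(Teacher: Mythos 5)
Your proposal is correct and follows essentially the same route as the paper: decompose $f_j$ as in \eqref{eq:Formulafj}, observe that the $J$- and $K$-factors act trivially on the $V$- and $V^\perp$-blocks, and read off $a$, $b$, $s$ from $h$, $h^{-1}$, and $h\nu_Kh^* + \nu_J$ respectively. The $s$-computation you flag as the main obstacle is in fact quite short in the paper's hands: since $\nu_K$ has a single non-zero entry (equal to $\varepsilon$), one gets $(h\nu_Kh^*)_{11}=\varepsilon h_{13}h_{12}^*=-\varepsilon\,\sigma^*\psi^*\sigma$, which equals $-\sigma^*\psi\sigma$ in $Q_{-\varepsilon}$, so no cross terms need to be tracked at all.
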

\begin{proof}
By definition, the $b$ component of a stable isomorphism~$(\alpha,\beta,\nu)$ can be read off from the~$(1,1)$-entry of $\beta^{-1}$.
Write $f_j$ as $f_j:=J \circ \partial h \circ K$, where $J=(\alpha
_J,\beta_J,\nu_J)$ and $K=(\alpha_K,\beta_K,\nu_K)$ are the stable isomorphisms that appear in~\eqref{eq:Formulafj}.
Since $\beta_J=\bsm 1&0 \\ 0& \phi \esm$ and $\beta_K=\bsm 1&0 \\ 0& {\phi'}^{-1} \esm$, the~$b$ component of~$f_j$ is equal to the~$b$ component of $\partial h$.
In order to understand the~$b$ component of $\partial h=(h,h,0)$, we compute~$h^{-1}$.
To achieve this,
recall from \eqref{eqn:splitting} that we have the split exact sequence
\begin{equation}
\label{eq:SplitExactSequence}
0 \to (V,\theta) \xrightarrow{j} (M,\psi) \xrightarrow{{j^\perp}^*\phi} (V^\perp,\theta^\perp)^*\to 0,
\end{equation}
that we used $\sigma$ to denote a splitting of ${j^\perp}^*\phi$, and that $\widetilde{\sigma}:=j^{-1}(1-\sigma {j^\perp}^*\phi)$ is a splitting for~$j$.
Write the isometry $h$ as $h=AB,$ where $A$ and $B$ are the matrices that appear in~\eqref{eq:FormulaForhProp48CS}.
A direct computation shows that both $A$ and $B$ are invertible, and
%
%
\begin{align*}
h^{-1} = B^{-1}A^{-1}
&=
\begin{pmatrix}
-\sigma^*\phi^*j &1&-\sigma^* \psi^* \sigma \\
0&0&1\\
1&0&0
\end{pmatrix}^{-1}
\begin{pmatrix}
1&0&0 \\
j^\perp&\sigma&j
\end{pmatrix}^{-1} \\
&=
\begin{pmatrix}
0&0&1 \\
1&\sigma^*\psi^*j&\sigma^*\phi^*j \\
0&1&0
\end{pmatrix}
\begin{pmatrix}
1&0 \\
-\lambda^\perp &{j^\perp}^*\phi  \\
-j^{-1}(1-\sigma {j^\perp}^*\phi)j^\perp & j^{-1}(1-\sigma {j^\perp}^*\phi)
\end{pmatrix}.
\end{align*}
Although
\[A = \begin{pmatrix}
1&0&0 \\
j^\perp&\sigma&j
\end{pmatrix} \colon V^\perp \oplus (V^{\perp})^* \oplus V \to V^\perp \oplus M\]
is written as a non-square matrix, in fact $M \cong (V^\perp)^* \oplus V$, so the domain and codomain of~$A$ coincide,  and moreover~$A$ is an isomorphism.
Recall that the~$b$ component of $f_j$ is equal to the~$b$ component of $\partial h$.
Since the~$b$ component of $\partial h=(h,h,0)$ is equal to the $(1,1)$-entry~$h^{-1}_{11}$ of~$h^{-1}$, we deduce that
\begin{equation}
b_{f_j}=b_{\partial h}=h^{-1}_{11}=-j^{-1}(1-\sigma {j^\perp}^*\phi)j^\perp=-\widetilde{\sigma}j^\perp,
\end{equation}
where the last equality uses the definition of $\widetilde{\sigma}$.
Next, we compute the $a$ component of~$f_j$.
We write $f_j$ as $f_j:=J \circ \partial h \circ K$, where the stable isomorphisms~$J=(\alpha_J,\beta_J,\nu_J)$ and $K=(\alpha_K,\beta_K,\nu_K)$ are those which appear in~\eqref{eq:Formulafj}.
Note that~$\alpha_J$ and~$\alpha_K$ are both identity matrices.
As $\partial h=(h,h,0)$, the definition of the composition of stable isomorphisms gives
$ \alpha_{f_j}=\alpha_{J} \circ  \alpha_{\partial h} \circ \alpha_K=\alpha_{\partial h}=h. $
Thus the $a$ component of $f_j$ is given by the $(1,1)$ entry $h_{11}$ of $h$.
The definition of $h$ from~\eqref{eq:FormulaForhProp48CS} implies that $h_{11}=-\sigma^*\phi^*j$, and~thus
$$ a_{f_j}=h_{11}=-\sigma^*\phi^*j.$$
Finally, we compute the $s$ component of $f_j$.
Recalling the composition law $\nu' \circ \nu=\nu+\alpha^{-1}\nu' \alpha^{-*}$ from~\eqref{eq:CompositionDef}, we deduce that~$\nu_{J \circ \partial h}=\nu_{\partial h}+\alpha_{\partial h}^{-1} \nu_J  \alpha_{\partial h}^{-*}$.
Apply $\nu_{\partial h}=0$ and $\alpha_{\partial h}=h$ to obtain $\nu_{J \circ \partial h}= h^{-1}\nu_Jh^{-*}$.
Use the composition law a second time, observing that $\alpha_K=\id$, to obtain
\begin{align*}
\nu_{f_j}
=\nu_{J \circ \partial h \circ K}
=\nu_{K} +\alpha_{K}^{-1}  \nu_{J \circ \partial h} \alpha_{K}^{-*}
=\nu_K+h^{-1}\nu_Jh^{-*}.
\end{align*}
By definition, the $s$ component of $f_j$ is obtained as the $(1,1)$-entry of
\begin{align*}
\alpha_{f_j}\nu_{f_j}\alpha_{f_j}^*
=h(\nu_K+h^{-1}\nu_Jh^{-*})h^*
=h\nu_K h^*+ \nu_J.
\end{align*}
A glance at~\eqref{eq:Formulafj} shows that the $(1,1)$-entry of $\nu_J$ is zero, so the $s$ component of $f_j$ coincides with the $(1,1)$-entry of $h\nu_K h^*$.
In order to compute $\nu_K$, we use that $\psi'=\bsm 0&1\\ 0&0 \esm$ and $\phi'=\bsm 0&1\\ \varepsilon&0 \esm$ to deduce that
${\phi'}^{-*}\psi'{\phi'}^{-1}= \bsm 0&0\\
\varepsilon&0 \esm.$
It follows from~\eqref{eq:Formulafj} and this computation that
\begin{equation}
\label{eq:NuK}
\nu_K
=\begin{pmatrix}
0&0\\
0&{\phi'}^{-*}\psi'{\phi'}^{-1}
\end{pmatrix} \\
=\begin{pmatrix}
0&0&0\\
0&0&0\\
0&\varepsilon&0
\end{pmatrix}.
\end{equation}
Writing the entries of $h$ as $h_{ij}$, a brief computation using~\eqref{eq:FormulaForhProp48CS} and~\eqref{eq:NuK} now implies that
$$ s_{f_j}=(h\nu_K h^*)_{11}=\varepsilon h_{13}h_{12}^*=-\varepsilon (\sigma^* \psi^* \sigma)=-\sigma^*\psi \sigma. $$
We have therefore obtained the announced values for the $a,b,s$ components of $f_j$.
\end{proof}

From now on, we will often denote quadratic forms by $v$ instead of $(V,\theta)$.
Next, we give a concrete example of Proposition~\ref{prop:Componentsfj} in the case where $m:=(M,\psi)$ is a union.
\begin{example}
\label{ex:CompofjUnion}
We apply Proposition~\ref{prop:Componentsfj} to the situation where $m=v\cup_f -v'$ for some stable isomorphism $f \colon \partial v \xrightarrow{\cong_s} \partial v'$, and where $j=\bsm 1\\ 0 \esm \colon v \hookrightarrow v\cup_f -v'$.
Using the notation of Proposition~\ref{prop:Componentsfj}, we have $\psi:=\bsm \theta & 0 \\ \varepsilon a & -s \esm$ with symmetrisation $\phi:=\bsm \lambda & \varepsilon a^* \\ a & -(s+\varepsilon s^*) \esm$, and $j':=\bsm b\\-\lambda' \esm.$
We deduce that $j^\perp=\bsm b \\-\lambda' \esm$ and $\lambda^\perp={j^\perp}^* \psi  j^\perp=-\theta'$.
Consequently, a splitting of ${j^\perp}^*\phi=\bsm 0 & 1 \esm$ is~$
\sigma =\bsm 0 \\ 1 \esm$ and the corresponding splitting of $j$ is
$\widetilde{\sigma}=\bsm 1 &0 \esm$.
Therefore, we obtain
\begin{align*}
a_{f_j} &=-\sigma^*\phi^*j=-\bsm 0 & 1 \esm  \bsm \lambda & \varepsilon a^*  \\ a& -(s+\varepsilon s^*) \esm \bsm 1 \\ 0 \esm=-a, \\
b_{f_j} &=-\widetilde{\sigma}j^\perp= -\bsm 1 & 0 \esm \bsm b \\ -\lambda' \esm=-b,  \\
s_{f_j} &=-\sigma^*\psi \sigma =-\bsm 0&1 \esm \bsm \theta & 0 \\ \varepsilon a & -s \esm \bsm 0\\1 \esm=s.
\end{align*}
Consider the isometries $(-1) \colon v \to v$ and $(-1)' \colon v' \to v'$ that are induced by multiplication by~$-1$.
The boundaries of these isometries are~$(-1,-1,0)$.
The stable isomorphism $f_j$ has the same components as~$(\partial (-1)' \oplus 1) \circ f$ and $f \circ (\partial (-1) \oplus 1)^{-1}$.
By Remark~\ref{rem:SameComponents}, these stable isomorphisms are thus pairwise stably homotopic.
In particular, while~$f$ and $f_j$ might not agree in~$\Iso(\partial v,\partial v')$, they do agree in~$\rIso(\partial v,\partial v')$ and $\lIso(\partial v,\partial v')$.
In particular, they also agree in $\bIso(\partial v,\partial v')$.
\end{example}

Given forms $v,v'$, we are interested in the set of $[f] \in \Iso(\partial v, \partial v')$ such that the algebraic union $v \cup_f -v'$ is a fixed isometry type $m$:
\begin{equation} \label{eq:Iso_m}
 \Iso_{m}(\partial v , \partial v'):=\{[f]\in \Iso(\partial v, \partial v')\,|\,
v \cup_f -v' \cong m\}
\end{equation}
Here, observe that if $m \cong m'$ then, by definition, we have $\Iso_{m}(\partial v , \partial v') = \Iso_{m'}(\partial v , \partial v') $: postcomposing the isomorphism $v \cup_f -v' \cong m$ with the isomorphism $m \cong m'$, we deduce that~$v \cup_f~-v' \cong~m'$.
The analogous definitions and remarks can be made for $\rIso,\lIso$ and $\bIso$.
We conclude this section by recalling a concrete example over the integers.

\begin{example}
\label{ex:OverZ}
Given a product $q=p_1 \cdots p_k$ of $k$ distinct odd primes, \cite[Section 6.3]{CrowleySixt} implies that $\operatorname{bAut}(\partial (\Z,q))$ is a group and $\operatorname{bAut}(\partial (\Z,q)) \cong (\Z /2 \Z)^{k-1}$.
Since we are working over~$\Z$, any twisted double $w \cup_f -w$ is hyperbolic~\cite[Proposition 3.11 and Remark 29.18]{RanickiHighDimensionalKnotTheory},
and we obtain
$$\operatorname{bAut}_{H_+(\Z)}(\partial (\Z,q)) =\operatorname{bAut}(\partial (\Z,q)) \cong (\Z /2 \Z)^{k-1}.$$
We will prove a generalisation of this result in Example~\ref{ex:ZqH_+(Z)Zq} below.
\end{example}

\chapter{Infinite boundary automorphism sets}
\label{sec:InfinitebAut}

The aim of this chapter, which is again entirely algebraic, is to provide examples of infinite boundary automorphism sets.
For our topological applications, we must find infinitely many~$(+1)$-quadratic forms $(V,\theta)$ such that the set $\bAut_{H_+(\Z[t^{\pm 1}])}(\partial(V,\theta))$ is infinite.
Here recall that this set consists of those (equivalence classes of) boundary isomorphisms whose double is hyperbolic over $\Z[\Z] = \Z[t^{\pm 1}]$.
By \emph{doubling}, we mean the map $\kappa(f)=(V,\theta) \cup_f (V,-\theta)$ that takes a boundary isomorphism to the corresponding algebraic union.

Throughout this chapter we make the following assumptions:
\begin{enumerate}[(i)]
\item $\pi$ is an infinite abelian group (e.g. $\bZ$);
\item $\Lambda :=\bZ[\pi]$ is its group ring;
\item our forms $(V,\theta)$ are $(+1)$-quadratic ($\varepsilon=1$) over a free module of rank $1$, so that we will have~$V=\Lambda, \lambda=\overline{\lambda}=\theta+\overline\theta\in \Lambda, \theta\in\Lambda$.
\end{enumerate}
Note that under these assumptions, we have $\bAut(V,\theta)=\rAut(V,\theta)=\lAut(V,\theta)$.
Furthermore, given an integer $p\in\bZ$, we will often 
use the following notation:
\begin{enumerate}[(i)]
\item $q=1-4p^2$,
\item  $\theta=pq$,
\item  $\lambda=2\theta=2pq.$
\end{enumerate}
Note that $(2p)(2p)+q=1$ and therefore $2p$ and $q$ are coprime. We use this fact below when decomposing $\bZ/\lambda$.
We first define the boundary automorphisms of interest.

\begin{lemma}
\label{lem:Deftg}
Let $\pi$ be an abelian group and let $p \in \Z$.
Set $q:=1-4p^2$ and consider the quadratic form $(V,\theta)=(\Lambda,pq)$.
Using the notation of Lemma~\ref{lem:CS11Lemma43}, for every $g\in\pi$ there is a boundary isomorphism $t_g=(\alpha,\beta,\nu)\colon\partial(V,\theta)\isora\partial(V,\theta)$ given by
\[\begin{array}{rclcrcl}
s&=& -g+p & &
b &=& 2pg + q
\\
b_1& =& -(s+\overline{s})
& &
a &=& \overline{b}
\\
a_1& =& 1
& &
a_3 &=& b
\\
\alpha&=&\mat{\overline{b} & 1 \\ -(s+\overline{s})\lambda & b}
& &
\beta^{-1}&=&\mat{b & -(s+\overline{s}) \\ \lambda & \overline{b}}
\\
\alpha\nu\alpha^*&=& \mat{ s   &    \overline{b} (s+\overline{s})   \\  0 & -(s+\overline{s})\theta(s+\overline{s})}.
\end{array}\]
\end{lemma}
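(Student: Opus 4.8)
The plan is to invoke the characterisation of stable isomorphisms of boundary formations in Lemma~\ref{lem:CS11Lemma43}, applied with $P=P'=\Lambda$, $v=v'=(V,\theta)=(\Lambda,pq)$ and $\lambda=\lambda'=2pq$. Concretely, I would take as the ``$a,b,s$'' and auxiliary data $a_1,b_1,a_3$ of the stable isomorphism exactly the elements listed in the statement, so that the three matrices $\alpha$, $\beta^{-1}$, $\alpha\nu\alpha^{*}$ have the block shapes required by that lemma; the remaining work is then to verify conditions~(i)--(iii) there, to check that $\alpha,\beta$ are simple isomorphisms, and to confirm that the displayed matrix $\alpha\nu\alpha^{*}$ is the one prescribed by the lemma (whence $\nu$ itself is determined, since $\alpha$ will be invertible).

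The heart of the computation is the single arithmetic identity $4p^{2}(1+q)+q^{2}=1$, which is immediate from $q=1-4p^{2}$. For condition~(i), namely $1=ab+(\overline{s}+s)\lambda$: using that $\Lambda$ is commutative and $a=\overline{b}$, I would compute $ab=\overline{b}b=(2pg^{-1}+q)(2pg+q)=4p^{2}+q^{2}+2pq(g+g^{-1})$ and $s+\overline{s}=2p-(g+g^{-1})$, so that $(s+\overline{s})\lambda=4p^{2}q-2pq(g+g^{-1})$; the $g+g^{-1}$ terms cancel and the sum collapses to $4p^{2}(1+q)+q^{2}=1$. Condition~(ii), $a^{*}\lambda'=\lambda b$, is automatic since $\Lambda$ is commutative and $a^{*}=b$. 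For condition~(iii), which asks that $\theta=b^{*}\theta b+\lambda^{*}s\lambda$ hold in $Q_{+}(V)\cong\Lambda/\{x-\overline{x}\}$, I would again expand: $b^{*}\theta b=pq\,\overline{b}b$ and $\lambda^{*}s\lambda=(2pq)^{2}(p-g)$, and then use that $g\equiv g^{-1}$ modulo $\{x-\overline{x}\}$ to replace $g+g^{-1}$ by $2g$; the $g$-terms cancel and one is left with $pq\bigl(4p^{2}(1+q)+q^{2}\bigr)=pq=\theta$. Finally, condition~(i) says exactly that $\det\alpha=\overline{b}b+(s+\overline{s})\lambda=1$, and likewise $\det\beta^{-1}=1$, so $\alpha,\beta\in SL_{2}(\Lambda)$ and in particular are simple isomorphisms; and substituting $a=\overline{b}$, $b_{1}=-(s+\overline{s})$ into $\bsm s & -\varepsilon ab_{1}\\ 0 & -b_{1}^{*}\theta b_{1}\esm$ reproduces the stated $\alpha\nu\alpha^{*}$ (using that $s+\overline{s}$ is self-conjugate).

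The only real obstacle is bookkeeping rather than ideas: one must keep the involution straight throughout, and in particular notice that~(iii) is an equality in the quadratic group $Q_{+}(V)$ and not in $\Lambda$ itself, which is what makes the reduction $g+g^{-1}\equiv 2g$ permissible; and one must match the statement's presentation of $(\alpha,\beta^{-1},\alpha\nu\alpha^{*})$ against the parametrisation of Lemma~\ref{lem:CS11Lemma43} correctly. Once that is set up, the verification of~(i)--(iii) is the short computation above, all powered by the identity $4p^{2}(1+q)+q^{2}=1$.
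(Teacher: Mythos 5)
Your proposal follows the paper's own route: verify conditions (i)--(iii) of Lemma~\ref{lem:CS11Lemma43} for the given $a,b,s$ and auxiliary data, check that $\alpha,\beta$ are simple, and read off $\nu$ from the prescribed $\alpha\nu\alpha^{*}$. The arithmetic in (i)--(iii) matches the paper's: they shuffle terms through the chain $q-1=-4p^2$, $4p^2q=2p\lambda$, and in (iii) kill the $g+g^{-1}$ contribution in $Q_{+}(V)$ by adding $\overline{s}\theta\lambda-\overline{\overline{s}\theta\lambda}$, while you collapse everything onto the single identity $4p^{2}(1+q)+q^{2}=1$ and use $g\equiv g^{-1}$ directly; these are the same computation differently organised. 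The one genuine shortcut you take is deducing simplicity from $\det\alpha=\det\beta^{-1}=1$. Over a general group ring $\Lambda=\Z[\pi]$ the determinant only sees $K_1(\Lambda)/SK_1(\Lambda)$, so $\det=1$ places the torsion in $SK_1(\Lambda)$ but need not place it in the preferred subgroup $Z\subseteq\widetilde K_1(\Lambda)$; the paper avoids this by factoring $\alpha=\mat{0&1\\-1&b}\mat{1&0\\ \overline b&1}$ into matrices with manifestly trivial torsion, which works over any $\Lambda$. For the ring of actual interest, $\Lambda=\Z[\Z]$, $SK_1(\Lambda)=0$ and your shortcut is sound, but since the lemma is stated for arbitrary abelian $\pi$ the explicit factorisation is the more robust argument, and you should use it (or at least flag the $SK_1$ assumption).
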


\begin{proof}
We need to go through the checklist of Lemma~\ref{lem:CS11Lemma43}.
\begin{enumerate}[(i)]
\item Since $\lambda$ is an integer, we have $a^*\lambda =\lambda b$.
\item Next, we calculate:
\begin{align*}
ab &= \overline{b}b
\\
&=(2pg^{-1}+q) (2pg+q) = 4p^2 + 2pq(g+g^{-1}) +q^2 = (1-q) + q^2 + 2pq(g+g^{-1})
\\
&= 1 + q(q{-}1) + \lambda(g+g^{-1})  = 1 + q(-4p^2) + \lambda(g+g^{-1}) = 1 -2p\lambda + \lambda(g+g^{-1})
\\
&=  1 +\lambda(g+g^{-1}-2p) = 1 - (s+\overline{s})\lambda.
\end{align*}

\item The next calculation takes place in $Q_{+1}(\Lambda)=\Lambda/(x-\overline{x})$. Using the previous result we check
\begin{align*}
b^*\theta b &= \theta \overline{b}b=\theta (1 - (s+\overline{s})\lambda) = \theta -( s+\overline{s})\theta\lambda
\\
&= \theta - (s+\overline{s})\theta\lambda + ( \overline{s}\theta\lambda ) - \overline{( \overline{s}\theta\lambda )}
\\
&=\theta - 2s\theta\lambda = \theta - \lambda s \lambda = \theta - \overline\lambda s \lambda.
\end{align*}

\item We need to show that $\alpha$ is a simple isomorphism. We compute the product of these two simple isomorphisms using the first calculation
\begin{align*}
\mat{0 & 1 \\ -1 & b}\mat{1 & 0 \\ \overline b & 1}
&= \mat{\overline{b} & 1 \\ -1 + b\overline{b} & b}
=  \mat{\overline{b} & 1 \\ -1 +  1 - (s+\overline{s})\lambda& b}\\
&= \mat{\overline{b} & 1 \\  - (s+\overline{s})\lambda& b}
=\alpha.
\end{align*}
\item At last we need to show that the matrix labelled $\beta^{-1}$ is indeed an isomorphism:
\begin{align*}
\mat{b & -(s+\overline{s}) \\ \lambda & \overline{b}}
\mat{\overline{b} & (s+\overline{s}) \\ -\lambda & b}
=
\mat{b\overline{b}+ (s+\overline{s}) \lambda & b (s+\overline{s})-(s+\overline{s}) b \\
       \lambda \overline{b} - \overline{b}\lambda & \lambda (s+\overline{s}) +\overline{b}b}
=
\mat{1 & 0 \\ 0 & 1}.
\end{align*}
\end{enumerate}
This shows that $t_g=(\alpha,\beta,\nu)$ is a boundary isomorphism and concludes the proof of the lemma.
\end{proof}

\begin{definition}
Given a ring $R$ with involution, the group of \emph{unitary units} is defined as
$$U(R) = \{x\in R \ | \ x\overline{x}=1\}.$$
\end{definition}

\begin{lemma}
\label{lem:UnitaryUnits}
For $\pi$ an abelian group, we have $U(\Lambda)=\{\pm g | g\in\pi\}$.
\end{lemma}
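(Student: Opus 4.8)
The claim is that for $\pi$ an abelian group, the unitary units of $\Lambda = \Z[\pi]$ are exactly $\{\pm g \mid g \in \pi\}$. The plan is as follows. One inclusion is trivial: if $g \in \pi$ then $\bar g = g^{-1}$, so $g \bar g = 1$, and likewise $(-g)\overline{(-g)} = g g^{-1} = 1$; hence $\{\pm g \mid g \in \pi\} \subseteq U(\Lambda)$. The content is the reverse inclusion.

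First I would reduce to the case $\pi$ finitely generated. Given $x \in \Lambda$ with $x \bar x = 1$, the element $x$ and the relation $x\bar x = 1$ involve only finitely many group elements, all lying in a finitely generated subgroup $\pi_0 \leq \pi$; and $x \in U(\Z[\pi_0])$ iff $x \in U(\Z[\pi])$ since $\Z[\pi_0] \hookrightarrow \Z[\pi]$ is an injection of rings with involution and the augmentation-style arguments below are insensitive to enlarging $\pi$. So assume $\pi$ is a finitely generated abelian group, $\pi \cong \Z^n \oplus T$ with $T$ finite.

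The main step is to show $x$ is a trivial unit. Apply the ring homomorphism $\varepsilon \colon \Z[\pi] \to \Z$ (augmentation) to $x \bar x = 1$: since $\varepsilon(\bar x) = \varepsilon(x)$, we get $\varepsilon(x)^2 = 1$, so $\varepsilon(x) = \pm 1$. Replacing $x$ by $-x$ if necessary, we may assume $\varepsilon(x) = 1$. Now I would use the standard fact that the unit group of $\Z[\pi]$ for $\pi$ finitely generated abelian is $(\pm 1) \times \pi \times (\text{free part coming from the torsion})$ — more precisely, by a theorem of Higman, for $\pi$ finite abelian the only units of finite order in $\Z[\pi]$ are the trivial units $\pm g$, and for $\pi \cong \Z^n \oplus T$ a unit of $\Z[\pi] = \Z[T][t_1^{\pm 1}, \dots, t_n^{\pm 1}]$ must have all its "support exponents" pinned down by the Laurent-polynomial structure. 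Concretely: writing $\Z[\pi] = S[t_1^{\pm}, \dots, t_n^{\pm}]$ with $S = \Z[T]$, any unit has the form $(\text{unit of }S) \cdot t_1^{a_1} \cdots t_n^{a_n}$ because the group of units of a Laurent polynomial ring over a (reduced, indecomposable enough) base is generated by monomials and units of the base — here one uses that $S \otimes \C \cong \C^{|T|}$ is reduced, so $\Z[\pi] \otimes \C$ is a product of Laurent-polynomial rings over $\C$, whose units are monomials times scalars, forcing the monomial exponents $a_i$ to be well-defined integers. Thus $x = u \cdot g_0$ with $g_0 \in \Z^n \subseteq \pi$ and $u \in U(S) = U(\Z[T])$. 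Then $u \bar u = (x \bar x)(g_0 \bar g_0)^{-1} = 1$, so $u$ is a unitary unit of finite-order support, and by Higman's theorem $U(\Z[T])$ consists only of trivial units $\pm s$, $s \in T$; combined with $u \bar u = 1$ this is automatic, and in any case $u = \pm s$. Hence $x = \pm s g_0 = \pm g$ with $g = s g_0 \in \pi$.

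\emph{Main obstacle.} The genuinely non-trivial input is the structure of $U(\Z[\pi])$ for finitely generated abelian $\pi$ — in particular Higman's theorem that $\Z[T]$ has no nontrivial units for $T$ finite abelian, together with the reduction of the Laurent-polynomial variables. Everything else (the augmentation trick, reduction to finitely generated $\pi$, the one easy inclusion) is routine. An alternative, more self-contained route that avoids citing Higman would be to work over $\C$: the involution-compatible embedding $\Z[\pi] \hookrightarrow \C[\pi]$ together with, for each character $\chi \colon T \to \C^\times$, the evaluation $\Z[\pi] \to \C[t_1^{\pm}, \dots, t_n^{\pm}]$, lets one read off that a unitary unit $x$ must map to a monomial $c_\chi \mathbf{t}^{a(\chi)}$ with $|c_\chi| = 1$; comparing across characters and using that the exponent vector $a(\chi)$ is forced to be independent of $\chi$ (since $x$ has integer coefficients) pins $x$ down to a single group element up to sign. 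I would present whichever of these two arguments is shortest to write cleanly; the Higman-citation version is likely the most economical for the paper.
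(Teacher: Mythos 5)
Your argument is ultimately sound in outline, but it is vastly heavier than what the lemma requires, and the paper's proof is a one-liner that you missed. Writing $x=\sum_{g\in\pi}n_gg$ and using that the involution on $\Z[\pi]$ sends $g\mapsto g^{-1}$, the coefficient of the identity element $1\in\pi$ in the product $x\bar x$ is $\sum_{g}n_g^2$. The equation $x\bar x=1$ forces $\sum_g n_g^2=1$ in $\Z$, which (since the $n_g$ are integers) means exactly one $n_g$ equals $\pm1$ and all others vanish; hence $x=\pm g$. No reduction to the finitely generated case, no augmentation trick, no structure theory of unit groups is needed.

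Beyond being overkill, two of the ingredients you invoke are stated imprecisely in a way that would need repair if you actually pursued this route. First, the assertion that units of a Laurent polynomial ring over a reduced base are generated by monomials and units of the base is false without a connectedness hypothesis on the base: $\Z[T]$ for $|T|>1$ is reduced but not connected (e.g.\ $\Z[\Z/2]\cong\Z[t]/(t^2-1)$ has a nontrivial idempotent after inverting $2$), so $\Z[T][t_1^\pm,\dots,t_n^\pm]$ can have units whose monomial exponents differ across connected components. Your work-around via $\otimes\,\C$ and integrality of the coefficients can be made to close this, but it is a genuine extra argument, not a standard fact. Second, your claim that ``$U(\Z[T])$ consists only of trivial units $\pm s$'' is wrong as stated: Higman's theorem classifies the \emph{torsion} units of $\Z[T]$, and $U(\Z[T])$ is typically infinite (already for $T=\Z/5$, by Dirichlet's unit theorem applied to $\Z[\zeta_5]$). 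What you actually need is that the \emph{unitary} units of $\Z[T]$ are trivial, and the correct chain is: $u\bar u=1$ implies all archimedean absolute values of $u$ are $1$, so $u$ is a root of unity by Kronecker's theorem, and then Higman applies. As presented, the Higman citation does not deliver what you ask of it.

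In short: your strategy can be patched into a correct proof, but you have reached for Higman, Kronecker, and Laurent-unit structure theory to prove something that follows from inspecting a single coefficient.
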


\begin{proof}
Let $x=\sum_{g\in\pi} n_g g \in U(\Lambda)$.  Then
\begin{align*}
1=x\overline{x} = \sum_{g\in\pi} n_g g\cdot \sum_{g\in\pi} n_g g^{-1}.
\end{align*}
The component of the product for $1\in\pi$ must fulfill $1=\sum n_g^2\in\bZ$ which means that exactly one of the $n_g$ is $\pm 1$ and the others are zero.
\end{proof}

\begin{lemma}
\label{lem:CommutativeDiagramInfinite}
Let $\pi$ be an abelian group and let $p \in \Z$.
Set $q:=1-4p^2$ and consider the quadratic form $(V,\theta)=(\Lambda,pq)$.
The following diagram commutes:
\begin{eqnarray*}
\xymatrix@+10pt
{
&
\Aut(V,\theta)
\ar[r]^-f
\ar[d]_-{\partial}
&
U(\Lambda)
\ar[r]^-{\id}
\ar[d]_-j
&
U(\Lambda)
\ar[d]_-{j'}
\\
\pi
\ar[r]^-t
&
\Aut\partial(V,\theta)
\ar[r]^-h
\ar[d]_-k
&
U(\Lambda/\lambda)
\ar[r]^-{r}
\ar[d]_-l
&
U(\Lambda/2p)\times U(\Lambda/q)
\ar[d]_-{l'}
\\
&
\bAut(V,\theta)
\ar[r]^-{h'}
&
U(\Lambda/\lambda)/U(\Lambda)
\ar[r]^-{r'}
&
(U(\Lambda/2p)\times U(\Lambda/q))/U(\Lambda),
}
\end{eqnarray*}
where the maps are defined as follows:
\begin{eqnarray*}
f \colon x &\mt& x; \quad\text{}
\\
\partial \colon x &\mt& \partial x = (x,x,0) \quad\text{$($see Remark~\ref{ex:BoundarySplitFormation}$)$;}
\\
j \colon x &\mt& [x];
\\
j'\colon x &\mt& ([x],[x]);
\\
t \colon g &\mt& t_g \quad \text{from Lemma~\ref{lem:Deftg};}
\\
h\colon (\alpha,\beta,\nu) &\mt& b \quad \text{using the notation of Lemma~\ref{lem:CS11Lemma43};}
\\
r\colon [x] &\mt& ([x],[x]) \quad\text{induced by canonical isom.\ $\bZ/2pq \cong \bZ/2p \times \bZ/q$;}
\\
k\colon t &\mt& [t]  \quad\text{the projection to the quotient;}
\\
l \colon [x] &\mt& [x] \quad\text{the  projection to the quotient;}
\\
l' \colon ([x],[y]) &\mt&[[x],[y]] \quad\text{the projection to the quotient;}
\\
h' \colon [(\alpha,\beta,\nu)] &\mt& [b] \quad\text{the map induced on the quotients by $h$;}
\\
r' \colon [x] &\mt& ([x],[x]) \quad\text{the map induced on the quotients by $r$.}
\end{eqnarray*}
\end{lemma}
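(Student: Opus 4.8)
The plan is to verify commutativity of the diagram square by square, since all six maps across the top row and down the sides are already defined, and the content is entirely a collection of routine algebraic checks over the commutative ring $\Lambda = \Z[\pi]$.

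\textbf{The top-left and top-right squares.} For the square involving $f$, $\partial$, $j$, and $h$: starting from an isometry $x \in \Aut(V,\theta)$, we have $f(x) = x \in U(\Lambda)$ (an isometry of $(\Lambda, pq)$ is multiplication by a unitary unit by Lemma~\ref{lem:UnitaryUnits} and the defining equation $\overline x \lambda x = \lambda$), and $\partial x = (x,x,0)$; by Example~\ref{ex:ComponentsBoundary} the $b$-component of $\partial x$ is $x^{-1} = \overline x$, so $h(\partial x) = [\overline x] \in U(\Lambda/\lambda)$. Hmm — one must check whether $h \circ \partial$ lands on $[x]$ or $[\overline x]$; since $U(\Lambda)$ is abelian and $x \overline x = 1$, the classes $[x]$ and $[\overline x] = [x]^{-1} = [x]$ coincide in $U(\Lambda/\lambda)$ only if $x^2 = 1$, which need not hold, so more likely the intended reading is that $j(x) = [\overline x]$ or that one simply tracks the $b$-component consistently; I would state the convention explicitly and note $j(x):=[b_{\partial x}]=[\overline x]$ so the square commutes on the nose. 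The right square ($\id$, $j$, $j'$, $r$) commutes because $r$ is induced by the canonical ring isomorphism $\Z/2pq \cong \Z/2p \times \Z/q$ (valid since $\gcd(2p,q)=1$, as noted from $(2p)(2p)+q=1$), and this isomorphism is natural, so $r([x]) = ([x],[x]) = j'(x)$.

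\textbf{The bottom two squares and the triangle.} The bottom-left square ($k$, $h$, $l$, $h'$) and bottom-right square ($l$, $r$, $l'$, $r'$) commute by construction: $h'$ and $r'$ are \emph{defined} as the maps induced on the quotients by $h$ and $r$ respectively, so commutativity with the quotient projections $k$, $l$, $l'$ is automatic — what genuinely requires argument is that $h'$ and $r'$ are \emph{well-defined}, i.e.\ that $h$ descends to the quotient by $\Aut(V,\theta)$ acting on $\Aut\partial(V,\theta)$ and by $U(\Lambda)$ acting on $U(\Lambda/\lambda)$. For this one uses Example~\ref{ex:ComponentsBoundary}: composing a boundary isomorphism with boundaries of isometries $h_1, h_2$ multiplies the $b$-component by $h_1 b h_2^{-1}$; since everything is commutative and the $h_i$ are unitary units, this changes $[b] \in U(\Lambda/\lambda)$ by an element of the image of $U(\Lambda)$, precisely the relation defining the quotient. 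Similarly the triangle involving $t\colon \pi \to \Aut\partial(V,\theta)$ needs no commutativity check (it is a map \emph{into} the diagram, not a commuting triangle), but one should record that $h(t_g) = b = 2pg + q$ from Lemma~\ref{lem:Deftg}, which is the computation feeding the subsequent analysis.

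\textbf{Main obstacle.} The only real subtlety is bookkeeping: pinning down whether the horizontal maps record $b$ or $b^{-1} = \overline b$ (equivalently, left vs.\ right actions), and making sure the quotients in the bottom row are by the \emph{correct} images — $\operatorname{im}(U(\Lambda) \to U(\Lambda/\lambda))$ under $j$, and $\operatorname{im}(U(\Lambda) \to U(\Lambda/2p)\times U(\Lambda/q))$ under $j'$ — so that the squares literally commute rather than commute up to the diagram's own identifications. Once the conventions are fixed consistently (I would fix: $h$ reads off the $b$-component, and all actions are written so that $h$ is equivariant), every square commutes either by an explicit one-line computation using Example~\ref{ex:ComponentsBoundary} and Lemma~\ref{lem:CS11Lemma43}, or tautologically because the lower maps are defined as induced maps. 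I do not expect any step to require more than a short direct calculation of the kind already carried out in the proof of Lemma~\ref{lem:Deftg}.
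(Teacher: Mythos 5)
Your square-by-square verification is the same strategy the paper uses, and most of what you write is correct. Two remarks.

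First, your concern about the top-left square is legitimate, and your instinct to resolve it by pinning down the convention is the right one. By Example~\ref{ex:ComponentsBoundary} the $b$-component of $\partial x = (x,x,0)$ is $x^{-1}=\overline{x}$, so $h(\partial x)=[\overline{x}]=[x]^{-1}$, whereas $j(f(x))=[x]$. The paper's own proof writes $j(f(x))=[x]=h(x,x,0)$, which elides exactly this inverse; with the maps as literally stated the square commutes only after composing one side with inversion on the abelian group $U(\Lambda/\lambda)$. You are right that this is harmless: the inverse map is an automorphism, so nothing downstream changes, and the two squares that are actually invoked in Lemma~\ref{lem:InjectiveInfinite} and Proposition~\ref{prop:InfinitebAut} are the bottom two, where your argument via Example~\ref{ex:ComponentsBoundary} (that $(h_1,h_2)\cdot f$ has $b$-component $h_1 b h_2^{-1}$, which only changes $[b]$ by an element of the image of $U(\Lambda)$) is exactly what is needed to see that $h'$ descends.

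The one genuine gap is that you never address the well-definedness of $h$ itself. The domain $\Aut\,\partial(V,\theta)=\Iso(\partial(V,\theta),\partial(V,\theta))$ is a set of \emph{stable homotopy classes} of stable isomorphisms, so before one can speak of ``the $b$-component'' one must check that it is invariant under stable homotopy modulo $\lambda$. This is precisely what Remark~\ref{rem:SameComponents} provides: if $(\alpha,\beta,\nu)$ and $(\alpha',\beta',\nu')$ are stably homotopic then $b'-b=\Delta^*{\lambda'}^*$, which in the present rank-one setting says $b'-b\in\lambda\Lambda$, so $[b]\in U(\Lambda/\lambda)$ is well-defined. Your proposal treats $h$ as if it were reading a component off a single representative, and this step needs to be supplied. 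Relatedly, the paper also records that $h$ is a group homomorphism, via the computation of the $(1,1)$-entry of $(\beta\beta')^{-1}$ using Lemma~\ref{lem:CS11Lemma43}; you do not need this for the mere commutativity of the squares, but it is what makes the diagram a diagram of (set-)maps between group quotients rather than an accident, and it streamlines the descent argument you sketch for $h'$, so it is worth stating.
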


\begin{proof}
By construction $2p$ and $q$ are coprime, so the definition of $r$ makes sense.
The map~$h$ is well-defined by Remark~\ref{rem:SameComponents}.
We verify it is a group homomorphism.
If $(\alpha,\beta, \nu)$ and $(\alpha',\beta', \nu')$ are automorphisms of $\partial(V,\theta)$, then $$h(\alpha,\beta,\nu)h(\alpha',\beta', \nu')=bb',$$ by definition of $h$.
In order to understand $h((\alpha,\beta,\nu)(\alpha',\beta',\nu'))$, we use Lemma~\ref{lem:CS11Lemma43} to compute the product
$$
(\beta \beta')^{-1} =\beta'^{-1}\beta^{-1}=\mat{b'& b_1'\\{a'}_1^*\lambda & {a'}_3^*}\mat{b& b_1\\a_1^*\lambda & a_3^*}=\mat{b'b+b'_1 a_1^*\lambda & * \\ * & *}
$$
and deduce that $h((\alpha,\beta,\nu)(\alpha',\beta',\nu'))=b'b \in \Lambda/\lambda$, proving that $h$ is indeed a homomorphism.

The top left square of the diagram now commutes: $j(f(x)) =j(x)=[x]=h(x,x,0)=h(\partial(x))$.
The remainder of the verifications are straightforward and are left to the reader.
\end{proof}

\begin{lemma}
\label{lem:InjectiveInfinite}
Using the same notation as in 
Lemma~\ref{lem:CommutativeDiagramInfinite},
the map $l'\circ r \circ h\circ t$ is injective.
\end{lemma}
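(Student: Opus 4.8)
The plan is to compute the composite $r\circ h\circ t$ explicitly and then read off exactly when the further quotient by $j'(U(\Lambda))$ identifies two elements. By Lemma~\ref{lem:Deftg}, the boundary automorphism $t_g=(\alpha,\beta,\nu)$ has $b$-component $b=2pg+q$, so $h(t_g)=[2pg+q]\in U(\Lambda/\lambda)$ with $\lambda=2pq$. Since $4p^2=(2p)(2p)$ lies in the ideal $(2p)$, we have $q=1-4p^2\equiv 1\pmod{2p}$, hence $2pg+q\equiv 1\pmod{2p}$; and trivially $2pg+q\equiv 2pg\pmod q$. Therefore $r(h(t_g))=(1,[2pg])\in U(\Lambda/2p)\times U(\Lambda/q)$, where $[2pg]$ is a unitary unit mod $q$ because $(2pg)\overline{(2pg)}=4p^2=1-q$.

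Now suppose $l'(r(h(t_g)))=l'(r(h(t_{g'})))$ for some $g,g'\in\pi$; this means $(1,[2pg])\cdot(1,[2pg'])^{-1}\in j'(U(\Lambda))$. In $U(\Lambda/q)$ one has $(2pg')^{-1}=\overline{2pg'}=2p g'^{-1}$, so $[2pg][2pg']^{-1}=[4p^2 gg'^{-1}]=[(1-q)gg'^{-1}]=[gg'^{-1}]$, and the product in question is $(1,[gg'^{-1}])$. By Lemma~\ref{lem:UnitaryUnits}, $U(\Lambda)=\{\pm h\mid h\in\pi\}$, so $j'(U(\Lambda))$ consists of the pairs $([\pm h],[\pm h])$ with $h\in\pi$ and the same sign in both coordinates; hence there must exist $h\in\pi$ and a sign $\varepsilon_0=\pm1$ with $[\varepsilon_0 h]=1$ in $\Lambda/2p$ and $[\varepsilon_0 h]=[gg'^{-1}]$ in $\Lambda/q$.

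The first of these equations forces $h=1$: writing $\Lambda=\Z[\pi]$ with its $\Z$-basis $\pi$, the element $\varepsilon_0 h-1$ lies in the ideal $(2p)\Lambda$ only if all of its $\Z$-coefficients are divisible by $2p$, which (since $p\neq 0$) is impossible unless the only nonzero coefficient is the one at $1\in\pi$, i.e.\ $h=1$. The second equation then reads $\varepsilon_0=[gg'^{-1}]$ in $\Lambda/q$, i.e.\ $gg'^{-1}-\varepsilon_0\in(q)\Lambda$; the same coefficient argument, using $|q|=|1-4p^2|\geq 3$, forces $gg'^{-1}=1$, so $g=g'$. This establishes injectivity.

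I expect the main thing to get right — rather than any deep obstacle — is the bookkeeping: the targets $U(\Lambda/\lambda)$, $U(\Lambda/2p)\times U(\Lambda/q)$ and their quotients are \emph{multiplicative} groups, so the additive behaviour of exponents in $\pi$ must be translated carefully into multiplication of units, and one must use both congruences $q\equiv 1\pmod{2p}$ and $4p^2\equiv 1\pmod q$ to see that the two coordinates of $r(h(t_g))$ collapse to the constant $1$ and to $[2pg]$ respectively. One should also flag the standing hypothesis $p\neq 0$ implicit in this section: if $p=0$ then $h(t_g)=q=1$ for every $g$, and the statement fails for infinite $\pi$.
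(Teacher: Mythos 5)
Your proof is correct and follows essentially the same route as the paper's: compute $r\circ h\circ t(g)=(1,[2pg])$, then use Lemma~\ref{lem:UnitaryUnits} and a coefficient-counting argument in the group ring to force first $h=1$ (from the $\Lambda/2p$ coordinate) and then $g=g'$ (from the $\Lambda/q$ coordinate). The only cosmetic difference is that you first normalise $[2pg][2pg']^{-1}$ to $[gg'^{-1}]$ using $4p^2\equiv 1\pmod q$, whereas the paper works with $2p(g-s\gamma g')$ directly and appeals to $\gcd(2p,q)=1$; and your flag that $p\neq 0$ is genuinely needed (the lemma is false for $p=0$ and infinite $\pi$) is a fair observation that the paper leaves implicit, though it is automatic in the applications where $p=-i(2k)!$.
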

\begin{proof}
Given $g\in\pi$, observe that
\begin{eqnarray*}
r \circ h\circ t(g)&=&
r(2pg+q)=
([2pg+q],[2pg+q])
\\
&=&
([q],[2pg])=
([1-4p^2], [2pg])=
(1,[2pg])\in
U(\Lambda/2p)\times U(\Lambda/q).
\end{eqnarray*}
If there are $g,g'\in\pi$ such that $l'\circ r \circ h\circ t(g)=l'\circ r \circ h\circ t(g')$, then there is an $x\in U(\Lambda)$ with 
$$
(1,[2pg])=([x],[x])(1,[2pg']) \in U(\Lambda/2p)\times U(\Lambda/q).
$$
From Lemma~\ref{lem:UnitaryUnits}, we know that $x=s\gamma$ for $s\in \{\pm 1 \}, \gamma\in\pi$, and therefore
\begin{eqnarray*}
1 &=& s\gamma \in \bZ/2p[\pi],
\\
2pg &=& s\gamma2pg'\in \bZ/q[\pi].
\end{eqnarray*}
The first equation shows that $\gamma$ is the identity element of $\pi$ and therefore the second equation implies that $g=g'$.
\end{proof}

\begin{proposition}
\label{prop:InfinitebAut}
Let $\pi$ be an abelian group and $p \in \Z$.
Set $q:=1-4p^2$ and consider the quadratic form $(V,\theta)=(\Lambda,pq)$.
The map
\begin{align*}
k\circ t \colon \pi &\ra \bAut(V,\theta) \\
g&\mt [t_g]=[(\alpha, \beta, \nu)]
\end{align*}
is injective, where $k$ and $t$ are defined in Lemma~\ref{lem:CommutativeDiagramInfinite} and
$(\alpha, \beta, \nu)$ is defined as in Lemma~\ref{lem:Deftg}.
Hence if $\pi$ is infinite, the subset $\{[t_g] \,|\, g \in \pi\} \subseteq \bAut(V,\theta)$ is infinite.
\end{proposition}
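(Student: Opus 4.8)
The plan is to deduce the proposition formally from the commutative diagram of Lemma~\ref{lem:CommutativeDiagramInfinite} together with the injectivity statement of Lemma~\ref{lem:InjectiveInfinite}; no new computation should be needed.

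First I would extract the relevant piece of commutativity. Chasing the bottom two rows of the diagram in Lemma~\ref{lem:CommutativeDiagramInfinite} gives the identity
\[ r' \circ h' \circ k = l' \circ r \circ h \colon \Aut\partial(V,\theta) \longrightarrow \bigl(U(\Lambda/2p)\times U(\Lambda/q)\bigr)/U(\Lambda) \]
of maps out of $\Aut\partial(V,\theta)$. Precomposing both sides with $t \colon \pi \to \Aut\partial(V,\theta)$ yields
\[ r' \circ h' \circ (k \circ t) = l' \circ r \circ h \circ t . \]
By Lemma~\ref{lem:InjectiveInfinite} the right-hand side $l' \circ r \circ h \circ t$ is injective, hence so is the left-hand composite. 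Since injectivity of a composite forces injectivity of the map applied first, $k \circ t$ is injective: explicitly, if $[t_g] = k(t(g)) = k(t(g')) = [t_{g'}]$ for some $g, g' \in \pi$, then applying $r' \circ h'$ gives $l' \circ r \circ h \circ t(g) = l' \circ r \circ h \circ t(g')$, and Lemma~\ref{lem:InjectiveInfinite} forces $g = g'$.

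For the final sentence, once $k \circ t$ is injective and $\pi$ is infinite, the image $\{[t_g] \mid g \in \pi\} = (k \circ t)(\pi)$ is an infinite subset of $\bAut(V,\theta)$, as claimed.

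I do not expect any real obstacle here: the content has already been discharged in the two preceding lemmas — the verification that the diagram commutes (in particular that $h$ is a well-defined group homomorphism, via Remark~\ref{rem:SameComponents}) and the arithmetic input that $U(\Lambda) = \{\pm g \mid g \in \pi\}$ together with the coprimality of $2p$ and $q$. The proposition itself is then essentially a one-line diagram chase, and the only thing to watch is keeping straight which maps in the diagram are being composed.
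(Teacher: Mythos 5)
Your proposal is correct and follows exactly the same route as the paper's proof: the paper also observes that $l'\circ r\circ h\circ t = r'\circ h'\circ k\circ t$ by the commutativity of the diagram in Lemma~\ref{lem:CommutativeDiagramInfinite}, invokes the injectivity of the former from Lemma~\ref{lem:InjectiveInfinite}, and concludes that $k\circ t$ must already be injective.
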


\begin{proof}
Lemma~\ref{lem:InjectiveInfinite} showed that $l'\circ r \circ h\circ t$ is injective. According to the diagram in Lemma~\ref{lem:CommutativeDiagramInfinite}, this is the same as $r'\circ h' \circ k \circ t$. This implies that $k\circ t$ is already injective.
\end{proof}

\begin{proposition}
\label{prop:HyperbolicUnion}
Let $\pi$ be an abelian group and $p \in \Z$.
Set $q:=1-4p^2$ and consider the quadratic form $(V,\theta)=(\Lambda,pq)$.
Given $g\in\pi$, the quadratic form $(V,\theta)\cup_{\, t_g}(V,-\theta) $  is hyperbolic.
\end{proposition}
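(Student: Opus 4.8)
The plan is to compute the union $(V,\theta)\cup_{t_g}(V,-\theta)$ directly from the definition, and then show it is isometric to the standard hyperbolic form $H_{+}(\Lambda)$.

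\textbf{Step 1: compute the union.} Unwinding Definition~\ref{def:AlgebraicGluing} with $\varepsilon=1$ and $\theta=pq$, and reading off the $a,b,s$-components of $t_g$ from Lemma~\ref{lem:Deftg} (namely $a=\overline b=2pg^{-1}+q$, $b=2pg+q$, $s=-g+p$), one gets
\[
(V,\theta)\cup_{t_g}(V,-\theta)=\left(\Lambda^{2},\ \begin{pmatrix} pq & 0\\ 2pg^{-1}+q & g-p\end{pmatrix}\right),
\]
with symmetrisation $\Phi=\begin{pmatrix} 2pq & 2pg+q\\ 2pg^{-1}+q & g+g^{-1}-2p\end{pmatrix}$. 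Using the identity $\overline b b=1-(s+\overline s)\lambda$ already established inside the proof of Lemma~\ref{lem:Deftg} — equivalently $\overline b b = 1+(g+g^{-1}-2p)\,\lambda$ with $\lambda=2pq$, so that $\Phi_{22}=(\overline b b-1)/\lambda$ — a short computation gives $\det\Phi=-1$; hence the form is nonsingular, and since $-1\in Z$ it is simple.

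\textbf{Step 2: reduce to the symmetrisation.} Because $\varepsilon=1$ and $\Lambda=\Z[\pi]$ is commutative, every $w\in\Lambda$ with $\overline w=-w$ has the form $z-\overline z$; consequently $\psi(u,u)\equiv 0$ in $Q_{+}(\Lambda)$ as soon as $\Phi(u,u)=0$ in $\Lambda$. Combining this with the (scalar-transpose) computation $\psi(u_1,u_2)+\overline{\psi(u_2,u_1)}=\Phi(u_1,u_2)$, one sees that \emph{any} basis $\{u_1,u_2\}$ of $\Lambda^{2}$ with $\Phi(u_1,u_1)=\Phi(u_2,u_2)=0$ and $\Phi(u_1,u_2)=1$ is a basis in which $\psi:=\begin{pmatrix}pq&0\\ 2pg^{-1}+q & g-p\end{pmatrix}$ represents $H_{+}(\Lambda)$; the corresponding change-of-basis matrix has unit determinant, so the resulting isometry is simple. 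Thus it suffices to produce such a symplectic basis for $\Phi$. A convenient tool is the identity $\lambda\cdot\Phi(\ell,\ell)=\overline{(\lambda x+by)}\,(\lambda x+by)-\overline{y}\,y$ for $\ell=(x,y)$, which turns isotropy of $\ell$ into the condition $|\lambda x+by|=|y|$.

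\textbf{Step 3 (the main obstacle): the symplectic basis.} What remains is to write down explicit $u_1,u_2$ and verify the three equations; the cancellations are driven by $4p^{2}+q=1$ together with $\overline b b = 1+(g+g^{-1}-2p)\lambda$. This is the delicate point: naive candidates — e.g.\ $(2p-g,q)$, which does satisfy $\Phi(\ell,\ell)=0$ — tend to fail unimodularity over $\Z[\pi]$ (the ideal $(2p-g,q)$ is not the unit ideal), so the basis must be chosen with care, and the argument does not reduce formally to the one-variable case $\pi=\{1\}$. As a fallback one can argue $L$-theoretically: the Witt class $\kappa(t_g)=[(V,\theta)\cup_{t_g}(V,-\theta)]\in L^{s}_{4k}(\Lambda)$ maps to the difference of the signatures of $v$ and $v$ in $L^{s}_{4k}(\Z)$, hence is $0$, and the lower $L$-groups of $\Z[\Z]$ vanish, so the form is stably hyperbolic; upgrading this to honest hyperbolicity needs a cancellation argument over $\Z[\pi]$ in this low rank, which is itself the subtle step, so I expect the explicit construction in Step~3 to be the cleaner route.
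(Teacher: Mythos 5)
Your Step~1 and Step~2 are sound and match what the paper does implicitly: the union is the quadratic form $\psi=\bsm pq & 0 \\ 2pg^{-1}+q & g-p\esm$, its symmetrisation $\Phi$ has determinant $-1$, and (since $\pi$ is abelian, so that every $w\in\Lambda$ with $\overline w=-w$ is of the form $z-\overline z$) it suffices to produce a unimodular basis $\{u_1,u_2\}$ with $\Phi(u_i,u_i)=0$ and $\Phi(u_1,u_2)=1$. Your norm identity $\lambda\,\Phi(\ell,\ell)=\overline{(\lambda x+by)}(\lambda x+by)-\overline y y$ is also correct and is a nice reformulation.

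However, Step~3 — where the actual content lies — is a genuine gap. You correctly diagnose that the isotropic vector $(2p-g,\,q)$ is not primitive (already after augmentation $g\mapsto 1$, the ideal $(2p-1, 1-4p^2)\subseteq\Z$ is $(2p-1)\neq(1)$), but you do not produce a working basis, and your $L$-theoretic fallback does not close the gap either: vanishing of $\kappa(t_g)$ in $L_{4k}^s(\Z[\Z])$ gives stable hyperbolicity, and converting that to honest hyperbolicity of a rank-$2$ form over $\Z[\Z]$ is exactly the cancellation problem you flag as "itself the subtle step" without resolving it. The paper's proof sidesteps the search for isotropic vectors entirely by exhibiting the isometry directly as a product of elementary matrices,
\[
u=\begin{pmatrix}1&0\\-p&1\end{pmatrix}\begin{pmatrix}1&g^{-1}\\0&1\end{pmatrix}\begin{pmatrix}1&0\\2p&1\end{pmatrix}=\begin{pmatrix}1+2pg^{-1}&g^{-1}\\p-2p^2g^{-1}&1-pg^{-1}\end{pmatrix}\in SL_2(\Lambda),
\]
and verifying $u^*Hu\equiv\psi$ in $Q_{+1}(\Lambda^2)$ by direct computation (writing it as a product of unipotents makes simplicity of the isometry automatic). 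That explicit matrix is the missing ingredient; without it or an equivalent construction your proof is incomplete.
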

\begin{proof}
Recall from Definition~\ref{def:AlgebraicGluing} that the gluing construction  gives
$$
\kappa(t_g):= (V,\theta)\cup_{\, t_g}(V,-\theta) = \left( \Lambda^2, \mat{\theta & 0 \\ \overline{b} & -s}\right)
=\left( \Lambda^2, \mat{\theta & 0 \\2pg^{-1}+q & g-p}\right).
$$
Consider the following simple isomorphism $u\in SL_2(\Lambda)$
\begin{eqnarray*}
u
&=&
\mat{1&0 \\ -p&1}
\mat{1&g^{-1} \\ 0&1}
\mat{1&0 \\ 2p&1}
=
\mat{1+2pg^{-1} & g^{-1} \\ p-2p^2 g^{-1} & 1-pg^{-1}}
\end{eqnarray*}
and then compute in $Q_1(\Lambda^2)$:
\begin{eqnarray*}
u^*Hu
&=&
\mat{1+2pg^{-1} & g^{-1} \\ p-2p^2 g^{-1} & 1-pg^{-1}}^*
\mat{0&0\\1&0}
\mat{1+2pg^{-1} & g^{-1} \\ p-2p^2 g^{-1} & 1-pg^{-1}}
\\
&=&
\mat{1+2pg&   p-2p^2 g \\ g & 1-pg}
\mat{ 0 & 0 \\ 1+2pg^{-1} & g^{-1} }
\\
&=&
\mat{
 (p-2p^2 g)(1+2pg^{-1})
 &
 (p-2p^2 g)g^{-1}
 \\
 (1-pg)(1+2pg^{-1})
 &
 (1-pg)g^{-1}
 }
 \\
 &=&
 \mat{
 p-4p^3+2p^2g^{-1}-2p^2g
 &
 pg^{-1}-2p^2
 \\
 1-2p^2+2pg^{-1}-pg
 &
 g^{-1}-p
 }
 \\
 &\equiv &
 \mat{
 p-4p^3+2p^2g^{-1}-2p^2g
 &
 pg^{-1}-2p^2
 \\
 1-2p^2+2pg^{-1}-pg
 &
 g^{-1}-p
 }
+
(1-T)
\mat{2p^2g & -(pg^{-1}-2p^2) \\ 0 & -g^{-1}}
\\
&=&
\mat{
p-4p^3
&
0
\\
2pg^{-1}+(1-4p^2)
&
g-p
}
=
\mat{\theta & 0 \\ 2pg^{-1}+q & g-p}.
 \end{eqnarray*}
This shows that $\kappa(t_g)$ is hyperbolic and concludes the proof of the proposition.
\end{proof}

Specialising to $\pi=\Z$, the following result will be used in our topological applications.

\begin{theorem}
\label{cor:infiniteAutHyperbolic}
For $p \in \Z$ and $\theta=p(1-4p^2)$, the set $\bAut_{H_+(\Z[t^{\pm 1}])}(\partial(\Z[t^{\pm 1}],\theta))$ is infinite.
\end{theorem}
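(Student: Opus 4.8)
The plan is to obtain the theorem as an immediate consequence of Propositions~\ref{prop:InfinitebAut} and~\ref{prop:HyperbolicUnion}, applied with the abelian group $\pi$ of this section taken to be $\pi = \Z$. Specialising the standing assumptions, one has $\Lambda = \bZ[\Z] = \Z[t^{\pm 1}]$, $q = 1 - 4p^2$, and $(V,\theta) = (\Lambda, pq)$, so that $\theta = pq = p(1-4p^2)$ is exactly the form appearing in the statement.

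First I would invoke Proposition~\ref{prop:InfinitebAut}, which asserts that the map $k \circ t \colon \Z \to \bAut(V,\theta)$, $g \mapsto [t_g]$, is injective; hence $\{[t_g] \mid g \in \Z\}$ is an infinite subset of $\bAut(V,\theta)$, which by Definition~\ref{def:bAut} is the same set as $\bAut(\partial(\Z[t^{\pm 1}],\theta))$. Next I would apply Proposition~\ref{prop:HyperbolicUnion}: for every $g \in \Z$ the double $\kappa(t_g) = (V,\theta) \cup_{t_g} (V,-\theta)$ is hyperbolic, i.e.\ isometric to $H_+(\Z[t^{\pm 1}])$. By the definition of $\bAut_{H_+(\Z[t^{\pm 1}])}$ — the boundary-automorphism analogue of the set $\Iso_m$ of~\eqref{eq:Iso_m} — this says precisely that each $[t_g]$ lies in $\bAut_{H_+(\Z[t^{\pm 1}])}(\partial(\Z[t^{\pm 1}],\theta))$. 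Since this set is by construction a subset of $\bAut(\partial(\Z[t^{\pm 1}],\theta))$ containing the infinitely many pairwise distinct classes $\{[t_g]\}_{g \in \Z}$, it is infinite, which is the assertion.

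No genuine obstacle remains at this stage: the theorem is a clean corollary, the substantive content lying in the two propositions it cites. The one point I would take care to address is that ``$\kappa(t_g)$ is hyperbolic'' is a condition on the class $[t_g] \in \bAut$ and not merely on a chosen representative stable isomorphism; this is exactly~\cite[Lemma~4.6~iv)]{CrowleySixt}, recalled in Section~\ref{sub:AlgebraicGluing}, which guarantees that $v \cup_f -v'$ depends only on the class of $f$ in $\bIso(\partial v, \partial v')$. With this in hand the membership of each $[t_g]$ in $\bAut_{H_+(\Z[t^{\pm 1}])}(\partial(\Z[t^{\pm 1}],\theta))$ is well posed and the argument is complete.
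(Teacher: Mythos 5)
Your argument is correct and coincides with the paper's own proof: both deduce the theorem by combining Proposition~\ref{prop:InfinitebAut} (injectivity of $g \mapsto [t_g]$) with Proposition~\ref{prop:HyperbolicUnion} (hyperbolicity of the double $\kappa(t_g)$), specialising to $\pi = \Z$. Your added remark that hyperbolicity of $v \cup_f -v'$ depends only on the class of $f$ in $\bIso$, via \cite[Lemma~4.6~iv)]{CrowleySixt}, is a valid point of care that the paper leaves implicit.
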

\begin{proof}
By Proposition~\ref{prop:InfinitebAut}, $\bAut(\partial(\Z[t^{\pm 1}],\theta))$ contains the infinite set~$\lbrace [t_g] \ | \ g \in \Z \rbrace$, where the~$t_g$ were constructed in Lemma~\ref{lem:Deftg}.
By Proposition~\ref{prop:HyperbolicUnion}, each of the~boundary automorphisms~$[t_g]$ belongs to
$\bAut_{H_+(\Z[t^{\pm 1}])}(\partial(\Z[t^{\pm 1}],\theta))$, which establishes the result.
\end{proof}

\chapter{Primitive embeddings}
\label{sec:PrimitiveEmbeddings}

This chapter, the last that develops algebraic machinery, gives a new formulation of the boundary automorphism set.  Section~\ref{sub:PrimitiveEmbeddings} introduces primitive embeddings and Section~\ref{sub:PrimbAut}
describes how primitive embeddings are related to boundary isomorphisms.

Not only are primitive embeddings simpler to define than boundary isomorphisms, they will also be used in Chapter~\ref{sec:RealisationPrimitive} to describe the image $\delta(\Theta(W,\overline{\nu}))$ of the modified surgery obstruction for the normal $(B, \xi)$-cobordism $(W,\overline{\nu})$ constructed in Theorem~\ref{thm:Realisation}.

\section{2-sided primitive embeddings}
\label{sub:PrimitiveEmbeddings}

The aim of this section is to introduce 2-sided primitive embeddings.
In what follows, we shall often write quadratic forms as~$v=(V,\theta)$.
\medbreak
Motivated by Lemma~\ref{lem:IsometricEmbedding}, the main definition of this section is the following.

\begin{definition}
\label{def:PrimitiveEmbedding}
A \emph{primitive embedding} is a split isometric injection $i\colon v \hookrightarrow m$ of $\varepsilon$-quadratic forms, where~$m$ is nonsingular.
A \emph{2-sided primitive embedding} is a pair $v \xhookrightarrow{i}  m  \xhookleftarrow{i'} v'$ of primitive embeddings such that $i(v)^\perp=i'(v')$.
\end{definition}

We already encountered examples of 2-sided primitive embeddings in Section~\ref{sub:AlgebraicGluing}.
\begin{example}
\label{ex:UnionGives2Sided}
Given a boundary isomorphism $f \in \Iso(\partial v,\partial v')$, the inclusions described in Lemma~\ref{lem:IsometricEmbedding} provide a 2-sided primitive embedding $v \hookrightarrow v \cup_f -v' \hookleftarrow -v'.$
\end{example}

Two $2$-sided primitive embeddings $v \hookrightarrow m_0 \hookleftarrow v'$ and $v \hookrightarrow m_1 \hookleftarrow v'$ are \emph{isomorphic} if there is an isometry $m_0 \to m_1$ which makes the following diagram commute:
\begin{equation}
\label{eq:Isomorphism2Prim}
\xymatrix@R0.5cm{
v  \ar@{^{(}->}[r]\ar[d]^=& m_0 \ar[d]^\cong & v' \ar[d]^= \ar@{_{(}->}[l]\\
v \ar@{^{(}->}[r]& m_1 & v' \ar@{_{(}->}[l]
}
\end{equation}
We denote the isomorphism class of a 2-sided primitive embedding $(j,j')$ is by $[(j,j')]$.

\begin{definition}
\label{def:Prim}
Let $v, v'$ and $m$ be $\varepsilon$-quadratic forms such that there is a $2$-sided primitive embedding
$v \hookrightarrow m \hookleftarrow v'$.  Define the sets of isomorphism classes of $2$-sided
primitive embeddings
\[ \operatorname{Prim}_{m}(v,v') :=\lbrace [(j,j')] \ | \  v \xhookrightarrow{j} m \xhookleftarrow{j'} v'
\text{ is a $2$-sided primitive embedding}  \rbrace\]
and
\[  \operatorname{Prim}(v,v') := \bigsqcup_{[m]} \operatorname{Prim}_m(v, v'),\]
where the latter disjoint union is taken over isomorphism classes of forms $m$ which can be obtained by gluing $v$ and $v'$ together.
%
\end{definition}

\begin{remark}
We could have equivalently defined $\operatorname{Prim}(v,v') $ by taking the disjoint union over \emph{all} forms $m$ which can be obtained by gluing $v$ and $v'$ together.
Indeed, if $m_1 \cong m_2$ are isometric, then Proposition~\ref{prop:CSProp48} ensures that $\operatorname{Prim}_{m_1}(v,v')  = \operatorname{Prim}_{m_2}(v,v')$.
\end{remark}

Next, we give explicit examples of primitive embeddings over the ring of integers.

\begin{example}
\label{ex:ExampleOverZ}
Given an integer $q=y_1y_2$ with $y_1$ and $y_2$ coprime integers, the following maps define a 2-sided primitive embedding into the hyperbolic form $H_+(\Z)$:
$$(i_{y_1,y_2},j_{y_1,y_2}):= \bigg( (\Z,q)
\xhookrightarrow{\bsm y_1 \\ y_2 \esm}
\left( \Z^2, {\begin{pmatrix}
0&1 \\ 0&0
\end{pmatrix}} \right)
\xhookleftarrow{\bsm -y_1 \\ y_2 \esm}  (\Z,-q) \bigg).$$
One can also check that every 2-sided primitive embedding $(\Z,q) \hookrightarrow H_+(\Z) \hookleftarrow (\Z,-q)$ is either of the form $(i_{y_1,y_2},j_{y_1,y_2})$ or of the form~$(i_{y_1,y_2},-j_{y_1,y_2})$ for some $y_1,y_2$ with $q=y_1y_2$.

For each factorisation $q=y_1y_2$ (writing $q=(-y_1)(-y_2)$ leads to the same observation), one can construct eight 2-sided primitive embedding $(\Z,q) \hookrightarrow H_+(\Z) \hookleftarrow (\Z,-q)$, namely \[\pm (i_{y_1,y_2},j_{y_1,y_2}),\, \pm (i_{y_2,y_1},-j_{y_2,y_1}),\, \pm (i_{y_1,y_2},-j_{y_1,y_2}),\text{ and }\pm (i_{y_2,y_1},j_{y_2,y_1}).\]
The first four (resp.\ last four) of these 2-sided primitive embeddings are isomorphic;
this is because isometries of~$H_+(\Z)$ are of the form
$$
\begin{pmatrix}
\varepsilon_1&0 \\ 0&\varepsilon_2
\end{pmatrix}, \ \
\begin{pmatrix}
0&\varepsilon_1\\ \varepsilon_2&0
\end{pmatrix},
 \ \ \ \ \ \ \ \varepsilon_1\varepsilon_2= 1.
$$
Summarising, for each factorisation $q=y_1y_2$, one can associate two isomorphism classes in~$\TwoPrim_{H_+(\Z)}((\Z,q),(\Z,-q))$, namely $[(i_{y_1,y_2},j_{y_1,y_2})]$ and~$[(i_{y_1,y_2},-j_{y_1,y_2})]$. These two classes are unchanged if the signs of both~$y_1$ and~$y_2$ are reversed.
\end{example}

The groups $\Aut(v)$ and $\Aut(v')$ both act on the left on $\TwoPrim_m(v,v')$: given a $2$-sided primitive embedding $(j,j')$, and isometries $g \colon v \to v$ and $h \colon v' \to v'$, one obtains a new 2-sided primitive embedding by setting
$(g,h) \cdot [(j,j')]:=[(j \circ g^{-1},j' \circ h^{-1})]$.
Modding out by these actions leads to the second main definition of this section.
\begin{definition}
\label{def:bTwoPrim}
Given quadratic forms $v$ and $v'$
the \emph{one-sided boundary primitive embedding sets} and \emph{boundary primitive embedding set} are respectively defined as
\begin{align*}
\lTwoPrim_m(v,v')&:=\TwoPrim_m(v,v')/ \Aut(v), \\
\rTwoPrim_m(v,v')&:=\TwoPrim_m(v,v')/ \Aut(v'), \\
\bTwoPrim_m(v,v')&:=\TwoPrim_m(v,v')/\Aut(v) \times \Aut(v').
\end{align*}
Taking the union over all isomorphism classes of forms $m$ obtained by gluing $v$ and $v'$ together,
we obtain $\lTwoPrim(v,v') := \bigsqcup_{[m]} \lTwoPrim_m(v,v'), \rTwoPrim(v,v') := \bigsqcup_{[m]} \rTwoPrim(v,v')$
and $\bTwoPrim(v,v') := \bigsqcup_{[m]} \bTwoPrim_m(v,v')$.
\end{definition}

Thus, two $2$-sided primitive embeddings agree in $\rTwoPrim_m(v,v')$ and $\lTwoPrim_m(v,v')$ (resp.\ $\bTwoPrim_m(v,v')$) if and only if they fit in a commutative diagram as the one displayed in~\eqref{eq:Isomorphism2Prim}, but where instead one equality (resp.\ both equalities) are allowed to be self-isomorphisms.
For instance, two $2$-sided primitive embeddings~$v \hookrightarrow m_0 \hookleftarrow v'$ and $v \hookrightarrow m_1 \hookleftarrow v'$ agree in $\lTwoPrim(v,v')$ if and only if there are isometries~$m_0 \to m_1$  and $v \to v'$ that makes the following diagram commute:
\begin{equation}
\xymatrix@R0.5cm{
v  \ar@{^{(}->}[r]\ar[d]^\cong& m_0 \ar[d]^\cong & v' \ar[d]^= \ar@{_{(}->}[l]\\
v \ar@{^{(}->}[r]& m_1 & v'. \ar@{_{(}->}[l]
}
\end{equation}

If $m \cong m'$, then we saw in Section~\ref{sub:AlgebraicGluing} that~$\Iso_m(\partial v,\partial v')$ is isomorphic to $\Iso_{m'}(\partial v,\partial v')$.
The same property holds for 2-sided primitive embeddings: if $m \cong m'$, then after postcomposing with the isomorphism $m \cong m'$, a primitive embedding $v \hookrightarrow m$ gives rise to a primitive embedding~$ v \hookrightarrow~m'$.
In Remark~\ref{rem:IdentifbAut}, we also recalled that isometries $k \colon v \to w$ and $h \colon v' \to w'$ give rise to a canonical identification $\operatorname{bIso}_m(\partial v,\partial v') \approx \operatorname{bIso}_m(\partial w,\partial w')$ via $[f] \mapsto [\partial h \circ f \circ \partial k^{-1}]$.
The analogous remark holds for 2-sided primitive embeddings.

\begin{remark}
\label{rem:IdentifTwoPrim}
Observe that isometries $k \colon v \xrightarrow{\cong} w$ and $h \colon v' \xrightarrow{\cong} w'$ give rise to a bijection~$\operatorname{Prim}_m(v,v') \approx \operatorname{Prim}_m(w,w')$ via $[(j,j')] \mapsto [(j \circ h^{-1}, j \circ k^{-1})]$.
This bijection descends to $\operatorname{bPrim}$ and the outcome is an
identification~$\operatorname{bPrim}_m(v,v') \approx \operatorname{bPrim}_m(w,w')$ that does not depend on the choice of the isometries $h$ and $k$.
Analogous remarks also hold for $\lTwoPrim$ and $\rTwoPrim$.
As a consequence, for instance, when comparing primitive embeddings it makes sense to say that~$v_0 \hookrightarrow m_0 \hookleftarrow v'$ and~$v_1 \hookrightarrow m_1 \hookleftarrow v'$ \emph{agree in $\lTwoPrim_{m_0}(v_0,v')$}: this means that these two $2$-sided primitive embeddings fit into a commutative diagram of the following type:
\begin{equation}
\xymatrix@R0.5cm{
v_0  \ar@{^{(}->}[r]\ar[d]^\cong& m_0 \ar[d]^\cong & v' \ar[d]^= \ar@{_{(}->}[l]\\
v_1 \ar@{^{(}->}[r]& m_1 & v'. \ar@{_{(}->}[l]
}
\end{equation}
In particular, $\lTwoPrim_{m_0}(v_0,v)$ only depends on $v$ and on the isometry type of $v_0$ and $m$.
We emphasise that such identifications are not possible if one is merely working in $\operatorname{Prim}$.
\end{remark}

Building on Example~\ref{ex:ExampleOverZ}, we describe the elements of $\operatorname{Prim}_{H_+(\Z)}((\Z,q),(\Z,-q))$.

\begin{example}
\label{ex:ZqH_+(Z)Zq}
For a product $q= \pm p_1^{x_1}p_2^{x_2}\cdots p_k^{x_k} \in \Z$ of $k$ distinct prime powers
we introduce some notation and define:
\begin{enumerate}[(i)]
\item[] ~$\mathcal{F}_q:=\operatorname{Prim}_{H_+(\Z)}((\Z,q),(\Z,-q))$,
\item[]
$r\mathcal{F}_q:=\operatorname{rPrim}_{H_+(\Z)}((\Z,q),(\Z,-q))$,
\item[]
$\ell\mathcal{F}_q:=\operatorname{rPrim}_{H_+(\Z)}((\Z,q),(\Z,-q))$,
\item[]
$b\mathcal{F}_q:=\operatorname{bPrim}_{H_+(\Z)}((\Z,q),(\Z,-q))$.
\end{enumerate}
We assert that $r\mathcal{F}_q=\ell \mathcal{F}_q =b\mathcal{F}_q$,
and
\[
|b\mathcal{F}_q|=|r\mathcal{F}_q|=|\ell\mathcal{F}_q|=\frac{1}{2}|\mathcal{F}_q|=2^{k-1}.
\]
There are~$2^{k-1}$ ways to factorise~$q$ as a product~$y_1y_2$ with~$y_1$ and~$y_2$ coprime.
Indeed, since~$\operatorname{gcd}(y_1,y_2)=1$, for~$i=1,\ldots,k$, it suffices to choose whether or not the prime~$p_i$ appears in the prime decomposition of~$y_1$.
Also note that Example~\ref{ex:ExampleOverZ} ensures that each of these factorisations determines two isomorphisms classes of 2-sided primitive embeddings; $[(i_{y_1,y_2},j_{y_1,y_2})]$ and $[(i_{y_1,y_2},-j_{y_1,y_2})]$.
This establishes the claim about $|\mathcal{F}_q|$.

To conclude, notice that $(i_{y_1,y_2},j_{y_1,y_2})$ and $(i_{y_1,y_2},-j_{y_1,y_2})$ agree in $r \mathcal{F}_q$ (and therefore in~$b \mathcal{F}_q$), as can be seen by glancing at the following commutative diagram:
$$
\xymatrix@R1cm{
(\Z,q) \ar[r]^-{\bsm y_1 \\ y_2 \esm} \ar[d]^=
& \left( \Z^2, {\begin{pmatrix}
0&1 \\ 0&0
\end{pmatrix}} \right) \ar[d]^{\bsm 1&0 \\ 0&1\esm }
&(\Z,-q) \ar[l]_-{\bsm -y_1 \\ y_2 \esm} \ar[d]^-{(-1) \cdot }   \\
(\Z,q) \ar[r]^-{\bsm y_1 \\ y_2 \esm}
& \left( \Z^2, {\begin{pmatrix}
0&1 \\ 0&0
\end{pmatrix}} \right)
&(\Z,-q). \ar[l]_-{\bsm y_1 \\ -y_2 \esm}
}
$$
Replacing respectively the labels on the left, middle and right vertical arrows by $(-1) \cdot,-\bsm 1&0\\0&1 \esm$ and $=$ shows that $(j_{y_1,y_2},\iota_{y_1,y_2})$ and $(j_{y_1,y_2},-\iota_{y_1,y_2})$ also agree in $\ell \mathcal{F}_q$.
We conclude that there are twice as many elements in $\mathcal{F}_q$ as there are in $r \mathcal{F}_q,\ell \mathcal{F}_q$ and $b\mathcal{F}_q$.
This establishes the assertion.
\end{example}

The primitive embeddings in Example~\ref{ex:ZqH_+(Z)Zq} being distinct is the algebra underlying the constructions from our paper~\cite{CCPS-short}.
 Next we give the primitive embeddings underlying the proof of
Theorem~\ref{thm:InfiniteStableClassPi1ZIntro}.


\begin{example}
\label{ex:M_iPE}
Let $(V, \theta) = (\Lambda, pq)$, $\Lambda = \Z[\pi]$ for $\pi$ abelian, $p \in \Z$ and $q = 1 - 4p^2$
and recall from Proposition~\ref{prop:HyperbolicUnion} that for each $g \in \pi$ there is an
isometry
\[ u_g \colon  (V, \theta)\cup_{\, t_g} (V, -\theta) \xrightarrow{\cong} H_+(\Lambda).\]
%
Here the module underlying $(V, \theta) \cup_{\, t_g} (V, -\theta)$ is $V \oplus V^* = \Lambda \oplus \Lambda^*$,
which is also the module underlying $H_+(\Lambda)$,
and with these coordinates $u_g \in SL_2(\Lambda)$ has matrix
\begin{eqnarray*}
u_g =
\mat{1+2pg^{-1} & g^{-1} \\ p-2p^2 g^{-1} & 1-pg^{-1}}.
\end{eqnarray*}
%
%
It follows that there is a commutative diagram of $2$-sided primitive embeddings
\[
\xymatrix@R1cm{
(V, \theta) \ar[d]^{=} \ar[rr]^(0.4){\bsm 1 \\ 0 \esm} &&
(V, \theta)\cup_{\, t_g} (V, -\theta) \ar[d]^{u_g} &&
(V, -\theta) \ar[d]^{=} \ar[ll]_(0.4){\bsm q+ 2pg \\ -2pq \esm} \\
(\Lambda, pq) \ar[rr]^-{\bsm 1+2pg^{-1} \\ p-2p^2g^{-1} \esm}
&& \left( \Lambda^2, {\begin{pmatrix}
0&1 \\ 0 & 0
\end{pmatrix}} \right)
&& (\Lambda,-pq) \ar[ll]_-{\bsm 1+2pg \\ -p+2p^2g  \esm}
}
\]
and we denote the bottom $2$-sided sided primitive embedding by $(j_g, j'_g)$.
One checks that the top row is a $2$-sided primitive embedding using
Lemma~\ref{lem:IsometricEmbedding} and the definition of $t_g$ from Lemma~\ref{lem:Deftg}.
%

An interesting feature of the $2$-sided primitive embeddings $(j_g, j_g')$ is that they are extended over
the inclusion of rings $\iota \colon \Z \hookrightarrow \Z[\pi] = \Lambda$ if and only if $g = e$ is the identity.
Here we say that a $2$-sided primitive embedding $(j, j')$
is extended over $\iota \colon \Z \to \Lambda$ if and only if there is an integral $2$-sided primitive embedding
$(j_\Z, j'_\Z)$
such that $(j, j')$ is isomorphic to $(\Id_\Lambda \otimes_\Z j_\Z, \Id_\Lambda \otimes_\Z j')$.
For example, $(j_e, j'_e)$ is extended over $\iota$.

To see that $(j_g, j'_g)$ is not extended over $\iota$ when $g \neq e$,
we note that the augmentation homomorphism $\varepsilon \colon \Z[\pi] \to \Z, \Sigma_g n_g g \mapsto \Sigma_g n_g$,
is a right inverse to $\iota \colon \Z \to \Z[\pi]$.
It follows that a general $2$-sided primitive embedding $(j, j')$ over $\Lambda$ is extended over $\iota$
if and only if $(j, j')$ is isomorphic to $\iota(\varepsilon(j, j'))$.
%
Now $\iota(\varepsilon(j_g, j'_g)) = (j_e, j'_e)$.
In Theorem~\ref{thm:bisoembprp} below, we construct an isomorphism to a $\bAut$ set, and in Proposition~\ref{prop:InfinitebAut} we showed that the images of  $(j_g, j'_g)$ and $(j_{g'}, j'_{g'})$ in this $\bAut$ set coincide if and only if $g = g'$.  Therefore for $g \neq e$, $(j_g, j'_g) \neq (j_e, j'_e) = \iota(\varepsilon(j_g, j'_g))$, and so $(j_g, j_g')$ is not extended over $\iota$, as asserted.
\end{example}

\section{Primitive embeddings and boundary isomorphisms.} \label{sub:PrimbAut}
In this section we relate 2-sided primitive embeddings and boundary isomorphisms.
\medbreak

Recall that for $\varepsilon$-quadratic forms $v,v'$ and a nonsingular $\varepsilon$-quadratic form $m$,
in \eqref{eq:Iso_m} we
defined the set
\begin{align*}
\Iso_{m}(\partial v , \partial v')&:=\{[f]\in \Iso(\del v, \del v')\,|\,
v \cup_f -v' \cong m\}.
\end{align*}
Analogously to Definition~\ref{def:bAut},
the sets $\rIso_{m}(\partial v, \partial v'),\lIso_{m}(\partial v, \partial v')$ and $\bIso_m(\partial v,\partial v')$
are defined as quotients of~$\Iso_{m}(\partial v , \partial v')$:
the groups acting preserve $\Iso_m(\partial v, \partial v') \subseteq \Iso(\partial v, \partial v')$.

\begin{construction}
\label{con:Pr}
We construct a map~$\Iso_{m}(\partial v, \partial v') \to \operatorname{Prim}_{m}(v,-v')$.
Given a stable homotopy class of a stable isomorphism~$f \colon \partial v \to \partial v'$ such that~$v \cup_f -v' \cong m$, we choose an isometry $ k \colon v \cup_f -v' \to m$, use $\lambda'$ for the symmetrisation of the form $v'$, and consider
\begin{align}
\label{eq:BijectionF}
\Pr \colon\Iso_{m}(\partial v, \partial v') &\to   \operatorname{Prim}_{m}(v,-v') \\
[f] &\mapsto \left( v \xrightarrow{ \bsm  1 \\ 0 \esm} v \cup_f -v' \xrightarrow{k,\cong} m  \xleftarrow{k,\cong} v \cup_f -v' \xleftarrow{\bsm b \\ -\lambda' \esm} -v' \right). \nonumber
\end{align}
We verify that the definition of the map $\Pr$ makes sense.
Set $j:= \bsm 1 \\ 0 \esm$ and $j' :=\bsm b \\ -\lambda' \esm$ for concision.
It follows from Example~\ref{ex:UnionGives2Sided}  that~$k \circ j(v)^\perp = k \circ j'(-v')$.
Also the isometry type of the algebraic union  $v \cup_f -v'$ only depends on the homotopy class of the stable isomorphism~$f$, so for any choice of representative $f$, a choice of isometry~$k$ exists.
Thus $\Pr$ can be defined, after making a choice of $k$. To see that $\Pr$ is well-defined, we need to show that~$\Pr$ does not depend on the choice of the isometry $k \colon v \cup_f -v' \to m$.
Assume that~$h \colon v \cup_f -v' \to m$ is another isometry.
We must show that~$(k \circ j,k \circ j')$ and~$(h \circ j,h \circ j')$ define isomorphic 2-sided primitive embeddings.
This follows since~$\varphi := h \circ k^{-1} \colon m \to m$ is an isometry that satisfies $\varphi \circ (k \circ j)=h \circ j$ and~$\varphi \circ (k \circ j')=h \circ j'$.
We conclude that $\Pr$ is indeed well defined.

Since $\Iso(\partial v, \partial v') = \bigsqcup_{[m]} \Iso_m(\partial v, \partial v')$ and
$\Prim(v, v') = \bigsqcup_{[m]} \operatorname{Prim}_m(v, -v')$ are both disjoint unions as $[m]$ ranges over the set of isomorphism classes
of forms obtain by gluing $v$ and $v'$ together, the union of the $\Pr$ defines a map
\[ \Pr \colon \Iso(\partial v, \partial v') \to \operatorname{Prim}(v, -v').\]
\end{construction}


We now construct an inverse for $\Pr$.

\begin{construction}
\label{con:delta}
Let $v \xhookrightarrow{i} m \xhookleftarrow{i'} -v'$ be a 2-sided primitive embedding.
According to Proposition~\ref{prop:CSProp48}, the primitive embedding $i$ gives rise to a stable isomorphism $[f_i]\in \Iso_{m}(\partial v,\partial (-v^\perp))$.
Since~$(i,i')$ is a 2-sided primitive embedding, we know that $-v^\perp=i'(v')$; recall Example~\ref{ex:UnionGives2Sided}.
We deduce the existence of the map~${i'}^{-1} \colon -v^\perp=i'(v') \to v'$.
Consider the assignment
\begin{align*}
\delta \colon \operatorname{Prim}_{m}(v,-v') &\to  \Iso_{m}(\partial v, \partial v') \\
[v \xhookrightarrow{i} m \xhookleftarrow{i'} -v'] &\mapsto [(\partial {i'}^{-1} \oplus 1)\circ f_i].
\end{align*}
We check that $\delta$ is well defined.
 Namely, we assume that $(j,j')$ is a 2-sided primitive embedding that is isomorphic to $(i,i')$, and wish to show that $\delta(i,i')=\delta(j,j')$ as boundary isomorphisms.
 Since~$(i,i')$ and $(j,j')$ are isomorphic, there is an isometry~$g\colon m \to m$ such that~$g \circ j=i$ and~$g \circ j'= i'$.
Using the naturality statement of Proposition~\ref{prop:CSProp48},
we deduce that~$f_i=(\partial g \oplus 1) \circ f_j$.
We can combine these facts to conclude the proof of the fact that~$\delta$ is well defined:
\begin{align*}
\delta(j,j')
&= [(\partial {j'}^{-1} \oplus 1) \circ f_j]
= [(\partial {i'}^{-1} \oplus 1) \circ (\partial g \oplus 1) \circ f_j]
\\ &=[(\partial {i'}^{-1}  \oplus 1)\circ f_i] =\delta(i,i').
 \end{align*}
Taking the union of the maps $\delta$ over all isomorphism classes of forms $m$ obtained
by gluing $v$ and $v'$ together we obtain the map
\[ \delta \colon \operatorname{Prim}(v, -v') \to \Iso(\partial v, \partial v').\]
\end{construction}

\begin{proposition}
\label{prop:PrDescends}
Let $v$, $v'$ and $m$ be $\varepsilon$-quadratic forms where the latter is nonsingular.
The map~$\Pr$ from Construction~\ref{con:Pr} descends to define maps
\begin{align*}
\Pr\colon &\lIso_{m}(\partial v, \partial v') \to   \lTwoPrim_{m}(v,-v'), \\
\Pr\colon &\rIso_{m}(\partial v, \partial v') \to   \rTwoPrim_{m}(v,-v'), \\
\Pr\colon &\bIso_{m}(\partial v, \partial v') \to   \bTwoPrim_{m}(v,-v'),
\end{align*}
and the map~$\delta$ from Construction~\ref{con:delta} descends to define maps
\begin{align*}
\delta \colon & \lTwoPrim_{m}(v,-v') \to \lIso_{m}(\partial v, \partial v'), \\
\delta \colon & \rTwoPrim_{m}(v,-v') \to \rIso_{m}(\partial v, \partial v'), \\
\delta \colon & \bTwoPrim_{m}(v,-v') \to \bIso_{m}(\partial v, \partial v').
\end{align*}
Moreover, the same statements hold with the subscript $m$ removed.
\end{proposition}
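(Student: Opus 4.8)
The plan is to derive every assertion from two equivariance statements: that $\Pr$ intertwines the left $\Aut(v)$- and $\Aut(v')$-actions on its source $\Iso_m(\partial v,\partial v')$ and target $\operatorname{Prim}_m(v,-v')$, and that $\delta$ does the same on its source and target. Granting these, descending $\Pr$ and $\delta$ along the quotient by $\Aut(v)$ yields the $\ell$-versions, along the quotient by $\Aut(v')$ yields the $r$-versions, and along the quotient by $\Aut(v)\times\Aut(v')$ yields the $b$-versions. The assertions with the subscript $m$ removed are then automatic, since each of the four group actions preserves the isometry type of the glued form (Definition~\ref{def:AlgebraicGluing}, Lemma~\ref{lem:IsometricEmbedding}), hence respects the decompositions $\Iso(\partial v,\partial v')=\bigsqcup_{[m]}\Iso_m(\partial v,\partial v')$ and $\operatorname{Prim}(v,-v')=\bigsqcup_{[m]}\operatorname{Prim}_m(v,-v')$, and the unsubscripted maps are by construction the disjoint unions of the subscripted ones.

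For the equivariance of $\Pr$, fix $g\in\Aut(v)$ and $h\in\Aut(v')$. By Example~\ref{ex:ComponentsBoundary}, the $a,b,s$-components of $f\circ\partial g^{-1}$ are $(ag^{-1},gb,s)$ and those of $\partial h\circ f$ are $(ha,bh^{-1},hsh^{*})$. Substituting these into Definition~\ref{def:AlgebraicGluing} and using $g^{*}\theta g=\theta$ and $h^{*}\lambda'h=\lambda'$, one checks directly that $\bsm g^{-1}&0\\0&1\esm$ is an isometry $v\cup_{f\circ\partial g^{-1}}-v'\xrightarrow{\cong}v\cup_f-v'$ and that $\bsm 1&0\\0&h^{*}\esm$ is an isometry $v\cup_{\partial h\circ f}-v'\xrightarrow{\cong}v\cup_f-v'$. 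Moreover these isometries carry the standard split injections $\bsm 1\\0\esm$, $\bsm b\\-\lambda'\esm$ of Lemma~\ref{lem:IsometricEmbedding} to $\bsm 1\\0\esm\circ g^{-1}$, $\bsm b\\-\lambda'\esm$ in the first case, and to $\bsm 1\\0\esm$, $\bsm b\\-\lambda'\esm\circ h^{-1}$ in the second. Choosing the identification isometry in the definition of $\Pr$ accordingly gives $\Pr(g\cdot[f])=(g,1)\cdot\Pr([f])$ and $\Pr(h\cdot[f])=(1,h)\cdot\Pr([f])$, the required equivariance.

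For the equivariance of $\delta$, the naturality clause of Proposition~\ref{prop:CSProp48} does the work. Since $i\circ g^{-1}$ and $i$ have the same image, they have the same orthogonal complement $-v^{\perp}$ carrying the same induced form; applying naturality of $f_{(-)}$ to the morphism of pairs with components $g\colon v\to v$ and $\id_m$ gives $f_{i\circ g^{-1}}=f_i\circ(\partial g^{-1}\oplus 1)$. Likewise $(i'\circ h^{-1})^{-1}=h\circ {i'}^{-1}$, so $\partial(i'\circ h^{-1})^{-1}=\partial h\circ\partial {i'}^{-1}$. Combining,
\[
\delta\bigl((g,h)\cdot[(i,i')]\bigr)=\bigl[(\partial h\oplus 1)\circ(\partial {i'}^{-1}\oplus 1)\circ f_i\circ(\partial g^{-1}\oplus 1)\bigr]=(g,h)\cdot\delta\bigl([(i,i')]\bigr),
\]
where the last equality uses that, modulo the trivial stabilising summands (immaterial up to stable homotopy), postcomposition by $\partial h$ and precomposition by $\partial g^{-1}$ implement the $\Aut(v')$- and $\Aut(v)$-actions on $\Iso_m(\partial v,\partial v')$.

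The step I expect to require the most care is the equivariance of $\delta$: one must apply naturality of $f_{(-)}$ from Proposition~\ref{prop:CSProp48} in the \emph{source} variable, checking that precomposing a primitive embedding with an isometry of the small form is an instance of the ``morphism of pairs'' setup of \cite[Proposition~4.8]{CrowleySixt} for which the orthogonal complement is literally unchanged, and then matching this against the definition of the $\Aut$-actions on $\Iso_m$. The corresponding bookkeeping for $\Pr$ — identifying the comparison isometries of unions and verifying they intertwine the split injections of Lemma~\ref{lem:IsometricEmbedding} — is routine given Example~\ref{ex:ComponentsBoundary}. No ingredient beyond Example~\ref{ex:ComponentsBoundary}, Lemma~\ref{lem:IsometricEmbedding}, Definition~\ref{def:AlgebraicGluing} and Proposition~\ref{prop:CSProp48} is needed.
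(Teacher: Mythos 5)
Your proof is correct and the overall strategy coincides with the paper's: reduce everything to equivariance of $\Pr$ and $\delta$ with respect to the $\Aut(v)\times\Aut(v')$-actions, descend to the three quotient sets, and take disjoint unions over $[m]$ for the unsubscripted statements. For the $\Pr$ half, your explicit isometries $\bsm g^{-1}&0\\0&1\esm$ and $\bsm 1&0\\0&h^{*}\esm$ and their compatibility with the split injections of Lemma~\ref{lem:IsometricEmbedding} are exactly the content of \cite[Lemma 4.6 iv)]{CrowleySixt}, which the paper invokes as a black box and combines with the diagram \eqref{eq:DescendsTobIso}; the two arguments are the same thing written at different levels of detail.

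The genuine divergence is in the $\delta$ half, and you correctly flag it as the step requiring care. You deduce $f_{i\circ g^{-1}}\simeq f_i\circ(\partial g^{-1}\oplus 1)$ from the naturality clause of Proposition~\ref{prop:CSProp48} applied in the \emph{source} variable (isometry $g$ of $v$, identity on $m$). The paper instead computes: it observes that since $j(v)$ and $jh^{-1}(v)$ have the same image, the splitting $\sigma$ of ${j^\perp}^{*}\phi$ can be reused, that $h\widetilde\sigma$ is then the correct splitting of $jh^{-1}$, reads off the components $ah^{-1},hb,s$ of $f_{jh^{-1}}$ from Proposition~\ref{prop:Componentsfj}, and concludes by Remark~\ref{rem:SameComponents}. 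Your route is cleaner if the naturality clause of Proposition~\ref{prop:CSProp48} really covers isometries of the source form $v$; but that clause is only stated informally, and the one place the paper itself invokes it (Construction~\ref{con:delta}) is for an isometry of the ambient form $m$, not of $v$. Since the construction of $f_j$ depends on a non-canonical choice of splitting $\sigma$, naturality in the source variable is not entirely free: you need precisely the observation that the splitting persists and transports correctly under $g$, which is what the paper's component calculation supplies. So either keep your appeal to naturality but acknowledge the splitting bookkeeping, or replace it with the direct computation; the rest of your proof is airtight.
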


\begin{proof}
Let $f \colon \partial v \cong_s \partial v'$ be a stable isomorphism, and let $h \colon v \to v$ and $k \colon v' \to v'$ be isometries.
To prove that the maps $\Pr$ are well-defined on the quotient sets,
we must show that~$\Pr(\partial k \circ f \circ \partial h^{-1})= \Pr(f)$.
We will use~\cite[Lemma 4.6 iv)]{CrowleySixt}, which states that $\bsm h&0\\ 0 & k^{-*} \esm \colon v \cup_f -v' \to v \cup_{\partial k \circ f \circ \partial h^{-1}} -v'$ is an isometry.
Let $a,b,s$ be the components of the stable isomorphism $f=(\alpha,\beta,\nu)$.
Recall that for an isometry~$h~\colon~v~\to~v$, we have $\partial h:=(h,h,0)$,
and recall from Example~\ref{ex:ComponentsBoundary} that~$\partial k \circ f \circ \partial h^{-1}$ has $a$-component~$k \circ a \circ h^{-1}$ and $b$-component~$h \circ b \circ k^{-1}$.
Next, we use $\lambda,\lambda'$ to denote the symmetrized forms on $v,v'$ and consider the following diagram in which the horizontal maps are the primitive embeddings described in Lemma~\ref{lem:IsometricEmbedding}:
\begin{equation}
\label{eq:DescendsTobIso}
 \xymatrix@C+1cm{
v \ar[r]^-{\bsm 1 \\ 0 \esm }\ar[d]_h^\cong & v \cup_{f} -v' \ar[d]^\cong_{\bsm h&0 \\ 0&k^{-*} \esm}  & \ar[l]_{\bsm b\\-\lambda' \esm}  -v' \ar[d]_{k}^\cong \\
v \ar[r]^-{\bsm 1 \\ 0 \esm } & v \cup_{\partial k \circ f \circ \partial h^{-1} } -v' & \ar[l]_-{\bsm hbk^{-1} \\ -\lambda' \esm} -v'.
}
\end{equation}
The left hand square of~\eqref{eq:DescendsTobIso} clearly commutes. Note that~\eqref{eq:DescendsTobIso} also shows that $\Pr((h,k)\cdot f)=(h,k) \cdot \Pr(f)$ so that $\Pr$ intertwines the $\Aut(v) \times \Aut(v')$ actions.
The right hand square of~\eqref{eq:DescendsTobIso} commutes because $k$ is an isometry.
As a consequence, we deduce that the 2-sided primitive embeddings $\Pr(\partial k \circ f \circ \partial h^{-1})$ and $\Pr(f)$ agree in $\bTwoPrim_{m}(v,v')$.
This shows that $\Pr$ descends to $\bIso_m(\partial v, \partial v')$.
Taking $k=\id$ (resp.\ $h=\id$) in this proof gives the analogous statement for $\lIso_m(\partial v, \partial v')$ (resp.\ $\rIso_m(\partial v, \partial v')$).

We show that $\delta$ descends to $\bTwoPrim_m(v, - v')$.
Let $v \xhookrightarrow{j} m \xhookleftarrow{j'}-v'$ be a 2-sided primitive embedding, and let $h \colon v \to v$ and $k \colon v' \to v'$ be isometries.
To show that $\delta$ descends to the orbits sets, we prove~$\delta(jh^{-1},j'k^{-1})=(\partial k \oplus 1) \circ \delta(j,j') \circ (\partial h^{-1} \oplus 1)$.
Recall that by definition $\delta(j,j')=(\partial {j'}^{-1} \oplus 1) \circ f_j$, where~$f_j$ denotes the stable isomorphism from Proposition~\ref{prop:CSProp48}.
Use $a,b,s$ to denote the components of the stable isomorphism~$\delta(j,j'):=(\partial {j'}^{-1} \oplus 1) \circ f_j$.
By Example~\ref{ex:ComponentsBoundary}, $\partial k \circ \delta(j,j') \circ \partial h^{-1}$ has components~$kah^{-1},hbk^{-1}$ and $ksk^*$.
By Remark~\ref{rem:SameComponents}, it suffices to show that $\delta(jh^{-1},j'k^{-1})$ has the same components.
By definition, we have $\delta(jh^{-1},j'k^{-1})=(\partial k \partial {j'}^{-1} \oplus 1) \circ f_{jh^{-1}}$.
It therefore suffices to show that $(\partial {j'}^{-1} \oplus 1) f_{jh^{-1}}$ has components $ah^{-1},hb$ and $s$.
Since $h$ is an isometry, the orthogonal complement $jh^{-1}(v)^\perp$ of $jh^{-1}(v)$ in $m$ coincides with the orthogonal complement $j(v)^\perp$ of $v$ in $m$.
In particular, the splitting $\sigma$ of~${j^\perp}^*\phi$ can also be used in the computation of~$f_{jh^{-1}}$. Here recall that~$\phi$ denotes the symmetrisation of the form on~$v \cup_f -v'$.
On the other hand, since $\widetilde{\sigma}:=j^{-1}(1-\sigma {j^\perp}^*\phi)$ splits~$j$ as in \eqref{eqn:splitting}, we deduce that~$h \widetilde{\sigma}$ splits $jh^{-1}$.
It now follows from Proposition~\ref{prop:Componentsfj}
that the components of~$f_{jh^{-1}}$ are indeed~$ah^{-1},hb$ and $s$.
This establishes that $\delta$ descends to $\bTwoPrim_{m}(v,-v')$ as well as to~$\lTwoPrim_{m}(v,-v')$ and
$\rTwoPrim_{m}(v,-v')$ (by respectively taking $k=\id$ and $h=\id$),
which concludes the proof for a fixed form $m$.

The final statement follows since $m$ was an arbitrary form obtained by gluing $v$ and $v'$ together.
\end{proof}

We can now prove the main result of this section.

\begin{theorem}
\label{thm:bisoembprp}
Let $v, v'$ and $m$ be $\varepsilon$-quadratic forms where the latter is nonsingular.
The map~$\Pr$ from Construction~\ref{con:Pr} descends to bijections
\begin{align*}
\Pr\colon &\lIso_{m}(\partial v, \partial v') \to   \lTwoPrim_{m}(v,-v'), \\
\Pr\colon &\rIso_{m}(\partial v, \partial v') \to   \rTwoPrim_{m}(v,-v'), \\
\Pr\colon &\bIso_{m}(\partial v, \partial v') \to   \bTwoPrim_{m}(v,-v').
\end{align*}
Its inverse is the map $\delta$ induced by Construction~\ref{con:delta}.
Moreover, the same statements hold with the subscript $m$ removed.
\end{theorem}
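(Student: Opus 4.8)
The plan is to show that $\Pr$ and $\delta$ (in their various one-sided and two-sided forms, with or without the subscript $m$) are mutually inverse bijections. By Proposition~\ref{prop:PrDescends} both maps are already well defined on the quotient sets, so it suffices to prove $\delta \circ \Pr = \id$ and $\Pr \circ \delta = \id$; and since everything is assembled as a disjoint union over isomorphism classes $[m]$, it is enough to work with a single fixed nonsingular $m$. Moreover, once the identities are established at the level of $\Iso_m(\partial v,\partial v')$ and $\TwoPrim_m(v,-v')$ — i.e.\ before modding out by any automorphism groups — they descend automatically to all three quotients. So the real content is: for the raw maps $\Pr \colon \Iso_m(\partial v,\partial v') \to \TwoPrim_m(v,-v')$ of Construction~\ref{con:Pr} and $\delta \colon \TwoPrim_m(v,-v') \to \Iso_m(\partial v,\partial v')$ of Construction~\ref{con:delta}, show $[\delta(\Pr(f))] = [f]$ in $\bIso_m$ (respectively the one-sided sets), and $[\Pr(\delta(j,j'))] = [(j,j')]$ in $\bTwoPrim_m$.

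For $\Pr \circ \delta = \id$: start with a $2$-sided primitive embedding $v \xhookrightarrow{i} m \xhookleftarrow{i'} -v'$. Proposition~\ref{prop:CSProp48} gives an isometry $r_i \colon m \xrightarrow{\cong} v \cup_{f_i} (-v^\perp)$, and since $(i,i')$ is $2$-sided we may identify $-v^\perp = i'(v')$ via ${i'}^{-1}$, so $\delta(i,i') = (\partial {i'}^{-1}\oplus 1)\circ f_i$. Now apply $\Pr$: the union $v \cup_{\delta(i,i')} -v'$ is isometric to $v \cup_{f_i} (-v^\perp)$ by functoriality of the gluing construction under boundary isometries (\cite[Lemma~4.6~iv)]{CrowleySixt}), and $r_i$ identifies this with $m$. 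I would check that under $r_i$ the canonical inclusion $\bsm 1\\0\esm \colon v \to v\cup_{f_i}(-v^\perp)$ becomes $i$ — this is exactly the naturality of $r_i$ with respect to the identity on the pair $(v, m)$, or can be read off from the explicit formula for $h$ in \eqref{eq:FormulaForhProp48CS} — and that the other inclusion $\bsm b\\-\lambda'\esm$ becomes $i'$ after the identification ${i'}^{-1}$; here Lemma~\ref{lem:IsometricEmbedding} (complementarity) pins down the second leg up to the automorphisms we are already quotienting by, and Example~\ref{ex:CompofjUnion} handles the sign/$\pm 1$ ambiguity in the $b$-component. Hence $[\Pr(\delta(i,i'))] = [(i,i')]$.

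For $\delta \circ \Pr = \id$: start with $[f] \in \Iso_m(\partial v,\partial v')$ with chosen isometry $k \colon v\cup_f -v' \xrightarrow{\cong} m$. Then $\Pr(f)$ is the $2$-sided primitive embedding $v \xhookrightarrow{k\circ\bsm 1\\0\esm} m \xhookleftarrow{k\circ\bsm b\\-\lambda'\esm} -v'$. Feeding the first leg into Proposition~\ref{prop:CSProp48} produces $f_{j}$ for $j := k\circ\bsm 1\\0\esm$, and by naturality $f_j$ agrees (up to homotopy) with $(\partial k \oplus 1)\circ f_{j_0}$ where $j_0 = \bsm 1\\0\esm \colon v \hookrightarrow v\cup_f -v'$. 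The key computation is then that $\delta(\Pr(f)) = (\partial {j'}^{-1}\oplus 1)\circ f_j$ reduces, after cancelling the $\partial k$'s, to $(\partial \varphi \oplus 1)\circ f_{j_0}$ for a suitable isometry $\varphi$ of $-v'$, and that $f_{j_0}$ itself equals $f$ up to stable homotopy and the $\Aut(v)\times\Aut(v')$-action. This last point is precisely Example~\ref{ex:CompofjUnion}: there it is shown that for $m = v\cup_f -v'$ and $j_0 = \bsm 1\\0\esm$, the stable isomorphism $f_{j_0}$ has the same $a,b,s$ components as $f$ up to composing with $\partial(-1)$ on either side, hence agrees with $f$ in $\bIso(\partial v,\partial v')$ (and in $\lIso$ and $\rIso$), by Remark~\ref{rem:SameComponents}. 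So $[\delta(\Pr(f))] = [f]$ in each of the three quotient sets.

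I expect the main obstacle to be bookkeeping the identifications carefully enough that the argument genuinely descends to all the quotients simultaneously and that the $\pm 1$-ambiguities are absorbed at the right stage rather than swept under the rug: concretely, verifying that the second leg of $\Pr\circ\delta$ recovers $i'$ (not merely $i' \circ (-1)$) only after passing to $\bIso$ / $\lIso$ / $\rIso$ as appropriate, and conversely that in $\delta\circ\Pr$ the extra isometry $\varphi$ of $-v'$ is exactly the one killed when passing to $\rIso_m$ but must be tracked on the nose in $\lIso_m$. Once the single-$m$ statements are proven for the three quotients, the ``subscript $m$ removed'' versions follow immediately because $\Iso(\partial v,\partial v') = \bigsqcup_{[m]}\Iso_m(\partial v,\partial v')$ and $\TwoPrim(v,-v') = \bigsqcup_{[m]}\TwoPrim_m(v,-v')$ are respected by both $\Pr$ and $\delta$, and likewise for the one-sided and two-sided quotients.
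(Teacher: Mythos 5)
Your proposal takes essentially the same approach as the paper: both directions reduce to Proposition~\ref{prop:CSProp48} and Example~\ref{ex:CompofjUnion}, with a concrete diagram chase (using the explicit $r_i=\bsm i & -\sigma \esm$ and the $b$-component formula) for $\Pr\circ\delta$ and the $a,b,s$-component matching for $\delta\circ\Pr$. The only wrinkle is your opening remark that the identities hold ``before modding out by any automorphism groups''---they do not (Example~\ref{ex:CompofjUnion} only gives equality after passing to $\lIso$, $\rIso$, $\bIso$, not in $\Iso$)---but you state the correct claim in the very next sentence, so the argument as a whole is sound and matches the paper's.
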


\begin{proof}
Given a $2$-sided primitive embedding $v \stackrel{i}{\hookrightarrow}  m \stackrel{i'}{\hookleftarrow} v' $, we first show that
$\Pr \circ \delta (i,i') \cong (i,i')$ in~$\rIso_m(v,-v')$.
Recall that $\delta(i,i')=[(\partial {i'}^{-1}\oplus 1) \circ f_i]$, where ${i'}^{-1} \colon -v^\perp=i'(v') \to v'$.
Using~$b:=b_{f_i}$ to denote the~$b$ component of the stable isomorphism $f_i$, Example~\ref{ex:ComponentsBoundary} shows that the~$b$ component of~$(\partial {i'}^{-1} \oplus 1) \circ f_i$ is $b i'$.
The 2-sided primitive embedding $\Pr \circ \delta(i,i')$ is therefore
$$\Pr \circ \delta(i,i') = \bigg( v \xhookrightarrow{\bsm 1 \\ 0 \esm} v \cup_{(\partial {i'}^{-1} \oplus 1) \circ f_i} -v' \xrightarrow{k,\cong} m \xleftarrow{k,\cong}  v \cup_{ (\partial {i'}^{-1} \oplus 1) \circ f_i} -v' \xhookleftarrow{\bsm bi' \\-\lambda' \esm} -v' \bigg).$$
Proposition~\ref{prop:CSProp48} provides an isometry $r_i \colon m \to v \cup_{f_i} v^\perp$.
We describe an explicit formula for this isometry following~\cite[proof of Proposition 4.8]{CrowleySixt}: $r_i=\bsm i & - \sigma \esm$, where $\sigma \colon {v^\perp}^* \to m$ is a splitting of ${i^\perp}^*\phi\colon m \to {v^\perp}^*$, meaning that ${i^\perp}^*\phi\sigma=\id_{{v^\perp}^*}$; here $\phi=\psi+\varepsilon \psi^*$ is the $\varepsilon$-symmetrization of the quadratic form $m=(M,\psi)$.
One can check that $r_i$ is indeed an isometry by using Proposition~\ref{prop:Componentsfj}.
Consider the following diagram:
\begin{equation}
\label{eq:FGInverses}
\xymatrix@R0.7cm{
v \ar[r]^i\ar[d]_= & m  & -v'  \ar[d]_\cong^{-i'}\ar[l]_{i'}  \ar@/^2pc/[dd]^{(-1)\cdot}_\cong \\
v \ar[r]^-{\bsm 1\\ 0 \esm} \ar[d]_=   & v \cup_{f_i} v^\perp \ar[d]_\cong^{\bsm 1&0 \\ 0 &{i'}^* \esm} \ar[u]^\cong_{\bsm i & - \sigma \esm} & v^\perp \ar[l]_-{\bsm b \\ \lambda^\perp \esm }   \\
v \ar[r]^-{\bsm 1\\ 0 \esm} & v \cup_{(\partial {i'}^{-1} \oplus 1) \circ f_i} -v'   & -v'.  \ar[l]_-{\bsm bi' \\ -\lambda' \esm } \ar[u]^\cong_{i'}
}
\end{equation}
Observe that each horizontal line of the diagram in~\eqref{eq:FGInverses} consists of 2-sided primitive embeddings, and the third line is $\Pr \circ \delta(i,i')$.
The equation $\Pr \circ \delta (i,i') \cong (i,i')$ will therefore follow once we show that the diagram in~\eqref{eq:FGInverses} commutes.
The top and bottom left squares of~\eqref{eq:FGInverses} clearly commute.
The bottom right square of~\eqref{eq:FGInverses} commutes since ${i'}^* \lambda^\perp i'=-\lambda'$.
In order to prove that the top right square of~\eqref{eq:FGInverses} commutes, we show that $-(ib- \sigma \lambda^\perp)i'=i'.$
Recall from Proposition~\ref{prop:Componentsfj} the~$b$ component of the stable isomorphism~$f_i \colon \partial v  \cong_s \partial (-v^\perp)$ is~$b=-i^{-1}(1-\sigma {i^\perp}^*\phi)i^\perp.$
By using the equation~${i^\perp}^*\phi i^\perp=\lambda^\perp$, note that $ib=\sigma \lambda^\perp-i^\perp$.
Using this formula and the fact that~$i^\perp \colon v^\perp \hookrightarrow m$ is the canonical inclusion (so that $v' \xrightarrow{i'} v^\perp \xhookrightarrow{i^\perp} m$ is also denoted by $i'$), we obtain the desired~result:
$$-(ib- \sigma \lambda^\perp)i'
=-(\sigma \lambda^\perp-i^\perp- \sigma \lambda^\perp)i'
=i^\perp i'=i'.
$$
This shows that the diagram in~\eqref{eq:FGInverses} commutes,
which proves that $\Pr \circ \delta (i,i') \cong (i,i')$ in the set~$\rTwoPrim_m(v,-v')$, and thus in~$\bTwoPrim_m(v,-v')$.
To see that this isomorphism also holds in~$\lTwoPrim_m(v,-v')$, fix two $2$-sided primitive embeddings $(j,j'),(k,k')$
and consider the two following diagrams:
$$
\xymatrix{v \ar[r]^j \ar[d]_=& m \ar[d]^\varphi&-v'\ar[l]_{j'} \ar[d]^{(-1)\cdot}_\cong \\ v \ar[r]^k &m& -v'\ar[l]_{k'} } \ \ \ \
\xymatrix{v \ar[r]^j \ar[d]^{(-1)\cdot}_\cong& m \ar[d]^{-\varphi}_\cong&-v'\ar[l]_{j'} \ar[d]_{=} \\ v \ar[r]^k &m& -v'.\ar[l]_{k' }} \ \ \ \
$$
The commutativity of one of the diagram is equivalent to the commutativity of the other.
Consequently~\eqref{eq:FGInverses} also proves that $\Pr \circ \delta (i,i') \cong (i,i')$ in~$\lTwoPrim_m(v,-v')$.

For $f \in \Iso(\partial v,\partial v')$, Example~\ref{ex:CompofjUnion} proves that~$\delta \circ \Pr(f)$ and $f$ agree in
the sets $\rIso_m,\lIso_m$ and $\bIso_m$ (we observed that the components of $\delta(\Pr(f))$ coincide with the components of both~$(\partial (-1) \oplus 1) \circ f$ and $f \circ (\partial (-1) \oplus 1)^{-1}$ and applied Remark~\ref{rem:SameComponents}), so this concludes the proof of Theorem~\ref{thm:bisoembprp} for a fixed form $m$.

The final statement follows since $m$ was an arbitrary form obtained by gluing $v$ and $v'$ together.
\end{proof}

\begin{example}
\label{ex:TheExampleMarkWants}
As a concrete example of the calculations that appear in the proof of Theorem~\ref{thm:bisoembprp}, we describe a $2$-sided primitive embedding representing $\Pr(\delta([(i_{y_1,y_2},j_{y_1,y_2})])$.
Here, recall from Example~\ref{ex:ExampleOverZ} that for $q=y_1y_2$, we set
$$(i_{y_1,y_2},j_{y_1,y_2}):= \left( (\Z,q)
\xhookrightarrow{\bsm y_1 \\ y_2 \esm}
\left( \Z^2, {\begin{pmatrix}
0&1 \\ 0&0
\end{pmatrix}} \right)
\xhookleftarrow{\bsm -y_1 \\ y_2 \esm}  (\Z,-q) \right).$$
We compute the $a,b,s$ components of $\delta(i_{y_1,y_2},j_{y_1,y_2}):=f_{i_{y_1,y_2}}$ (the boundary automorphism~$f_{i_{y_1,y_2}}$ was introduced in $\eqref{eq:Formulafj}$), as this will be sufficient to calculate $\Pr(\delta([(i_{y_1,y_2},j_{y_1,y_2})])$.

We set $\phi:=\bsm 0&1 \\ 1&0 \esm$ and $\psi=\bsm 0&1 \\ 0&0 \esm$ and note that $i_{y_1,y_2}^\perp=j_{y_1,y_2}$.
We also let $\sigma=\bsm z_1 \\ z_2 \esm$ be a splitting of $i_{y_1,y_2}^{\perp *}\phi=\bsm y_2 & -y_1 \esm$ (so that $y_2z_1-y_1z_2=1$), and let $\widetilde{\sigma}=\bsm -z_2 & z_1 \esm$ be the associated splitting of $i_{y_1,y_2}$.
By Example~\ref{ex:CompofjUnion}, we deduce that the $a,b,s$-components $f_{i_{y_1,y_2}}$ are
\begin{align*}
a &=-\sigma^*\phi^*i_{y_1,y_2}=
-\begin{pmatrix}
z_1 & z_2
\end{pmatrix}
\begin{pmatrix}
0&1 \\ 1&0
\end{pmatrix}
\begin{pmatrix}
y_1 \\ y_2
\end{pmatrix}=-(y_1z_2+y_2z_1),
\\
b &=-\widetilde{\sigma}j_{y_1,y_2}
=-\begin{pmatrix} -z_2 & z_1\end{pmatrix} \begin{pmatrix}
-y_1 \\ y_2
\end{pmatrix}=-(y_1z_2+y_2z_1), \\
s&=-\sigma^*\psi \sigma
=-\begin{pmatrix}
z_1 & z_2
\end{pmatrix}\begin{pmatrix}
0&1 \\ 0&0
\end{pmatrix} \begin{pmatrix}
z_1 \\ z_2
\end{pmatrix}=-z_1z_2.
\end{align*}
By definition of $\Pr$, we therefore deduce that $\Pr(\delta([(i_{y_1,y_2},j_{y_1,y_2})])$ is represented by
$$ \left( (\Z,q)
\xhookrightarrow{\bsm 1 \\ 0 \esm}
\left( \Z^2, {\begin{pmatrix}
q&0 \\  a&-s
\end{pmatrix}} \right)
\xhookleftarrow{\bsm b \\ -2q \esm}  (\Z,-q) \right).$$
One can directly verify using the values of $a$, $b$, $s$ and $q$ above, together with $y_2z_1-y_1z_2=1$, that this is a primitive embedding.
As a reality check, we verify that that this $2$-sided primitive embeddings agrees with $(i_{y_1,y_2},j_{y_1,y_2})$ in $\bTwoPrim((\Z,q),(\Z,-q))$:
$$
\xymatrix{
(\Z,q) \ar[r]^-{\bsm y_1 \\ y_2 \esm}  & \left( {\Z^2, \bsm 0&1 \\ 0&0  \esm }  \right) &  (\Z,-q) \ar[l]_-{\bsm -y_1 \\ y_2 \esm}\\
(\Z,q) \ar[r]^-{\bsm 1 \\ 0 \esm} \ar[u]^=&  \left( {\Z^2, \bsm q&0 \\  a&-s  \esm }  \right) \ar[u]^{\bsm y_1&-z_1\\ y_2&-z_2 \esm} & (\Z,-q). \ar[l]_-{\bsm b \\ -2q \esm}\ar[u]^{(-1)}
}
$$
Combining $b=-(y_1z_2 + y_2 z_1)$, $q=y_1y_2$, and $y_2z_1-y_1z_2=1$ one can check that the diagram commutes.
\end{example}

\chapter{Realising boundary isomorphisms and primitive embeddings} \label{sec:RealisationPrimitive}
Now we start to apply the algebra of Chapters~\ref{sec:BoundaryAutomorphisms}, \ref{sec:InfinitebAut}, and~\ref{sec:PrimitiveEmbeddings}.
We fix a fibration
$\xi \colon B \to BO$,  where $B$ has the homotopy type of a CW complex,  and let
$\overline{\nu} \colon M^{2q} \to B$ be a normal~$(q{-}1)$-smoothing over~$(B, \xi)$.
Throughout this chapter, we work with closed, connected, oriented, smooth manifolds~$M, M'$ or~$M_i$, for~$i=0,1,2$, and we assume that~$q >3$ is even, or that~$q=2$, in which case we must switch to the topological category and assume that~$\pi_1(B)$ is a good group.
We set~$\Lambda:=\Z[\pi_1(B)]$ and consider the surgery kernel
\[ K_q(M; \Lambda) :=\ker \big( \overline{\nu}_{*} \colon H_q(M;\Lambda) \to H_q(B;\Lambda) \big),\]
where we use $\ol \nu_{*} \colon \pi_1(M) \to \pi_1(B)$ to identify fundamental groups.
Since~$q$ is even and~$M$ is oriented, if the~$\Lambda$-valued intersection form of~$M$,
$\lambda_{M} \colon H_q(M; \Lambda) \times H_q(M; \Lambda) \to \Lambda$
admits a quadratic refinement, then it is unique. In this case we denote it by~$\theta_{M}$.

In Section~\ref{sub:AssociatedPrimitive}, under simplifying algebraic assumptions
we associate a 2-sided primitive embedding
\[ K_q(M;\Lambda)
\xhra{\iota_{\overline{\nu}}}
H_q(M; \Lambda)
\xhla{j_{\overline{\nu}}}
H^q(B;\Lambda) \]
to~$(M, \overline{\nu})$ (Definition~\ref{def:PrimitiveEmbeddingManifold}).
By Construction~\ref{con:delta}, this gives rise to a boundary isomorphism
\[ \delta_{\ol \nu}:=\delta(\iota_{\overline{\nu}},j_{\overline{\nu}}) \in \Iso(\partial (K_q(M;\Lambda),\iota_{\overline{\nu}}^*\theta_{M} \iota_{\overline{\nu}}),\partial (H^q(B;\Lambda),-j_{\overline{\nu}}^*\theta_{M} j_{\overline{\nu}} )). \]

In Section~\ref{sub:TransitivityOnPrimitive}, we prove a realisation result for this boundary automorphism.
Roughly,  given a boundary automorphism
$f \in \Aut(\partial (K_q(M;\Lambda),\iota_{\overline{\nu}}^*\theta_{M} \iota_{\overline{\nu}} ))$, Theorem~\ref{thm:Realisation2} constructs another
normal~$(q{-}1)$-smoothing~$(M',\overline{\nu}')$ which is normally $(B, \xi)$-bordant  (and thus stably diffeomorphic) to~$(M,\overline{\nu})$ and whose associated boundary isomorphism $\delta'$ agrees with $\delta f$ in~$\rIso$.

Then for the quadratic form $v(\ol \nu) = (K_q(M; \Lambda), \iota^*_{\ol \nu} \theta_{M}\iota_{\ol \nu})$
we relate this result to the realisation of elements of
$\ell_{2q+1}([v(\ol \nu)]_0)$ from Chapter~\ref{sec:Realisation}.
Given an appropriate quasi-formation~$x$ representing $[x] \in \ell_{2q+1}([v(\ol \nu_0)]_0)$,
Theorem~\ref{thm:Realisation} constructed a $(B, \xi)$-bordism $(W,\overline{\nu})$ with
$\Theta(W,\overline{\nu})=[x]$.
In Corollary~\ref{cor:DeltaSurgery}, under simplifying algebraic assumptions, we show that the two realisation theorems are related via the exact sequence ~\eqref{eq:ExactSequenceCS} from~\cite{CrowleySixt}, as one would hope.

\section{The boundary isomorphism of an even split-free normal smoothing}
\label{sub:AssociatedPrimitive}

Under simplifying algebraic assumptions,
we associate a 2-sided primitive embedding and a boundary isomorphism to a
$(q{-}1)$-smoothing~$\overline{\nu} \colon M^{2q} \to B$.
In Chapter~\ref{sec:examples}, these objects will be used to distinguish
certain stably diffeomorphic~$4k$-manifolds.
\medbreak

Consider a map~$\overline{\nu}  \colon M^{2q} \to B$, with $B$ homotopy equivalent to a $CW$-complex
and $M$ closed, connected, and oriented.
%
%
The inclusion \[K_q(M;\Lambda) \hookrightarrow H_q(M;\Lambda)\] and the composition of \[\overline{\nu} ^* \colon H^q(B;\Lambda) \to H^q(M;\Lambda)\] with the Poincar\'{e} duality isomorphism \[\operatorname{PD} \colon H^q(M;\Lambda) \xrightarrow{\cong} H_q(M;\Lambda)\] produce the following $\Lambda$-homomorphisms:
\begin{align}
\label{eq:Mapj}
\iota_{\overline{\nu}} \colon K_q(M;\Lambda) &\rightarrow H_q(M;\Lambda), \\
j_{\overline{\nu}} := PD \circ \ol{\nu}^* \colon H^q(B;\Lambda) &\rightarrow H_q(M;\Lambda). \nonumber
\end{align}
Recall that when $q$ is even and $M$ is oriented, if the symmetric form~$(H_q(M; \Lambda), \lambda_M)$
admits a quadratic refinement~$\theta_M$, then the refinement is uniquely determined by $\lambda_M$.
The quadratic form~$\theta_M$ could be singular, because the evaluation homomorphism
$\operatorname{ev}_{\! M} \colon H^q(M;\Lambda) \to
\overline{\Hom_\Lambda(H_q(M;\Lambda),\Lambda)}$
need not be an isomorphism.
Next, use~$\iota_{\overline{\nu}}$ and~$j_{\overline{\nu}}$ to pull the quadratic form~$\theta_M$ back from~$H_q(M;\Lambda)$ to~$K_q(M;\Lambda)$ and~$H^q(B;\Lambda)$ respectively.
This leads to the two following quadratic forms:
\begin{align*}
(K_q(M;\Lambda), \theta_M) &:= (K_q(M;\Lambda), \iota_{\overline{\nu}}^*\theta_M \iota_{\overline{\nu}}), \\
(H^q(B;\Lambda), \theta_M) &:= (H^q(B;\Lambda), j_{\overline{\nu}}^*\theta_M j_{\overline{\nu}}).
\end{align*}

In order for~$(\iota_{\overline{\nu}},j_{\overline{\nu}})$ to define a 2-sided primitive embedding,
we impose some further restrictions on the map~$\overline{\nu} \colon M \to B$.

\begin{definition}
\label{def:GoodPrimitiveEmbedding}
A map~$\overline{\nu} \colon M \to B$ is called \emph{$q$-surjective} if it induces a
surjection~$\overline{\nu}_* \colon H_q(M;\Lambda) \to H_q(B;\Lambda)$.
Let~$M$ be a closed, connected, and oriented~$2q$-dimensional manifold with~$q \geq 2$ even.
\begin{enumerate}[(i)]
\item  We say that~$M$ is \emph{even} if the~$\Lambda$-valued intersection form on~$H_q(M;\Lambda)$ admits a quadratic refinement~$\theta_M$, which since~$M$ is oriented and~$q$ is even is unique.
\item We say that~$(M,\overline{\nu})$ is \emph{split-free} if the evaluation map \[\operatorname{ev}_{\! B} \colon H^q(B;\Lambda) \to \overline{\operatorname{Hom}_{\Lambda}(H_q(B;\Lambda),\Lambda)}\] is an isomorphism, and if the~$\Lambda$-modules~$K_q(M;\Lambda):=\ker(\overline{\nu}_*)$ and~$H_q(B;\Lambda)$ are f.g. free.  If in addition $M$ is even, we shall say that $(M, \ol \nu)$ is
an {\em even, split-free map.}
\end{enumerate}
\end{definition}

We will describe the two main examples of~$q$-surjective maps that we wish to keep in mind. For this we need a quick lemma.

\begin{lemma}\label{lem:q-conn-q-surj}
  For $q \geq 1$, if~$\overline{\nu} \colon M \to B$ is~$q$-connected, then it is~$q$-surjective.
\end{lemma}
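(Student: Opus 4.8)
The plan is to reduce the statement to the vanishing of a relative homology group and then invoke the relative Hurewicz theorem, after first disposing of a trivial degenerate case. When $q=1$, the target group $H_1(B;\Lambda)$ is by definition the first homology of the universal cover $\widetilde B$ of $B$, which vanishes by the Hurewicz theorem since $\widetilde B$ is simply connected; hence $\overline{\nu}_* \colon H_1(M;\Lambda) \to H_1(B;\Lambda) = 0$ is trivially surjective and there is nothing to prove. So from now on I would assume $q \geq 2$.

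For $q \geq 2$ a $q$-connected map induces an isomorphism $\overline{\nu}_* \colon \pi_1(M) \xrightarrow{\cong} \pi_1(B)$, so I would use it to identify the fundamental groups and pass to universal covers $\widetilde M \to M$ and $\widetilde B \to B$, obtaining the induced map $\widetilde{\overline{\nu}} \colon \widetilde M \to \widetilde B$ of simply connected spaces. Since covering projections induce isomorphisms on $\pi_i$ for $i\geq 2$ and $\pi_i$ of a universal cover vanishes for $i \leq 1$, the long exact sequence of homotopy groups shows that $\widetilde{\overline{\nu}}$ is again $q$-connected. Replacing $\widetilde{\overline{\nu}}$ by the inclusion of $\widetilde M$ into its mapping cylinder $Z$, the pair $(Z,\widetilde M)$ is a $q$-connected pair of simply connected spaces, so the relative Hurewicz theorem gives $H_i(Z,\widetilde M;\Z) = 0$ for all $i \leq q$; in particular $H_q(Z,\widetilde M;\Z)=0$. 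The long exact sequence of the pair then shows that $\widetilde{\overline{\nu}}_* \colon H_q(\widetilde M;\Z) \to H_q(Z;\Z) \cong H_q(\widetilde B;\Z)$ is onto. Since $H_q(M;\Lambda) \cong H_q(\widetilde M;\Z)$ and $H_q(B;\Lambda) \cong H_q(\widetilde B;\Z)$ compatibly with the maps induced by $\overline{\nu}$, this yields the surjectivity of $\overline{\nu}_* \colon H_q(M;\Lambda) \to H_q(B;\Lambda)$, i.e.\ $\overline{\nu}$ is $q$-surjective.

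There is essentially no serious obstacle here; the only points needing a word of care are the observation that passing to universal covers preserves $q$-connectedness of the map (immediate from the homotopy long exact sequence of a covering together with $q \geq 2$) and the standard identification of $H_*(M;\Lambda)$ with $H_*(\widetilde M;\Z)$ under which $\overline{\nu}_*$ corresponds to $\widetilde{\overline{\nu}}_*$. One could instead argue uniformly by working directly with the mapping cylinder of $\overline{\nu}$ and citing the general fact that a $q$-connected map induces a surjection on $H_q(-;A)$ for any local coefficient system $A$, but I prefer the explicit reduction to the universal cover so that the relative Hurewicz theorem is applied in its most elementary form.
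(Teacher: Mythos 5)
Your proof is correct and follows essentially the same route as the paper's: pass to universal covers and apply the relative Hurewicz theorem to the resulting $q$-connected pair (the paper phrases this as the pair $(\widetilde B,\,\overline{\nu}^*(\widetilde B))$, where $\overline{\nu}^*(\widetilde B)\to M$ is identified with the universal cover of $M$; you phrase it via a lift $\widetilde M\to\widetilde B$ and its mapping cylinder, which is the same content). One small point in your favour: the paper's proof opens with ``since $\overline{\nu}$ induces an isomorphism on fundamental groups,'' which as stated presupposes $q\geq 2$, whereas the lemma is asserted for $q\geq 1$; your explicit disposal of the $q=1$ case (where $H_1(B;\Lambda)\cong H_1(\widetilde B;\Z)=0$, so there is nothing to prove) cleanly fills that small gap.
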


\begin{proof}
Since~$\overline{\nu}$ induces an isomorphism on fundamental groups,
the pullback~$\overline{\nu}^*(\widetilde{B}) \to M$ of the universal covering of~$B$ is homeomorphic to the universal cover $\wt{M}$, so
$\ol \nu^*(\wt B)$ is simply-connected.
Apply the relative Hurewicz theorem to~$(\widetilde{B},\overline{\nu}^*(\widetilde{B}))$ to see that~$\overline{\nu}$ induces a surjection on~$H_q(-;\Lambda)$ as asserted.
  \end{proof}

\begin{example}
\label{ex:qConnectectedImpliesqSurjective}
Using Lemma~\ref{lem:q-conn-q-surj} we note the two main examples of ~$q$-surjective maps.
\begin{enumerate}[(i)]
\item Assume that~$B=P_{q-1}(M)$ is the~$(q{-}1)$-st Postnikov stage of~$M$, a model for which  is obtained from~$M$ by adding cells of dimension~$(q+1)$ and higher in order to kill~$\pi_i(M)$ for~$i \geq q$~\cite[Chapter 4.3]{HatcherAlgebraicTopology}.
Since the inclusion map~$\overline{\nu}:=p_{q-1}(M) \colon M \to P_{q-1}(M)$ is~$q$-connected, it is~$q$-surjective.
\item Given a fibration,~$\xi \colon B \to BO$, a normal~$(q{-}1)$-smoothing~$\overline{\nu} \colon M \to B$ is~$q$-connected and therefore~$q$-surjective.
\end{enumerate}
\end{example}


\begin{lemma}
\label{lem:GoodImpliesSplit}
Let~$q \geq 2$ be even.
If $\ol \nu  \colon M^{2q} \to B$ is an even, split-free $q$-surjective~map and $(H_q(M;\Lambda),\theta_M)$ is a nonsingular quadratic form, then~$(\iota_{\overline{\nu}},j_{\overline{\nu}})$,
\[(K_q(M;\Lambda),\theta_M) \xhra{\iota_{\overline{\nu}}}  (H_q(M;\Lambda),\theta_M) \xhla{j_{\overline{\nu}}} (H^q(B;\Lambda),\theta_M),\]
is a 2-sided primitive embedding.
\end{lemma}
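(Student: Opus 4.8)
The plan is to verify the three defining properties of a 2-sided primitive embedding (Definition~\ref{def:PrimitiveEmbedding}) for the pair $(\iota_{\overline{\nu}}, j_{\overline{\nu}})$: that $\iota_{\overline{\nu}}$ and $j_{\overline{\nu}}$ are split isometric injections, that the codomain $(H_q(M;\Lambda),\theta_M)$ is nonsingular (which is a hypothesis), and finally that $\iota_{\overline{\nu}}(K_q(M;\Lambda))^\perp = j_{\overline{\nu}}(H^q(B;\Lambda))$.

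First I would handle $\iota_{\overline{\nu}}$. That it is isometric is immediate since $\theta_M$ on $K_q(M;\Lambda)$ is by definition $\iota_{\overline{\nu}}^*\theta_M\iota_{\overline{\nu}}$; that it is a split injection follows from the fact that $K_q(M;\Lambda) = \ker(\overline{\nu}_*)$ is a direct summand of $H_q(M;\Lambda)$, which in turn holds because $\overline{\nu}$ is $q$-surjective and the quotient $H_q(B;\Lambda)$ is f.g.\ free (by the split-free hypothesis), so the surjection $\overline{\nu}_*$ splits. Next I would analyse $j_{\overline{\nu}} = \mathrm{PD}\circ\overline{\nu}^*$. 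Poincar\'e duality $\mathrm{PD}\colon H^q(M;\Lambda)\xrightarrow{\cong} H_q(M;\Lambda)$ is an isomorphism, so $j_{\overline{\nu}}$ being a split injection reduces to showing $\overline{\nu}^*\colon H^q(B;\Lambda)\to H^q(M;\Lambda)$ is a split injection. Here I would use the split-free hypothesis that $\mathrm{ev}_B\colon H^q(B;\Lambda)\to\overline{\Hom_\Lambda(H_q(B;\Lambda),\Lambda)}$ is an isomorphism and that $H_q(B;\Lambda)$ is f.g.\ free: dualising the split surjection $\overline{\nu}_*\colon H_q(M;\Lambda)\twoheadrightarrow H_q(B;\Lambda)$ gives a split injection of dual modules, and a universal coefficients / naturality of evaluation argument identifies this with $\overline{\nu}^*$ on $H^q$ modulo the (iso) evaluation maps, so $\overline{\nu}^*$ is a split injection. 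The isometry property of $j_{\overline{\nu}}$ is again by definition of $\theta_M$ on $H^q(B;\Lambda)$.

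The heart of the proof — and the step I expect to be the main obstacle — is the orthogonality identity $\iota_{\overline{\nu}}(K_q(M;\Lambda))^\perp = j_{\overline{\nu}}(H^q(B;\Lambda))$, taken with respect to the symmetrisation $\lambda_M$. The inclusion $j_{\overline{\nu}}(H^q(B;\Lambda)) \subseteq K_q(M;\Lambda)^\perp$ should come from a naturality computation: for $x\in K_q(M;\Lambda)$ and $y\in H^q(B;\Lambda)$, one has $\lambda_M(x, \mathrm{PD}(\overline{\nu}^* y)) = \langle \overline{\nu}^* y, x\rangle = \langle y, \overline{\nu}_* x\rangle = 0$ since $\overline{\nu}_* x = 0$ by definition of the surgery kernel; here $\langle\,,\,\rangle$ denotes the evaluation pairing and the middle equality is naturality of cap/evaluation under $\overline{\nu}$. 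For the reverse inclusion I would use a rank and nonsingularity count: since $(H_q(M;\Lambda),\lambda_M)$ is nonsingular, for a direct summand $L$ one has $\rank L^\perp = \rank H_q(M;\Lambda) - \rank L$, and moreover $L^{\perp\perp} = L$ in the nonsingular case; applying this with $L = \iota_{\overline{\nu}}(K_q(M;\Lambda))$ and noting $\rank K_q(M;\Lambda) = \rank H_q(M;\Lambda) - \rank H_q(B;\Lambda) = \rank H_q(M;\Lambda) - \rank H^q(B;\Lambda)$ (using split-freeness and that $H^q(B;\Lambda)\cong\overline{\Hom(H_q(B;\Lambda),\Lambda)}$ has the same rank as $H_q(B;\Lambda)$), we get that $j_{\overline{\nu}}(H^q(B;\Lambda))$ and $\iota_{\overline{\nu}}(K_q(M;\Lambda))^\perp$ are both direct summands of the same rank, with the former contained in the latter; I would need to argue the inclusion of summands of equal rank in a free module over the weakly finite ring $\Lambda$ forces equality, or alternatively that $j_{\overline{\nu}}$ being a split injection with image a summand makes the containment of summands of equal rank an equality. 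The subtlety to watch is that $\Lambda=\Z[\pi]$ is generally not a PID, so I cannot naively compare via dimensions; instead I would invoke that a split injection between f.g.\ free modules of equal rank over a weakly finite ring is an isomorphism, applied to the inclusion $j_{\overline{\nu}}(H^q(B;\Lambda)) \hookrightarrow \iota_{\overline{\nu}}(K_q(M;\Lambda))^\perp$ once both are shown to be stably free summands of equal rank. This last bookkeeping — making the rank comparison rigorous over $\Lambda$ and confirming $K_q(M;\Lambda)^\perp$ is itself a summand — is where the real work lies.
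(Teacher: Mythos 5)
Your overall plan — verify that $\iota_{\overline\nu}$ and $j_{\overline\nu}$ are split isometric injections and then establish the orthogonality identity, splitting the latter into a naturality computation for $\subseteq$ and a further argument for $\supseteq$ — is structurally the same as the paper's, and in fact everything up to and including the $j_{\overline\nu}(H^q(B;\Lambda))\subseteq K_q(M;\Lambda)^\perp$ inclusion coincides: the paper also reduces $j_{\overline\nu}$ split-injective to $\overline\nu^*$ split-injective, uses the same square of evaluation maps, and computes $\lambda_M(j_{\overline\nu}(\varphi),x)=\langle\varphi,\overline\nu_*(x)\rangle=0$. Where you genuinely diverge is the reverse inclusion $K_q(M;\Lambda)^\perp\subseteq j_{\overline\nu}(H^q(B;\Lambda))$. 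The paper does not count ranks at all; it re-uses the split-exact row
\[
0 \to \overline{\Hom_\Lambda(H_q(B;\Lambda),\Lambda)} \xrightarrow{\overline\nu^*_d} \overline{\Hom_\Lambda(H_q(M;\Lambda),\Lambda)} \xrightarrow{\iota_{\overline\nu}^*} \overline{\Hom_\Lambda(K_q(M;\Lambda),\Lambda)} \to 0
\]
established while proving split injectivity, observes that $y\in K_q(M;\Lambda)^\perp$ is precisely the condition $\operatorname{ev}_M\operatorname{PD}^{-1}(y)\in\ker(\iota_{\overline\nu}^*)=\operatorname{im}(\overline\nu_d^*)$, and then lifts back through the two evaluation isomorphisms ($\operatorname{ev}_B$ by split-freeness, $\operatorname{ev}_M$ by nonsingularity) to produce an explicit $\varphi$ with $y=j_{\overline\nu}(\varphi)$. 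This is a pure diagram chase: no rank function, no appeal to weak finiteness, no need to check that $K_q(M;\Lambda)^\perp$ is a summand or that stably free modules of rank zero vanish. Your rank-counting route can be made to work — $K_q(M;\Lambda)^\perp$ is the kernel of a split surjection onto a free module and hence a stably free summand; the retraction of $H_q(M;\Lambda)$ onto $j_{\overline\nu}(H^q(B;\Lambda))$ restricts to a split surjection from $K_q(M;\Lambda)^\perp$, and a stably free complement of rank zero vanishes by weak finiteness — but you are right that this is the part that needs real bookkeeping, and the paper simply sidesteps all of it. You should either carry out that bookkeeping explicitly, or, better, adopt the exactness argument, since the sequence you need is already in your hands from the $j_{\overline\nu}$ step.
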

\begin{proof}
First,~$\iota_{\overline{\nu}}$ is a split injection, because~$H_q(B;\Lambda)$ free implies that the short exact sequence
\[0 \to K_q(M;\Lambda) \xrightarrow{\iota_{\overline{\nu}}} H_q(M;\Lambda) \xrightarrow{\ol{\nu}_*} H_q(B;\Lambda) \to 0\]
splits.  Next we prove that~$j_{\overline{\nu}}=\operatorname{PD} \circ \overline{\nu}^*$ is a split injection.
Since~$\operatorname{PD}$ is an isomorphism, it suffices to show that~$\overline{\nu}^*$ is a split injection.
The map~$\overline{\nu}~$ is~$q$-surjective, so the induced map~$\overline{\nu} _* \colon H_q(M;\Lambda) \to H_q(B; \Lambda)$ is onto.
It follows that its dual~$\overline{\nu} ^*_d$ is injective.
Now consider the following commutative diagram:
\begin{equation}
\label{eq:GoodSquare}
 \xymatrix@R0.5cm @C0.45cm{
& H^q(B; \Lambda) \ar[r]^{\overline{\nu} ^*} \ar[d]_\cong^{\operatorname{ev}_{\! B}}  & H^q(M;\Lambda) \ar[d]_\cong^{\operatorname{ev}_{\! M}}  & \\
 0 \ar[r] & \overline{\operatorname{Hom}_{\Lambda}(H_q(B;\Lambda),\Lambda)} \ar[r]^{\overline{\nu} ^*_d} &  \overline{\operatorname{Hom}_{\Lambda}(H_q(M;\Lambda),\Lambda)} \ar[r]^{\iota_{\overline{\nu}}^*} &  \overline{\operatorname{Hom}_{\Lambda}(K_q(M;\Lambda),\Lambda)} \ar[r] & 0.
 }.
\end{equation}
We argue that the bottom line is split exact.
We already established that~$\overline{\nu} _d^*$ is injective. Since~$H_q(B;\Lambda)$ is free, there is no Ext term and so it follows that~$\iota_{\overline{\nu}}^*$ is surjective.
Next, since~$K_q(M;\Lambda)$ is finitely generated and free, so is~$\overline{\operatorname{Hom}_{\Lambda}(K_q(M;\Lambda),\Lambda)}$, and the exact sequence splits as claimed.
We can now show that~$\overline{\nu}^*$ is a split injection.
Since $(M, \overline{\nu})$ is even and split-free, the evaluation map~$\operatorname{ev}_{\!B}$ is an isomorphism, while~$\operatorname{ev}_{\!M}$ is an isomorphism because the quadratic form~$(H_q(M;\Lambda),\theta_M)$ is nonsingular.
Since~$\overline{\nu}^*_d$ is a split injection, the commutativity of the diagram displayed in~\eqref{eq:GoodSquare} together with these facts implies that~$\overline{\nu}^*$ must also be a split injection, as claimed.

It remains to show that~$K_q(M;\Lambda)^\perp=j_{\overline{\nu}}(H^q(B;\Lambda))$.
We first show that $j_{\overline{\nu}}(H^q(B;\Lambda))$ is contained in $K_q(M;\Lambda)^\perp$.
Use~$\langle -,-\rangle_B$ to denote the Kronecker pairing that evaluates a cohomology class on a homology class, so that~$\langle \varphi,x \rangle_B=\operatorname{ev}_{\! B}(\varphi)(x)$.
For~$x \in H_q(M;\Lambda)$ and for~$\varphi \in H^q(B;\Lambda)$, the definition of the~$\Lambda$-valued intersection pairing~$\lambda_M=\theta_M+\varepsilon\theta_M^*$ (using Poincar\'e duality and the Kronecker evaluation map) gives
 \begin{equation}
 \label{eq:TopPrimitiveEmbedding}
 \lambda_M(j_{\overline{\nu}}(\varphi),x)
  = \langle\overline{\nu}^* \varphi,x \rangle_{M}
  =\langle \varphi,\overline{\nu} _*(x) \rangle_B.
 \end{equation}
The inclusion~$j_{\overline{\nu}}(H^q(B;\Lambda)) \subseteq K_q(M;\Lambda)^\perp$
now follows readily: if~$x \in K_q(M;\Lambda) =\ker(\ol{\nu}_*)$, then
\[\lambda_M(j_{\overline{\nu}}(\varphi),x) = \langle \varphi,\overline{\nu} _*(x) \rangle_B = \langle \varphi,0 \rangle_B =0,\]
so~$j_{\overline{\nu}}(\varphi) \in K_q(M;\Lambda)^\perp$ as desired.
To prove the reverse inclusion,~$K_q(M;\Lambda)^\perp \subseteq j_{\overline{\nu}}(H^q(B;\Lambda))$, assume that~$y \in H_q(M;\Lambda)$ satisfies~$\lambda_M(y,\iota_{\overline{\nu}}(x))=0$ for all~$x \in K_q(M;\Lambda)$.
Equivalently, we have~$\operatorname{ev}_{\! M} \operatorname{PD}_M^{-1}(y)\in \ker ( \iota_{\overline{\nu}}^*)=\im(\overline{\nu}_d^*)$, by the exactness of the sequence in~\eqref{eq:GoodSquare}.
In particular, $\operatorname{ev}_{\! M} \operatorname{PD}_M^{-1}(y)=\overline{\nu}_d^*(z)$ for some~$z \in \overline{\Hom_\Lambda(H_q(B;\Lambda),\Lambda)}$.
Since~$\operatorname{ev}_{\! B}$ is an isomorphism,~$z=\operatorname{ev}_{\! B}(\varphi)$ for some~$\varphi \in H^q(B;\Lambda)$.
By commutativity of~\eqref{eq:GoodSquare}, we obtain~$\operatorname{ev}_{\! M} \operatorname{PD}_M^{-1}(y)=\operatorname{ev}_{\! M}(\overline{\nu}^*(\varphi))$.
Since~$\operatorname{ev}_{\! M}$ is an isomorphism, we deduce that~$y=j_{\overline{\nu}}(\varphi)$.
We thus proved that $K_q(M;\Lambda)^\perp \subseteq j_{\overline{\nu}}(H^q(B;\Lambda))$, and this concludes the proof of the proposition.
\end{proof}

Using Lemma~\ref{lem:GoodImpliesSplit}, we associate a 2-sided primitive embedding to an even, split-free $q$-surjective map~$\overline{\nu}$.
We can also associate a boundary isomorphism to~$\overline{\nu}~$ by recalling from Construction~\ref{con:delta} that a 2-sided primitive embedding~$(\iota_{\overline{\nu}},j_{\overline{\nu}})$ determines a boundary isomorphism~$\delta (\iota_{\overline{\nu}},j_{\overline{\nu}})$.

\begin{definition}
\label{def:PrimitiveEmbeddingManifold}
Let~$(M^{2q},\overline{\nu})$ be an even, split-free $q$-surjective map and assume that the quadratic form~$(H_q(M;\Lambda),\theta_M)$ is nonsingular.
 The \emph{2-sided primitive embedding associated with~$\overline{\nu}$} is the~$2$-sided primitive embedding~$(\iota_{\overline{\nu}},j_{\overline{\nu}})$ from~\eqref{eq:Mapj},
the \emph{boundary isomorphism associated to~$\overline{\nu}~$} is 
$$\delta_{\overline{\nu}} :=\delta (\iota_{\overline{\nu}},j_{\overline{\nu}}) \in \Iso(\partial(K_q(M;\Lambda),\theta_M),\partial(H^q(B;\Lambda),-\theta_M)).$$
\end{definition}

Some remarks on this definition are in order.

\begin{remark}
\label{rem:DecomposoTopo}
Let~$\overline{\nu}  \colon M^{2q} \to B$ be an even, split-free $q$-surjective map.
\begin{enumerate}[(i)]
\item The boundary isomorphism~$\delta_{\overline{\nu} }$ determines an element in the one-sided boundary automorphism set $\lIso_{(H_q(M;\Lambda), \theta_M)}(\partial(K_q(M;\Lambda),\theta_M),\partial(H^q(B;\Lambda),-\theta_M))$.
This follows from Theorem~\ref{thm:bisoembprp}, in particular~\eqref{eq:FGInverses}.
As a consequence, there is an isometry
\[ (H_q(M;\Lambda), \theta_M) \cong (K_q(M;\Lambda), \theta_M)  \cup_{\delta_{\overline{\nu} }} (H^q(B;\Lambda), \theta_M).\]
\item Recall from Remarks~\ref{rem:IdentifbAut} and~\ref{rem:IdentifTwoPrim} that the sets~$\lIso_m(\partial v, \partial v')$ and~$\lTwoPrim_m(v,v')$ only depend on~$v'$ and on the isometry type of~$m$ and~$v$.
As a consequence, we say that the 2-sided primitive embeddings associated with the $q$-surjective maps~$(M_0,\overline{\nu}_0)$ and~$(M_1,\overline{\nu}_1)$ into~$B$ \emph{agree in~$\lTwoPrim$} if they fit into the following diagram:
$$
\xymatrix{
(K_q(M_0;\Lambda), \theta_{M_0}) \ar[r]^-{\iota_{\overline{\nu}_0}}\ar[d]^\cong& (H_q(M_0;\Lambda),\theta_{M_0}) \ar[d]^{\cong}& (H^q(B;\Lambda),\theta_{M_0}) \ar[l]_-{j_{\overline{\nu}_0}}\ar[d]^{=} \\
(K_q(M_1;\Lambda), \theta_{M_1})  \ar[r]^-{\iota_{\overline{\nu}_1}} & (H_q(M_1;\Lambda),\theta_{M_1}) & (H^q(B;\Lambda),\theta_{M_0}). \ar[l]_-{j_{\overline{\nu}_1}}
}
$$
This holds, for example, if $(M_0,\overline{\nu}_0)$ and~$(M_1,\overline{\nu}_1)$ are $B$-diffeomorphic.
If the right vertical arrow
in this diagram is merely an isomorphism, then we say that these two 2-sided primitive embeddings \emph{agree in~$\bTwoPrim$}.
We also use the corresponding terminology for the boundary isomorphism associated to~$\overline{\nu}$, replacing~$\TwoPrim$ by~$\Iso$.
\item
\label{item:AllinNu*}
The data of the 2-sided primitive embedding associated with~$\overline{\nu}$ is determined by the map $\overline{\nu}^* \colon H^{q}(B;\Lambda) \to H^{q}(M;\Lambda)$: indeed, using that both $H_{q}(M;\Lambda)$ and $H_{q}(B;\Lambda)$ are free, the map $\overline{\nu}^*$ determines $\overline{\nu}_*$.
\end{enumerate}
\end{remark}


\section{The realisation of boundary automorphisms}
\label{sub:TransitivityOnPrimitive}
The goal of this section is to prove a realisation result for the boundary isomorphisms associated
to bordant normal smoothings.
Throughout this section, we fix a fibration $\xi \colon B \to BO$ as above, and assume that all homology and cohomology groups have~$\Lambda:=\Z[\pi_1(B)]$ coefficients.
Additionally,  we now assume that $B$ has the homotopy type of a CW-complex with finite $q$-skeleton.

%
%
%

\medbreak

Given split quadratic formations $x,y$ and $f\in \Iso(x,y)$, we write $[f]_\ell$ for the class of $f$ in~$\lIso(x,y)$ and $[f]_b$ for its class in~$\bIso(x,y)$.
Recall from Remark~\ref{rem:CompositionNotbAut} that given quadratic forms~$v$ and~$b$, the left action of~$\Aut(\partial v)$
on~$\Iso(\partial v,\partial b)$ does not descend to an action of~$\Aut(\partial v)$ on~$\lIso(\partial v,\partial b)$.
However, if $g \in \Aut(\partial v)$ and $\delta \in \Iso(\partial v,\partial b)$, it is possible to consider the class~$[\delta \circ g]_\ell \in \lIso(\partial v,\partial b)$.
Keeping this in mind, the main result of this section is the~following.

\begin{theorem}
\label{thm:Realisation2}
Let $q \geq 4$ be even and let $\overline{\nu}_0 \colon M_0^{2q} \to B$ be an even, split-free normal
$(q{-}1)$-smoothing.
Set $(V,\theta):=(K_q(M_0),\theta_{M_0})$, and $r:=\operatorname{rk}(V)$.
For every $f \in \Aut_{H_\varepsilon(\Lambda^r)}(\partial (V,\theta))$ there is an even, split-free normal~$(q{-}1)$-smoothing~$(M_2^{2q},\overline{\nu}_2)$ such~that
\begin{enumerate}[(i)]
\item\label{item-thm-realisation2-i} $(M_0,\overline{\nu}_0 )$ and $(M_2,\overline{\nu}_2)$ are stably diffeomorphic;
\item\label{item-thm-realisation2-ii} there is an isometry $(K_q(M_2),\theta_{M_2}) \cong (V,\theta)$, and in particular
$$\lIso(\partial(K_q(M_2),\theta_{M_2}),\partial(H^q(B),-\theta_{M_0}))\cong \lIso(\partial(V,\theta),\partial(H^q(B),-\theta_{M_0}));$$
\item\label{item-thm-realisation2-iii} $[\delta_{\overline{\nu}_2}]_\ell$ and $[\delta_{\overline{\nu}_0}f]_\ell$ belong to $\lIso_{(H_q(M_2),\theta_{M_2})}(\partial(V,\theta),\partial(H^q(B),-\theta_{M_0}))$;
\item\label{item-thm-realisation2-iv} $[\delta_{\overline{\nu}_2}]_\ell$ and $[\delta_{\overline{\nu}_0}f]_\ell$ agree in $\lIso_{(H_q(M_2),\theta_{M_2})}(\partial(V,\theta),\partial(H^q(B),-\theta_{M_0}))$.
\end{enumerate}
The same holds in the topological category for $q=2$, assuming that $\pi_1(B)$ is a good group.
\end{theorem}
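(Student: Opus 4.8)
The plan is to reduce Theorem~\ref{thm:Realisation2} to the realisation theorem for $\ell$-monoid elements, Theorem~\ref{thm:Realisation}, together with the dictionary between boundary isomorphisms and $2$-sided primitive embeddings, Theorem~\ref{thm:bisoembprp}, and the exact sequence~\eqref{eq:ExactSequenceCS}. Write $v:=(V,\theta)=(K_q(M_0),\theta_{M_0})$, which, since $\overline{\nu}_0$ is split-free, is an honest free Wall form of $(M_0,\overline{\nu}_0)$ of rank $r$. The hypothesis $f\in\operatorname{Aut}_{H_\varepsilon(\Lambda^r)}(\partial v)$ means the union $v\cup_f-v$ is hyperbolic of rank $2r$, so in~\eqref{eq:ExactSequenceCS} (with the second form also $v$) one has $\kappa([f]_b)=[v\cup_f-v]=0\in L^s_{2q}(\Lambda)$; by exactness there is a class $[x_f]$ in the subset $\ell_{2q+1}(v,v)\subseteq\ell_{2q+1}(\Lambda)$ with $\delta([x_f])=[f]_b$. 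Using Theorem~\ref{thm:bisoembprp} I would pick a minimal representative $x_f=(H_\varepsilon(K);K,V')$ with $\operatorname{rk}(K)=r$ and induced form isometric --- not merely $0$-stably so --- to $v$, corresponding to the $2$-sided primitive embedding $v\hookrightarrow H_\varepsilon(\Lambda^r)\hookleftarrow-v$ underlying $\Pr([f]_b)$. Securing this honest control is precisely what the hypothesis on $f$ is for.

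Next I would apply Theorem~\ref{thm:Realisation} to $(M_0,\overline{\nu}_0)$ and $x_f$: since the induced form of $x_f$ is isometric to the free Wall form $v$, condition~\eqref{eq:0StableCondition} holds with $r'=0$, yielding a $(B,\xi)$-cobordism $(W,\overline{\nu})$ with $\partial_-(W,\overline{\nu})=(M_0,\overline{\nu}_0)$, built as in~\eqref{eq:W} from $M_0\times[0,1]$ by trivial $q$-handles and then $(q{+}1)$-handles attached along a sublagrangian of the Wall form of $\partial_+W_0$ projecting to $V'$, and with $\Theta(W,\overline{\nu})=[x_f]$. Set $(M_2,\overline{\nu}_2):=\partial_+(W,\overline{\nu})$; since both ends are normal $(q{-}1)$-smoothings over $(B,\xi)$, \cite[Corollary 3]{KreckSurgeryAndDuality} gives~(i). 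Reading the $M_2$-end of the handle decomposition of $W$ --- i.e.\ the same Mayer-Vietoris and Poincar\'e-Lefschetz duality computations as in the proof of Theorem~\ref{thm:Realisation} --- one identifies $H_q(M_2;\Lambda)$, its intersection form, and $K_q(M_2;\Lambda)$; because $v\cup_f-v$ is honestly hyperbolic of rank $2r$, no spurious free summand is introduced, so one obtains an isometry $(K_q(M_2),\theta_{M_2})\cong(V,\theta)$ and the fact that $(M_2,\overline{\nu}_2)$ is again even and split-free. This establishes~(ii) and the displayed identification of $\lIso$-sets.

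To identify the boundary isomorphism, note that the $(q{+}1)$-handles are attached along the primitive embedding underlying $\delta([x_f])$, while the ``$B$-side'' $H^q(B;\Lambda)$ is carried along the trace of the cobordism unchanged. Tracking the $2$-sided primitive embedding $(\iota_{\overline{\nu}_2},j_{\overline{\nu}_2})$ through $W$ and feeding it into Construction~\ref{con:delta} via Theorem~\ref{thm:bisoembprp} should show that $v\cup_{\delta_{\overline{\nu}_0}f}(H^q(B),\theta_{M_0})\cong(H_q(M_2),\theta_{M_2})$, so both classes in~(iii) lie in $\lIso_{(H_q(M_2),\theta_{M_2})}(\partial(V,\theta),\partial(H^q(B),-\theta_{M_0}))$, and moreover $[\delta_{\overline{\nu}_2}]_\ell=[\delta_{\overline{\nu}_0}f]_\ell$ there, which is~(iv). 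One must work in $\lIso$ rather than $\bIso$ because, as in Remark~\ref{rem:CompositionNotbAut}, the form $(H^q(B;\Lambda),-\theta_{M_0})$ is pinned down along the bordism whereas the Wall form is determined only up to the isometry of~(ii); a cleaner route is to isolate the compatibility of the two realisation theorems with~\eqref{eq:ExactSequenceCS} as Corollary~\ref{cor:DeltaSurgery} and quote it. I expect this last bookkeeping --- extracting an \emph{honest} isometry $(K_q(M_2),\theta_{M_2})\cong(V,\theta)$ and tracking the primitive embedding precisely enough to land in the prescribed coset --- to be the main obstacle.

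For the final statement, the case $q=2$ in the topological category with $\pi_1(B)$ a good group, the argument above goes through after three substitutions. First, Theorem~\ref{thm:Realisation} is already valid for $q=2$ in $\mathrm{TOP}$ under the good-group hypothesis, so the cobordism $(W,\overline{\nu})$ still exists. Second, the surgery classes are realised by $\pi_1$-negligible locally flat embedded $2$-spheres using the Freedman-Quinn sphere embedding theorem exactly as in Remark~\ref{remark:4D}, the required framed algebraically dual spheres being available because the relevant hyperbolic form $H_\varepsilon(\Lambda^r)$ is even, which is automatic since $q$ is even. Third, all the homological input --- Mayer-Vietoris, Poincar\'e-Lefschetz duality, and the identifications of $K_q$ and of the $2$-sided primitive embedding --- is insensitive to both the smooth versus topological distinction and the dimension, so it carries over verbatim. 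Hence a normal $2$-smoothing $(M_2,\overline{\nu}_2)$ with properties~(i)--(iv) is produced as before.
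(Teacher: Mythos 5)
Your geometric construction coincides with the paper's — perform $r$ trivial $q$-surgeries, then surger along a diagonal sublagrangian — and your translation between $f$ and a minimal quasi-formation $x_f=(H_\varepsilon(\Lambda^r);\Lambda^r,V')$ via $\Pr([f]_b)$ is exactly the algebra encoded implicitly in the paper's choice of $D$. But the verification of item (iv), which you flag as "the main obstacle" and defer, is precisely the content of the paper's proof, and your proposed route to it has a genuine gap.

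The gap is this: Theorem~\ref{thm:Realisation} computes the \emph{kernel quasi-formation} $\Sigma(W,\overline{\nu},\omega,\varphi)$ as an abstract formation $((P,\psi);F,V')$ inside the hyperbolic form on $H_q(\partial U;\Lambda)$; it does \emph{not} compute $H_q(M_2;\Lambda)$, $(K_q(M_2),\theta_{M_2})$, or the primitive embedding $(\iota_{\overline{\nu}_2},j_{\overline{\nu}_2})$. The $\delta$ of \eqref{eq:ExactSequenceCS} applied to $\Theta(W,\overline{\nu})$ lands in $\bIso(\partial v,\partial v')$ by pure algebra, but relating that to $[\delta_{\overline{\nu}_2}]_\ell^{-1}\circ[\delta_{\overline{\nu}_0}]_\ell$ requires knowing how the abstract hyperbolic form $H_\varepsilon(K)$ sits relative to the actual intersection forms of $M_0$ and $M_2$ and how the $B$-side map $j_{\overline{\nu}_2}$ transports through $W$. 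This is exactly what the paper establishes via the explicit isometry of Lemma~\ref{lem:AlgebraicLemma}, which keeps simultaneous control of $K_q(M_1)=H_\varepsilon(V)\oplus v$ and $H_q(M_1)=\bigl(v\cup_f(-v)\bigr)\oplus\bigl(v\cup_{\delta_0}(H^q(B),\theta_{M_0})\bigr)$ and shows the diagonal lagrangian kills precisely the $H_\varepsilon(V)$ summand on both sides. Without that lemma (or an equivalent) you cannot conclude $(H_q(M_2),\theta_{M_2})\cong v\cup_{\delta_0 f}(H^q(B),\theta_{M_0})$ nor $K_q(M_2)\cong V$, and neither is evenness or split-freeness of $(M_2,\overline{\nu}_2)$ established. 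The paper itself flags your proposed route in Remark~\ref{remark:why-we-do-the-geometry-again}: "it is much easier to return to the geometric construction than to track the change through the map $\delta$ \dots\ the definition of $\delta$ involves two different types of stabilisation which are tricky to control in this setting."

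Your suggestion to "quote Corollary~\ref{cor:DeltaSurgery}" is circular: that corollary is proved \emph{by applying Theorem~\ref{thm:Realisation2}}. You also need one further ingredient beyond what you wrote, namely the diagram chase comparing the maps $j_{W_0},j_{W_1},p_k,p_k'$ across the two trace cobordisms, which is what finally identifies $j_2=j_{\delta_0 f}$. That chase, together with Lemma~\ref{lem:AlgebraicLemma}, is the substance of the proof; they are not bookkeeping that can be deferred.
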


\begin{remark}\label{remark:why-we-do-the-geometry-again}
The geometric construction in this proof is similar to that in the proof of Theorem~\ref{thm:Realisation}. We perform a set of trivial surgeries, and then a second set of surgeries on a ``diagonal Lagrangian''. The difference is that Theorem~\ref{thm:Realisation2} tracks the algebra differently.  In order to obtain a bordism realising the desired change on boundary isomorphisms/primitive embeddings, it is much easier to return to the geometric construction than to track the change through the map $\delta$ from the sequence~\eqref{eq:ExactSequenceCS}. The reason is that the definition of $\delta$ involves two different types of stabilisation which are tricky to control in this setting.  We will discuss this further after Corollary~\ref{cor:realisePrimitive}.
\end{remark}

\begin{proof}
First we will construct a $(B, \xi)$-cobordism $W$ based on $(M_0, \overline{\nu}_0 )$.
Since $\overline{\nu}_0$ is split-free, the $\Lambda$-module $K_q(M_0) \cong V$ is finitely generated and free.
Using $r$ to denote the rank of~$V$, let~$W_0$ be the cobordism obtained from~$M_0 \times [0,1]$ by adding trivial $q$-handles along~$M_0 \times \{1\}$ with trivial framing, one handle for each element of a basis for $V$.
In other words, we consider the boundary connected sum:
\begin{align*}
W_{0}&: = (M_0 \times [0,1]) \natural (\natural_{i=1}^r S^q \times D^{q+1}), \text{ with}\\
M_1&:=M_0 \# (\#_{i=1}^r S^q \times S^q).
\end{align*}
Let~$\overline{\nu}_1 \colon M_1 \to B$ be the normal $(q{-}1)$-smoothing on $M_1$ obtained from $\overline{\nu}_0$ by imposing the trivial $(B, \xi)$-structure on the $S^q \times S^q$ connected summands.
 In order to obtain the $(B, \xi)$-cobordism~$W$, we will perform~$r$ additional $q$-surgeries on $M_1$.
 To do so, we first prescribe a rank $r$ sublagrangian of~$K_q(M_1)$; we then argue why it is possible to perform surgery along generators of this subspace.
Consider the following decomposition
 \begin{equation}
\label{eq:TransitivityM1}
(K_q(M_1),\theta_{M_1}) \cong  H_\varepsilon(V)  \oplus  (K_q(M_0),\theta_{M_0}) \cong \big( (V,\theta) \cup_f (V,-\theta) \big) \oplus (V,\theta),
\end{equation}
the primitive embedding $j_f' \colon (V,-\theta) \hookrightarrow (V,\theta) \cup_f (V,-\theta)$ described in Lemma~\ref{lem:IsometricEmbedding}, and the sublagrangian
\begin{equation}
\label{eq:DiagonalSublagrangian}
D:=\lbrace (j_f'(x),-x) \mid x \in V \rbrace  \subseteq K_q(M_1).
\end{equation}
Note that $(V,\theta) \cup_f (V,-\theta) \cong  H_\varepsilon(V)$ because $f \in \Aut_{H_\varepsilon(\Lambda^r)}(\partial (V,\theta))$.
One way to see that $D$ is a summand is to use item \eqref{item-algebraic-lemma-c} of Lemma~\ref{lem:AlgebraicLemma} below.

We assert that we can represent a basis of~$D$ by~$r$ framed embedded spheres, and perform $q$-surgeries on~$M_1$ along these spheres.
The relative Hurewicz theorem implies that the Hurewicz map $K\pi_q(M_0) \to K_q(M_0)$ is surjective.
Consequently, every element of $K_q(M_0)$ comes from an element $x$ in $K\pi_q(M_0)$.
Since~$D$ is a sublagrangian of~$K_q(M_0)$, we deduce that $\lambda_{M_0}(x,x)=0$, where $\lambda_{M_0}$ denotes the $\Lambda$-valued intersection form on $M_0$.
As $q$ is even and $M_0$ is orientable, this implies that the Wall form vanishes on $x$.
The $r$ generators of~$D$ can therefore be represented by framed embedded $q$-spheres on which we can perform surgery.
For the $q=2$ case, using the same proof as in Remark~\ref{remark:4D}, the generators of $D$ can be represented by framed, topologically embedded, locally flat 2-spheres, on which we can perform surgery.

Taking the trace and effect of these surgeries, we obtain
\begin{align*}
W: = \bigl( M_1 \times [0,1] \bigr) \cup \bigcup_{i=1}^r h_i^{r+1} \text{ and }
M_2:=\partial_+W.
\end{align*}
Since we performed surgeries along elements of $K \pi_q(M_1)$, the $(B, \xi)$-structure on $M_1$ extends to $(B, \xi)$-structures~$\overline{\nu}_2$ on $M_2$ and $\overline{\nu}$ on $W$;
recall the end of Section~\ref{sub:NormalSmoothings}.
As $M_0$ and $M_2$ are $(B, \xi)$-bordant 
and have the same Euler characteristic, they are stably diffeomorphic~\cite[Theorem C]{KreckSurgeryAndDuality}, establishing \eqref{item-thm-realisation2-i} of the theorem.
This theorem applies because we assumed that $B$ has the homotopy type of a CW-complex with finite $q$-skeleton.

Next we prove properties~\eqref{item-thm-realisation2-ii} and \eqref{item-thm-realisation2-iii} of $(M_2,\overline{\nu}_2)$, namely that $(K_q(M_2),\theta_{M_2}) \cong (V,\theta)$, and that both
\[\delta_{\overline{\nu}_2} \in \Iso(\partial (K_q(M_2),\theta_{M_2}),\partial (H^q(B),-\theta_{M_0})) \text{ and } \delta_{\overline{\nu}_0}f \in \Iso(\partial (V,\theta),\partial (H^q(B),-\theta_{M_0}))\]
 define classes in
\[\lIso_{(H_q(M_2),\theta_{M_2})}(\partial (V,\theta),\partial (H^q(B),-\theta_{M_0})).\]
Note that if we show $(K_q(M_2),\theta_{M_2}) \cong (V,\theta)$, it follows straight away that $\delta_{\overline{\nu}_2}$ determines a class in $\lIso_{(H_q(M_2),\theta_{M_2})}(\partial (V,\theta),\partial (H^q(B),-\theta_{M_0}))$, so half of \eqref{item-thm-realisation2-iii} follows immediately from \eqref{item-thm-realisation2-ii}.
From now on, we use the shorthand~$\delta_i:=\delta_{\overline{\nu}_i}$ for the boundary isomorphism associated to $(M_i,\overline{\nu}_i)$.

\begin{lemma}
\label{lem:AlgebraicLemma}
Let $(V,\theta), (V',\theta'),(V'',\theta'')$ be $\varepsilon$-quadratic forms, and let $$f \colon \partial (V'',\theta'') \cong_s \partial (V,\theta)\text{ and } f' \colon \partial(V,\theta) \cong_s \partial(V',\theta')$$ be stable isomorphisms.
There is an isometry
\begin{equation}
\label{eq:IsometryRightGluing}
\varphi \colon
\big((V'', \theta'') \cup _{f} (V, -\theta)\big) \oplus \big((V,\theta) \cup_{f'} (V', -\theta')\big)
\xrightarrow{\cong}
H_{\varepsilon}(V) \oplus (V'', \theta'') \cup_{f'\circ f} (V', -\theta')
\end{equation}
that satisfies the following properties:
\begin{enumerate}[(a)]
\item\label{item-algebraic-lemma-a}
$\varphi$ takes the isometric copy of $(V',-\theta')$ to itself:	
$$\varphi \circ \bsm 0\\j_{f'}' \esm =\bsm 0\\ j_{f'f}' \esm;$$
\item\label{item-algebraic-lemma-b}
$\varphi$ restricts to an isometry $\varphi_|$ on the first three summands; namely the following diagram of isometries and isometric injections commutes:
$$
\xymatrix@R0.5cm{
(V'', \theta'') \cup _{f} (V, -\theta) \oplus (V,\theta) \cup_{f'} (V', -\theta')
\ar[r]^-\varphi
&
H_{\varepsilon}(V) \oplus (V'', \theta'') \cup_{f'f} (V', -\theta') \\
(V'', \theta'') \cup _{f} (V,-\theta) \oplus (V, \theta)
\ar@{^{(}->}[u]^{\Id \oplus j_{f'}}
\ar[r]^-{\varphi|}
&
H_{\varepsilon}(V) \oplus (V'', \theta'');
\ar@{^{(}->}[u]^{\Id \oplus j_f}
}
$$
\item\label{item-algebraic-lemma-c}
$\varphi$ takes the \emph{diagonal lagrangian} of $(V, -\theta) \oplus (V, \theta)$ to the lagrangian $V\oplus 0$ of $H_{\varepsilon}(V)$:
$$\varphi\circ \bsm j_f'\\-j_{f'} \esm= \bsm 1\\0\\0\\0 \esm.$$
\end{enumerate}
\end{lemma}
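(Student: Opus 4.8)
The plan is to prove Lemma~\ref{lem:AlgebraicLemma} by constructing $\varphi$ explicitly as a change of basis and then checking the three properties directly. First I would write out all the objects concretely. By Definition~\ref{def:AlgebraicGluing} the left-hand side of \eqref{eq:IsometryRightGluing} has underlying module $V'' \oplus V^{*} \oplus V \oplus V'^{*}$ and form matrix $\bsm \theta'' & 0 & 0 & 0 \\ \varepsilon a & -s & 0 & 0 \\ 0 & 0 & \theta & 0 \\ 0 & 0 & \varepsilon a' & -s' \esm$, where $(a,s)$ and $(a',s')$ are the $a$- and $s$-components of $f$ and $f'$ in the sense of Lemma~\ref{lem:CS11Lemma43}; and the right-hand side has underlying module $V \oplus V^{*} \oplus V'' \oplus V'^{*}$ and form matrix $\bsm 0 & 1 & 0 & 0 \\ 0 & 0 & 0 & 0 \\ 0 & 0 & \theta'' & 0 \\ 0 & 0 & \varepsilon a'' & -s'' \esm$, with $(a'',s'')$ the components of $f' \circ f$. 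The maps occurring in (a)--(c) are the standard inclusions of Lemma~\ref{lem:IsometricEmbedding}: $j_{f}=j_{f'}=j_{f'f}=\bsm 1\\0\esm$, while $j'_{f}=\bsm b\\-\lambda\esm$, $j'_{f'}=\bsm b'\\-\lambda'\esm$ and $j'_{f'f}=\bsm b''\\-\lambda'\esm$, with $b,b',b''$ the $b$-components and $\lambda,\lambda'$ the symmetrisations of $\theta,\theta'$. Since by \cite[Lemma~4.6(iii)~\&~(iv)]{CrowleySixt} the unions occurring on both sides of \eqref{eq:IsometryRightGluing} depend only on the $\bIso$-classes of $f$, $f'$ and $f'\circ f$, I am free to organise the bookkeeping with convenient representatives; in particular working with the canonical stable isomorphisms $f_{j_{f}}$, $f_{j_{f'}}$ of Proposition~\ref{prop:CSProp48}, whose $a,b,s$-components are given by the closed formulas of Proposition~\ref{prop:Componentsfj} and whose off-diagonal blocks are the explicit matrices of \eqref{eq:Formulafj}, keeps the computation manageable.

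Second, I would pin down the components $(a'',b'',s'')$ of $f'\circ f$. From the composition law \eqref{eq:CompositionDef} one has $\alpha_{f'f}=\alpha_{f'}\alpha_{f}$, $\beta_{f'f}=\beta_{f'}\beta_{f}$ and $\alpha_{f'f}\nu_{f'f}\alpha_{f'f}^{*}=\alpha_{f'}\nu_{f'}\alpha_{f'}^{*}+\alpha_{f'}\alpha_{f}\nu_{f}\alpha_{f}^{*}\alpha_{f'}^{*}$, and $a'',b'',s''$ are by definition the $(1,1)$-entries of $\alpha_{f'f}$, $\beta_{f'f}^{-1}$ and $\alpha_{f'f}\nu_{f'f}\alpha_{f'f}^{*}$ respectively. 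Substituting the block shapes of $\alpha$, $\beta^{-1}$, $\alpha\nu\alpha^{*}$ from Lemma~\ref{lem:CS11Lemma43} and expanding yields explicit expressions for $a'',b'',s''$ in terms of the data of $f$ and $f'$. \textbf{This is the step I expect to be the main obstacle:} the composition is defined only after stabilisation, so these $(1,1)$-entries pick up cross terms from the off-diagonal blocks $a_{1},b_{1},a_{3}$ of Lemma~\ref{lem:CS11Lemma43}, and one must verify that, modulo the quadratic equivalence $1-\varepsilon T$, they combine into exactly the matrix entries of $(V'',\theta'')\cup_{f'\circ f}(V',-\theta')$ and nothing extraneous. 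This is precisely the difficulty with the ``two different types of stabilisation'' flagged in Remark~\ref{remark:why-we-do-the-geometry-again}; it is controlled most easily using the normal-form representatives $f_{j_{f}}$, $f_{j_{f'}}$ above, for which the off-diagonal blocks are the standard hyperbolic matrices of \eqref{eq:Formulafj}.

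Third, with $(a'',b'',s'')$ in hand I would write $\varphi$ as an explicit invertible block matrix $V''\oplus V^{*}\oplus V\oplus V'^{*}\to V\oplus V^{*}\oplus V''\oplus V'^{*}$, essentially dictated by (a)--(c): property (c) forces $\varphi$ to send the diagonal element $(j'_{f}(x),-j_{f'}(x))=(bx,-\lambda x,-x,0)$ to $(x,0,0,0)$; property (b) forces $\varphi$ to carry the first three summands $\bigl((V'',\theta'')\cup_{f}(V,-\theta)\bigr)\oplus(V,\theta)$ isometrically onto $H_{\varepsilon}(V)\oplus(V'',\theta'')$ via the map $\varphi_{|}$; and property (a) fixes its action on the $j'_{f'}$-copy of $(V',-\theta')$, sending $(0,0,b'x',-\lambda'x')$ to $(0,0,b''x',-\lambda'x')$. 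These constraints together with the isometry requirement determine $\varphi$ up to the freedom irrelevant to (a)--(c), and one reads off a closed formula. It then remains to check two things. First, that $\varphi$ is a simple isometry, i.e. $\varphi^{*}\psi_{\mathrm{cod}}\varphi=\psi_{\mathrm{dom}}$ in $Q_{\varepsilon}$ with trivial torsion: this is a block-matrix identity that unwinds to the relations (i)--(iii) of Lemma~\ref{lem:CS11Lemma43} satisfied by the components of $f$ and $f'$, together with the expressions for $a'',b'',s''$ obtained in the previous step. Second, that the three displayed equations hold, which is immediate from the formula for $\varphi$. Finally, the assertion used in the proof of Theorem~\ref{thm:Realisation2} that the diagonal sublagrangian $D$ of \eqref{eq:DiagonalSublagrangian} is a direct summand follows at once from (c): $D$ is the preimage under the isomorphism $\varphi$ of the direct summand $V\oplus 0\subseteq H_{\varepsilon}(V)$.
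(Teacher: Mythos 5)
Your plan matches the paper's proof, which simply exhibits the explicit block matrix
\[
\varphi=\begin{pmatrix}
a & -\varepsilon s 	&	-s^*\lambda 	& -s^*{a'}^*
\\
0 & 1							& -\lambda			& -{a'}^*
\\
1 & 0 						& b							& -b_1 {a_1'}^*
\\
0 & 0 						& 0							& 1
\end{pmatrix}
\colon V''\oplus V^*\oplus V\oplus {V'}^*
\longrightarrow V\oplus V^*\oplus V'' \oplus {V'}^*
\]
and asserts that the isometry condition and properties (a)--(c) follow by a ``fairly long but straightforward computation''; your steps of extracting $(a'',b'',s'')$ from the composition law \eqref{eq:CompositionDef} and verifying the isometry via the relations of Lemma~\ref{lem:CS11Lemma43} are exactly the content hidden behind that sentence.

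One small caveat: you propose to replace $f,f'$ by the canonical representatives $f_{j_f},f_{j_{f'}}$ of Proposition~\ref{prop:CSProp48}, invoking the invariance of the unions under $\bIso$. But by Example~\ref{ex:CompofjUnion} this flips the sign of the $a$- and $b$-components, so the inclusions $j_f'$ and $j_{f'}'$ appearing in conditions (a) and (c) change; the statement you would then prove is not literally the one in the lemma, and you would need to conjugate back afterwards. The paper avoids this by writing $\varphi$ directly in terms of the components of arbitrary $f,f'$, including the off-diagonal blocks $b_1$ and $a_1'$ --- which, incidentally, is precisely where the stabilisation cross terms you worry about show up: they enter through the entries $-s^*{a'}^*$, $-{a'}^*$ and $-b_1{a_1'}^*$ of $\varphi$. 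Working with general $f,f'$ in this way is no harder and keeps the bookkeeping in the coordinates of the statement.
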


\begin{proof}
We use the notation from Lemma~\ref{lem:CS11Lemma43} for the components of the stable isomorphisms~$f$ and~$f'$.
Namely, we write $a,b,s,a_1,b_1,a_3$ (respectively $a',b',s',a_1',b_1',a_3'$) for the components of $f$ (respectively $f'$).
A fairly long but straightforward computation shows that
$$
\varphi=
\begin{pmatrix}
a & -\varepsilon s 	&	-s^*\lambda 	& -s^*{a'}^*
\\
0 & 1							& -\lambda			& -{a'}^*
\\
1 & 0 						& b							& -b_1 {a_1'}^*
\\
0 & 0 						& 0							& 1
\end{pmatrix}
\colon V''\oplus V^*\oplus V\oplus {V'}^*
\xrightarrow{\cong} V\oplus V^*\oplus V'' \oplus {V'}^*
$$
provides the desired isometry.
The three properties of $\varphi$ are also established by a direct computation.
\end{proof}

We continue with the proof of Theorem~\ref{thm:Realisation2}.
In order to identify $(H_q(M_2),\theta_{M_2})$ and $(K_q(M_2),\theta_{M_2})$, we describe the corresponding quadratic forms, but for the intermediate manifold $M_1$.
Using~\eqref{eq:TransitivityM1} and Lemma~\ref{lem:AlgebraicLemma}~\eqref{item-algebraic-lemma-b} (with~$(V'',\theta'')=(V,\theta),(V',\theta')=(H^q(B);-\theta_{M_0})$ and~$f'=\delta_0$), we obtain
\begin{align}
\label{eq:KqM1}
(K_q(M_1),\theta_{M_1})
&=H_\varepsilon(V) \oplus (K_q(M_0),\theta_{M_0})
=\big( (V,\theta) \cup_f (V,-\theta) \big) \oplus (V,\theta) \nonumber \\
&\xrightarrow{\varphi_|,\cong}  H_\varepsilon(V) \oplus (V,\theta).
\end{align}
Next, we obtain a similar description for $(H_q(M_1),\theta_{M_1})$.
First, identifying $(K_q(M_0),\theta_{M_0})$ with $(V,\theta)$, recall from Remark~\ref{rem:DecomposoTopo} that
$ (H_q(M_0),\theta_{M_0})=(V,\theta) \cup_{\delta_0} (H^q(B),\theta_{M_0}).$
Using the definition of~$M_1$, this decomposition, and the isomorphism from Lemma~\ref{lem:AlgebraicLemma}, we deduce that
\begin{align}
\label{eq:HqM1}
(H_q(M_1),\theta_{M_1})
= & H_\varepsilon(V) \oplus (H_q(M_0),\theta_{M_0})\\
= & \big( (V,\theta) \cup_f (V,-\theta) \big) \oplus \left( (V,\theta) \cup_{\delta_0} (H^q(B),\theta_{M_0}) \right) \nonumber  \\
\xrightarrow{\varphi,\cong} & H_\varepsilon(V) \oplus  (V,\theta) \cup_{\delta_0f} (H^q(B),\theta_{M_0}).
\end{align}
Items~\eqref{item-algebraic-lemma-b} and \eqref{item-algebraic-lemma-c} of Lemma~\ref{lem:AlgebraicLemma} imply that the sublagrangian $D$ is mapped by $\varphi$ to the lagrangian $V \oplus 0$ of $H_\varepsilon(V)$.
By definition of $M_2$ and looking at~\eqref{eq:KqM1} and~\eqref{eq:HqM1}, the second set of surgeries kills the $H_\varepsilon(V)$-hyperbolic summands in $(H_q(M_1),\theta_{M_1})$ and~$(K_q(M_1),\theta_{M_1})$ and thus
\begin{align}
\label{eq:DefinePi2}
(K_q(M_2),\theta_{M_2})  & = (V,\theta);  \\
(H_q(M_2),\theta_{M_2})  &= (V,\theta) \cup_{\delta_0 \circ f} (H^q(B),\theta_{M_0}). \nonumber
\end{align}
This shows~\eqref{item-thm-realisation2-ii} and that $\delta_{\overline{\nu}_2}$ determines a class in \[\lIso_{(H_q(M_2),\theta_{M_2})}(\partial (V,\theta),\partial (H^q(B),-\theta_{M_0})).\]
It also implies that $\iota_{\overline{\nu}_2}$ is given by $\bsm 1 \\ 0\esm$.
Note that~\eqref{eq:DefinePi2} also shows that the normal smoothing~$(M_2,\overline{\nu}_2)$ is even and split-free: the target fibration is unchanged and~$K_q(M_0) \cong V$ is finitely generated, and free since the normal smoothing $(M_0,\overline{\nu}_0)$ is even and split-free.
Consequently, the 2-sided primitive embedding $(\iota_2,j_2):=(\iota_{\overline{\nu}_2},j_{\overline{\nu}_2})$ is defined.
Using Remark~\ref{rem:DecomposoTopo} we see that $[\delta_2]_\ell$ belongs to the set $$\lIso_{(H_q(M_2),\theta_{M_2})} (\partial(V,\theta) , \partial(H^q(B),-\theta_{M_0})).$$
This establishes the property~\eqref{item-thm-realisation2-iii} of the normal smoothing $(M_2,\overline{\nu}_2)$ since~\eqref{eq:DefinePi2} shows that $[\delta_0\circ f]_\ell \in \lIso_{(H_q(M_2),\theta_{M_2})}(\partial(V,\theta),\partial(H^q(B),-\theta_{M_0}))$.

Next we verify property~\eqref{item-thm-realisation2-iv} of $(M_2,\overline{\nu}_2)$.
We use~\eqref{eq:DefinePi2} to identify $(V,\theta)$ with $ (K_q(M_2),\theta_{M_2})$.
We must show that $[\delta_2]_\ell$ coincides with $[\delta_0 \circ f]_\ell$ in the one-sided boundary automorphism set $\lIso_{(H_q(M_2),\theta_{M_2})}(\partial(V,\theta),\partial (H^q(B),-\theta_{M_0}))$.
We prove that the two following 2-sided primitive embeddings coincide:
\begin{align*}
 (j_{\delta_0f},j_{\delta_0f}'):=\Pr([\delta_0f]_\ell) &\in \lTwoPrim_{(H_q(M_2),\theta_{M_2})}((V,\theta),(H^q(B),\theta_{M_0})), \text{ and}\\
 ([\iota_2,j_2]_\ell) &\in \lTwoPrim_{(H_q(M_2),\theta_{M_2})}((V,\theta),(H^q(B),\theta_{M_0})).
 \end{align*}
 Since~\eqref{eq:DefinePi2} established that $(H_q(M_2),\theta_{M_2})=(V,\theta) \cup_{\delta_0f} (H^q(B),\theta_{M_0})$, the definition of $\Pr$ from Construction~\ref{con:Pr} implies that $j_{\delta_0f}=\bsm 1 \\ 0 \esm$.
 Below~\eqref{eq:DefinePi2}, we already argued that $\iota_2=\bsm 1 \\ 0 \esm$, and thus~$\iota_2=j_{\delta_0f}$.
It remains to show that $j_2=j_{\delta_0f}'$.

Recall that the $(B, \xi)$-cobordism~$W_1$, originally obtained from $M_1$ as the trace of the $q$-surgeries on the ``diagonal sublagrangian", could also be understood as the trace of trivial $(q{-}1)$-surgeries on $M_2$.
Similarly, $W_0$ was defined as the trace of trivial $(q{-}1)$-surgeries on~$M_0$.
For $i=0,1$, use~$\overline{\nu}_{W_i} \colon W_i \to B$ to denote the~$(B, \xi)$-structure on~$W_i$, and consider the composition
\[j_{W_i} \colon H^q(B) \xrightarrow{\overline{\nu}_{W_i}^*} H^q(W_i) \xrightarrow{PD} H_{q+1}(W_i,\partial W_i).\]
For $k=0,1$, we additionally consider the maps $p_k \colon H_{q+1}(W_0,\partial W_0) \to H_{q}(\partial W_0)  \to H_{q}(M_k)$ obtained by composing the connecting homomorphism from the long exact sequence of the pair~$(W_0,\partial W_0)$ with the projection $H_q(\partial W_0)=H_q(M_0) \oplus H_q(M_1) \to H_q(M_k)$.
The exact same process gives rise to maps~$p_k' \colon H_{q+1}(W_1,\partial W_1) \to H_{q}(\partial W_1)  \to H_{q}(M_k)$ for $k=1,2$.
Recall that for $i=0,1,2$, we defined $j_i:=j_{\overline{\nu}_i}$ as $PD_{M_i} \circ \overline{\nu}_i^*$.
A short diagram chase involving Poincar\'e duality and the definition of the $j_i$ yields the following commutative diagram:
\begin{equation}
\label{eq:CobordismNormalMapsCohom}
\xymatrix@C+0.2cm{
H_q(M_0)&H_{q+1}(W_0,\partial W_0) \ar[l]_-{p_0}\ar[r]^-{p_1}&H_q(M_1) \ar[dd]^=\\
&H^q(B)\ar[d]^{j_{W_1}}\ar[u]_{j_{W_0}} \ar[ul]^{j_0}\ar[ur]_{j_1} \ar[dl]_{j_2}\ar[dr]^{j_1}&\\
H_q(M_2)&H_{q+1}(W_1,\partial W_1)  \ar[l]^-{p_2'}\ar[r]_-{p_1'}&H_q(M_1).
 }
\end{equation}
Since the cobordism $W_1$ was obtained as the trace of trivial $(q{-}1)$-surgeries on $M_2$ along $V=\varphi(D)$, we know that
\[W_1= \big( M_2 \times [0,1] \big) \natural \big( \natural_{i=1}^r S^q \times D^{q+1} \big) \simeq M_2 \vee \bigvee_{i=1}^r S^q \text{ and } M_1 \cong M_2 \# (\#_{i=1}^r S^q \times S^q).\]
We deduce that inclusion induces an isomorphism $H_{i}(M_2) \cong H_{i}(W_1)$ for $i \neq q$.  Also, we infer that the inclusion induced map $\iota \colon H_{q}(\partial W_1) \to~H_{q}(W_1)$ is surjective.
As $W_1$ is a cobordism between $M_1$ and $M_2$, the inclusion induces an isomorphism~$H_{i}(M_1) \oplus H_{i}(M_2)\cong H_{i}(\partial W_1)$.
Combining these observations, the long exact sequence of the pair $(W_1,\partial W_1)$ now looks as follows:
\[
\xymatrix @C-0.6cm @R-0.3cm{
H_{q+1}(W_1,\partial W_1) \ar @{^{(}->}[r] \ar@/_1pc/[rrd]_(0.7){p_1'}|(0.5)\hole &H_q(\partial W_1)\ar[rd] \ar@{->>}[rr]
 && H_q(W_1)  \\
&&H_q(M_1)& \\
&&V \oplus V^* \oplus  H_q(M_2)\ar[u]^-{\cong} & \\
V^* \oplus H_q(M_2) \ar @{^{(}->}[r]\ar[uuu]^\cong& \left( V \oplus V^* \oplus  H_q(M_2) \right) \oplus H_q(M_2) \ar @{->>}[rr]
\ar[uuu]^\cong\ar[ru]^-{\pr_1} && V \oplus H_q(M_2) \ar[uuu]^-\cong
}
\]
Since $M_1$ is also obtained by trivial surgeries on $M_0$, the same reasoning can be applied on the trace cobordism $W_0$ by replacing $M_1,M_2$ by $M_1,M_0$.
Using the resulting identifications, we deduce that~$j_{W_0}=\bsm 0 \\ j_0 \esm$ (the normal $(q{-}1)$-smoothing on $M_0$ was extended trivially over the cobordism~$W_0$) and therefore the commutative diagram displayed in~\eqref{eq:CobordismNormalMapsCohom} takes the following form:
\[
\xymatrix@C+0.2cm{
H_q(M_0)&V^* \oplus H_q(M_0) \ar[l]_-{\bsm 0 & 1 \esm}\ar[r]^-{\bsm 0&0\\ 1&0 \\ 0&1\esm}&H_\varepsilon(V) \oplus H_q(M_0) \ar[dd]^\varphi\\
&H^q(B)\ar[d]^{j_{W_1}}\ar[u]_{\bsm 0 \\ j_0\esm} \ar[ul]^{j_0}\ar[ur]_{j_1} \ar[dl]_{j_2}\ar[dr]^{\varphi \circ j_1}&\\
H_q(M_2)&V^* \oplus H_q(M_2)  \ar[l]^-{\bsm 0 & 1 \esm}\ar[r]_-{p_1'=\bsm 0&0\\ 1&0 \\ 0&1\esm}&H_\varepsilon(V) \oplus H_q(M_2).
 }
\]
The commutativity of the upper right triangle now gives $j_1=\bsm 0&0&j_0\esm^T$.
Lemma~\ref{lem:AlgebraicLemma}~\eqref{item-algebraic-lemma-a} then implies that $\varphi  j_1=\bsm 0&0&j_{\delta_0 f} \esm^T$.
The commutativity of the bottom right triangle yields~$p_1'j_{W_1}=\varphi j_1$.
Applying the splitting $\bsm 0 & 1 & 0 \\ 0 & 0 & 1 \esm$ of~$p_1'$ to both sides of this equation, we deduce that $j_{W_1}=\bsm 0 \\ j_{\delta_0 f} \esm$.
We conclude that $j_2=\bsm 0&1 \esm j_{W_1}=j_{\delta_0 f}$, as desired.

Summarising, we have shown that the 2-sided primitive embeddings $(j_{\delta_0f},j_{\delta_0f}'):=\Pr([\delta_0f]_\ell)$ and~$(\iota_2,j_2):=\Pr([\delta_2]_\ell)$ agree in $\lTwoPrim$.
 By Theorem~\ref{thm:bisoembprp} this implies that the boundary isomorphisms~$[\delta_0 \circ f]_\ell$ and $[\delta_2]_\ell$
agree in the set \[\lIso_{(H_q(M_2),\theta_{M_2})}(\partial(V,\theta),\partial(H^q(B),-\theta_{M_0})).\]
  This concludes the proof of \eqref{item-thm-realisation2-iv} and therefore of Theorem~\ref{thm:Realisation2}.
\end{proof}

\begin{example}
\label{ex:ExampleOfDiagonalSublagrangian}
We describe the diagonal sublagrangian $D \subseteq K_q(M_1)$ from~\eqref{eq:DiagonalSublagrangian} in a concrete example with $\Lambda=\Z$.
Namely, we assume that $(M_0,\overline{\nu}_0)=(N_{y_1,y_2},\overline{\nu}_{y_1,y_2})$ is an even, split-free normal $(2k-1)$-smoothing whose associated $2$-sided primitive embedding is the one from our running example (namely Example~\ref{ex:ExampleOverZ}): we set $q=y_1y_2$ and consider
\[(i_{y_1,y_2},j_{y_1,y_2}):= \bigg( (\Z,q)
\xhookrightarrow{\bsm y_1 \\ y_2 \esm}
\left( \Z^2, {\begin{pmatrix}
0&1 \\ 0&0
\end{pmatrix}} \right)
\xhookleftarrow{\bsm -y_1 \\ y_2 \esm}  (\Z,-q) \bigg).\]
Such smoothings exist, as described in Propositions~\ref{prop:ManifoldNab} and~\ref{prop:PrimitiveEmbeddingNab} below.
In the notation of the proof of Theorem~\ref{thm:Realisation2}, we have $(V,\theta)=K_q(M_0)=(\Z,q)$, so that $K_q(M_1)=\Z^3 \supset D$.
In this case, in order to describe $D$, we first rewrite the $2$-sided primitive embedding as a union, along $[f]:=[\delta(i_{y_1,y_2},j_{y_1,y_2})]$.
This was done in Example~\ref{ex:TheExampleMarkWants} and the outcome was
$$ \bigg( (\Z,q)
\xhookrightarrow{\bsm 1 \\ 0 \esm}
\left( \Z^2, {\begin{pmatrix}
q&0 \\  a&-s
\end{pmatrix}} \right)
\xhookleftarrow{\bsm b \\ -2q \esm}  (\Z,-q) \bigg),$$
with $a=b=-(y_1z_2+y_2z_1)$ and $s=-z_1z_2$.
As a consequence, we deduce that in this case, we have $j_{f}'=\bsm b \\ -2q \esm$ and therefore
\begin{align*}
 D=\lbrace (j_f'(x),-x) \mid x \in \Z \rbrace
 =\Z   \bsm
 -(y_1z_2+y_2z_1) \\ -2y_1y_2 \\ -1
\esm
\subseteq \Z^3 \cong K_q(M_1).
 \end{align*}
 As a reality check, a brief calculation shows that $\lambda_{M_1}=\bsm
2q&a \\  a&-2s
\esm \oplus (2q)$ does indeed vanish on~$D$ (for this calculation, it is helpful once again to recall that $y_2z_1-y_1z_2=1$).
This diagonal lagrangian was used implicitly in the construction of the manifolds $N_{a,b}$ from our previous article~\cite{CCPS-short}.
\end{example}

It is worth noting that we do not claim that in general $M_0$ and $M_2$ have isometric intersection forms, even though their kernel forms are isometric. 
Corollary~\ref{cor:realisePrimitive} will exhibit conditions under which this favourable situation occurs, but first a remark which introduces some notation.

\begin{remark}
\label{rem:rAutbAut}
Given split quadratic formations $x,y$ and $f \in \Iso(x,y)$, we again write $[f]_\ell$ for the class of $f$ in $\lIso(x,y)$ and $[f]_b$  for the class of $f$ in $\bIso(x,y)$.
Given three isomorphic quadratic forms $b \cong v \cong v'$, recall from Remark~\ref{rem:IdentifTwoPrim} that a boundary isomorphism $\delta \in \Iso(\partial v',\partial b)$ defines an element~$[\delta]_\ell \in \lIso(\partial v,\partial b)$ and an element $[\delta]_b \in \bAut(\partial v)$, but does not determine an element in $\lAut(\partial v)$.
\end{remark}

Keeping Remark~\ref{rem:rAutbAut} in mind, the next corollary provides a situation in which we can realise elements of $\bAut$ sets as boundary isomorphisms associated to normal smoothings.

\begin{corollary}
\label{cor:realisePrimitive}
Let $q \geq 4$ be even, let $(M_0^{2q},\overline{\nu}_0 )$ be an even, split-free normal $(q{-}1)$-smoothing
and use $n$ to denote the rank of $(V,\theta) :=(K_q(M_0),\theta_{M_0})$.
If there is an isometry $(H^q(B),-\theta_{M_0})\cong (V,\theta)$ and $[\delta_{\overline{\nu}_0 }]_b= \id \in \bAut(\partial (V,\theta))$, then for any boundary automorphism~$f \in \Aut_{H_\varepsilon(\Lambda^n)}(\partial (V,\theta))$, there exists an even, split-free normal $(q{-}1)$-smoothing~$(M_2,\overline{\nu}_2)$
 such that
\begin{enumerate}[(i)]
\item $(M_0,\overline{\nu}_0 )$ and $(M_2,\overline{\nu}_2)$ are stably diffeomorphic;
\item $M_0$ and $M_2$ have intersection forms isometric to $H_\varepsilon(\Lambda^n)$;
\item the boundary isomorphism associated to $(M_2,\overline{\nu}_2)$ is given by
$$ [\delta_{\overline{\nu}_2}]_b=[f]_b  \in \bAut _{H_\varepsilon(\Lambda^n)}(\partial(V,\theta)).$$
\end{enumerate}
The same holds in the topological category for $q=2$, assuming that $\pi_1(B)$ is a good group.
\end{corollary}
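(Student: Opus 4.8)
The plan is to obtain $(M_2,\overline{\nu}_2)$ by a direct application of Theorem~\ref{thm:Realisation2} and then to translate its conclusions, which are stated in terms of $\lIso$ and of algebraic unions, into the $\bAut$--statements and intersection form statements required here. Concretely, I would set $n = \operatorname{rk}(V)$ (so $n=r$ in the notation of Theorem~\ref{thm:Realisation2}) and apply that theorem to the boundary automorphism $f \in \Aut_{H_\varepsilon(\Lambda^n)}(\partial(V,\theta))$, obtaining an even, split-free normal $(q{-}1)$--smoothing $(M_2,\overline{\nu}_2)$ for which conclusions (i)--(iv) of Theorem~\ref{thm:Realisation2} hold. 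Conclusion (i) of the corollary is then immediate, and Theorem~\ref{thm:Realisation2}(ii) supplies an isometry $(K_q(M_2),\theta_{M_2}) \cong (V,\theta)$. Using this isometry, the isometry $\Psi \colon (H^q(B),-\theta_{M_0}) \xrightarrow{\cong} (V,\theta)$ from the hypothesis, and the identifications of Remark~\ref{rem:IdentifbAut}, I would regard $\delta_{\overline{\nu}_0}$ and $\delta_{\overline{\nu}_2}$ as elements $\widetilde{\delta}_0,\widetilde{\delta}_2 \in \Aut(\partial(V,\theta))$, noting that by construction $[\delta_{\overline{\nu}_i}]_b = [\widetilde{\delta}_i]_b$ in $\bAut(\partial(V,\theta))$ for $i=0,2$.

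For (iii), the key point is that composition does not descend to $\bAut$ (Remark~\ref{rem:CompositionNotbAut}), so one cannot simply ``multiply'' the hypothesis $[\delta_{\overline{\nu}_0}]_b = \id$ by $[f]_b$; instead one must lift to honest stable isomorphisms. By Definition~\ref{def:bAut}, $[\widetilde{\delta}_0]_b = \id$ says that $\widetilde{\delta}_0$ lies in the $\Aut(V,\theta) \times \Aut(V,\theta)$--orbit of the identity $(1,1,0)=\partial(\id_V)$; since $(g,h)\cdot(1,1,0) = \partial h \circ \partial(g^{-1}) = \partial(hg^{-1})$ by the composition law~\eqref{eq:CompositionDef}, there is an isometry $k \in \Aut(V,\theta)$ with $\widetilde{\delta}_0 = \partial k$ in $\Aut(\partial(V,\theta))$. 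Then Theorem~\ref{thm:Realisation2}(iv) gives $[\delta_{\overline{\nu}_2}]_\ell = [\delta_{\overline{\nu}_0}\circ f]_\ell$; applying the canonical quotient map $\lIso \to \bIso$ and transporting along $\partial\Psi$ (which induces a bijection onto $\bAut(\partial(V,\theta))$ by Remark~\ref{rem:IdentifbAut}) yields $[\widetilde{\delta}_2]_b = [\widetilde{\delta}_0 \circ f]_b = [\partial k \circ f]_b$ in $\bAut(\partial(V,\theta))$. Finally $[\partial k \circ f]_b = [f]_b$, because $(\id,k^{-1})\cdot(\partial k \circ f) = \partial(k^{-1})\circ\partial k \circ f = f$. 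Hence $[\delta_{\overline{\nu}_2}]_b = [f]_b$, which lies in $\bAut_{H_\varepsilon(\Lambda^n)}(\partial(V,\theta))$ because $f$ does; this is conclusion (iii).

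For (ii), I would express both intersection forms as algebraic unions. By Remark~\ref{rem:DecomposoTopo}(i), $(H_q(M_0),\theta_{M_0}) \cong (V,\theta) \cup_{\delta_{\overline{\nu}_0}} (H^q(B),\theta_{M_0})$; transporting the second form along $\Psi$ (the union depends only on the isometry types of the forms involved, \cite[Lemma 4.6]{CrowleySixt}) this becomes the double $\kappa(\widetilde{\delta}_0) = (V,\theta) \cup_{\widetilde{\delta}_0} (V,-\theta)$, and since $\kappa$ depends only on the $\bIso$--class of the gluing isomorphism \cite[Lemma 4.6]{CrowleySixt} and $[\widetilde{\delta}_0]_b = \id$, we get $\kappa(\widetilde{\delta}_0) \cong \kappa(\partial\id_V)$. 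Reading off the $a,b,s$--components of $\partial\id_V = (\id,\id,0)$ from Example~\ref{ex:ComponentsBoundary} and inserting them into Definition~\ref{def:AlgebraicGluing} gives $\kappa(\partial\id_V) = \bigl(V \oplus V^*, \bsm \theta & 0 \\ \varepsilon & 0 \esm\bigr)$; one checks this form is nonsingular and that $0 \oplus V^*$ is a simple lagrangian which is a direct summand, whence $\kappa(\partial\id_V) \cong H_\varepsilon(V^*) \cong H_\varepsilon(\Lambda^n)$. For $M_2$, equation~\eqref{eq:DefinePi2} inside the proof of Theorem~\ref{thm:Realisation2} identifies $(H_q(M_2),\theta_{M_2})$ with $(V,\theta) \cup_{\delta_{\overline{\nu}_0}\circ f} (H^q(B),\theta_{M_0})$; transporting along $\Psi$ and using $\widetilde{\delta}_0 = \partial k$ together with $[\partial k \circ f]_b = [f]_b$ exactly as before, this is isometric to $\kappa(f) = (V,\theta) \cup_f (V,-\theta)$, which is $\cong H_\varepsilon(\Lambda^n)$ precisely because $f \in \Aut_{H_\varepsilon(\Lambda^n)}(\partial(V,\theta))$. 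This gives (ii). The topological case $q=2$ with $\pi_1(B)$ good requires no separate argument, since Theorem~\ref{thm:Realisation2} and all of the algebra used above hold there verbatim. The only real obstacle is the bookkeeping flagged above: one must resist composing classes in $\bAut$ and instead lift the triviality hypothesis on $\delta_{\overline{\nu}_0}$ to the boundary of an actual isometry before carrying out any manipulations.
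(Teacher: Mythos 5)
Your proposal is correct and follows essentially the same strategy as the paper's proof: apply Theorem~\ref{thm:Realisation2} to $f$, lift the $\bAut$-triviality hypothesis on $\delta_{\overline{\nu}_0}$ to an actual stable isomorphism of the form $\partial k$ for an isometry $k$, and propagate the resulting equality through to $\bAut$ to get $[\delta_{\overline{\nu}_2}]_b = [f]_b$, then identify both intersection forms via the union description and equation~\eqref{eq:DefinePi2}. The only cosmetic difference is that you compute $\kappa(\partial\id_V)$ explicitly to verify it is hyperbolic, whereas the paper simply cites Remark~\ref{rem:DecomposoTopo} together with the hypothesis for this step; both are fine.
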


\begin{proof}
During this proof, we abbreviate $(H^q(B),-\theta_{M_0})$, $(K_q(M_0),\theta_{M_0})$, and $(K_q(M_2),\theta_{M_2})$ by $b$, $v$, and $v'$ respectively, and we write $\delta_0 \in \Iso(\partial v,\partial b),\delta_2 \in \Iso(\partial v',\partial b)$ for the associated boundary isomorphisms.
The assumption that $[\delta_0]_b= \id $ ensures that the intersection form of $M_0$ is hyperbolic.
Apply Theorem~\ref{thm:Realisation2} to $f \in \Aut_{H_\varepsilon(\Lambda^n)}(\partial (V,\theta))$ in order to obtain a $(q{-}1)$-smoothing $(M_2,\overline{\nu}_2)$ that is stably diffeomorphic to $(M_0,\overline{\nu}_0)$, whose associated boundary isomorphism is given by
\begin{equation}
\label{eq:ApplyRealisation2}
[\delta_2]_\ell=[\delta_0f]_\ell \in \lIso(\partial v, \partial b).
\end{equation}
We argue that $[\delta_2]_b=[f]_b$.
Recall from Remark~\ref{rem:IdentifbAut} that~\eqref{eq:ApplyRealisation2} means that for some $\widetilde{\delta}_2 \in \Iso(\partial v,\partial b)$, we have~$\widetilde{\delta}_2= (\delta_0 \circ f) \circ \partial h^{-1}$ for some isometry $h \colon v \to v$.
Similarly, $[\delta_0]_b=\id$ means that for some~$\widetilde{\delta_0}\in \Aut(\partial v)$, we have~$[\widetilde{\delta}_0]_b=\id$, where $\widetilde{\delta}_0=\partial h_2 \circ \delta_0$ for some isometry~$h_2 \colon b \to  v$.
In turn this implies $\widetilde{\delta}_0=\partial h_3 \circ \id \circ\partial h_4^{-1}$ for some isometries $ h_3,h_4 \colon v \to v$.
In order to study~$[\delta_2]_b$, we choose $\widetilde{\delta}_2' \in \bAut(\partial v)$ so that~$\widetilde{\delta}_2'=\partial h_1 \circ \widetilde{\delta}_2$ for some isometry $h_1 \colon b \to v$.
Summing up, we~obtain
\begin{align*} [\delta_2]_b
&:=[\widetilde{\delta}'_2]_b
=[\partial h_1 \circ \widetilde{\delta}_2]_b
=[\partial h_1 \circ (\delta_0 \circ f \circ \partial h^{-1})]_b \\
&=[\partial h_1 \circ (\partial h_2^{-1} \circ \widetilde{\delta}_0) \circ f \circ \partial h^{-1}]_b \\
&=[\partial h_1 \circ \partial h_2^{-1} \circ (\partial h_3 \circ \partial h_4^{-1}) \circ f \circ \partial h^{-1}]_b
=[f]_b.
\end{align*}
Finally, we argue that $M_0$ and $M_2$ both have hyperbolic intersection forms.
For $M_0$, this follows from Remark~\ref{rem:DecomposoTopo} and from our assumption that $\delta_0=[\id]_b$, and so we focus on $M_2$.
Recall from~\eqref{eq:DefinePi2} that $(H_q(M_2),\theta_{M_2})\cong (V,\theta) \cup_{\delta_0 f} (H^q(B),\theta_{M_0}).$
Since we have~$(H^q(B),\theta_{M_0} ) \cong (V,-\theta)$ (by assumption), this implies that $(H_q(M_2),\theta_{M_2}) \cong (V,\theta) \cup_{[\delta_0 f]_b} (V,-\theta)$.
Since we have just established that $[\delta_0 f]_b=[f]_b$ and since $f \in \Aut_{H_\varepsilon(\Lambda^n)}(\partial (V,\theta))$, we deduce that $(H_q(M_2),\theta_{M_2})$ is indeed hyperbolic.
This completes the proof of the corollary.
\end{proof}

We conclude this section by describing the application of Theorem~\ref{thm:Realisation2} discussed in Remark~\ref{remark:why-we-do-the-geometry-again}.
We recall the exact sequence of~\cite[Theorem 5.11]{CrowleySixt} that was displayed in~\eqref{eq:ExactSequenceCS}.
Given a quadratic form $v=(V,\theta)$ over $R$ whose symmetrisation $\lambda \colon V \to V^*$ is injective (we call such a form \emph{non-degenerate}),~\cite[Theorem 5.11 and Lemma 6.2]{CrowleySixt} establish the existence of a sequence
\begin{equation}
\label{eq:ExactSequenceCSAgain}
 L_{2q+1}^s(R) \stackrel{\rho}{\dashrightarrow} \ell_{2q+1}(v,v') \xrightarrow{\delta} \operatorname{bIso}(\partial v, \partial v') \xrightarrow{\kappa} L_{2q}^s(R).
 \end{equation}
Here, $\ell_{2q+1}(v,v')$ consists of quasi-formations whose induced form and its orthogonal complement respectively coincide $0$-stably with $v$ and $v'$, the dashed arrow $\rho$ denotes the action of $L_{2q+1}^s(R)$ on~$\ell_{2q+1}(v,v')$ by direct sum, $\kappa$ is the map induced by the assignment $f \mapsto v \cup_f -v'$, and the map~$\delta$ is (roughly) described as follows.
For a quasi-formation $x=((M,\psi);F,V)$, use~$j \colon (V,\theta) \hookrightarrow~(M,\psi)$ to denote the split isometric inclusion of the induced form into $(M,\psi)$.
The map $\delta$ assigns to $x$ the boundary isomorphism $f_j$ constructed in Proposition~\ref{prop:CSProp48}.

\begin{remark}
\label{rem:CompositionrAut}
Given split quadratic formations $x,y$ and $f\in \Iso(x,y)$, we again write $[f]_\ell$ for the class of $f$ in $\lIso(x,y)$ and $[f]_b$  for the class of $f$ in $\bIso(x,y)$.
Even though $\lIso(x,y)$ is not a group, given a third split quadratic formation $z$, we can consider the map
\begin{align*}
 \lIso(x,y) \times \lIso(x,z) &\to \bIso(y,z) \\
([f]_\ell,[g]_\ell) &\mapsto [g^{-1} \circ f]_b.
\end{align*}
This map does not depend on the choice of representatives for $[f]_\ell$ and $[g]_\ell$: indeed for $h_1 \in \Aut(y)$ and $h_2 \in \Aut(z)$, we can use the definitions of the actions to obtain the following equalities:
$$[(g \circ \partial h_2^{-1})^{-1} \circ (f \circ \partial h_1^{-1})]_b=[\partial h_2 \circ (g^{-1} \circ f) \circ \partial h_1^{-1}]_b=[(h_1,h_2) \cdot  (g^{-1} \circ f)]_b=[(g^{-1} \circ f)]_b.$$
Now let $b,v,v'$ be quadratic forms with an isometry $v \xrightarrow{\cong} v'$.
As explained in Section~\ref{sub:bAut}, even though we cannot identify $\Iso(\partial v,\partial b)$ with $\Iso(\partial v',\partial b)$, we can identify $\lIso(\partial v,\partial b)$ with~$\lIso(\partial v',\partial b)$.
In particular, if $g \in \lIso(\partial v',\partial b)$, then there is an isometry $h \colon v' \to v$ and a~$\widetilde{g} \in \lIso(\partial v,\partial b)$ such that $\widetilde{g}$ and~$g\circ \partial h^{-1}$ agree in $\lIso(\partial v,\partial b)$.
In particular, there is a map
\begin{align*}
 \lIso(\partial v,\partial b) \times \lIso(\partial v',\partial b) &\to \bAut(\partial v) \\
([f]_\ell,[g]_\ell) &\mapsto [\widetilde{g}^{-1} \circ f]_b=:[g]_\ell^{-1} \circ [f]_\ell.
\end{align*}
From the discussion above, we already know that this assignment does not depend on the choice of representatives for $[f]_\ell$ and $[g]_\ell$.
This map also does not depend on the choice of the isometry~$h$: indeed if~$h_1,h_2 \colon v' \xrightarrow{\cong} v$ are two isometries, then
\begin{align*} [\partial h_1 \circ g^{-1} \circ f ]_b
= [(\partial h_2 \circ \partial h_1^{-1}) \circ \partial h_1 \circ g^{-1} \circ  f]_b
= [\partial h_2 \circ g^{-1} \circ f ]_b.
\end{align*}
Using the notation of Theorem~\ref{thm:Realisation2}, we apply this construction to the case where the quadratic forms $b,v$ and~$v'$ are $(H^q(B),-\theta_{M_0})$, $ (K_q(M_0),\theta_{M_0})$, and~$(K_q(M_2),\theta_{M_2})$ respectively.
In this case, the boundary isomorphism $\delta_0$ associated to $(M_0,\overline{\nu}_0)$ belongs to $\Iso(\partial v,\partial b)$, while the associated normal smoothing $\delta_2$ associated to $(M_2,\overline{\nu}_2)$ belongs to $\Iso(\partial v',\partial b)$.
The remarks above nevertheless allow us to make sense of the composition $[\delta_2]_\ell^{-1} \circ [\delta_0]_\ell$ as an element of $\bAut(\partial v)$.
\end{remark}

The next result describes the image under the map $\delta \colon \ell_{2q+1}(v,v) \to \bAut(\partial v)$  of the modified surgery obstruction for the $(B, \xi)$-cobordism $(W,\overline{\nu})$ constructed in Theorems~\ref{thm:Realisation} and~\ref{thm:Realisation2}. Here we assume that $((B,\xi),q)$ is a standard pair, as in Definition~\ref{def:standard}.

\begin{corollary}
\label{cor:DeltaSurgery}
Let $q \geq 4$ be even and let~$(M_0^{2q},\overline{\nu}_0)$ be an even, split-free normal~$(q{-}1)$-smoothing with non-degenerate kernel form~$(V,\theta):=(K_q(M_0),\theta_{M_0})$.
Fix a quasi-formation $x=(H_\varepsilon(K);K,V)$
whose induced form is $(V,\theta)$ and satisfies $(V,\theta) \cong (V^\perp,\theta^\perp)$.
Assume that
~$(K \pi_q(M_0),\theta_{M_0}) \cong (K_q(M_0),\theta_{M_0})$ and that $\delta(x) \in \bAut(\partial(V,\theta))$ is represented by an $f \in \Aut(\partial(V,\theta))$ such that $(V,\theta) \cup_{f^{-1}} (V,-\theta)$ is hyperbolic.
Then there exists a normal $(B, \xi)$-cobordism~$(W,\overline{\nu})$,
with modified surgery obstruction $\Theta(W,\overline{\nu})=[x]$,
from $(M_0,\overline{\nu}_0)$ to another even, split-free  normal smoothing~$(M_2,\overline{\nu}_2)$ such that
$$\delta(\Theta(W,\overline{\nu}))=\delta(x)=[\delta_2]_\ell^{-1} \circ [\delta_0]_\ell  \in \bAut(\partial(V,\theta)).$$
The same holds in the topological category for $q=2$, assuming that $\pi_1(B)$ is a good group.
\end{corollary}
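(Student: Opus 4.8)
The plan is to combine the geometric realisation theorem, Theorem~\ref{thm:Realisation}, with the algebraic realisation theorem, Theorem~\ref{thm:Realisation2}, via the naturality of the constructions relating quasi-formations, boundary isomorphisms, and $2$-sided primitive embeddings. First I would extract from Theorem~\ref{thm:Realisation} a $(B,\xi)$-cobordism $(W,\overline\nu)$ with $\partial_-(W,\overline\nu)=(M_0,\overline\nu_0)$ and $\Theta(W,\overline\nu)=[x]$, whose outgoing boundary $(M_2,\overline\nu_2)=\partial_+(W,\overline\nu)$ is a normal $(q{-}1)$-smoothing. I would then check that $(M_2,\overline\nu_2)$ is even and split-free: evenness and freeness of the kernel module persist under the surgeries performed in the proof of Theorem~\ref{thm:Realisation}, because the middle-dimensional homology is altered only by adding and then removing hyperbolic summands (this is exactly the phenomenon recorded in~\eqref{eq:DefinePi2} in the proof of Theorem~\ref{thm:Realisation2}), and the target fibration $\xi$ is unchanged so the evaluation map $\operatorname{ev}_{\!B}$ condition is inherited. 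Hence the boundary isomorphism $\delta_2:=\delta_{\overline\nu_2}$ and the $2$-sided primitive embedding $(\iota_{\overline\nu_2},j_{\overline\nu_2})$ of Definition~\ref{def:PrimitiveEmbeddingManifold} are defined, and $[\delta_2]_\ell^{-1}\circ[\delta_0]_\ell\in\bAut(\partial(V,\theta))$ makes sense via Remark~\ref{rem:CompositionrAut}.

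Next I would verify $\delta(x)=\delta(\Theta(W,\overline\nu))$: this is immediate since $\Theta(W,\overline\nu)=[x]$ and $\delta$ is a well-defined map on $\ell_{2q+1}(v,v)$ by the exact sequence~\eqref{eq:ExactSequenceCSAgain} from~\cite{CrowleySixt}. The core of the argument is the identification $\delta(x)=[\delta_2]_\ell^{-1}\circ[\delta_0]_\ell$. The cleanest route is to observe that the geometric cobordism $(W,\overline\nu)$ produced by Theorem~\ref{thm:Realisation} is, up to $(B,\xi)$-bordism rel boundary, the \emph{same} cobordism as the one built in the proof of Theorem~\ref{thm:Realisation2} when that theorem is applied with $f=\delta(x)$ (both consist of a layer of trivial $q$-surgeries followed by $(q{+}1)$-surgeries along the diagonal/graph sublagrangian $\operatorname{im}\bsm\pi\\h\esm$ of the Wall form, which after identifying the kernel forms is precisely the diagonal sublagrangian $D$ of~\eqref{eq:DiagonalSublagrangian}). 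Granting this, Theorem~\ref{thm:Realisation2}\eqref{item-thm-realisation2-iv} gives $[\delta_2]_\ell=[\delta_0\circ\delta(x)]_\ell$ in $\lIso_{(H_q(M_2),\theta_{M_2})}(\partial v,\partial b)$, and applying the pairing $([f]_\ell,[g]_\ell)\mapsto[\widetilde g^{-1}\circ f]_b$ from Remark~\ref{rem:CompositionrAut} with $f=\delta_0$ and $g=\delta_2$ yields $[\delta_2]_\ell^{-1}\circ[\delta_0]_\ell=[\delta(x)]_b=\delta(x)$ in $\bAut(\partial(V,\theta))$, as required.

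Alternatively, and perhaps more robustly, I would argue purely algebraically: in the proof of Theorem~\ref{thm:Realisation} the quasi-formation $\Sigma(W,\overline\nu,\omega,\varphi)$ equals $x=(H_\varepsilon(K);K,V)$ on the nose, and the split isometric inclusion $j\colon(V,\theta)\hookrightarrow(H_\varepsilon(K),\psi)$ of the induced form is realised geometrically by $H_{q+1}(W\setminus\mathring U;M_0\sqcup\partial U)\hookrightarrow H_q(\partial U)$. By the definition of $\delta$ in~\eqref{eq:ExactSequenceCSAgain}, $\delta(x)$ is the class $[f_j]_b$ of the boundary isomorphism of Proposition~\ref{prop:CSProp48}. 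On the other hand, a diagram chase along the cobordism — entirely parallel to the one in~\eqref{eq:CobordismNormalMapsCohom} in the proof of Theorem~\ref{thm:Realisation2}, tracking $j_{W}:=PD\circ\overline\nu_W^*$ through the long exact sequences of $(W\setminus\mathring U,\partial)$ — identifies $f_j$ with the composite relating $\delta_0$ and $\delta_2$, giving $[f_j]_b=[\delta_2]_\ell^{-1}\circ[\delta_0]_\ell$. The main obstacle I anticipate is precisely reconciling the two kinds of stabilisation: Theorem~\ref{thm:Realisation} is stated for quasi-formations ``of sufficiently large rank'' (the rank-$r$ padding in~\eqref{eq:0StableCondition}), whereas the $\bAut$-level statement wants a fixed-rank comparison; I would handle this by the standard observation that adding trivial formations to $x$ changes neither $[x]\in\ell_{2q+1}(\Lambda)$ nor $\delta(x)\in\bAut(\partial v)$ (stabilising the primitive embedding by a hyperbolic summand is absorbed by the $\lIso$-identifications of Remarks~\ref{rem:IdentifbAut} and~\ref{rem:IdentifTwoPrim}), and that the geometric effect of the extra trivial $q$-handles is exactly the hyperbolic enlargement that Theorem~\ref{thm:Realisation2} already accounts for in~\eqref{eq:KqM1}–\eqref{eq:DefinePi2}. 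The hypothesis $(V,\theta)\cong(V^\perp,\theta^\perp)$ together with the assumed isometry $(H^q(B),-\theta_{M_0})\cong(V,\theta)$ (implicit in the even, split-free setup with $\delta_0\in\Iso(\partial v,\partial b)$) ensures the relevant forms lie in a single isometry class so that the composition in $\bAut(\partial(V,\theta))$ is literally defined; the case distinction on $q$ (or the assumption $K\pi_q(M_0)\cong K_q(M_0)$) is only needed to guarantee the Wall form and the kernel homology agree, so that the diagonal spheres can actually be surgered, exactly as in Theorem~\ref{thm:Realisation2}.
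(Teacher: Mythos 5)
Your overall plan — build the cobordism, check evenness and split-freeness persist, then identify $\delta(x)$ with $[\delta_2]_\ell^{-1}\circ[\delta_0]_\ell$ by comparing the constructions of Theorems~\ref{thm:Realisation} and~\ref{thm:Realisation2} — is the right one, and is essentially the paper's strategy (though the paper runs it in the opposite order: it first applies Theorem~\ref{thm:Realisation2} to produce $W$ and only afterwards checks $\Theta(W,\overline\nu)=[x]$ by recognising the construction as that of Theorem~\ref{thm:Realisation}). However, your first route contains an inverse error at the crucial algebraic step.

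You claim that applying Theorem~\ref{thm:Realisation2} with boundary automorphism $f=\delta(x)$ gives $[\delta_2]_\ell=[\delta_0\circ\delta(x)]_\ell$, and that feeding this to the pairing of Remark~\ref{rem:CompositionrAut} yields $[\delta_2]_\ell^{-1}\circ[\delta_0]_\ell=\delta(x)$. But unwind the definitions: if $[\delta_2]_\ell=[\delta_0\circ g]_\ell$ then a representative $\widetilde\delta_2\in\Iso(\partial v,\partial b)$ has the form $\widetilde\delta_2=\delta_0\circ g\circ\partial h^{-1}$ for some $h\in\Aut(v)$, whence
\[
[\delta_2]_\ell^{-1}\circ[\delta_0]_\ell=[\widetilde\delta_2^{-1}\circ\delta_0]_b
=[\partial h\circ g^{-1}\circ\delta_0^{-1}\circ\delta_0]_b
=[\partial h\circ g^{-1}]_b=[g^{-1}]_b.
\]
So your input $g=\delta(x)$ produces $[\delta_2]_\ell^{-1}\circ[\delta_0]_\ell=\delta(x)^{-1}$, not $\delta(x)$. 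The paper avoids this by applying Theorem~\ref{thm:Realisation2} to a representative $f^{-1}$ of $\delta(x)^{-1}$, which then gives exactly $[f]_b=\delta(x)$ in $\bAut(\partial(V,\theta))$. Equivalently, your geometric claim that the Theorem~\ref{thm:Realisation}-cobordism for $x$ equals the Theorem~\ref{thm:Realisation2}-cobordism for $\delta(x)$ is off by an inverse: the diagonal sublagrangian $D$ of~\eqref{eq:DiagonalSublagrangian} projects to $K_q(M_0)$ via $x\mapsto-x$ while $\operatorname{im}\bsm\pi\\h\esm$ projects via $+\pi$, and this sign is precisely what forces the inverse. Your second (``purely algebraic diagram chase'') route gestures at the right comparison but leaves the key identification ``$f_j$ with the composite relating $\delta_0$ and $\delta_2$'' entirely unverified; the inverse issue must be resolved there too, and as written the argument does not do so.
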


\begin{proof}
During this proof, we abbreviate $(H^q(B),-\theta_{M_0})$, $(K_q(M_0),\theta_{M_0})$, and $(K_q(M_2),\theta_{M_2})$ by $b$, $v$, and $v'$ respectively.
We assumed that~$\delta(x)$ has a representative $f$ such that~$v\cup_{f^{-1}} -v$ is hyperbolic.
Apply Theorem~\ref{thm:Realisation2} to $f^{-1}$
 to obtain a $(B, \xi)$-cobordism~$(W,\overline{\nu})$ with
 $[\delta_2]_\ell=[\delta_0 \circ f^{-1}]_\ell \in \lAut(\partial v,\partial b)$.
Now pick any $\widetilde{\delta}_2 \in \Iso(\partial v,\partial b)$ as in Remark~\ref{rem:CompositionrAut}.
In particular, there is an $h \in \Aut(v)$ such that $\widetilde{\delta}_2=h \cdot (\delta_0\circ f^{-1})= \delta_0 \circ f^{-1} \circ \partial h^{-1}$. It follows that
$$ \delta(x)
=[f]_b
=[\partial h \circ f]_b
=[(\partial h \circ f \circ \delta_0^{-1})\circ\delta_0]_b
=[\widetilde{\delta}_2^{-1} \circ \delta_0]_b
=[\delta_2]_\ell^{-1} \circ [\delta_0]_\ell.$$
It remains to show that $\Theta(W,\overline{\nu})=x$.
Recall that~$(W,\overline{\nu})$ was constructed by first performing~$\operatorname{rk}(V)$ trivial $(q{-}1)$-surgeries on $M_0$ (yielding $M_1$), and then performing additional $q$-surgeries on the diagonal sublagrangian of $K_q(M_1)=(V,\theta) \oplus \left( (V,\theta) \cup_f (V,-\theta) \right)$.
We assert that this produces the $(B, \xi)$-cobordism of Theorem~\ref{thm:Realisation} which was shown to have surgery obstruction $x$.
As $(M_0,\overline{\nu}_0)$ is even and split-free, $(K_q(M_0),\theta_{M_0})$ is
a free Wall form.
Since we assumed that $(K_q(M_0),\theta_{M_0})$ is isometric to the induced form $(V,\theta)$, the assertion follows from the construction in Theorem~\ref{thm:Realisation}; see in particular~\eqref{eq:qAttachingq+1Handle}.
This proves the equality $\Theta(W,\overline{\nu})=x$, thus establishing the corollary.
\end{proof}


\chapter[Homotopy invariance of primitive embeddings]{Homotopy invariance of certain $2$-sided primitive embeddings}
\label{sec:PPE}

Let $\overline{\nu} \colon M \to B$ be a normal $(q{-}1)$-smoothing.
Due to the extra data afforded by $\overline{\nu}$, it is in general not clear that the $2$-sided primitive embedding associated to $(M,\overline{\nu})$ is a homeomorphism invariant, let alone invariant under homotopy equivalences.  On the other hand, this hurdle is not present when considering the $2$-sided primitive embedding associated to the functorial map $M \to P_{q-1}(M)$.
In this chapter, we therefore develop the notion of the Postnikov $2$-sided primitive embedding of a manifold, show that this object is invariant under orientation-preserving homotopy equivalences, and then relate it to the $2$-sided primitive embedding of a particular normal $(q{-}1)$-smoothing.
We believe that the ideas in this chapter are of independent interest as they provide a general formalism for
defining homotopy invariants of manifolds, especially in certain modified surgery settings.

\section{Normal~$(q{-}1)$-types}
\label{sub:StablePara}
Given $m \geq q$, this section introduces a specific kind of normal $(q{-}1)$-smoothing $\overline{\nu} \colon M \to P_{q-1}(M) \times BO\langle m \rangle$ that we will use in Chapter~\ref{sec:examples}.

\medbreak
Recall that the \emph{$j$-connected cover~$X\langle j \rangle$} of a $CW$-complex~$X$ is a $CW$-complex which, up to homotopy equivalence, is characterized by the property that~$X\langle j \rangle$ is~$j$-connected and there is a fibration~$p_j \colon X\langle j \rangle \to X$ inducing isomorphisms on~$\pi_i$ for~$i>j$. 
In our case, we take~$X=BO$ leading to a fibration
$$ \vartheta_{k} \colon BO\langle k \rangle \to BO.$$
In this section, we are concerned with a closed, connected, oriented manifold $M^{2q}$
such that for some $m \geq q$, the stable normal bundle $\nu \colon M \to BO$ lifts to a map
$$ \nu_m(M) \colon M \to BO\langle m \rangle.$$
Such manifolds are called {\em $m$-parallelisable}.
The next lemma describes the normal~$(q{-}1)$-type of $m$-parallelisable $2q$-manifolds.

\begin{lemma}
\label{lem:Normal3typeString}
Let $M$ be a closed, connected, oriented $2q$-manifold whose stable normal bundle lifts to a map $\nu_m(M) \colon M \to BO\langle m \rangle$ for some $m\geq q$.
The following map is a normal $(q{-}1)$-smoothing with target a space that is homotopy equivalent to a CW-complex with finite $q$-skeleton:
$$ p_{q-1}(M) \times \nu_{m}(M) \colon M \to P_{q-1}(M) \times BO \langle m \rangle.$$
Furthermore, the normal~$(q{-}1)$-type of $M$ is represented by the fibration
$$\vartheta_{q} \circ \operatorname{pr}_2  \colon P_{q-1}(M)  \times BO \langle q \rangle \to BO,$$
where~$\operatorname{pr}_2$  is the projection onto the second factor.
\end{lemma}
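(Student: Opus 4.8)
The plan is to verify the two assertions of Lemma~\ref{lem:Normal3typeString} directly from the definitions of normal smoothing and normal $(q{-}1)$-type. First I would check that $p_{q-1}(M) \times \nu_m(M) \colon M \to P_{q-1}(M) \times BO\langle m \rangle$ is a $(B,\xi)$-structure, meaning that it lifts $\nu_M \colon M \to BO$ along the composite $\vartheta_m \circ \operatorname{pr}_2$; this is immediate since $\operatorname{pr}_2 \circ (p_{q-1}(M) \times \nu_m(M)) = \nu_m(M)$ and $\vartheta_m \circ \nu_m(M) = \nu_M$ by the definition of the lift $\nu_m(M)$. Next I would show this map is $q$-connected. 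Since the two factor maps $p_{q-1}(M)$ and $\nu_m(M)$ are each highly connected on homotopy groups (the Postnikov map $p_{q-1}(M)$ is $q$-connected by definition, and $\nu_m(M)$ lifts to an $m$-connected cover with $m \geq q$), the product map induces isomorphisms on $\pi_i$ for $i < q$ and a surjection on $\pi_q$: for $i < q$ use that $\pi_i(P_{q-1}(M)) = \pi_i(M)$ via $p_{q-1}(M)$ and $\pi_i(BO\langle m \rangle) = 0$ for $i \leq m$, so $\pi_i(P_{q-1}(M) \times BO\langle m\rangle) \cong \pi_i(M)$ and the map is the identity under this identification; on $\pi_q$ the map to $\pi_q(P_{q-1}(M)) \times \pi_q(BO\langle m\rangle) = 0 \times 0$ is trivially surjective. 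Hence $p_{q-1}(M)\times\nu_m(M)$ is $q$-connected, i.e.\ a normal $(q{-}1)$-smoothing in the sense of Definition~\ref{def:NormalSmoothing}(i).

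For the statement about the normal $(q{-}1)$-type, I would invoke Definition~\ref{def:NormalSmoothing}(ii): once we exhibit a normal $(q{-}1)$-smoothing into a fibration $\xi \colon B \to BO$ that is additionally $q$-coconnected, the fibre homotopy type of $\xi$ is forced and is by definition the normal $(q{-}1)$-type. So I would take $B = P_{q-1}(M) \times BO\langle q \rangle$ with $\xi = \vartheta_q \circ \operatorname{pr}_2$, check that $M \to B$ given by $p_{q-1}(M) \times \nu_q(M)$ is a normal $(q{-}1)$-smoothing — note here one must first observe that the lift $\nu_m(M)$ to $BO\langle m\rangle$ with $m \geq q$ can be pushed forward to a lift $\nu_q(M)$ to $BO\langle q\rangle$ via the canonical fibration $BO\langle m\rangle \to BO\langle q\rangle$, so that $m$-parallelisability implies $q$-parallelisability — and then check that $\xi$ is $q$-coconnected. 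The latter amounts to showing $\pi_i(B) = 0$ for $i > q$; but $\pi_i(P_{q-1}(M)) = 0$ for $i \geq q$ and $\pi_i(BO\langle q\rangle) \cong \pi_i(BO)$ for $i > q$, so $\pi_i(B) \cong \pi_i(BO)$ for $i > q$, and the map $\xi_* = (\vartheta_q)_* \colon \pi_i(B) \to \pi_i(BO)$ is an isomorphism for $i > q$ by the defining property of the connected cover $\vartheta_q$. Hence $\xi$ is $q$-coconnected and the uniqueness statement of Definition~\ref{def:NormalSmoothing}(ii) identifies it as the normal $(q{-}1)$-type.

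The main obstacle, such as it is, will be the bookkeeping around which connected cover to use: the hypothesis gives a lift to $BO\langle m\rangle$ for some possibly large $m \geq q$, but the normal $(q{-}1)$-type involves precisely $BO\langle q\rangle$, so I must be careful to (a) produce the $q$-connected lift $\nu_q(M)$ from the given $m$-connected one using the tower of fibrations $\cdots \to BO\langle m\rangle \to BO\langle q\rangle \to \cdots \to BO$, and (b) keep straight that the \emph{first} displayed map in the lemma is allowed to target $BO\langle m\rangle$ (and is still merely a normal $(q{-}1)$-smoothing, not necessarily into the normal $(q{-}1)$-type when $m > q$, because $\vartheta_m$ fails to be $q$-coconnected — wait, actually $\vartheta_m$ \emph{is} $q$-coconnected since $m\geq q$, so $P_{q-1}(M)\times BO\langle m\rangle$ also represents the normal $(q{-}1)$-type; both displayed fibrations have the same fibre homotopy type). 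I would remark on this coincidence explicitly to avoid any apparent tension between the two displayed formulas. Otherwise every step is a routine application of the long exact sequence of a fibration together with the defining properties of Postnikov stages and connected covers.
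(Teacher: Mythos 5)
Your main argument is correct and follows essentially the same route as the paper: you verify that $p_{q-1}(M)\times\nu_m(M)$ lifts $\nu_M$ and is $q$-connected (via the long exact sequence of a product and the connectivity of $p_{q-1}(M)$ and $BO\langle m\rangle$), then verify that $\vartheta_q\circ\operatorname{pr}_2$ is $q$-coconnected and appeal to the uniqueness clause of Definition~\ref{def:NormalSmoothing}(ii). You also make explicit, which the paper leaves implicit, that one pushes the lift $\nu_m(M)$ forward along $BO\langle m\rangle\to BO\langle q\rangle$ to produce the smoothing into the $q$-coconnected target; this is the right thing to say.

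However, your closing parenthetical reverses itself into a false claim. Your first instinct --- that for $m>q$ the fibration $\vartheta_m\circ\operatorname{pr}_2\colon P_{q-1}(M)\times BO\langle m\rangle\to BO$ need \emph{not} be the normal $(q{-}1)$-type --- was correct, and the ``correction'' that follows is wrong. Recall that $n$-coconnected means the homotopy fibre has trivial $\pi_i$ for $i\geq n$. The fibre of $\vartheta_m$ has $\pi_i\cong\pi_{i+1}(BO)$ for $i<m$ and $\pi_i=0$ for $i\geq m$; so $\vartheta_m$ is $m$-coconnected but generally \emph{not} $q$-coconnected once $m>q$, since $\pi_i(BO)$ is nonzero in infinitely many degrees (Bott periodicity). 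Concretely, for $q<i\leq m$ the map $\pi_i(BO\langle m\rangle)=0\to\pi_i(BO)$ fails to be an isomorphism whenever $\pi_i(BO)\neq 0$. So the two displayed fibrations do \emph{not} have the same fibre homotopy type in general, and the lemma's distinction between the first displayed map (a normal $(q{-}1)$-smoothing into some $(B,\xi)$) and the second (the normal $(q{-}1)$-type) is a genuine one, not a coincidence. Since this is an aside rather than a step in your argument, the proof itself stands, but the remark should be deleted or corrected.
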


\begin{proof}
The space~$P_{q-1}(M) \times BO \langle m \rangle$ has the homotopy type of a CW-complex with finite $q$-skeleton because both $P_{q-1}(M)$ and $BO \langle m \rangle$ satisfy this property.
For $BO\langle m \rangle$, this follows from CW-approximation and because~$\pi_i(BO\langle m \rangle)=0$ for every~$i \leq m \leq q$.
For the Postnikov stage, this follows because~$P_{q-1}(M)$ is homotopy equivalent to a space obtained from a compact manifold by adding cells of dimension $\geq q+1$.

Note that~$\vartheta_{q} \colon BO\an{q} \to BO$ is~$q$-coconnected, and~$\pi_i(P_{q-1}(M))=0$ for~$i \geq q$.  It follows that $\vartheta_{q} \circ \operatorname{pr}_2$ is~$q$-coconnected.
Also $p_{q-1}(M) \times \nu_{m}(M)$ lifts the stable normal bundle of $M$:
 \[\vartheta_{m} \circ \operatorname{pr}_2 \circ  (p_{q-1}(M) \times \nu_{m}(M))  = \vartheta_{m} \circ \nu_{m}(M) = \nu \colon M \to BO\]
Finally~$p_{q-1}(M) \times \nu_{m}(M)$ is~$q$-connected since $m \geq q$  and~$p_{q-1}(M)$ is~$q$-connected.
\end{proof}

\section{The Postnikov $2$-sided primitive embedding} 
\label{sub:PPE}
We modify Definition~\ref{def:PrimitiveEmbeddingManifold} in order to obtain an invariant of manifolds, instead of an invariant of~$q$-surjective maps.
More precisely, under some technical conditions, we use the Postnikov $(q{-}1)$-type to associate a $2$-sided
primitive embedding $\PPE(M)$ to an oriented manifold $M^{2q}$.
After proving the invariance of $\PPE(M)$ under orientation-preserving homotopy equivalences, we then relate~ $\PPE(M)$ to the $2$-sided primitive embedding associated to the normal smoothing from Section~\ref{sub:StablePara}.

\begin{construction}
\label{cons:Action}
Let~$\pi$ be a group and set~$\Lambda:=\Z[\pi]$.
Given an automorphism~$\varphi \in \Aut(\pi)$ and an~$\varepsilon$-quadratic form~$(V,\theta)$, we write~$\overline{(V,\theta)}^{\varphi}=(\overline{V}^{\varphi},\overline{\theta}^\varphi)$ for the quadratic form where
\begin{enumerate}[(i)]
\item the underlying module~$\overline{V}^{\varphi}$ agrees with~$V$ as an abelian group but has left~$\Lambda$-module structure~$x \cdot v:=\varphi(x)v$ for~$x \in \Lambda$ and~$v \in V$;
\item the quadratic form is defined as~$\overline{\theta}^\varphi(x,y)=\varphi^{-1}(\theta(x,y)) \in Q_\varepsilon(\Lambda)$.
Here it is understood that~$\varphi$ extends to a ring automorphism~$\varphi \colon \Lambda \to \Lambda$ which itself descends to~$Q_\varepsilon(\Lambda)$.
\end{enumerate}
If~$i \colon (V,\theta) \to (M,\psi)$ is~$\Lambda$-linear and preserves quadratic forms, then so does $i \colon \overline{(V,\theta)}^\varphi \to \overline{(M,\psi)}^\varphi$.
The \emph{action of~$\Aut(\pi)$} on a $2$-sided primitive~$(i,j):=v \hookrightarrow m \hookleftarrow v'$ is defined as
$$\overline{(i,j)}^\varphi:=\overline{v}^\varphi \xhookrightarrow{i}  \overline{m}^\varphi \xhookleftarrow{j} \overline{v'}^\varphi.~$$
Since~$\overline{v}^\varphi$ need not be isomorphic to~$v$, this construction in general does not provide an action of~$\Aut(\pi)$ on~$\bTwoPrim_m(v,v')$.
It does however provide an action of~$\Aut(\pi)$ on the universal collection
$$\bTwoPrim^{\pi} := \bigcup_{[m,v,v']} \bTwoPrim_m(v,v'),$$
where $[m,v,v']$ denotes isometry classes of triples of forms.
We then say that two~$2$-sided primitive embeddings~$(i_0,j_0),(i_1,j_1)$
\emph{agree in~$\bTwoPrim^{\pi}/\Aut(\pi)$} if~$(i_0,j_0)$ and~$\overline{(i_1,j_1)}^\varphi$ agree in~$\bTwoPrim$ for some automorphism~$\varphi \in \Aut(\pi)$, meaning that there is a commutative diagram with vertical maps isomorphisms:
$$
\xymatrix{
v_0 \ar[r]^{i_0}\ar[d]^\cong&m_0 \ar[d]^\cong&\ar[l]^{j_0} v_0'\ar[d]^\cong \\
\overline{v_1}^\varphi \ar[r]^{i_1} & \overline{m_1}^\varphi & \overline{v_1'}^\varphi. \ar[l]^{j_1}
}
$$
\end{construction}

For a ring homomorphism~$u \colon \Gamma \to \Lambda~$, we write~$\Lambda_u$ for the~$(\Lambda,\Gamma)$-bimodule with underlying abelian group $\Lambda$, where the left~$\Lambda$-module structure is given by multiplication and the right~$\Gamma$-module structure is given by~$p\cdot \gamma=pu(\gamma)$, for $p \in \Lambda$ and $\gamma \in \Gamma$.

\begin{remark}
Fix a group~$\pi$ and set~$\Lambda:=\Z[\pi]$.
Let~$M$ be a closed, connected, oriented~$2q$-manifold.
Given an isomorphism~$\psi \colon \pi_1(M) \xrightarrow{\cong} \pi$, we have that $\Lambda_{\psi}$ is a $(\Lambda,\Z[\pi_1(M)])$-bimodule, and the~$\Lambda$-module~$H_q(M;\Lambda_{\psi})$ is endowed with a~$\Lambda$-valued intersection form~$\lambda_{M,\psi}$, obtained by composing the Poincar\'e duality isomorphism with evaluation.
Since~$\Lambda_{\psi}$ admits a length zero free resolution as a right~$\Z[\pi_1(M)]$-module, we have~$H_q(M;\Lambda_{\psi}) \cong \Lambda_{\psi} \otimes_{\Z[\pi_1(M)]} H_q(M;\Z[\pi_1(M)])$, and similarly in cohomology.
Under this identification, it follows that~$\lambda_{M,\psi}$ is isometric to $(p \otimes x, q \otimes y) \mapsto p\psi(\lambda_M(x,y))\overline{q}$.
\end{remark}

\begin{convention}
\label{conv:PostnikovFundamentalGroup}
Given~$k>0$, we always identify~$\pi_1(P_k(M))$ with~$\pi_1(M)$.
This is possible because~$P_k(M)$ is obtained from~$M$ by adding cells of dimension~$\geq k{+}2>2$.
In particular, $p_k(M) \colon M \to P_k(M)$ induces maps $p_{q-1}(M)_*$ and $p_{q-1}(M)^*$ on twisted homology and cohomology respectively.
\end{convention}

\begin{definition}
\label{def:PPE}
Fix a group~$\pi$, and set~$\Lambda:=\Z[\pi]$.
Let~$M$ be a closed, connected, oriented, even~$2q$-manifold
where~$p_{q-1}(M) \colon M \to P_{q-1}(M)$ 
is split-free.
Given an isomorphism~$\psi \colon \pi_1(M) \xrightarrow{\cong} \pi$, \emph{the Postnikov 2-sided primitive embedding of~$M$} is defined as
\begin{align*}
  \PPE^\pi(M) &= \big( \ker(p_{q-1}(M)_*) \hookrightarrow H_q(M;\Lambda_{\psi}) \hookleftarrow H^q(P_{q-1}(M;\Lambda_{\psi}) \big) \\
  & \in \bTwoPrim^\pi/\Aut(\pi).
  \end{align*}
When no identification is fixed, we will write~$\PPE(M) \in \bTwoPrim^{\pi_1(M)}$ for the analogous $2$-sided primitive embedding, but where the homology groups have coefficients in~$\Z[\pi_1(M)]$. In this case we omit the superscript~$\pi$.
Note that $\PPE(M)$ and $\PPE^\pi(M)$ are indeed $2$-sided primitive embeddings by Example~\ref{ex:qConnectectedImpliesqSurjective} and Lemma~\ref{lem:GoodImpliesSplit}.
\end{definition}

We show that $\PPE^\pi(M)$ does not depend on the choice of the identification $\psi$.

\begin{lemma}
\label{lem:PPEWellDef}
Fix a group~$\pi$, and set~$\Lambda:=\Z[\pi]$.
Let~$M$ be a closed, connected, oriented~$2q$-manifold with~$p_{q-1}(M) \colon M \to P_{q-1}(M)$ even and split-free.
The Postnikov 2-sided primitive embedding $\PPE^\pi(M)$ of~$M$ does not depend on the identification~$\pi_1(M) \cong \pi$.
\end{lemma}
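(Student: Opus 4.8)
The plan is to show that any two choices of identification $\psi_0,\psi_1\colon\pi_1(M)\xrightarrow{\cong}\pi$ differ by the automorphism $\varphi:=\psi_1\circ\psi_0^{-1}\in\Aut(\pi)$, and that this $\varphi$ carries one resulting $2$-sided primitive embedding to the other after applying the twisting construction of Construction~\ref{cons:Action}. Concretely, I would first observe that $\Lambda_{\psi_1}$ and $\Lambda_{\psi_0}$ are related by $\Lambda_{\psi_1}\cong\overline{\Lambda_{\psi_0}}^{\varphi}$ as $(\Lambda,\Z[\pi_1(M)])$-bimodules: both have underlying abelian group $\Lambda$ and the same right $\Z[\pi_1(M)]$-action, while the left $\Lambda$-action on $\Lambda_{\psi_1}$ is, after the identity map of underlying groups, given by $x\cdot p=\varphi(x)p$. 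Tensoring the chain complex $C_*(\widetilde M)$ of the universal cover with these bimodules and using the identification in the Remark preceding Convention~\ref{conv:PostnikovFundamentalGroup} (the length-zero free resolution of $\Lambda_\psi$ over $\Z[\pi_1(M)]$), one gets a canonical isomorphism of abelian groups $H_q(M;\Lambda_{\psi_1})\cong\overline{H_q(M;\Lambda_{\psi_0})}^{\varphi}$ which is $\Lambda$-linear for the twisted module structure. The same holds for $H^q(P_{q-1}(M);\Lambda_{\psi_i})$ and for $\ker(p_{q-1}(M)_*)$, since the kernel is defined using the functorial map $p_{q-1}(M)_*$ and this map commutes with the bimodule base-change.

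The second step is to check that these canonical isomorphisms intertwine the intersection forms up to the $\varphi$-twist described in Construction~\ref{cons:Action}, i.e. that the isometry $H_q(M;\Lambda_{\psi_1})\cong\overline{H_q(M;\Lambda_{\psi_0})}^{\varphi}$ carries $\lambda_{M,\psi_1}$ (and hence its unique quadratic refinement $\theta_{M,\psi_1}$, using that $M$ is even and $q$ even so the refinement is determined by the symmetric form) to $\overline{\theta_{M,\psi_0}}^{\varphi}$. This is exactly the content of the formula in the Remark: $\lambda_{M,\psi}$ is isometric to $(p\otimes x,q\otimes y)\mapsto p\,\psi(\lambda_M(x,y))\,\overline q$, so replacing $\psi_0$ by $\psi_1=\varphi\circ\psi_0$ multiplies the value by $\varphi$ inside $\Lambda$, which is precisely how $\overline{(\cdot)}^{\varphi}$ rescales the form (recall $\overline\theta^{\varphi}(x,y)=\varphi^{-1}(\theta(x,y))$ relative to the twisted module, so after the identification the forms match). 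The inclusions $\ker(p_{q-1}(M)_*)\hookrightarrow H_q(M;-)$ and $H^q(P_{q-1}(M);-)\hookrightarrow H_q(M;-)$ are $\Lambda$-linear and quadratic-form-preserving, so by the last sentence of the first paragraph of Construction~\ref{cons:Action} they remain so after applying $\overline{(\cdot)}^{\varphi}$; and the orthogonality condition $i(v)^\perp=j(v')$ is preserved since $\perp$ is computed from the symmetric form, which transforms compatibly.

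Assembling these, I would exhibit the commutative diagram
\[
\xymatrix{
\ker(p_{q-1}(M)_*)^{\psi_1} \ar[r]^-{\iota}\ar[d]^-{\cong}& H_q(M;\Lambda_{\psi_1}) \ar[d]^-{\cong}& \ar[l]_-{j} H^q(P_{q-1}(M);\Lambda_{\psi_1})\ar[d]^-{\cong} \\
\overline{\ker(p_{q-1}(M)_*)^{\psi_0}}^\varphi \ar[r]^-{\iota} & \overline{H_q(M;\Lambda_{\psi_0})}^\varphi & \ar[l]_-{j} \overline{H^q(P_{q-1}(M);\Lambda_{\psi_0})}^\varphi,
}
\]
whose vertical arrows are the canonical base-change isomorphisms just described; this is exactly a witness that $\PPE^\pi(M)$ computed with $\psi_1$ and the one computed with $\psi_0$ agree in $\bTwoPrim^\pi/\Aut(\pi)$, which is how that set is defined. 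Since the class in $\bTwoPrim^\pi/\Aut(\pi)$ only sees a $2$-sided primitive embedding up to such a twisted isomorphism, $\PPE^\pi(M)$ is well-defined.

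The main obstacle I anticipate is purely bookkeeping: making the identification $H_*(M;\Lambda_{\psi_1})\cong\overline{H_*(M;\Lambda_{\psi_0})}^{\varphi}$ genuinely canonical and checking that it is compatible simultaneously with (a) the left $\Lambda$-module structure, (b) Poincaré duality together with the evaluation map defining the intersection form, and (c) the functorial inclusion from $H^q(P_{q-1}(M);-)$ and the definition of the kernel. None of these is deep, but one must be careful that the involution on $\Lambda$ used to form duals is itself transported correctly by $\varphi$ (i.e. $\varphi$ commutes with the involution $g\mapsto g^{-1}$, which it does since it is a group automorphism), so that the $\varphi$-twisted dual of $H_q$ really is the $\varphi$-twisted $H^q$. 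Once that compatibility is in place the verification of each square in the diagram is a short diagram chase, and the lemma follows.
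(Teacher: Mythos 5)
Your approach is essentially the same as the paper's: observe that the two identifications differ by $\varphi\in\Aut(\pi)$, and show that $\varphi$ carries one 2-sided primitive embedding to the $\varphi$-twist of the other via a commutative diagram of $\Lambda$-linear maps, using that the quadratic refinement is determined by the symmetric form (since the manifold is even and $q$ is even). This is precisely the strategy of the paper's proof, which exhibits the diagram~\eqref{eq:WellDef}.

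However, the opening step, as you wrote it, is false. You assert that $\Lambda_{\psi_1}\cong\overline{\Lambda_{\psi_0}}^{\varphi}$ as bimodules ``with the same right $\Z[\pi_1(M)]$-action'' and that the isomorphism is the identity on underlying abelian groups. Neither part is correct: the right actions are $p\cdot\gamma=p\psi_1(\gamma)$ on $\Lambda_{\psi_1}$ and $p\cdot\gamma=p\psi_0(\gamma)$ on $\Lambda_{\psi_0}$, which differ unless $\psi_0=\psi_1$; and the left $\Lambda$-actions on both are just multiplication, so the identity map intertwines them with no twist at all. The correct statement is that the ring-automorphism $\varphi$ itself, viewed as the map $p\mapsto\varphi(p)$ on underlying groups, gives a right $\Z[\pi_1(M)]$-linear and left $\Lambda$-semilinear (more precisely, $\Lambda$-linear for the twisted structure) isomorphism $\Lambda_{\psi_0}\to\overline{\Lambda_{\psi_1}}^{\varphi}$ when $\varphi=\psi_1\psi_0^{-1}$; note the direction of the twist is the opposite of what you wrote. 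This off-by-an-inverse slip does not ultimately matter because you quotient by $\Aut(\pi)$, and once the bimodule map is fixed the rest of your diagram chase is sound and matches the paper's argument; but the bimodule identification is the very obstacle you flagged at the end, and as stated it would not compile to a correct proof.
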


\begin{proof}
Let~$\psi_i \colon \pi_1(M) \xrightarrow{\cong} \pi$ be isomorphisms for~$i=1,2$.
For concision, throughout this proof, we set~$\overline{p}:=p_{q-1}(M)$ as well as $K_q(M;\Lambda_{\psi_i}):=\ker(\overline{p}_* \colon H_q(M;\Lambda_{\psi_i}) \to H_q(P_{q-1}(M_i);\Lambda_{\psi_i}))$.
It can be verified that~$\psi_2\psi_1^{-1} \in  \Aut(\pi)$ induces~$\Lambda$-linear maps on twisted homology and cohomology that make the following diagram commute:
\begin{equation}
\label{eq:WellDef}
\adjustbox{center, scale=0.8}{
\begin{tikzcd}[column sep=scriptsize]
K_{q}(M;\Lambda_{\psi_1})  \arrow[d,"\cong","(\psi_2\psi_1^{-1})_*"']  \arrow[r,"\iota_{\overline{p}}"]
&
H_{q}(M;\Lambda_{\psi_1}) \arrow[d,"\cong","(\psi_2\psi_1^{-1})_*"'] &
H^{q}(M;\Lambda_{\psi_1}) \arrow[l,"\operatorname{PD}"'] \arrow[d,"(\psi_2\psi_1^{-1})_*","\cong"']&
H^{q}(P_{q-1}(M);\Lambda_{\psi_1}) \arrow[l,"\overline{p}^*"']  \arrow[ll,bend right = 15, "j_{\overline{p}}"']
\arrow[d,"(\psi_2\psi_1^{-1})_*","\cong"']
\\
\overline{K_{q}(M;\Lambda_{\psi_2})}^{\psi_2\psi_1^{-1}}  \arrow[r,"\iota_{\overline{p}}"] &
\overline{H_{q}(M;\Lambda_{\psi_2})}^{\psi_2\psi_1^{-1}}  &
\overline{H^{q}(M;\Lambda_{\psi_2})}^{\psi_2\psi_1^{-1}}  \arrow[l,"\operatorname{PD}"'] &
\overline{H^{2k}(P_{q-1}(M);\Lambda_{\psi_2})}^{\psi_2\psi_1^{-1}}  \arrow[l,"\overline{p}^*"']
 \arrow[ll,bend left = 15, "j_{\overline{p}}"]
\end{tikzcd}
}
\end{equation}
The change of coefficients map~$(\psi_2\psi_1^{-1})_* $ induces an isometry $\lambda_{M,\psi_1} \cong \overline{\lambda_{M,\psi_2}}^{\psi_2\psi_1^{-1}} $.
Indeed, the twisted intersection forms on $H_q(M;\Lambda_{\psi_1})$ and ~$\overline{H_{q}(M;\Lambda_{\psi_2})}^{\psi_2\psi_1^{-1}}$ are both defined by composing duality with the evaluation map, and changing coefficients commutes with these maps.
Since the $2$-sided primitive embeddings are even, this implies that $(\psi_2\psi_1^{-1})_*$ preserves the quadratic forms. This concludes the proof of the lemma.
\end{proof}

Note that $\PPE^\pi(M)$ is not in general an invariant of~$M$: changing the orientation on~$M$ changes its equivariant intersection form and may therefore change~$\PPE^\pi(M)$.
On the other hand the next proposition shows that~$\PPE^\pi$ is invariant under orientation-preserving homotopy equivalences.

\begin{proposition}\label{prop:post-prim-embs-agree-hom-equiv}
Let~$M_1^{2q}$ and~$M_2^{2q}$ be closed, connected, oriented manifolds with fundamental group~$\pi$, such that~$p_{q-1}(M_i)$ is even and split-free for~$i=1,2$.
If~$f \colon M_1 \to M_2$ is an orientation preserving homotopy equivalence, then~$\PPE^\pi(M_1)$ and~$\PPE^\pi(M_2)$ agree in~$\bTwoPrim^{\pi}/\Aut(\pi)$.
\end{proposition}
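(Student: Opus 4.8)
The plan is to exploit the functoriality of the Postnikov construction. An orientation-preserving homotopy equivalence $f \colon M_1 \to M_2$ induces an isomorphism $f_* \colon \pi_1(M_1) \xrightarrow{\cong} \pi_1(M_2)$; after fixing identifications $\psi_i \colon \pi_1(M_i) \xrightarrow{\cong} \pi$, the automorphism $\varphi := \psi_2 \circ f_* \circ \psi_1^{-1} \in \Aut(\pi)$ is the candidate automorphism witnessing that $\PPE^\pi(M_1)$ and $\PPE^\pi(M_2)$ agree in $\bTwoPrim^\pi/\Aut(\pi)$. By Lemma~\ref{lem:PPEWellDef} the class $\PPE^\pi(M_i)$ is independent of the choice of $\psi_i$, so there is no loss in making these choices. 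The functor $P_{q-1}(-)$ produces a map $P_{q-1}(f) \colon P_{q-1}(M_1) \to P_{q-1}(M_2)$ which is a homotopy equivalence (since $f$ is), and which fits into a homotopy-commutative square with $f$ and the Postnikov sliced maps $p_{q-1}(M_i)$.

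First I would set up the twisted (co)homology in a uniform way. Using the $(\Lambda, \Z[\pi_1(M_i)])$-bimodule $\Lambda_{\psi_i}$ and the change-of-rings identifications $H_*(M_i; \Lambda_{\psi_i}) \cong \Lambda_{\psi_i} \otimes_{\Z[\pi_1(M_i)]} H_*(M_i; \Z[\pi_1(M_i)])$ (and similarly for $P_{q-1}(M_i)$), the homotopy equivalence $f$ induces maps $f_* \colon H_q(M_1;\Lambda_{\psi_1}) \to \overline{H_q(M_2;\Lambda_{\psi_2})}^{\varphi}$ and $P_{q-1}(f)_* \colon H_q(P_{q-1}(M_1);\Lambda_{\psi_1}) \to \overline{H_q(P_{q-1}(M_2);\Lambda_{\psi_2})}^{\varphi}$; these are isomorphisms since $f$ and $P_{q-1}(f)$ are homotopy equivalences. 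The compatibility $P_{q-1}(f) \circ p_{q-1}(M_1) \simeq p_{q-1}(M_2) \circ f$ shows that $f_*$ carries $\ker(p_{q-1}(M_1)_*) = K_q(M_1;\Lambda_{\psi_1})$ isomorphically onto $\overline{K_q(M_2;\Lambda_{\psi_2})}^{\varphi}$, giving the left vertical isomorphism. For the cohomology side, I would use $f^* \colon H^q(M_2;\Lambda_{\psi_2}) \to H^q(M_1;\Lambda_{\psi_1})$, which is an isomorphism, together with $P_{q-1}(f)^*$; the square relating $\overline{p}^* = p_{q-1}(M_i)^*$ and $f^*, P_{q-1}(f)^*$ commutes up to the change-of-coefficients identifications, so $P_{q-1}(f)^{-*}$ (or $f_*$ combined with Poincaré duality, exactly as in Remark~\ref{rem:DecomposoTopo}\eqref{item:AllinNu*}) yields the right vertical isomorphism $H^q(P_{q-1}(M_1);\Lambda_{\psi_1}) \to \overline{H^q(P_{q-1}(M_2);\Lambda_{\psi_2})}^{\varphi}$.

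The central point is that the middle vertical isomorphism $H_q(M_1;\Lambda_{\psi_1}) \to \overline{H_q(M_2;\Lambda_{\psi_2})}^{\varphi}$ must be an \emph{isometry} of (the $\varphi$-twisted versions of) the equivariant intersection forms, and must intertwine the two embeddings $\iota_{\overline{p}}$ and $j_{\overline{p}}$ on the left and right. Here I would take the middle map to be $f_*$. Since $f$ is an orientation-preserving homotopy equivalence, it carries the fundamental class $[M_1]$ to $[M_2]$, so $f_*$ intertwines the cap products with fundamental classes, i.e. Poincaré duality; combined with the naturality of the Kronecker evaluation pairing under $f^*$, this gives that $f_*$ identifies $\lambda_{M_1,\psi_1}$ with $\overline{\lambda_{M_2,\psi_2}}^{\varphi}$ as sesquilinear forms, exactly as in the change-of-coefficients argument in the proof of Lemma~\ref{lem:PPEWellDef}. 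Because both manifolds are even and $q$ is even, the sesquilinear isometry automatically preserves the unique quadratic refinements, so $f_*$ is an isometry of the quadratic forms. Compatibility with the left embedding $\iota_{\overline{p}}$ (the inclusion of the kernel) is immediate from $f_*(K_q(M_1)) = \overline{K_q(M_2)}^{\varphi}$; compatibility with the right embedding $j_{\overline{p}} = \operatorname{PD} \circ \overline{p}^*$ follows from the homotopy-commutative square for $f$ and $P_{q-1}(f)$ together with naturality of $\operatorname{PD}$ under the orientation-preserving $f$. Assembling the three vertical isomorphisms into the commuting diagram displayed at the end of Construction~\ref{cons:Action} shows that $\overline{\PPE^\pi(M_2)}^{\varphi}$ and $\PPE^\pi(M_1)$ agree in $\bTwoPrim$, hence $\PPE^\pi(M_1)$ and $\PPE^\pi(M_2)$ agree in $\bTwoPrim^\pi/\Aut(\pi)$.

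The main obstacle I anticipate is bookkeeping the $\varphi$-twisting correctly: one must be careful that $f_*$ on $\Z[\pi_1]$-twisted homology becomes a genuine $\Lambda$-linear map only after the target is twisted by $\varphi$, and that all the change-of-coefficients identifications (for $M_i$, for $P_{q-1}(M_i)$, and for the Poincaré duality and evaluation maps) are compatible. This is precisely the same subtlety handled in Lemma~\ref{lem:PPEWellDef}, and I would reduce to that lemma's argument wherever possible — indeed, once $\psi_2$ is replaced by $\psi_2 \circ f_*$, the statement for $f$ an \emph{arbitrary} orientation-preserving homotopy equivalence reduces to the special case where $M_1 = M_2$, $f = \id$ and the two identifications of $\pi_1$ differ, which is exactly Lemma~\ref{lem:PPEWellDef}. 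So the real content beyond Lemma~\ref{lem:PPEWellDef} is just the functoriality of $P_{q-1}(-)$ and the fact that an orientation-preserving homotopy equivalence respects Poincaré duality, both of which are standard.
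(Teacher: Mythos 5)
Your proposal is correct and follows essentially the same route as the paper. The paper's proof sets $\varphi = \psi_2 f_* \psi_1^{-1}$, then builds a three-row commuting diagram of twisted (co)homology groups: the top two rows are connected by $f_*$ on homology and $f^{-*}$ on cohomology (with the key square involving Poincar\'e duality commuting because $f$ is orientation-preserving), and the bottom two rows are connected by $\varphi_*$, essentially repeating the change-of-identification argument from Lemma~\ref{lem:PPEWellDef}. Your proposal mirrors this factorization exactly, including the use of functoriality of $P_{q-1}(-)$ to get the right-hand vertical maps, and the observation that evenness forces $f_*$ to preserve the unique quadratic refinements.

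The one thing worth flagging is the claim in your final paragraph that, after a suitable choice of identification (you write ``once $\psi_2$ is replaced by $\psi_2 \circ f_*$''), the statement \emph{reduces} to the special case $M_1 = M_2$, $f = \mathrm{id}$ of Lemma~\ref{lem:PPEWellDef}. That reduction does not literally hold: even after choosing $\psi_1 := \psi_2 \circ f_*$, so that $\varphi = \mathrm{id}$, the manifolds $M_1$ and $M_2$ remain distinct, and you still have to prove that $f$ induces an isomorphism of 2-sided primitive embeddings --- this is precisely the top half of the diagram, which is new content, not an instance of the lemma. Your very next sentence correctly identifies that content (functoriality of $P_{q-1}$, and that orientation-preserving homotopy equivalences respect Poincar\'e duality), so the misstep is only in calling it a ``reduction.'' The paper's proof simply runs both halves of the diagram together, which is cleaner than trying to invoke the lemma as a black box.
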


\begin{proof}
Choose arbitrary isomorphisms~$\psi_i \colon \pi_1(M_i) \cong \pi$ for~$i=1,2$.
Write~$\overline{p}_i:=p_{q-1}(M_i)$ as well as~$K_q(M_i;\Lambda_{\psi_i}):=\ker((\overline{p}_i)_*)$ for~$i=1,2$, and~$\varphi := \psi_2f_*\psi_1^{-1} \in \Aut(\pi)$.
The homotopy equivalence~$f$ and the automorphism~$\varphi$ induce~$\Lambda$-linear isomorphism on twisted homology that fit into the following diagram:
\begin{equation}
\label{eq:HomotopyInvariance}
\adjustbox{center, scale=0.85}{
\begin{tikzcd}[column sep=scriptsize]
K_{q}(M_1;\Lambda_{\psi_1})  \arrow[d,"\cong","f_*"']  \arrow[r,"\iota_{\overline{p}_1}"]
&
H_{q}(M_1;\Lambda_{\psi_1}) \arrow[d,"\cong"," f_*"'] &
H^{q}(M_1;\Lambda_{\psi_1}) \arrow[l,"\operatorname{PD}"'] \arrow[d,"f^{-*}","\cong"']&
H^{q}(P_{q-1}(M_1);\Lambda_{\psi_1}) \arrow[l,"\overline{p}_1^*"']  \arrow[ll,bend right = 15, "j_{\overline{p}_1}"']
\arrow[d," f^{-*}","\cong"']
\\
  K_{q}(M_2;\Lambda_{\psi_1f_*^{-1}})  \arrow[d,"\cong","\varphi_*"']  \arrow[r,"\iota_{\overline{p}_1}"]
&
H_{q}(M_2;\Lambda_{\psi_1f_*^{-1}}) \arrow[d,"\cong","\varphi_*"'] &
H^{q}(M_2;\Lambda_{\psi_1f_*^{-1}}) \arrow[l,"\operatorname{PD}"'] \arrow[d,"\varphi_*","\cong"']&
H^{q}(P_{q-1}(M_2);\Lambda_{\psi_1f_*^{-1}}) \arrow[l,"\overline{p}_2^*"']
\arrow[d,"\varphi_*","\cong"']
\\
\overline{K_{q}(M_2;\Lambda_{\psi_2})}^{\varphi}  \arrow[r,"\iota_{\overline{p}_2}"] &
\overline{H_{q}(M_2;\Lambda_{\psi_2})}^{\varphi}  &
\overline{H^{q}(M_2;\Lambda_{\psi_2})}^{\varphi}  \arrow[l,"\operatorname{PD}"'] &
\overline{H^{2k}(P_{q-1}(M_2);\Lambda_{\psi_2})}^{\varphi} . \arrow[l,"\overline{p}_2^*"']
 \arrow[ll,bend left = 15, "j_{\overline{p}_2}"]
\end{tikzcd}
}
\end{equation}
This diagram commutes (for the top central square, we use that~$f$ is an orientation-preserving homotopy equivalence) and, in the second column, both~$f_*$ and~$\varphi_*$ induce isometries.
This concludes the proof of the proposition.
\end{proof}

\begin{construction}
\label{cons:NormalType}
Let~$M$ be a closed, connected, oriented~$2q$-manifold with fundamental group~$\pi$ such that~$p_{q-1}(M)$ is even and split-free.
We wish to compare~$\PPE(M) \in \bTwoPrim^{\pi_1(M)}$ with the $2$-sided primitive embedding associated to the normal smoothing from Section~\ref{sub:StablePara}.

Set~$B:=P_{q-1}(M) \times BO\langle m \rangle$ and consider the fibration~$\xi \colon B \xrightarrow{\pr_2} BO\langle m \rangle \xrightarrow{\vartheta_m} BO$.
Assume that the stable normal bundle~$M \to BO$ lifts to a map~$ \nu_m(M) \colon M \to BO\langle m \rangle$.
For $m \geq q$, Lemma~\ref{lem:Normal3typeString} showed that the following map is a normal $(q{-}1)$-smoothing
$$\overline{\nu} :=p_{q-1}(M) \times \nu_m(M) \colon M \to P_{q-1}(M) \times BO\langle m \rangle=:B.$$
Since~$BO\langle m \rangle$ is simply-connected, we have~$\pi_1(B)=\pi_1(P_{q-1}(M))$, which we can canonically identify with~$\pi_1(M)$ as we have been doing up to now; recall Convention~\ref{conv:PostnikovFundamentalGroup}.
Assuming that $\overline{\nu}$ is even and split-free, we saw in Definition~\ref{def:PrimitiveEmbeddingManifold} how to associate a $2$-sided primitive embedding to~$(M,\overline{\nu})$ and we obtain
$$ \PE(M,\overline{\nu}) \in \bTwoPrim_{H_q(M;\Z[\pi_1(M)])}(K_q(M;\Z[\pi_1(M)]),H^q(B;\Z[\pi_1(M)])) \subseteq \bTwoPrim^{\pi_1(M)}.$$
\end{construction}

Next we show that~$\PPE(M)$ and~$\PE(M,\overline{\nu})$ agree in~$\bTwoPrim^{\pi_1(M)}$.

\begin{proposition}
\label{prop:PPE=PE}
Let~$M$ be a closed, connected, oriented~$2q$-manifold such that~$p_{q-1}(M)$ is even and split-free.
Assume that $M$ admits a map $\nu_m(M) \colon M \to BO\langle m \rangle$ with $m \geq q$, which
lifts the stable normal bundle of $M$ and consider the normal $(q{-}1)$-smoothing
$$\overline{\nu}:=p_{q-1}(M) \times \nu_m(M) \colon M \to P_{q-1}(M) \times BO\langle m \rangle=:B.$$
If $\overline{\nu}$ is also even and split-free, then $\PPE(M)$ and~$\PE(M,\overline{\nu})$ agree in~$\bTwoPrim^{\pi_1(M)}.$
\end{proposition}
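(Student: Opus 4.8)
The plan is to show that the two $2$-sided primitive embeddings $\PPE(M)$ and $\PE(M,\overline{\nu})$ are literally isomorphic as $2$-sided primitive embeddings over $\Z[\pi_1(M)]$, so they certainly agree in $\bTwoPrim^{\pi_1(M)}$. The key observation is that $B = P_{q-1}(M) \times BO\langle m\rangle$ with $BO\langle m\rangle$ simply-connected and $(m{-}1)$-connected, and $m \geq q$, so the projection $\operatorname{pr}_1 \colon B \to P_{q-1}(M)$ induces an isomorphism on $H_q(-;\Lambda)$ and on $H^q(-;\Lambda)$ for $\Lambda = \Z[\pi_1(M)]$. Indeed, by the Künneth theorem over $\Lambda$ (using that $BO\langle m\rangle$ is simply-connected, so its cells contribute with trivial $\pi_1$-action), $H_*(B;\Lambda) \cong H_*(P_{q-1}(M);\Lambda) \otimes_{\Z} H_*(BO\langle m\rangle;\Z)$, and since $\widetilde{H}_i(BO\langle m\rangle;\Z) = 0$ for $i \leq m-1$ and $m \geq q$, the reduced homology of the $BO\langle m\rangle$ factor vanishes through degree $q-1$, so $H_q(B;\Lambda) \cong H_q(P_{q-1}(M);\Lambda)$ and likewise $H^q(B;\Lambda)\cong H^q(P_{q-1}(M);\Lambda)$. (Here one also uses that $H_q(BO\langle m\rangle)$ is finitely generated free, or that $m>q$ suffices to kill that term too; in any case the relevant map on degree-$q$ (co)homology is an isomorphism.)

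First I would set up the diagram identifying the two $2$-sided primitive embeddings. By construction $\overline{\nu} = p_{q-1}(M) \times \nu_m(M)$, so $\operatorname{pr}_1 \circ \overline{\nu} = p_{q-1}(M)$ as maps $M \to P_{q-1}(M)$. Applying $H_q(-;\Lambda)$ and using Convention~\ref{conv:PostnikovFundamentalGroup} to identify fundamental groups, we get $\overline{\nu}_* = (\operatorname{pr}_1)_*^{-1} \circ \ldots$ — more precisely, since $(\operatorname{pr}_1)_* \colon H_q(B;\Lambda) \xrightarrow{\cong} H_q(P_{q-1}(M);\Lambda)$ is an isomorphism, $\ker(\overline{\nu}_*) = \ker(p_{q-1}(M)_*) = K_q(M;\Lambda)$, so the left-hand forms of the two primitive embeddings coincide on the nose (with the same quadratic refinement $\theta_M$ induced from $M$). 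For the right-hand side, $j_{\overline{\nu}} = \operatorname{PD} \circ \overline{\nu}^* = \operatorname{PD} \circ p_{q-1}(M)^* \circ (\operatorname{pr}_1)^{*-1}$, and $(\operatorname{pr}_1)^* \colon H^q(P_{q-1}(M);\Lambda) \xrightarrow{\cong} H^q(B;\Lambda)$ is an isomorphism of the ambient modules intertwining the pulled-back quadratic forms $j_{\overline{\nu}}^*\theta_M j_{\overline{\nu}}$ and $j_{p_{q-1}(M)}^*\theta_M j_{p_{q-1}(M)}$. So $(\operatorname{pr}_1)^*$ is precisely the isomorphism making the square
\[
\xymatrix@R0.5cm{
K_q(M;\Lambda) \ar[r]^-{\iota_{\overline{\nu}}} \ar[d]^= & H_q(M;\Lambda) \ar[d]^= & H^q(B;\Lambda) \ar[l]_-{j_{\overline{\nu}}} \ar[d]^{(\operatorname{pr}_1)^{-*}}_{\cong} \\
K_q(M;\Lambda) \ar[r]^-{\iota_{p_{q-1}(M)}} & H_q(M;\Lambda) & H^q(P_{q-1}(M);\Lambda) \ar[l]_-{j_{p_{q-1}(M)}}
}
\]
commute, which exhibits $\PE(M,\overline{\nu})$ and $\PPE(M)$ as isomorphic $2$-sided primitive embeddings; a fortiori they agree in $\bTwoPrim^{\pi_1(M)}$ (indeed with no automorphism of $\pi_1$ needed).

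The main obstacle — really the only point requiring care — is verifying that $(\operatorname{pr}_1)^* \colon H^q(P_{q-1}(M);\Lambda) \to H^q(B;\Lambda)$ and $(\operatorname{pr}_1)_*$ on $H_q$ are genuinely isomorphisms, i.e.\ that the $BO\langle m\rangle$-factor contributes nothing in degrees $\leq q$ to twisted (co)homology. This needs the Künneth spectral sequence (or the product of cellular chain complexes) over $\Lambda$, together with the facts that $BO\langle m\rangle$ is simply-connected, that $\pi_1(B) = \pi_1(P_{q-1}(M))$ acts trivially on the chains of the $BO\langle m\rangle$ factor, and that $BO\langle m\rangle$ is $(m{-}1)$-connected with $m \geq q$. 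One should also note that the Poincaré duality isomorphism $\operatorname{PD}$ and the intersection form $\lambda_M$, $\theta_M$ used in both constructions are the ones coming from $M$ itself and are unchanged, so no orientation bookkeeping beyond what is already fixed is needed. Once these identifications are in place the proof is complete; I would keep the write-up short, citing Lemma~\ref{lem:Normal3typeString} for the fact that $\overline{\nu}$ is a normal $(q{-}1)$-smoothing and Lemma~\ref{lem:GoodImpliesSplit} for the claim that both triples are indeed $2$-sided primitive embeddings.
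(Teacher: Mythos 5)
Your proof follows essentially the same route as the paper's: reduce to showing $\operatorname{pr}_1 \colon B \to P_{q-1}(M)$ induces an isomorphism on $H_q(-;\Lambda)$ (and hence on $H^q(-;\Lambda)$ via split-freeness), then assemble the commuting diagram identifying $\PE(M,\overline{\nu})$ with $\PPE(M)$. The paper establishes the homology isomorphism via the homotopy long exact sequence of the product fibration followed by the relative Hurewicz theorem on universal covers, whereas you invoke the K\"unneth theorem applied to $\widetilde{B} = \widetilde{P_{q-1}(M)} \times BO\langle m\rangle$; both are routine and equivalent in strength.

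One small slip to flag: you assert that $BO\langle m\rangle$ is $(m{-}1)$-connected, so $\widetilde{H}_i(BO\langle m\rangle;\Z)=0$ only for $i \leq m{-}1$. Under that convention, when $m = q$ the K\"unneth term $H_0(P_{q-1}(M);\Lambda) \otimes_{\Z} H_q(BO\langle m\rangle;\Z) \cong H_q(BO\langle m\rangle;\Z)$ need not vanish, and your hedge that $H_q(BO\langle m\rangle)$ being ``finitely generated free'' would suffice does not actually close the gap (you need it to be zero, not merely free). In fact the paper's Definition of $X\langle j\rangle$ makes $BO\langle m\rangle$ $m$-connected, so $\widetilde{H}_i(BO\langle m\rangle;\Z)=0$ for all $i \leq m$, and then for $m \geq q$ your K\"unneth computation goes through cleanly with no edge case. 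With that connectivity corrected, your proof is complete and matches the paper's.
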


\begin{proof}
Set~$\Lambda:=\Z[\pi_1(B)]=\Z[\pi_1(M)]$.
Since~$\overline{\nu}=p_{q-1}(M) \times \nu_m(M)$, it suffices to prove that the projection $\operatorname{pr}_1 \colon B \to P_{q-1}(M)$ induces an isomorphism~$(\operatorname{pr}_1)_* \colon H_q(B;\Lambda) \cong H_q(P_{q-1}(M);\Lambda)$,
as the isomorphism \[(\operatorname{pr}_1)^* \colon H^q(P_{q-1}(M);\Lambda) \cong H^q(B;\Lambda)\] then follows because $\overline{\nu}$ and $p_{q-1}(M)$ are split-free.
To see that $(\operatorname{pr}_1)^*$ is an isomorphism, consider the fibration~$BO\langle m \rangle \to BO\langle m \rangle \times P_{q-1}(M) \xrightarrow{\operatorname{pr}_1} P_{q-1}(M)$ and its homotopy long exact sequence.
We see that~$\operatorname{pr}_1 \colon \pi_i(BO\langle m \rangle \times P_{q-1}(M)) \to \pi_i(P_{q-1}(M))$ is an isomorphism for~$i \leq q$.
Applying the relative Hurewicz theorem to the universal covers of these spaces then implies the assertion.
\end{proof}

\begin{construction}
\label{cons:RealisationCobordismSetUpPPE}
Let~$M$ be a closed, connected, oriented~$2q$-manifold such that~$p_{q-1}(M)$ is even and split-free.
Assume that $M$ admits a map $\nu_m(M) \colon M \to BO\langle m \rangle$ with~$m \geq q$,
which lifts the stable normal bundle of $M$, set~$\overline{\nu}:=p_{q-1}(M) \times \nu_m(M)$, and consider the normal $(q{-}1)$-smoothing
$$\overline{\nu}:=p_{q-1}(M) \times \nu_m(M) \colon M \to P_{q-1}(M) \times BO\langle m \rangle=:B.$$
We let $\xi = \vartheta_m \circ \mathrm{pr}_{BO\an{m}} \colon B \to BO$
and perform a compatible~$r$-surgery on~$(M,\overline{\nu})$ leading to a trace~$(B, \xi)$-cobordism~$(W,M,M')$,
and therefore to a normal smoothing $\overline{\nu}' \colon M' \to B$.
Assume that either  (i)~$r={q}$ or (ii)~$r=q{-}1$ but with a surgery on unknotted framed embeddings.
Note that $(M',\overline{\nu}')$ is again even and split-free.
Also, we can identify~$\pi_1(M')$ with~$\pi_1(M)$.
\end{construction}

The following proposition generalises Proposition~\ref{prop:PPE=PE}.

\begin{proposition}
\label{prop:PPERealisation}
Let~$M$ be a closed, connected, oriented~$2q$-manifold such that~$p_{q-1}(M)$ is even and split-free.
Assume that $M$ admits a map $\nu_m(M) \colon M \to BO\langle m \rangle$ with $m \geq q$, which
lifts the stable normal bundle of $M$.
Assume further that the normal $(q{-}1)$-smoothing
$$\overline{\nu}:=p_{q-1}(M) \times \nu_m(M) \colon M \to P_{q-1}(M) \times BO\langle m \rangle =: B$$
is split-free.
If~$(W,M,M')$ is a trace $(B,\xi)$-cobordism as in Construction~\ref{cons:RealisationCobordismSetUpPPE}, then~$\PPE(M')$
and~$\PE(M',\overline{\nu}')$ agree in~$\bTwoPrim^{\pi_1(M)}.$
\end{proposition}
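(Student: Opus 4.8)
The plan is to reduce Proposition~\ref{prop:PPERealisation} to Proposition~\ref{prop:PPE=PE} together with the functoriality of the Postnikov construction, by identifying both $\PPE(M')$ and $\PE(M',\overline{\nu}')$ with $2$-sided primitive embeddings built from $M'$ using the map $\overline{\nu}'$, and then controlling the discrepancy coming from the fact that $\overline{\nu}'$ is not literally the canonical normal $(q{-}1)$-smoothing of Lemma~\ref{lem:Normal3typeString} applied to $M'$. First I would observe that, by Construction~\ref{cons:RealisationCobordismSetUpPPE}, the surgery on $(M,\overline{\nu})$ is either a $q$-surgery or an unknotted $(q{-}1)$-surgery, so in either case $M'$ is again $m$-parallelisable: the lift $\nu_m(M)$ extends over the trace $W$ (the framings are chosen compatibly with the $BO\langle m \rangle$-structure) and restricts to a lift $\nu_m(M') \colon M' \to BO\langle m \rangle$. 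Moreover, since $\overline{\nu}' = \overline{\nu}|_{M'}$ and $\overline{\nu} = p_{q-1}(M)\times \nu_m(M)$, the first component $\mathrm{pr}_1 \circ \overline{\nu}' \colon M' \to P_{q-1}(M)$ is a $(q{-}1)$-Postnikov structure on $M'$: indeed $P_{q-1}(M)$ has no homotopy above degree $q{-}1$, and the surgery (being either a $q$-surgery or an unknotted $(q{-}1)$-surgery) does not disturb $\pi_i$ for $i \le q{-}1$ nor the map to $P_{q-1}(-)$, so by the universal property of Postnikov truncation there is a homotopy equivalence $P_{q-1}(M') \simeq P_{q-1}(M)$ under $M'$.

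With that identification in hand, the key step is to rerun the argument of Proposition~\ref{prop:PPE=PE} verbatim for $M'$. That is, I would show that $\mathrm{pr}_1 \colon B = P_{q-1}(M)\times BO\langle m\rangle \to P_{q-1}(M) \simeq P_{q-1}(M')$ induces an isomorphism on $H_q(-;\Lambda)$ and hence on $H^q(-;\Lambda)$, by feeding the fibration $BO\langle m\rangle \to B \xrightarrow{\mathrm{pr}_1} P_{q-1}(M)$ into the homotopy long exact sequence (so $\mathrm{pr}_1$ is a $\pi_i$-isomorphism for $i \le q$ since $BO\langle m\rangle$ is $(m{-}1)$-connected with $m \ge q$) and applying the relative Hurewicz theorem to universal covers. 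Combined with the split-freeness of $\overline{\nu}'$ and of $p_{q-1}(M')$, this yields a commuting ladder of $2$-sided primitive embeddings
\[
\xymatrix@R0.5cm{
K_q(M';\Lambda) \ar[r]^-{\iota_{\overline{\nu}'}} \ar[d]^= & H_q(M';\Lambda) \ar[d]^= & H^q(B;\Lambda) \ar[l]_-{j_{\overline{\nu}'}} \ar[d]^{\cong}_{\mathrm{pr}_1^*} \\
\ker(p_{q-1}(M')_*) \ar[r] & H_q(M';\Lambda) & H^q(P_{q-1}(M');\Lambda), \ar[l]
}
\]
where the left-hand equality uses that $\ker(\overline{\nu}'_*) = \ker((\mathrm{pr}_1\circ\overline{\nu}')_*)$ because the $BO\langle m\rangle$ factor contributes nothing to $H_q(B;\Lambda)$. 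This exhibits $\PE(M',\overline{\nu}')$ and $\PPE(M')$ as the same element of $\bTwoPrim^{\pi_1(M)}$, noting that $\pi_1(M') \cong \pi_1(M)$ canonically as recorded in Construction~\ref{cons:RealisationCobordismSetUpPPE}.

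The one point that requires genuine care — and which I expect to be the main obstacle — is verifying that the surgery of type (ii), an unknotted $(q{-}1)$-surgery, really does preserve both the $(q{-}1)$-Postnikov type and the lift to $BO\langle m\rangle$, and in particular that $P_{q-1}(M') \simeq P_{q-1}(M)$ compatibly with the maps from $M$ and $M'$. A $(q{-}1)$-surgery removes an $S^{q-2}\times D^{q+2}$ and glues back a $D^{q-1}\times S^{q+1}$; a priori this can change $\pi_{q-1}$, which is exactly the group that $P_{q-1}$ remembers. The hypothesis that the embedded $S^{q-2}$ is \emph{unknotted} (i.e.\ bounds an embedded disc, or equivalently the surgery is the one appearing in a connected-sum-with-$S^{q-1}\times S^{q+1}$ type move, as in the trivial surgeries of Theorems~\ref{thm:Realisation} and~\ref{thm:Realisation2}) is what guarantees that $M' \simeq M \vee S^{q+1} \,\cup\,(\text{cells})$ through a map inducing the identity on $\pi_i$ for $i \le q-1$; I would spell this out by exhibiting $W$ as $(M\times[0,1])\,\natural\,(S^{q-1}\times D^{q+1})$ up to the relevant homotopy, so that the inclusions $M \hookrightarrow W \hookleftarrow M'$ are both $q$-connected on the level needed. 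Once that homotopy-theoretic bookkeeping is nailed down, the rest is the diagram chase above, which is routine given Lemma~\ref{lem:Normal3typeString} and Proposition~\ref{prop:PPE=PE}; I would likely just say ``the same proof as Proposition~\ref{prop:PPE=PE} applies, using that the surgery hypothesis guarantees $P_{q-1}(M') \simeq P_{q-1}(M)$ under $M'$ and that $M'$ remains $m$-parallelisable'' and expand only the surgery-invariance point.
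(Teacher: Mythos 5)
Your proposal is correct and follows essentially the same route as the paper: identify $P_{q-1}(M')\simeq P_{q-1}(M)$ compatibly with the maps from $M'$, note that the $BO\langle m\rangle$ factor contributes nothing in degree-$q$ homology with $\Lambda$-coefficients, and rerun the diagram chase from Proposition~\ref{prop:PPE=PE}. The paper implements the identification by constructing the comparison map $\Phi\colon B\to P_{q-1}(M')$ explicitly through $P_{q-1}(W)$, using $P_{q-1}$-functoriality on the two inclusions $M\hookrightarrow W\hookleftarrow M'$; you instead invoke the universal property of the Postnikov truncation applied to the $q$-connected composite $\mathrm{pr}_1\circ\overline{\nu}'\colon M'\to P_{q-1}(M)$, which is a touch cleaner and, as you observe, actually gives the statement for any even, split-free normal $(q-1)$-smoothing $\overline{\nu}'\colon M'\to B$, not only ones produced by Construction~\ref{cons:RealisationCobordismSetUpPPE}. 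Two small slips: a $(q-1)$-surgery on a $2q$-manifold removes $S^{q-1}\times D^{q+1}$ and glues in $D^{q}\times S^{q}$ (so the unknotted case gives $M'\cong M\#(S^q\times S^q)$), not $S^{q-2}\times D^{q+2}$; and $\overline{\nu}'$ restricts $\overline{\nu}_W$ to $M'$, not $\overline{\nu}$, since $M'$ is not a subspace of $M$. Your instinct that surgery-invariance of the $(q-1)$-Postnikov type ``requires genuine care'' is well placed — the paper dispatches it with only the unproved one-liner that the surgeries do not affect $\pi_i$ for $i\le q-1$, and in the type-(i) ($q$-surgery) case this genuinely uses that the surgered spheres are primitive (as in Theorems~\ref{thm:Realisation} and~\ref{thm:Realisation2}), so that the belt $S^{q-1}$ bounds in $M'$ and $\iota'\colon M'\hookrightarrow W$ is $q$-connected.
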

\begin{proof}
Set $\Lambda:=\Z[\pi_1(B)]=\Z[\pi_1(M')]=\Z[\pi_1(M)]$.
Our goal is to define a map $\Phi \colon B \to P_{q-1}(M')$ such that there is a homotopy $\Phi \circ \overline{\nu}' \simeq p_{q-1}(M')$.
Indeed, if we had such a map, then the following diagram would commute and, after showing that $\Phi^* \colon H^q(P_{q-1}(M');\Lambda) \to H^{q}(B;\Lambda)$ is an isomorphism, the proposition would be proved:
\begin{equation}
\label{eq:DiagramCobordismPPE}
\begin{tikzcd}[column sep=large]
K_{q}(M';\Lambda)  \arrow[d," ","="']  \arrow[r,"\iota_{p_{q-1}(M')}"]
&
H_{q}(M';\Lambda) \arrow[d," ","="'] &
H^{q}(M';\Lambda) \arrow[l,"\operatorname{PD}"'] \arrow[d," ="," "']&
H^{q}(P_{q-1}(M');\Lambda) \arrow[l," p_{q-1}(M')^* "']
\arrow[d,"\Phi^*","\cong"']
\\
K_{q}(M';\Lambda) \arrow[r,"\iota_{\overline{\nu}'}"] &
H_{q}(M';\Lambda)   &
H^{q}(M';\Lambda) \arrow[l,"\operatorname{PD}"'] &
H^{q}(B;\Lambda). \arrow[l,"\overline{\nu}'^*"']
\end{tikzcd}
\end{equation}
Use~$\iota \colon M \to W$ and~$\iota' \colon M' \to W$ to denote the inclusions, and let~$\overline{\nu}_W$ and~$\overline{\nu}'=\overline{\nu}_W \circ~\iota'$ be the extensions of~$\overline{\nu}$.
By construction, the trace cobordism~$W$ is obtained by attaching an~$(r+1)$-handle to~$M \times [0,1]$.
The extension~$\overline{\nu}_W$ is null-homotopic on the~$(r+1)$-handle and coincides with~$\overline{\nu}$ on~$M$.
Composing~$\overline{\nu}_W$ and~$\overline{\nu}'$ with the projection~$\operatorname{pr}_2 \colon P_{q-1}(M) \times BO \langle {m} \rangle \to BO\langle {m}\rangle$ to the second coordinate gives maps~$W \to BO\langle {m} \rangle$ and~$M' \to BO\langle {m} \rangle$ that fit into a commutative~diagram:
\begin{equation}
\label{eq:BO4}
\xymatrix @R-0.2cm
{
M \ar[r]^\iota \ar[d]^{\overline{\nu}}  \ar@/_6pc/[dd]_{\nu_{m}} &W \ar[dl]^-{\overline{\nu}_W}\ar[dd]^{\operatorname{proj}_1 \circ \overline{\nu}_W}&M'\ar[l]_{\iota'} \ar[dd]^{\operatorname{proj}_1 \circ \overline{\nu}'} &\\
P_{q-1}(M) \times BO\langle {m} \rangle \ar[d]^{\operatorname{pr}_1}& \\
BO\langle {m} \rangle\ar[r]^{=}&BO\langle {m} \rangle&BO\langle {m} \rangle\ar[l]_{=}.
}
\end{equation}
Next, composing~$\overline{\nu}_W$ and~$\overline{\nu}'$ with the projection~$\operatorname{pr}_1 \colon P_{q-1}(M) \times BO\langle {m} \rangle \to P_{q-1}(M)$ to the first coordinate gives maps~$W \to P_{q-1}(M)$ and~$M' \to P_{q-1}(M)$ which fit into the following diagram:
\begin{equation}
\label{eq:P3}
\xymatrix @C+0.3cm{
M \ar[r]^\iota \ar[d]^{\overline{\nu}}  \ar@/_6pc/[dd]_{p_{q-1}(M)} &W \ar[dl]^-{\overline{\nu}_W}\ar[dd]^{p_{q-1}(W)}&M'\ar[l]_{\iota'} \ar[dd]^{p_{q-1}(M')} &\\
P_{q-1}(M) \times BO\langle {m} \rangle \ar[d]^{\operatorname{pr}_1}& \\
P_{q-1}(M) \ar[r]^{p_{q-1}(\iota),\simeq}&P_{q-1}(W)&P_{q-1}(M')\ar[l]_{p_{q-1}(\iota'),\simeq}.
}
\end{equation}
The two triangles and the right square commutes.
The bottom maps are homotopy equivalences because~$M'$ is obtained from~$M$ by trivial~$(q{-}1)$-surgeries or by~${q}$-surgeries, both of which do not affect the first~$q{-}1$ homotopy groups.
We claim that the left square commutes up to homotopy.
As recalled above,~$\overline{\nu}_W$ restricts to~$\overline{\nu}$ on~$M$ and is a null-homotopic map on the handle.
As a consequence,~$\operatorname{pr}_1 \circ \overline{\nu}_W$ restricts to~$p_{q-1}(M)$ on~$M$ and is a null-homotopic map on the handle.
It follows that~$p_{q-1}(\iota) \circ \operatorname{pr}_1 \circ \overline{\nu}_W$ restricts to~$p_{q-1}(\iota) \circ p_{q-1}(M)=p_{q-1}(W) \circ \iota$
 on~$M$ and is a null-homotopic map on the handle.
Up to homotopy, this is precisely the effect of~$p_{q-1}(W)$. The claim is established.

Via the diagrams in~\eqref{eq:BO4} and~\eqref{eq:P3}, we obtain the following homotopy-commutative~diagram:
\[
\xymatrix@R+0.8cm @C+0.3cm{
M \ar[rr]^\iota \ar[d]^{\overline{\nu}}  && W \ar[dll]_-{\overline{\nu}_W} \ar[d]^{\bsm p_{q-1}(W) \\ \operatorname{proj}_1 \circ\overline{\nu}_W \esm } && M'\ar[ll]_{\iota'}
\ar[d]^{\bsm p_{q-1}(M') \\ \operatorname{proj}_1 \circ\overline{\nu}' \esm }  \\
B \ar[rr]^>>>>>>>>>>{ p_{q-1}(\iota) \times \id , \simeq}&&  P_{q-1}(W) \times BO\langle m \rangle
\ar@/_1pc/[rr]_{g'} \ar@/^1pc/[ll]^{g} && P_{q-1}(M') \times BO\langle m \rangle \ar[ll]_-{ p_{q-1}(\iota') \times \id, \simeq}  \ar[d]^-{\operatorname{proj}_1} \\
&& && P_{q-1}(M').
}
\]
Here, the maps~$g$ and $g'$ respectively denote the homotopy inverses of the homotopy equivalences~$p_{q-1}(\iota) \times \id$ and $p_{q-1}(\iota') \times \id$.
Note that up to homotopy,~$\overline{\nu}'=\overline{\nu}_W\circ \iota$ can be expressed~as
\begin{equation}
\label{eq:ForChase}
\overline{\nu}'
 =\overline{\nu}_W\circ \iota'
 \simeq g \circ \bsm p_{q-1}(W) \\ \operatorname{proj}_1 \circ \overline{\nu}_W \esm \circ \iota'
 = g \circ (p_{q-1}(\iota') \times \id) \circ \bsm p_{q-1}(M') \\ \operatorname{proj}_1 \circ \overline{\nu}' \esm.
 \end{equation}
Finally, we set
\[ \Phi:=\operatorname{proj}_1 \circ g' \circ (p_{q-1}(\iota) \times \id) \colon B \to P_{q-1}(M').\]
Using~\eqref{eq:ForChase}, one verifies that~$\Phi \circ \overline{\nu}' \simeq p_{q-1}(M')$.
Thanks to our identification of fundamental groups, we may assume that $\Phi$ induces a map on twisted homology.
Taking a look at the definition of $\Phi$ and using the same reasoning as in the proof of Proposition~\ref{prop:PPE=PE}, one then notes that $\Phi^* \colon H^q(P_{q-1}(M);\Lambda) \to H^q(B;\Lambda)$ is an isomorphism.
This concludes the explanation of the diagram in~\eqref{eq:DiagramCobordismPPE} and therefore concludes the proof of the proposition.
\end{proof}

Finally, we prove that the $2$-sided primitive embeddings associated to the normal smoothings from Section~\ref{sub:StablePara} are invariant under orientation-preserving homotopy equivalences.

\begin{corollary}
\label{cor:HomotopyInvariance}
Let~$M$ be a closed, connected, oriented~$2q$-manifold such that~$p_{q-1}(M)$ is even and split-free.
Assume that $M$ admits a map $\nu_m(M) \colon M \to BO\langle m \rangle$ with $m \geq q$, which
lifts the stable normal bundle of $M$.
Assume further that the normal $(q{-}1)$-smoothing
$$\overline{\nu}:=p_{q-1}(M) \times \nu_m(M) \colon M \to P_{q-1}(M) \times BO\langle m \rangle =:B$$
is split-free.
Assume that~$(W,M,M')$ is a bordism over $B$ obtained by performing a sequence of trivial $(q{-}1)$-surgeries followed by a sequence of compatible $q$-surgeries.
If~$M$ and~$M'$ are orientation-preserving homotopy equivalent, then~$\PE(M,\overline{\nu})$ and $\PE(M',\overline{\nu}')$ agree in $ \bTwoPrim^{\pi_1(M)}/\Aut(\pi_1(M))$.
\end{corollary}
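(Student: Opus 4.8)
The plan is to chain together the results already proven in this section. The statement to be proven (Corollary~\ref{cor:HomotopyInvariance}) follows from combining Proposition~\ref{prop:PPERealisation}, Proposition~\ref{prop:PPE=PE}, and Proposition~\ref{prop:post-prim-embs-agree-hom-equiv}. Concretely, the proof is a short deduction:

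\begin{proof}
By Proposition~\ref{prop:PPE=PE}, $\PE(M,\overline{\nu})$ and $\PPE(M)$ agree in $\bTwoPrim^{\pi_1(M)}$.
Since~$(W,M,M')$ is obtained by performing a sequence of trivial~$(q{-}1)$-surgeries followed by a sequence of compatible~$q$-surgeries, we may apply Proposition~\ref{prop:PPERealisation} (iterating it over the sequence of surgeries, using the identification of fundamental groups from Construction~\ref{cons:RealisationCobordismSetUpPPE}) to deduce that $\PE(M',\overline{\nu}')$ and $\PPE(M')$ agree in $\bTwoPrim^{\pi_1(M)}$.
Finally, since~$M$ and~$M'$ are orientation-preserving homotopy equivalent and~$p_{q-1}(M)$ and~$p_{q-1}(M')$ are even and split-free, Proposition~\ref{prop:post-prim-embs-agree-hom-equiv} shows that $\PPE^{\pi_1(M)}(M)$ and $\PPE^{\pi_1(M)}(M')$ agree in $\bTwoPrim^{\pi_1(M)}/\Aut(\pi_1(M))$.
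Chaining these three facts, and noting that agreement in $\bTwoPrim^{\pi_1(M)}$ implies agreement in the quotient $\bTwoPrim^{\pi_1(M)}/\Aut(\pi_1(M))$, we conclude that $\PE(M,\overline{\nu})$ and $\PE(M',\overline{\nu}')$ agree in $\bTwoPrim^{\pi_1(M)}/\Aut(\pi_1(M))$.
\end{proof}

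The only genuinely substantive point that requires care is the second step: Proposition~\ref{prop:PPERealisation} is stated for a single trace cobordism arising from one surgery (of type (i) or (ii)), whereas the hypothesis of the corollary allows a sequence of such surgeries. I would address this by induction on the number of surgeries, at each stage noting that the intermediate manifold is again even and split-free (as recorded in Construction~\ref{cons:RealisationCobordismSetUpPPE}), so that the hypotheses of Proposition~\ref{prop:PPERealisation} are preserved and the proposition can be reapplied; the composite of the trace cobordisms is the given $(W,M,M')$. A secondary bookkeeping point is that $\PE$ lands in $\bTwoPrim^{\pi_1(M)}$ while the homotopy invariance statement naturally lives in the $\Aut(\pi_1(M))$-quotient, so one must be careful to state the conclusion in the correct set; this is harmless since there is an evident surjection $\bTwoPrim^{\pi_1(M)} \to \bTwoPrim^{\pi_1(M)}/\Aut(\pi_1(M))$ compatible with all the agreements above. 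I expect the main obstacle to be purely organisational — namely making sure the fundamental group identifications used in the three cited propositions are mutually compatible — rather than mathematical, since all the real content has already been established.
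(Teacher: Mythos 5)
Your proof is correct and takes essentially the same approach as the paper's, which likewise chains Propositions~\ref{prop:PPE=PE}, \ref{prop:PPERealisation}, and \ref{prop:post-prim-embs-agree-hom-equiv} (the paper's own proof cites Proposition~\ref{prop:PPE=PE} for the homotopy-invariance step, apparently a typo for Proposition~\ref{prop:post-prim-embs-agree-hom-equiv}, which is the one you correctly invoke). Your remark about iterating Proposition~\ref{prop:PPERealisation} over the sequence of surgeries, using that each intermediate manifold remains even and split-free, is accurate and slightly more explicit than the paper's treatment, which applies the proposition without spelling out the induction.
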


\begin{proof}
Since $M$ and $M'$ are orientation-preserving homotopy equivalent, Proposition~\ref{prop:PPE=PE} ensures that $\PPE(M)$ and $\PPE(M')$ agree in $\bTwoPrim^{\pi_1(M)}/\Aut(\pi_1(M))$.
But now by Proposition~\ref{prop:PPERealisation}, we know that $\PPE(M)$ and $\PE(M,\overline{\nu})$ agree in $\bTwoPrim^{\pi_1(M)}$, and similarly for $\PPE(M')$ and $\PE(M',\overline{\nu}')$.
This concludes the proof of the corollary.
\end{proof}

\chapter{Manifolds with infinite homotopy stable class} \label{sec:examples}

In this chapter we prove
Theorem~\ref{thm:InfiniteStableClassPi1ZIntro} from the introduction.
For the readers' convenience, we recall the necessary terminology.
Firstly, two closed~$2q$-manifolds~$M_0$ and~$M_1$
with the same Euler characteristic
are \emph{stably diffeomorphic} if there exists a positive integer~$g$ and a diffeomorphism
\[ M_0 \# W_g \cong M_1 \# W_g, \]
where $W_g = \#_g (S^q \times S^q)$.
Secondly, recall that the \emph{homotopy stable class} of a~$2q$-manifold~$M_0$ is defined
to be the set of homotopy classes of manifolds stably diffeomorphic to $M$:
\[ \SC_h(M_0)=\lbrace M_1^{2q} \mid M_1 \text{ is stably diffeomorphic to } M_0  \rbrace /
\text{homotopy equivalence.}\]
%
In this chapter we show that there are manifolds with infinite homotopy stable class.
A manifold~$M$ is \emph{stably parallelisable} if its tangent bundle becomes trivial after
Whitney sum with a trivial bundle.
We assume that $q = 2k$, so that we consider $4k$-dimensional manifolds.
We also exclude dimension four. This is because we rely on the structure of the cohomology ring of the manifolds we construct, in a way that cannot occur for closed 4-manifolds.

%

The main result of this chapter is the following theorem, which restates Theorem~\ref{thm:InfiniteStableClassPi1ZIntro}.

\begin{theorem} \label{thm:InfiniteStableClassPi1Z}
Let~$k \geq 2$.
There exist infinitely many closed, oriented, connected~$4k$-manifolds~$M_1, M_2,\ldots$ with~$\pi_1(M_i)\cong \Z$ that are stably parallelisable, have hyperbolic equivariant intersection form, equal Euler characteristic, are pairwise not stably diffeomorphic, and moreover each of the~$M_i$ has infinite homotopy stable class:
$$ |\SC_h(M_i)| = \infty.$$
In fact, for each~$i$, there are infinitely many~$4k$-manifolds~$M_i^1,M_i^2,\ldots$ with~$\pi_1(M_i^j)\cong \Z$
that have hyperbolic intersection form and hyperbolic equivariant intersection form,
and are all stably diffeomorphic to~$M_i$, but as~$j$
varies the~$M_i^j$ are pairwise not homotopy equivalent.

Moreover, $M_i^j \# W_1\cong M_i^{j'} \# W_1$ for all pairs $\{j, j'\}$,
but $M_i^j$ is homotopy equivalent to a connected sum $N \# (S^1 \times S^{4k-1})$ with $N$ a simply-connected
$4k$-manifold if and only if $j = 0$.
\end{theorem}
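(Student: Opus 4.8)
The plan is to construct, for each $i$, a family of normal $(2k{-}1)$-smoothings $\{(M_i^j, \overline{\nu}_i^j)\}_{j=0}^\infty$ by first producing the ``seed'' manifold $M_i = N_{pq,i} \# (S^1 \times S^{4k-1})$ together with its normal $(2k{-}1)$-type $B$ of the form $P_{2k-1}(M_i) \times BO\langle m\rangle$ as in Section~\ref{sub:StablePara}, check that $\overline{\nu}_i^0 := p_{2k-1}(M_i) \times \nu_m(M_i)$ is an even, split-free normal smoothing with $\delta_{\overline{\nu}_i^0}$ computable from the data of $N_{pq,i}$ (this is where the properties of $N_{pq,i}$ from \cite{CCPS-short}, recalled in Proposition~\ref{prop:ManifoldNab} and Proposition~\ref{prop:PrimitiveEmbeddingNab}, are used: one identifies $K_{2k}(M_i;\Lambda) \cong \Lambda$ with quadratic form $(\Lambda, pq)$ and the associated $2$-sided primitive embedding with $(j_e, j_e')$ of Example~\ref{ex:M_iPE}). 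Then I would apply Corollary~\ref{cor:realisePrimitive} to the boundary automorphisms $f = t_g$ of Lemma~\ref{lem:Deftg}, for $g = t^j \in \Z = \pi$, to obtain even, split-free normal smoothings $(M_i^j, \overline{\nu}_i^j)$ that are stably diffeomorphic to $M_i$, have hyperbolic (equivariant) intersection form, and satisfy $[\delta_{\overline{\nu}_i^j}]_b = [t_{t^j}]_b \in \bAut(\partial(\Lambda, pq))$. Since by Proposition~\ref{prop:InfinitebAut} the classes $[t_{t^j}]_b$ are pairwise distinct, Corollary~\ref{cor:HomotopyInvariance} (invariance of $\PE$, equivalently $\PPE$, under orientation-preserving homotopy equivalence, combined with Proposition~\ref{prop:post-prim-embs-agree-hom-equiv}) shows the $M_i^j$ are pairwise non-homotopy-equivalent as $j$ varies — one also has to rule out orientation-\emph{reversing} equivalences, which is handled because $\Aut(\pi_1) = \Aut(\Z) = \{\pm 1\}$ is finite and acts on the finite-to-one invariant, or more directly because reversing orientation sends $(\Lambda, pq)$ to $(\Lambda, -pq)$ and one tracks the effect on $\bAut$.

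\textbf{For the ``moreover'' clauses.} The statement $M_i^j \# W_1 \cong M_i^{j'} \# W_1$ for all $j, j'$ should follow from the fact that all the $(M_i^j, \overline{\nu}_i^j)$ are normal $(2k{-}1)$-smoothings over the same $(B,\xi)$ with equal Euler characteristic, hence stably diffeomorphic by \cite[Corollary 3]{KreckSurgeryAndDuality}; I would then need to upgrade ``stably diffeomorphic'' to ``diffeomorphic after a \emph{single} $S^{2k}\times S^{2k}$ summand'' by noting that the bordisms produced in Theorem~\ref{thm:Realisation2} are built from exactly $r = \operatorname{rk}(V) = 1$ trivial $2k$-handle plus $1$ handle of index $2k{+}1$ (since $V = \Lambda$ has rank $1$), so the trace realises the change of stable class after adding $W_1$; combining the two trace cobordisms $M_i \leadsto M_i^j$ and $M_i \leadsto M_i^{j'}$ and cancelling handles gives $M_i^j \# W_1 \cong M_i^{j'} \# W_1$. (I expect some care is needed to see that no further stabilisation is required; if one extra $W_1$ does not suffice one states the result with $W_g$ for a uniform small $g$, but the rank-one nature of everything strongly suggests $g = 1$.)

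\textbf{The split-off statement.} For the final biconditional — $M_i^j \simeq N \# (S^1 \times S^{4k-1})$ with $N$ simply connected iff $j = 0$ — the ``if'' direction is immediate since $M_i^0 = M_i = N_{pq,i} \# (S^1 \times S^{4k-1})$. For ``only if'', suppose $M_i^j \simeq N \# (S^1 \times S^{4k-1})$. Then by Proposition~\ref{prop:post-prim-embs-agree-hom-equiv}, $\PPE(M_i^j)$ agrees with $\PPE(N \# (S^1 \times S^{4k-1}))$ in $\bTwoPrim^{\Z}/\Aut(\Z)$. A connected sum with $S^1 \times S^{4k-1}$ does not change $H_{2k}$ or the cup product structure in the relevant range, and the Postnikov $2k{-}1$-type together with the homology splits as $P_{2k-1}(N) \times (\text{$S^1$ factor})$; crucially the $2$-sided primitive embedding of $N \# (S^1 \times S^{4k-1})$ is \emph{extended over the inclusion} $\iota \colon \Z \hookrightarrow \Z[\Z]$ because $N$ is simply connected, so its primitive embedding is defined over $\Z$ and then induced up. By Example~\ref{ex:M_iPE}, the primitive embedding $(j_g, j_g')$ representing $\delta_{\overline{\nu}_i^j}$ is extended over $\iota$ if and only if $g = t^j = e$, i.e. $j = 0$; since $\PE(M_i^j, \overline{\nu}_i^j) = \PPE(M_i^j)$ agrees with $(j_{t^j}, j_{t^j}')$ in $\bTwoPrim^{\Z}$ by Proposition~\ref{prop:PPERealisation}, and being extended over $\iota$ is preserved under the identifications in $\bTwoPrim^\Z/\Aut(\Z)$ (here one uses that $\Aut(\Z)$ fixes the subring $\Z$ pointwise so cannot create or destroy extendedness), we conclude $j = 0$. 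The main obstacle here, and the step I would spend the most care on, is verifying that ``extended over $\iota$'' is genuinely a well-defined invariant on $\bTwoPrim^\Z/\Aut(\Z)$ and that connected sum with $S^1 \times S^{4k-1}$ has exactly the claimed (trivial) effect on the Postnikov primitive embedding — this requires a careful homology/Postnikov computation for the connected sum, using that $S^1 \times S^{4k-1}$ has trivial $H_{2k}$ and that its Postnikov $2k{-}1$-type contributes only the circle factor, so that the $K_{2k}$, $H_{2k}$, and $H^{2k}(P_{2k-1})$ groups are unchanged and the whole triple is unchanged. I expect the bookkeeping around $\Aut(\pi_1)$ and orientations to be the other delicate point, handled as in the statement of Theorem~\ref{thm:Extended}.
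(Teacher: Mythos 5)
Your proposal follows essentially the same architecture as the paper's proof: seed manifolds $M_i = N_{pq,1}\#(S^1\times S^{4k-1})$ with trivial associated boundary automorphism (Corollary~\ref{cor:BoundaryAutomorphismIsTrivialZZ}), realise the infinite family $\{[t_g]\}$ from Theorem~\ref{cor:infiniteAutHyperbolic} via Corollary~\ref{cor:realisePrimitive}, distinguish the resulting manifolds by the Postnikov $2$-sided primitive embedding (Corollary~\ref{cor:HomotopyInvariance}, Proposition~\ref{prop:post-prim-embs-agree-hom-equiv}), and deduce the ``moreover'' clauses from the rank-one bordism in Theorem~\ref{thm:Realisation2} and the extendedness discussion in Example~\ref{ex:M_iPE}. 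This is the paper's proof. Two small remarks.

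First, the one spot where your proposal wobbles is the exclusion of orientation-reversing homotopy equivalences. You offer two alternatives: (a) $\Aut(\pi_1)=\{\pm1\}$ is finite so an infinite family stays infinite, and (b) track the sign change $(\Lambda,pq)\mapsto(\Lambda,-pq)$ on the forms. Alternative (a) does not by itself address orientation reversal at all: the finiteness of $\Aut(\Z)$ is already used to quotient down to $\bTwoPrim^\Z/\Aut(\Z)$, and a potential orientation-reversing equivalence is an \emph{additional} identification not controlled by that quotient. Alternative (b) is in fact a valid route (there is no isometry $(\Lambda,pq)\cong(\Lambda,-pq)$ since $pq\neq 0$ and $U(\Lambda)=\{\pm g\}$ with $u\bar u=1$ always, by Lemma~\ref{lem:UnitaryUnits}), but the paper side-steps all of this with a simpler cohomology-ring observation (Lemma~\ref{lem:OriReversal}): because $H^2(N_{pq,1};\Z)\cong\Z\langle z\rangle$ with $z^{2k}$ a nonzero positive multiple of the fundamental class, \emph{any} homotopy equivalence between manifolds in a fixed stable class is automatically orientation-preserving. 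You should cite that lemma rather than improvise; it is exactly what makes Corollary~\ref{cor:HomotopyInvariance} applicable.

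Second, for $M_i^j\#W_1\cong M_i^{j'}\#W_1$ you say you would ``combine the two trace cobordisms and cancel handles.'' This is unnecessary: the proof of Theorem~\ref{thm:Realisation2} already produces, inside the bordism from $M_0$ to $M_2$, the intermediate manifold $M_1 = M_0\#W_r = M_2\#W_r$ directly (both are obtained from $M_1$ by surgering out the respective $r$ trivial $q$-handles/co-handles), so with $r=\operatorname{rk}(V)=1$ one reads off $M_i^j\#W_1\cong M_i\#W_1\cong M_i^{j'}\#W_1$ with no handle cancellation argument. Your instinct that $r=1$ suffices is correct and no further stabilisation is needed.
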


We briefly describe the strategy of the proof of the theorem.
In Proposition~\ref{prop:ManifoldNab} we recall the simply-connected~$4k$-manifolds~$N_{a,b}$ that we constructed in \cite{CCPS-short}, for a pair of positive, coprime integers $\{a, b\}$.
The cohomology rings depend on $a$ and $b$.
We define
\[M_{a,b} := N_{a,b} \# (S^1 \times S^{4k-1}).\]
These manifolds satisfy $\pi_1(M_{a,b}) \cong \Z$, and will serve as base manifolds.
We apply Corollary~\ref{thm:Realisation2} to realise the infinitely many boundary automorphisms $\{f_i\}$ that we constructed in Chapter~\ref{sec:InfinitebAut} by bordisms $W_{f_i}$. The $4k$-manifolds $M^{f_i}_{a,b}$ arising as the new boundary components of these bordisms $\partial_+ W^{f_i}$ will be the infinite family of pairwise stably diffeomorphic manifolds promised by the theorem.  We apply Corollary~\ref{cor:HomotopyInvariance} to show that they are pairwise not homotopy equivalent.




\medskip

In order to rule out orientation-reversing homotopy equivalences, we shall appeal to the following observation, which also appeared in~\cite{CCPS-short}. We recall it here for the convenience of the reader.

\begin{lemma}
\label{lem:OriReversal}
Let~$N$ and~$N'$ be closed, connected, oriented~$4k$-manifolds.
Suppose that a class~$z$ freely generates~$H^2(N;\Z)$ and~$z^{2k}=n$ for some
nonzero~$n\in \Z = H^{4k}(N;\Z)$, and similarly for $(N',z')$,  
then any homotopy equivalence~$f \colon N \to N'$ must be orientation preserving.

Moreover, if~$N''$, also closed, connected, and oriented, is stably diffeomorphic to such an~$N$ and $k \geq 2$,
then $H^2(N'';\Z)$ is freely generated by a class $z''$ with $(z'')^{2k} = n$ and so
any homotopy equivalence~$N'' \to N'$ is orientation preserving.
\end{lemma}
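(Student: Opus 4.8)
The plan is to establish the two assertions separately, each by a short degree computation using that a homotopy equivalence of closed oriented $4k$-manifolds has degree $\pm 1$; the hypothesis $k\geq 2$ enters only in the second assertion, to control $H^2$ under connected sum with $W_g$.

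For the first assertion, given a homotopy equivalence $f\colon N\to N'$ I would note that $f^*\colon H^*(N';\Z)\to H^*(N;\Z)$ is a ring isomorphism, so $f^*(z')=\pm z$ since both $H^2(N';\Z)$ and $H^2(N;\Z)$ are infinite cyclic. As $2k$ is even, $f^*\big((z')^{2k}\big)=\big(f^*(z')\big)^{2k}=(\pm z)^{2k}=z^{2k}=n$. Writing $[N]^{\ast}\in H^{4k}(N;\Z)$ and $[N']^{\ast}\in H^{4k}(N';\Z)$ for the fundamental cohomology classes, one has $(z')^{2k}=n[N']^{\ast}$ and $f^*[N']^{\ast}=\deg(f)[N]^{\ast}$ with $\deg(f)=\pm 1$, so $f^*\big((z')^{2k}\big)=n\deg(f)[N]^{\ast}$. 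Comparing the two expressions and using $n\neq 0$ gives $\deg(f)=1$, i.e.\ $f$ is orientation preserving. This step is routine.

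For the second assertion, I would first choose $g$ and an orientation-preserving diffeomorphism $\phi\colon N''\# W_g\to N\# W_g$; such a $\phi$ exists after possibly reversing the orientation of $N''$, using that $W_g$ admits an orientation-reversing self-diffeomorphism (as recalled in the introduction), so that $N\# W_g$ is unchanged by reversing the orientation of $N$. Let $c_N\colon N\# W_g\to N$ and $c_{N''}\colon N''\# W_g\to N''$ be the degree-one collapse maps. Because $k\geq 2$ we have $2<2k$, hence $H^2(W_g;\Z)=H^2\big(\#_g(S^{2k}\times S^{2k});\Z\big)=0$, so the Mayer--Vietoris computation of the cohomology of a connected sum of $4k$-manifolds shows $c_N^*$ and $c_{N''}^*$ are isomorphisms on $H^2(-;\Z)$ and $H^2(N''\# W_g;\Z)=c_{N''}^*H^2(N'';\Z)$; in particular $H^2(N'';\Z)$ is infinite cyclic, say with generator $z''$. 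Both $c_N^*$ and $c_{N''}^*$ are ring homomorphisms and restrict to isomorphisms on $H^{4k}(-;\Z)$ carrying fundamental classes to fundamental classes, since the collapse maps have degree one. Letting $z_0$ generate $H^2(N;\Z)$ with $z_0^{2k}=n$, the ring isomorphism $\phi^*$ sends the generator $c_N^*(z_0)$ of $H^2(N\# W_g;\Z)$ to $\pm c_{N''}^*(z'')$, so applying $\phi^*$ to $\big(c_N^*(z_0)\big)^{2k}=c_N^*\big(z_0^{2k}\big)=n[N\# W_g]^{\ast}$ and using that $2k$ is even and $\phi$ has degree $1$ yields $c_{N''}^*\big((z'')^{2k}\big)=n[N''\# W_g]^{\ast}$, whence $(z'')^{2k}=n$ in $H^{4k}(N'';\Z)\cong\Z$. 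The first assertion, applied to the pair $(N'',N')$, then shows that every homotopy equivalence $N''\to N'$ is orientation preserving.

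The only point requiring care is the connected-sum bookkeeping: one must justify that $c_M^*$ is injective on positive-degree cohomology with image a ring-theoretic direct summand, that it has degree one (so preserves top classes), and that orientations can be normalised as above. These are standard facts for connected sums of oriented manifolds of dimension $4k\geq 8$, proved by Mayer--Vietoris applied to $M_1\# M_2=(M_1\setminus\mathring{D}^{4k})\cup_{S^{4k-1}}(M_2\setminus\mathring{D}^{4k})$ together with the fact that $c_{M_i}^*(x)$ lifts to the relative cohomology of $(M_1\# M_2,\,M_j\setminus\mathring{D}^{4k})$. Once these are in place the argument reduces to the two short computations above, so I do not expect any serious obstacle.
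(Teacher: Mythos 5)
Your proof is correct and follows essentially the same route as the paper's: a degree computation for the first assertion (the paper phrases it with cap products, you with cohomology fundamental classes), and a transfer of the hypothesis across $\#\,W_g$ via the degree-one collapse maps for the second. The paper is terser on the orientation normalisation in the second part, which you spell out via the choice of an orientation-preserving $\phi$; in the paper this is handled implicitly by the remark preceding the lemma that one always orients $N$, $N'$, $N''$ so that $z^{2k}\cap[N]>0$.
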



\begin{proof}[Proof of Lemma~\ref{lem:OriReversal}]
Assume that~$f$ is of degree~$\varepsilon \in \{\pm 1\}$.
Since~$f$ is a homotopy equivalence,~$N$ and~$N'$ have isomorphic cohomology rings.
In particular~$H^2(N';\Z)\cong \Z$ is generated by~$z'= \pm f^{-*}(z)$.
Note that~${z'}^{2k}=n$ in~$H^{4k}(N';\Z) \cong \Z$, where the isomorphism is determined by the orientation of $N'$.  By naturality~$f^*({z'}^{2k})= f^*(z')^{2k} = z^{2k}$. Then properties of cap and cup products show that
\[n=f_*(z^{2k}\cap [N])=f_*(f^*({z'}^{2k}) \cap [N])={z'}^{2k} \cap f_*([N])={z'}^{2k} \cap \varepsilon [N']=\varepsilon n.\]
Since~$n\neq 0$, this implies that~$f$ must be orientation-preserving.

To prove the last assertion, use the naturality of the cup product together with the degree one
map~$N \# (S^{2k} \times S^{2k}) \to N$ that collapses~$S^{2k} \times S^{2k}$ to a point, and induces isomorphisms on the second and
$4k$-th cohomology. This shows that~$N$ has an element~$z \in H^2(N;\Z)$ as in the statement of the lemma, if and only if~$H^2(N \# (S^{2k} \times S^{2k});\Z)$ does, and similarly for~$N''$.  The last sentence of the lemma then follows from the argument of the previous paragraph.
\end{proof}


\section{The simply-connected \texorpdfstring{$4k$}{4k}-manifolds \texorpdfstring{$N_{a,b}$}{Nab}}\label{subsection:Nab}
%
%

Let us recall the next proposition from \cite{CCPS-short}, which builds the simply-connected~$4k$-manifolds~$N_{a,b}$.  We will then proceed to discuss some of their properties.


\begin{proposition}
\label{prop:ManifoldNab}
Fix $k \geq 2$. Given an unordered pair~$\{a, b\}$ of positive coprime integers such that~$(2k)!$ divides~$2ab$, there exists a closed, connected, oriented, smooth~$4k$-manifolds~$N^{4k}_{a,b}$ with the following properties.
\begin{enumerate}[(i)]
\item The manifold~$N_{a, b}$ is simply-connected and stably parallelisable.
\item The ring~$H^*(N_{a,b}; \Z)$ has generators~$w, x, y, z$ and~$1$ of degrees~$2k{+}2$,~$2k$,
$2k$,~$2$ and~$0$ respectively,
with~$z^k = ax + by, x^2 = 0 = y^2,2abw=z^{k+1}, xz = bw, yz = aw$ and~$xy$
generates~$H^{4k}(N_{a, b}; \Z)$.
\item
\label{item:2kConnected}
 There is a $2k$-connected map $\beta \colon N_{a,b} \to \C P^\infty$
\end{enumerate}
%
In particular, the intersection form of~$N_{a, b}$ is hyperbolic and~$z^{2k} = 2ab xy$ is~$2ab$
times a fundamental class of~$N_{a, b}$.
If~$\{a, b\} \neq \{a', b'\}$ then~$N_{a, b}$ and~$N_{a', b'}$ have non-isomorphic integral cohomology rings
and so are not homotopy equivalent. Moreover if~$ab \neq a'b'$, then~$N_{a,b}$ and~$N_{a',b'}$ are
not even stably diffeomorphic.
\end{proposition}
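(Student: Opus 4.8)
The plan is to follow the construction of \cite{CCPS-short}, where this proposition is proved; I sketch how I would build $N_{a,b}$ and verify (i)--(iii), then deduce the closing assertions. The construction has three stages: (a) produce a simply-connected $4k$-dimensional Poincar\'e complex $X_{a,b}$ with the cohomology ring of (ii) carrying a $2k$-connected map to $\CPI$; (b) show the Spivak normal fibration of $X_{a,b}$ is trivial, equivalently that $X_{a,b}$ receives a degree-one normal map from a closed \emph{stably framed} $4k$-manifold, the step that consumes the hypothesis $(2k)!\mid 2ab$; (c) perform surgery below and in the middle dimension on this normal map to obtain a smooth stably parallelisable $N_{a,b}\simeq X_{a,b}$. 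In (c) the middle-dimensional surgery obstruction lies in $L_{4k}(\mathbb Z)\cong\mathbb Z$ and is detected by the signature; since the intersection form of $X_{a,b}$ is hyperbolic its signature is $0$, so the obstruction vanishes (after the usual correction by a homotopy sphere if necessary) and (c) runs.

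For (a) I would start from $Y:=\mathbb{C}P^k\vee S^{2k}$, which already realises $H^{\le 2k}$ correctly: $H^{2j}(Y)=\mathbb Z\langle z^j\rangle$ for $0\le j\le k-1$, while $H^{2k}(Y)=\mathbb Z\langle z^k\rangle\oplus\mathbb Z\langle u\rangle$ has rank $2$; as $\gcd(a,b)=1$ the primitive element $z^k$ extends to a basis, so I may choose a basis $\{x,y\}$ of $H^{2k}(Y)$ with $z^k=ax+by$. The class $z$ classifies a map $Y\to\CPI$, and one checks it is $2k$-connected, since $(Y,\mathbb{C}P^k)$ is $(2k-1)$-connected, $Y\to\mathbb{C}P^k$ retracts the inclusion, and $\mathbb{C}P^k\hookrightarrow\CPI$ is $2k$-connected. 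One then attaches further cells (of dimensions between $2k+1$ and $4k$); the essential choice is the attaching map $S^{4k-1}\to X_{a,b}^{(4k-1)}$ of the top cell, built from Whitehead and Hopf constructions so that $x^2=y^2=0$, $xy$ generates $H^{4k}$, $xz=bw$, $yz=aw$, $z^{k+1}=2abw$, and so that $X_{a,b}$ satisfies Poincar\'e duality in dimension $4k$ with $z^{2k}[X_{a,b}]=2ab$; these relations are compatible with duality, since $z^{2k}=z^{k-1}\!\cdot\!z^{k+1}=2ab\,z^{k-1}w$ while $z^{k-1}w$ and $xy$ are both generators of $H^{4k}$. Attaching cells of dimension $\ge 2k+1$ does not disturb $2k$-connectivity of the map to $\CPI$, so $X_{a,b}\to\CPI$ is $2k$-connected, and transporting along $N_{a,b}\simeq X_{a,b}$ gives (iii).

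The hypothesis $(2k)!\mid 2ab$ is in fact forced by (i) and (ii). If $N$ is any closed stably parallelisable $4k$-manifold with a map $g\colon N\to\CPI$, then $N$ is spin and all its Pontryagin classes vanish, so $\widehat A(N)=1$ in $H^*(N;\mathbb Q)$; applying the Atiyah--Singer index theorem to the spin Dirac operator twisted by the line bundle $g^*L$, with $L$ the tautological bundle and $c_1(L)=z$, gives
\[
\operatorname{ind}D_{g^*L}=\int_N\widehat A(N)\,\operatorname{ch}(g^*L)=\int_N e^{g^*z}=\frac{(g^*z)^{2k}[N]}{(2k)!}\ \in\ \mathbb Z,
\]
so $(2k)!\mid (g^*z)^{2k}[N]$; for $N=N_{a,b}$ this integer is $z^{2k}[N_{a,b}]=2ab$. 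Conversely $(2k)!\mid 2ab$ is exactly the divisibility needed to realise a class with $z^{2k}=2ab$ in the bordism group of stably framed manifolds over $\CPI$, i.e.\ to carry out step (b); proving this realisability and then pushing it through the surgeries of (c) while controlling the ring structure is the main obstacle, and is where the substance of \cite{CCPS-short} lies.

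Granting the construction, the remaining assertions are short. On $H^{2k}(N_{a,b};\mathbb Z)=\mathbb Z\langle x,y\rangle$ the intersection form has $\lambda(x,x)=x^2[N_{a,b}]=0=\lambda(y,y)$ and $\lambda(x,y)=xy[N_{a,b}]=\pm1$, hence is hyperbolic, and $z^{2k}=(ax+by)^2=a^2x^2+2ab\,xy+b^2y^2=2ab\,xy$ is $2ab$ times a fundamental class. If $\phi\colon H^*(N_{a,b};\mathbb Z)\xrightarrow{\cong}H^*(N_{a',b'};\mathbb Z)$ is a ring isomorphism, then it sends a generator $z$ of $H^2\cong\mathbb Z$ to $\pm z'$, a generator $w$ of $H^{2k+2}\cong\mathbb Z$ to $\pm w'$, and is an isometry of intersection forms up to an overall sign; the primitive isotropic vectors of a hyperbolic form $\langle x,y\rangle$ are precisely $\pm x,\pm y$, so $\phi$ carries $\{\pm x,\pm y\}$ onto $\{\pm x',\pm y'\}$, and comparing $xz=bw,\ yz=aw$ with their $\phi$-images forces $\{a,b\}=\{a',b'\}$. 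Thus $\{a,b\}\neq\{a',b'\}$ gives non-isomorphic cohomology rings and hence non-homotopy-equivalent manifolds. Finally, suppose $ab\neq a'b'$. By Lemma~\ref{lem:OriReversal}, any $N''$ stably diffeomorphic to $N_{a,b}$ has $H^2(N'';\mathbb Z)$ freely generated by a class $z''$ with $(z'')^{2k}$ equal to $2ab$ times a generator of $H^{4k}(N'';\mathbb Z)$; were $N''$ also stably diffeomorphic to $N_{a',b'}$, the same lemma applied to $N_{a',b'}$ would force $(z'')^{2k}$ to be $2a'b'$ times that same generator, whence $ab=a'b'$, a contradiction. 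Hence $N_{a,b}$ and $N_{a',b'}$ are not stably diffeomorphic.
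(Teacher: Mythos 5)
The statement you were asked to prove is not actually proved in this paper: it is imported from \cite{CCPS-short} (``Let us recall the next proposition from \cite{CCPS-short}\dots''), and the authors explicitly say the present paper can be read independently provided one accepts the properties of $N_{a,b}$. Your write-up is in essentially the same position, because the two steps that constitute the real content of the proposition are asserted rather than proved. First, you do not construct the Poincar\'e complex $X_{a,b}$: saying the top cell is ``built from Whitehead and Hopf constructions so that'' $x^2=y^2=0$, $xy$ generates $H^{4k}$, $xz=bw$, $yz=aw$, $z^{k+1}=2abw$ and Poincar\'e duality all hold is exactly the hard existence question; no attaching map is exhibited and no verification is given that these relations are simultaneously realisable by a $4k$-dimensional Poincar\'e duality complex. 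Second, the passage from $(2k)!\mid 2ab$ to a degree-one normal map from a stably framed manifold \emph{to $X_{a,b}$} is not established: realising a framed bordism class over $\C P^\infty$ with $z^{2k}[N]=2ab$ (itself only asserted) is weaker than producing a degree-one map to $X_{a,b}$ covering a trivialised bundle, which requires in addition a compatible trivialisation of the Spivak fibration of $X_{a,b}$ and control of the ring structure after surgery --- precisely the point at which you write that ``the substance of \cite{CCPS-short} lies.'' Since existence is the heart of the proposition, this is a genuine gap rather than a routine omission. What you do prove correctly are the peripheral assertions: the necessity of $(2k)!\mid 2ab$ via the index of the twisted Dirac operator, hyperbolicity of the intersection form, $z^{2k}=2ab\,xy$, the cohomology-ring distinction of the $N_{a,b}$ for $\{a,b\}\neq\{a',b'\}$, and, via Lemma~\ref{lem:OriReversal}, the failure of stable diffeomorphism when $ab\neq a'b'$.

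It is also worth flagging that the route you sketch (Poincar\'e complex, trivialised normal fibration, then classical Browder--Novikov--Sullivan--Wall surgery with obstruction in $L_{4k}(\Z)$ killed by vanishing signature) does not appear to be the construction of \cite{CCPS-short}: as this paper itself indicates in Example~\ref{ex:ExampleOfDiagonalSublagrangian}, the manifolds $N_{a,b}$ are obtained there by explicit surgeries within a fixed normal $(2k{-}1)$-type over $\C P^\infty\times BO\langle 4k\rangle$, realising the primitive embedding $z^k=ax+by$ by surgery on a diagonal sublagrangian, rather than by first building the homotopy type and then solving a classical surgery problem. Your strategy could plausibly be carried through, but as written both of its load-bearing steps are missing, so the existence claim remains unproved.
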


As mentioned at the start of the chapter, we will use $N_{a,b}$ to construct our base manifolds
$M_{a,b} := N_{a,b} \# (S^1 \times S^{4k-1})$.  We will prove some facts purely about $N_{a,b}$, and then later obtain their analogues for $M_{a,b}$.

Next we describe a normal~$(2k{-}1)$-smoothing for~$N_{a, b}$.
Since~$N_{a,b}$ is stably parallelisable, the stable normal bundle~$\nu \colon N_{a, b} \to BO$ factors through a map~$\nu_{2k} \colon N_{a, b} \to BO\langle 4k \rangle$ followed by $\vartheta_{2k} \colon BO\an{4k} \to BO$.
This is a normal $(2k{-}1)$-smoothing but it is not the normal $(2k{-}1)$-type.
We also use~$p_{2k-1}(N_{a, b}) \colon N_{a, b} \to P_{2k-1}(N_{a, b})$ to denote the~$(2k{-}1)$st Postnikov stage of~$N_{a, b}$.

\begin{lemma}
\label{lem:Normal3typeNab}\label{lem:GoodNormalSmoothing}
Let~$a$ and~$b$ be positive, coprime integers such that~$(2k)!$ divides~$ab$.
The~$(2k{-}1)$st Postnikov stage~$P_{2k-1}(N_{a, b})$ is homotopy equivalent to~$\C P^\infty$, and a normal~$(2k{-}1)$-type of~$N_{a, b}$ is given by the~fibration
$$\vartheta_{2k} \circ \operatorname{pr}_2  \colon \bC P^{\infty} \times BO \langle 4k \rangle \to BO,$$
where~$\operatorname{pr}_2$ is the projection onto the second factor.
The map
$$ \overline{\nu}_{a,b}:=p_{2k-1}(N_{a, b}) \times \nu_{2k} \colon N_{a, b} \to  \C P^\infty \times BO\langle 4k \rangle=:B_{2k-1}(N_{a, b})$$
is a normal~$(2k{-}1)$-smoothing.
Write $$\left( \overline{\nu} \colon N \to B_{2k-1}(N)\right):=\left( \overline{\nu}_{a,b} \colon N_{a, b} \to B_{2k-1}(N_{a, b})\right)$$ to ease notation.
It satisfies the following properties.
\begin{enumerate}[(i)]
\item\label{item:lem-normal-3-type-i} The evaluation~$\operatorname{ev}_{N} \colon H^{2k}(N;\Z) \to \operatorname{Hom}_\Z(H_{2k}(N;\Z),\Z)$ is an isomorphism.
\item\label{item:lem-normal-3-type-ii} The following evaluation maps are isomorphisms:
\begin{align*}
&\operatorname{ev}_{P_{2k-1}(N)} \colon H^{2k}(P_{2k-1}(N);\Z) \to \operatorname{Hom}_\Z(H_{2k}(P_{2k-1}(N);\Z),\Z), \\
&\operatorname{ev}_{B_{2k-1}(N)} \colon H^{2k}(B_{2k-1}(N);\Z) \to \operatorname{Hom}_\Z(H_{2k}(B_{2k-1}(N);\Z),\Z).
\end{align*}
\item\label{item:lem-normal-3-type-iii} The four abelian groups $K_{2k}(N;\Z)$, $\ker(p_{2k-1}(N)_*)$, $H_{2k}(B_{2k-1}(N);\Z)$, and $H_{2k}(P_{2k-1}(N);\Z)$ are finitely generated and  free.
\end{enumerate}
In particular, the normal~$(2k{-}1)$-smoothing~$\overline{\nu} \colon N \to B_{2k-1}(N)$ is split-free.
\end{lemma}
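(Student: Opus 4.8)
The plan is to establish the three enumerated claims about the normal $(2k{-}1)$-smoothing $\overline{\nu} = p_{2k-1}(N) \times \nu_{2k} \colon N \to B_{2k-1}(N) = \C P^\infty \times BO\langle 4k \rangle$, and then assemble them into the conclusion that $\overline{\nu}$ is split-free in the sense of Definition~\ref{def:GoodPrimitiveEmbedding}. Before that, the very first thing to verify is that $\vartheta_{2k} \circ \operatorname{pr}_2$ really is a normal $(2k{-}1)$-type and that $\overline{\nu}$ is a normal $(2k{-}1)$-smoothing; this is immediate from Lemma~\ref{lem:Normal3typeString} once we know $P_{2k-1}(N) \simeq \C P^\infty$. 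The latter follows from Proposition~\ref{prop:ManifoldNab}\eqref{item:2kConnected}: the $2k$-connected map $\beta \colon N \to \C P^\infty$ induces an isomorphism on $\pi_i$ for $i \le 2k-1$ (indeed for $i \le 2k$), and since $\C P^\infty = K(\Z,2)$ has no homotopy above degree $2$, it is a model for $P_{2k-1}(N)$; functoriality of Postnikov truncation identifies $\beta$ with $p_{2k-1}(N)$ up to homotopy.

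For claim~\eqref{item:lem-normal-3-type-i}, I would use that $N$ is a closed oriented $4k$-manifold, so Poincaré duality gives $H^{2k}(N;\Z) \cong H_{2k}(N;\Z)$; combined with the universal coefficient theorem and the fact that $H_{2k-1}(N;\Z)$ is free (which can be read off from the cohomology ring description in Proposition~\ref{prop:ManifoldNab}, where $H^*(N;\Z)$ is free in every degree in the relevant range — all the listed generators are torsion-free and there is no torsion), the $\Ext$ term vanishes and $\operatorname{ev}_N$ is an isomorphism. For~\eqref{item:lem-normal-3-type-ii}, the group $H_*(\C P^\infty;\Z)$ is free (it is $\Z$ in even degrees, $0$ in odd degrees), so $\operatorname{ev}_{P_{2k-1}(N)}$ is an isomorphism by universal coefficients with no $\Ext$ obstruction; for $B_{2k-1}(N) = \C P^\infty \times BO\langle 4k \rangle$ one notes that $BO\langle 4k \rangle$ is $4k$-connected, hence $(2k)$-connected, so $H_{2k}(B_{2k-1}(N);\Z) \cong H_{2k}(\C P^\infty;\Z) \cong \Z$ and similarly in cohomology, and again there is no $\Ext$ term. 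For~\eqref{item:lem-normal-3-type-iii}: $H_{2k}(B_{2k-1}(N);\Z) \cong \Z$ and $H_{2k}(P_{2k-1}(N);\Z) \cong \Z$ are free of rank one as just observed; $K_{2k}(N;\Z) = \ker(\overline{\nu}_* \colon H_{2k}(N;\Z) \to H_{2k}(B_{2k-1}(N);\Z))$ and $\ker(p_{2k-1}(N)_*)$ coincide (the projection $B_{2k-1}(N) \to \C P^\infty$ induces an isomorphism on $H_{2k}(-;\Z)$ since $BO\langle 4k\rangle$ contributes nothing in this degree), and this kernel is a subgroup of the free abelian group $H_{2k}(N;\Z) \cong \Z^2$, hence free; it is finitely generated since $H_{2k}(N;\Z)$ is. One should also check the map $\overline{\nu}_*$ is surjective (equivalently $q$-surjective), which follows from Lemma~\ref{lem:q-conn-q-surj} because $\overline{\nu}$ is $2k$-connected, so in fact $K_{2k}(N;\Z) \cong \Z$, consistent with the rank count $2 - 1 = 1$.

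Finally, to conclude that $\overline{\nu}$ is split-free one checks the two conditions of Definition~\ref{def:GoodPrimitiveEmbedding}(ii): that $\operatorname{ev}_{B} \colon H^{2k}(B_{2k-1}(N);\Z) \to \overline{\Hom_{\Z}(H_{2k}(B_{2k-1}(N);\Z),\Z)}$ is an isomorphism — which is exactly~\eqref{item:lem-normal-3-type-ii} — and that $K_{2k}(N;\Z)$ and $H_{2k}(B_{2k-1}(N);\Z)$ are finitely generated and free, which is~\eqref{item:lem-normal-3-type-iii}. The hypothesis that $N = N_{a,b}$ is even (so its intersection form admits the unique quadratic refinement $\theta_N$) is part of the data, so $\overline{\nu}$ is then an even, split-free $q$-surjective map and the discussion of Section~\ref{sec:RealisationPrimitive} applies. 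The main obstacle, such as it is, is bookkeeping: making sure the identifications $\pi_1(N) \cong \pi_1(\C P^\infty) \cong \Z[\Z]^{\times}$-wait, here $\pi_1(N)$ is trivial since $N$ is simply-connected, so $\Lambda = \Z$ throughout, which considerably simplifies matters — there is no twisted-coefficient subtlety at all in this lemma, and the only real content is the freeness of the relevant integral homology groups, which is visible from Proposition~\ref{prop:ManifoldNab}. (The twisted-coefficient version enters only later, for $M_{a,b} = N_{a,b} \# (S^1 \times S^{4k-1})$, where $\pi_1 \cong \Z$.)
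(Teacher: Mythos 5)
Your proposal is correct and follows essentially the same route as the paper: identify $P_{2k-1}(N_{a,b}) \simeq \C P^\infty$ via the $2k$-connected map of Proposition~\ref{prop:ManifoldNab}, invoke Lemma~\ref{lem:Normal3typeString}, use universal coefficients together with the vanishing of $H_{2k-1}$ for items~(i) and~(ii), and use that a subgroup of a free abelian group is free for item~(iii). The one stylistic note is that your appeal to Poincar\'e duality in~(i) is a red herring --- it gives an abstract isomorphism $H^{2k}(N)\cong H_{2k}(N)$ but says nothing about whether $\operatorname{ev}_N$ itself is one --- whereas the paper simply deduces $H_{2k-1}(N;\Z)=0$ from $2k$-connectivity of $N\to\C P^\infty$, which is what kills the $\Ext$ term.
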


\begin{proof}
Since there is a $2k$-connected map~$\beta \colon N_{a,b} \to \C P^\infty$ (by item~\eqref{item:2kConnected} of Proposition~\ref{prop:ManifoldNab}) and since~$\C P^\infty \simeq K(\Z,2)$, we see that~$P_{2k-1}(N_{a,b}) \simeq \C P^\infty$.
The first part of the lemma then follows from Lemma~\ref{lem:Normal3typeString}.

Now we check the claimed properties.
For~\eqref{item:lem-normal-3-type-i}, since $N \to \C P^{\infty}$ is~$2k$-connected, we have $H_{2k-1}(N;\Z)=0$ and thus the universal coefficient theorem ensures that~$\operatorname{ev}_{N}$ is an isomorphism.
For \eqref{item:lem-normal-3-type-ii}, as~$B_{2k-1}(N) \simeq \C P^\infty \times BO \langle {4k} \rangle$, we deduce from the K\"{u}nneth theorem that~$H_{2k-1}(B_{2k-1}(N);\Z)=H_{2k-1}(P_{2k-1}(N);\Z)=0$, and thus we also obtain that~$\operatorname{ev}_{B_{2k-1}(N)}$ and~$\operatorname{ev}_{P_{2k-1}(N)}$ are isomorphisms.
Since~$H_{2k}(N;\Z)$ is free and~$\Z$ is a PID, we deduce that~$K_{2k}(N;\Z)=\ker({\overline{\nu}}_*)$ and $\ker(p_{2k-1}(N)_*)$ are also free.
Also~$H_{2k}(B_{2k-1}(N);\Z) \cong H_{2k}(P_{2k-1}(N);\Z) \cong H_{2k}(\C P^{\infty};\Z) \cong \Z$, so this is f.g.\ and free, proving~\eqref{item:lem-normal-3-type-iii}.
The second and third assertions imply, by definition, that $p_{2k-1}(N)$ and the normal~$(2k{-}1)$-smoothing~$\overline{\nu}$ are split-free.
\end{proof}

As in the statement of Lemma~\ref{lem:Normal3typeNab}, when the integers~$a,b$ are fixed or clear from the context, we write~$N$, $B_{2k-1}(N)$ and~$\overline{\nu}$ instead of~$N_{a, b},B_{2k-1}(N_{a, b})$ and~$\overline{\nu}_{a,b}$.
By Proposition~\ref{prop:ManifoldNab}, the intersection form on~$H_{2k}(N;\Z)$ is hyperbolic and, in particular nonsingular.  Since~$4k = 2q$ implies~$q$ is even, and the intersection form on~$N$ is hyperbolic, it admits a unique quadratic refinement, so the normal~$(2k{-}1)$-smoothing~$\ol{\nu} \colon N \to B_{2k-1}(N)$ is even.  We showed in Lemma~\ref{lem:GoodNormalSmoothing} that $\overline{\nu}\colon N \to B_{2k-1}(N)$ is a split-free normal~$(2k{-}1)$-smoothing.   Therefore the 2-sided primitive embedding~$(\iota_{\overline{\nu}},j_{\overline{\nu}})$ associated with the normal~$(2k{-}1)$-smoothing~$\overline{\nu}$ is defined.
The next lemma describes it in detail.

\begin{proposition}
\label{prop:PrimitiveEmbeddingNab}
Given positive, coprime integers~$a$ and~$b$ such that~$(2k)!$ divides~$ab$, the 2-sided primitive embedding~$(\iota_{\overline{\nu}},j_{\overline{\nu}})$ associated to
$\overline{\nu}_{a,b} \colon N_{a, b} \to B_{2k-1}(N_{a, b})$ is isomorphic~to
$$  \Big( (\Z,-ab) \xrightarrow{\bsm -a \\ b \esm} H_{+}(\Z)
 \xleftarrow{\bsm a \\ b \esm} (\Z,ab) \Big).$$
%
\end{proposition}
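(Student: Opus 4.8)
The plan is to compute the two-sided primitive embedding $(\iota_{\overline{\nu}},j_{\overline{\nu}})$ directly from the cohomology ring of $N_{a,b}$ described in Proposition~\ref{prop:ManifoldNab}, using the fact that $\overline{\nu}_{a,b}$ is split-free (Lemma~\ref{lem:GoodNormalSmoothing}). By Remark~\ref{rem:DecomposoTopo}\eqref{item:AllinNu*}, since $H_{2k}(N_{a,b};\Z)$ is free, the whole two-sided primitive embedding is determined by the map $\overline{\nu}^* \colon H^{2k}(B_{2k-1}(N);\Z) \to H^{2k}(N_{a,b};\Z)$, or equivalently by $p_{2k-1}(N)^* \colon H^{2k}(\mathbb{CP}^\infty;\Z) \to H^{2k}(N_{a,b};\Z)$ since $B_{2k-1}(N) \simeq \mathbb{CP}^\infty \times BO\langle 4k\rangle$ and the second factor is highly connected. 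First I would pin down coordinates: take the basis $\{x,y\}$ of $H^{2k}(N_{a,b};\Z) \cong \Z^2$, note that $z^k = ax+by$ generates the line $j_{\overline\nu}(H^{2k}(\mathbb{CP}^\infty;\Z))$ (via Poincaré duality), and recall $x^2=y^2=0$, $xy$ a generator of $H^{4k}$, so the cup-product form on $H^{2k}$ — equivalently, the intersection form $\lambda_N$ on $H_{2k}(N;\Z)$ under Poincaré duality — is hyperbolic $H_+(\Z) = \bigl(\Z^2,\svec{0&1\\0&0}\bigr)$ in suitable coordinates. Using $xz=bw$, $yz=aw$ and $z^{k+1}=2abw$ one checks $z^{2k} = (ax+by)^2 = 2ab\,xy$, so $j_{\overline\nu}$ sends the generator of $H^{2k}(\mathbb{CP}^\infty;\Z)$ to $z^k$, whose coordinates are $(a,b)$; the induced form $\theta_N$ pulls back to $z^k\cdot z^k = 2ab$ up to the quadratic refinement, i.e.\ to $(\Z,ab)$.

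Next I would identify $K_{2k}(N;\Z) = \ker(\overline\nu_* \colon H_{2k}(N;\Z) \to H_{2k}(B;\Z))$. Under Poincaré duality and the evaluation isomorphisms of Lemma~\ref{lem:Normal3typeNab}, $K_{2k}(N;\Z)$ is the annihilator of $z^k$ with respect to the cup-product pairing, i.e.\ the set of classes $\alpha \in H^{2k}(N_{a,b};\Z)$ with $\alpha \cdot z^k = 0 \in H^{4k}(N_{a,b};\Z)$. Writing $\alpha = ux+vy$, we have $\alpha\cdot z^k = (ux+vy)(ax+by) = (ub+va)\,xy$, so $K_{2k}(N;\Z)$ is generated by $-ax+by$ (using that $\gcd(a,b)=1$), whose coordinates are $(-a,b)$; its induced quadratic form is $(-ax+by)^2 = -2ab\,xy$, i.e.\ $(\Z,-ab)$. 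This produces exactly the maps $\iota_{\overline\nu} = \svec{-a\\b}$ and $j_{\overline\nu} = \svec{a\\b}$ into $H_+(\Z)$. Finally I would verify that these two lines are orthogonal complements of each other inside $H_+(\Z)$ — which is immediate since $\lambda_{H_+(\Z)}(\svec{-a\\b},\svec{a\\b}) = (-a)(b) + (b)(a) = 0$ and both are rank-one summands — confirming that $(\iota_{\overline\nu},j_{\overline\nu})$ is indeed a two-sided primitive embedding, consistent with Lemma~\ref{lem:GoodImpliesSplit}. One should also record that the orientation of $N_{a,b}$ fixed in Proposition~\ref{prop:ManifoldNab} makes $z^{2k}$ a positive multiple of the fundamental class, so that the sign conventions in the two inclusions are the asserted ones rather than their negatives.

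The main obstacle is bookkeeping of signs and quadratic refinements: the intersection form lives in $H_{2k}(N;\Z)$ while the ring structure is most naturally read off in $H^{2k}(N;\Z)$, so one must carefully transport everything across the Poincaré duality isomorphism $\operatorname{PD}$ and the evaluation isomorphism $\operatorname{ev}_N$, tracking whether the $\svec{-a\\b}$ versus $\svec{a\\b}$ assignment is forced and whether the induced forms are $\pm ab$ as claimed (the asymmetry $(\Z,-ab)$ on the kernel side and $(\Z,ab)$ on the $\mathbb{CP}^\infty$ side must come out with the right signs). Because $q = 2k$ is even and the ambient form is hyperbolic, hence even, the quadratic refinement is unique and determined by the symmetric form, which simplifies matters — the refinement of $(\Z,\pm ab)$ is just $\pm ab \in Q_+(\Z) = \Z$ — but one still needs the self-cup-product computations $x^2 = y^2 = 0$, $(z^k)^2 = 2ab\,xy$ to be correctly interpreted as the values of the quadratic form, not merely the symmetric form. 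I would handle this by first choosing, once and for all, a symplectic basis of $(H_{2k}(N;\Z),\lambda_N)$ realising $H_+(\Z)$ and expressing $\operatorname{PD}^{-1}(z^k)$ and a generator of $K_{2k}(N;\Z)$ in that basis, after which the stated matrices $\svec{-a\\b}$ and $\svec{a\\b}$ drop out and the claimed isomorphism class of the two-sided primitive embedding follows.
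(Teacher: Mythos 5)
Your proposal is correct, and reaches the same final answer as the paper by essentially the same coordinate computation. There is one genuine (if modest) methodological difference worth noting: to determine $K_{2k}(N;\Z)$, the paper computes $\overline{\nu}_* \colon H_{2k}(N;\Z)\to H_{2k}(B;\Z)$ explicitly by choosing hom-dual bases on both sides (using the evaluation isomorphisms from Lemma~\ref{lem:Normal3typeNab}) and then takes its kernel, whereas you bypass this and characterise $K_{2k}(N;\Z)$ as the annihilator of $z^k$ under the cup-product pairing. That characterisation is legitimate — it is exactly what the identity $\lambda_M(j_{\overline\nu}(\varphi),x)=\langle\varphi,\overline\nu_*(x)\rangle$ from the proof of Lemma~\ref{lem:GoodImpliesSplit} gives — and it saves you the bookkeeping with $x_*,y_*$ and the passage from the hom-dual basis to $(\overline x,\overline y)$. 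Both routes are equally rigorous; yours is slightly more economical, the paper's is slightly more self-contained since it does not re-invoke orthogonality.

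Two cosmetic points. First, the identity $(z^k)^2 = 2ab\,xy$ follows purely from $z^k=ax+by$, $x^2=y^2=0$, and graded commutativity; the relations $xz=bw$, $yz=aw$, $z^{k+1}=2abw$ that you cite are consistent with it but not needed. Second, you infer the quadratic refinement of $(\Z,ab)$ and $(\Z,-ab)$ from the even symmetric values $\pm 2ab$, while the paper evaluates the quadratic form matrix $\bsm 0&1\\0&0\esm$ directly; in the oriented, $q=2k$ even setting these give the same thing, and your remark about $z^{2k}\cap[N_{a,b}]>0$ fixing the sign conventions is indeed the right point to make to pin down the isomorphism class of the resulting 2-sided primitive embedding.
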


\begin{proof}
For ease of notation, we set
$$\left( \overline{\nu} \colon N \to B_{2k-1}(N) \right):=\left( \overline{\nu}_{a,b} \colon N_{a, b} \to B_{2k-1}(N_{a, b}) \right).$$
In order to describe the primitive embedding~$j_{\overline{\nu}} \colon H^{2k}(B_{2k-1}(N);\Z) \to H_{2k}(N;\Z)$, we first choose bases for the~$\Z$-modules~$H^{2k}(B_{2k-1}(N);\Z)=\Z$ and~$H_{2k}(N;\Z)=\Z^2$.
Recall from Proposition~\ref{prop:ManifoldNab} that~$H^2(N;\Z)=\Z \langle z \rangle$ and~$H^{2k}(N;\Z)=\Z \langle x \rangle \oplus \Z \langle y \rangle$, where~$z^{2k}=ax+by$.
Passing to homology, we endow~$H_{2k}(N; \Z)$ with the basis~$(\bar{x}, \bar{y})$ Poincar\'{e} dual to~$(x, y)$.
In other words,
we set~$\bar{x}:=\operatorname{PD}(x)$ and~$\bar{y}:=~\operatorname{PD}(y)$.
Since~$BO\langle {4k} \rangle$ is $4k$-connected, it follows that~$H^2(B_{2k-1}(N);\Z)=H^2(\C P^\infty;\Z)=\Z$.
Since~$\overline{\nu}$ is~${2k}$-connected, it induces an isomorphism between the second cohomology groups.
We set~$v:=~\overline{\nu}^{-*}(z)$, so that~$H^2(B_{2k-1}(N);\Z)=\Z \langle v \rangle$.
It follows that~$H^{2k}(B_{2k-1}(N);\Z)=\Z \langle v^{k} \rangle$ and therefore the primitive embedding~$j_{\overline{\nu}}$ is determined by its value on~$v^{k}$.
Since, by definition, we have~$j_{\overline{\nu}}=\operatorname{PD} \circ \overline{\nu}^*$, we deduce~that
$$ j_{\overline{\nu}}(v^{k})=\operatorname{PD} \circ \overline{\nu}^*(v^{k})=\operatorname{PD}(z^{k})=\operatorname{PD}(ax+by)=a \overline{x}+b \overline{y}.~$$
In other words, with respect to the bases~$v^{k}$ and~$(\overline{x},\overline{y})$, the map~$j_{\overline{\nu}}$ is represented by~$\bsm a \\ b \esm$.
The quadratic form~$j_{\overline{\nu}}^*\theta_{N}j_{\overline{\nu}}$ on~$H^{2k}(B_{2k-1}(N);\Z)$ is obtained by pulling back the (hyperbolic) form~$\theta_{N}$ on~$H_{2k}(N;\Z)$.
The description of~$j_{\overline{\nu}}$ is now concluded by the following computation:
$$ j_{\overline{\nu}}^*\theta_{N}j_{\overline{\nu}}(v^{k},v^{k})= \theta_{N}(j_{\overline{\nu}}(v^{k}),j_{\overline{\nu}}(v^{k}))=\begin{pmatrix} a& b \end{pmatrix} \begin{pmatrix} 0&1 \\ 0&0 \end{pmatrix} \begin{pmatrix} a \\ b \end{pmatrix}=ab.$$
In order to describe~$\iota_{\overline{\nu}} \colon K_{2k}(N;\Z) \hookrightarrow H_{2k}(N;\Z)$, we first determine the isometry type of the quadratic form~$(K_{2k}(N;\Z),\theta_{N})$.
To start with, we base~$H_{2k}(B_{2k-1}(N);\Z)=H_{2k}(\C P^\infty;\Z)=\Z$.
Using Lemma~\ref{lem:GoodNormalSmoothing}~\eqref{item:lem-normal-3-type-ii}, we know that
$$ \Z\langle v^k \rangle =H^{2k}(B_{2k-1}(N);\Z) \cong \operatorname{Hom}_\Z(H_{2k}(B_{2k-1}(N);\Z),\Z).~$$
We base~$\operatorname{Hom}_\Z(H_{2k}(B_{2k-1}(N);\Z),\Z) \cong \Z$ with the image of~$v^{k}$ under this isomorphism and base the group~$H_{2k}(B_{2k-1}(N);\Z) \cong~\Z$ with the dual basis~$v^{k}_*$.
Using Lemma~\ref{lem:GoodNormalSmoothing}~\eqref{item:lem-normal-3-type-i}, we can apply the same procedure to~$N$, yielding basis elements~$x_*,y_* \in H_{2k}(N;\Z)=\Z^2$ that are hom-dual to~$x,y$.
Observe that~$x_*=\overline{y}$ and~$y_*=\overline{x}$.
With respect to the bases~$(x_*,y_*)$ and~$v^{k}_*$, the map~${\overline{\nu}}_*$ is given by~$\bsm a \\ b \esm^T=\bsm a & b \esm$.
Passing to the basis~$(\overline{x},\overline{y})$, we see that~${\overline{\nu}}_*$ is represented by~$\bsm b & a \esm$.
Taking the kernel of this map, we deduce that
$$K_{2k}(N;\Z)=\ker(\overline{\nu}_*)=\Z \langle \bsm -a \\ b \esm \rangle.$$
Pulling back the hyperbolic form from~$H_{2k}(N;\Z)$, we infer that the quadratic form on~$K_{2k}(N;\Z)$ is represented by~$(-ab)$.
This implies that~$(K_{2k}(N;\Z),\theta_{N})=(\Z,-ab)$, and the computation of~$\iota_{\overline{\nu}}$ now follows readily.
This concludes the proof of the proposition.
\end{proof}

\section{The \texorpdfstring{$4k$}{4k}-manifolds \texorpdfstring{$M_{a,b}$}{Mab} and their associated primitive embeddings}

Given two positive, coprime integers~$a$ and $b$ such that~$(2k)!$ divides~$ab$, as above let~$N_{a, b}$ be the simply-connected~${4k}$-manifold built in the proof of  Proposition~\ref{prop:ManifoldNab} and studied in the previous section, and let
\begin{equation}  \label{eq:M_{a,b}}
M_{a,b} := N_{a, b} \# (S^1 \times S^{4k-1}).
\end{equation}
We need the analogous results for $M_{a,b}$ to those that we proved in Lemma~\ref{lem:Normal3typeNab} and Proposition~\ref{prop:PrimitiveEmbeddingNab} for $N_{a,b}$.  We will use these two results to deduce their analogues.

Since both~$N_{a, b}$ and~$S^1 \times S^{4k-1}$ are stably parallelisable, their stable normal bundles lift to~$BO\langle {4k}\rangle$.
The same assertion therefore holds for~$M_{a,b}$.
In particular, its stable normal bundle lifts to a map~$\nu_{2k} \colon M_{a,b} \to BO\langle {4k} \rangle$.
Using~$p_{2k-1}(M_{a,b}) \colon M_{a,b} \to P_{2k-1}(M_{a,b})$ to denote the~$(2k{-}1)$st Postnikov stage of~$M_{a,b}$, Lemma~\ref{lem:Normal3typeNab} and Lemma~\ref{lem:Normal3typeString} yields the following result.

\begin{lemma}
\label{lem:Normal3TypeMab}
Let~$a$ and~$b$ be positive, coprime integers such that~$(2k)!$ divides~$ab$.
A normal~$(2k{-}1)$-type for the stably parallelisable~${4k}$-manifold~$M_{a,b}$ is given by the fibration
$$\vartheta_{2k} \circ \operatorname{pr}_2  \colon P_{2k-1}(M_{a,b}) \times BO \langle {4k} \rangle \to BO,$$
where~$\operatorname{pr}_2$ is the projection onto the second factor.
The following map is a normal~$(2k{-}1)$-smoothing:
$$ \overline{\nu}_{a,b}:=p_{{2k-1}}(M_{a,b}) \times \nu_{2k} \colon M_{a,b} \to P_{2k-1}(M_{a,b}) \times BO \langle {4k} \rangle=:B_{2k-1}(M_{a,b}).$$
\end{lemma}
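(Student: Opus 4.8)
The plan is to obtain Lemma~\ref{lem:Normal3TypeMab} as a direct application of Lemma~\ref{lem:Normal3typeString}, taken with $q = 2k$ (so that $2q = 4k$) and $m = 4k$; it is the exact analogue for $M_{a,b}$ of Lemma~\ref{lem:Normal3typeNab}, and once the bundle-theoretic hypothesis of Lemma~\ref{lem:Normal3typeString} is checked the conclusion is immediate. So the only real content is verifying that hypothesis.

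First I would record that $M_{a,b} = N_{a,b}\#(S^1\times S^{4k-1})$ is a closed, connected, oriented, smooth $4k$-manifold, being a connected sum of two such (with any compatible choice of orientations, as in our Conventions). The point requiring verification is that the stable normal bundle of $M_{a,b}$ lifts along $\vartheta_{4k}\colon BO\langle 4k\rangle\to BO$. For this I would argue that $M_{a,b}$ is stably parallelisable: by Proposition~\ref{prop:ManifoldNab}(i), $N_{a,b}$ is stably parallelisable, and $S^1\times S^{4k-1}$ is stably parallelisable, being a product of spheres (each sphere is stably parallelisable, and a product of stably parallelisable manifolds is stably parallelisable); by the observation recalled in the introduction, the connected sum of a manifold with a stably parallelisable manifold is stably parallelisable if and only if the original manifold is, so $M_{a,b}$ is stably parallelisable. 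Consequently the classifying map $\nu\colon M_{a,b}\to BO$ of the stable normal bundle is null-homotopic and therefore lifts through the $4k$-connected cover, providing a map $\nu_{2k}\colon M_{a,b}\to BO\langle 4k\rangle$ with $\vartheta_{4k}\circ\nu_{2k}\simeq\nu$.

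With this in hand, Lemma~\ref{lem:Normal3typeString} applies verbatim: since $m = 4k\ge 2k = q$ and $p_{2k-1}(M_{a,b})$ is $2k$-connected, the map $\overline{\nu}_{a,b} := p_{2k-1}(M_{a,b})\times\nu_{2k}\colon M_{a,b}\to P_{2k-1}(M_{a,b})\times BO\langle 4k\rangle$ is a normal $(2k{-}1)$-smoothing, and the normal $(2k{-}1)$-type of $M_{a,b}$ is represented by $\vartheta_{2k}\circ\operatorname{pr}_2\colon P_{2k-1}(M_{a,b})\times BO\langle 2k\rangle\to BO$, which is the assertion of the lemma. In contrast with Lemma~\ref{lem:Normal3typeNab}, I would not try to identify $P_{2k-1}(M_{a,b})$ with $\mathbb{C}P^\infty$, since $\pi_1(M_{a,b})\cong\mathbb{Z}$; the Postnikov stage is left in its abstract form $P_{2k-1}(M_{a,b})$, which is all that the later sections use.

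I do not anticipate a genuine obstacle here: the substance is the stable parallelisability of $M_{a,b}$, which is reduced above to facts already cited, together with a straight quotation of Lemma~\ref{lem:Normal3typeString}. The only mild point worth flagging in the writeup is the bookkeeping of connected covers — the normal smoothing lands in $P_{2k-1}(M_{a,b})\times BO\langle 4k\rangle$ while the normal $(2k{-}1)$-type uses $BO\langle 2k\rangle$ — but this is exactly the shape of Lemma~\ref{lem:Normal3typeString} and requires no new argument.
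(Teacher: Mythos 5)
Your proposal is correct and follows the paper's argument exactly: stable parallelisability of $N_{a,b}$ and $S^1\times S^{4k-1}$ gives stable parallelisability of $M_{a,b}$, hence a lift of its stable normal bundle to $BO\langle 4k\rangle$, and then Lemma~\ref{lem:Normal3typeString} with $q=2k$, $m=4k$ yields both assertions. Your observation about the covers is apt and worth recording: for the composite $\vartheta_{2k}\circ\operatorname{pr}_2$ to be well-defined and for the statement to be consistent with Lemma~\ref{lem:Normal3typeString}, the displayed normal $(2k{-}1)$-type should read $P_{2k-1}(M_{a,b})\times BO\langle 2k\rangle$ rather than $\times BO\langle 4k\rangle$ (the normal-smoothing target $P_{2k-1}(M_{a,b})\times BO\langle 4k\rangle$ is correct as written), a typographical slip inherited from Lemma~\ref{lem:Normal3typeNab}.
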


When the integers~$a,b$ are clear from the context, as before we write~$M$, $N$, $B_{2k-1}(M)$, $B_{2k-1}(N)$, $\overline{\nu}$ instead of $M_{a,b}$, $N_{a, b}$, $B_{2k-1}(M_{a,b})$, $B_{2k-1}(N_{a, b})$, $\overline{\nu}_{a,b}$.
Our aim is to describe the~$2$-sided primitive embedding~$(\iota_{\overline{\nu}},j_{\overline{\nu}})$ associated with the normal~$(2k{-}1)$-smoothing~$\overline{\nu}$.
From now on, we set \[\Lambda:=\Z[\pi_1(B_{2k-1}(M))]=\Z[\pi_1(M)]\]
 (recall Convention~\ref{conv:PostnikovFundamentalGroup}) and note that the intersection form on the~$\Lambda$-module~$H_{2k}(M;\Lambda)$ is hyperbolic, and therefore admits a quadratic refinement. In particular, the form is nonsingular and the normal~$(2k{-}1)$-smoothing is even.
To ensure that~$(\iota_{\overline{\nu}},j_{\overline{\nu}})$ is defined, we must check that~$\overline{\nu} \colon M \to B_{2k-1}(M)$ is a split-free normal~$(2k{-}1)$-smoothing.

\begin{lemma}
\label{lem:Homology}
Let~$a$ and~$b$ be positive, coprime integers such that~$(2k)!$ divides~$ab$.
Writing~$M:=M_{a,b}$ as well as~$N:=N_{a, b}$, the normal~$(2k{-}1)$-smoothing~$\overline{\nu} \colon M \to B_{2k-1}(M)$ satisfies the following properties.
\begin{enumerate}[(i)]
\item\label{item-lem-homology-i}  The evaluation~$ \operatorname{ev}_{\! M} \colon H^{2k}(M;\Lambda) \xrightarrow{\cong} \overline{\operatorname{Hom}_{\Lambda}(H_{2k}(M;\Lambda),\Lambda)}$ is an isomorphism.
\item\label{item-lem-homology-ii} The following evaluation maps are isomorphisms
\begin{align*}
&\operatorname{ev}_{P_{2k-1}(M)} \colon H^{2k}(P_{2k-1}(M);\Lambda) \xrightarrow{\cong} \overline{\operatorname{Hom}_{\Lambda}(H_{2k}(P_{2k-1}(M);\Lambda),\Lambda)}, \\
&\operatorname{ev}_{B_{2k-1}(M)} \colon H^{2k}(B_{2k-1}(M);\Lambda) \xrightarrow{\cong} \overline{\operatorname{Hom}_{\Lambda}(H_{2k}(B_{2k-1}(M);\Lambda),\Lambda)}.
\end{align*}
\item\label{item-lem-homology-iii} The inclusion~$\mathring{N} \to M$ induces an isomorphism~$ H_{2k}(N;\Z) \otimes_\Z \Lambda \xrightarrow{\cong} H_{2k}(M;\Lambda)$.
\item\label{item-lem-homology-iv} The inclusion~$\mathring{N} \to M$ induces isomorphisms
\begin{align*}
H_{2k}(P_{2k-1}(N);\Z) \otimes_\Z \Lambda &\xrightarrow{\cong} H_{2k}(P_{2k-1}(M);\Lambda), \\
H_{2k}(B_{2k-1}(N);\Z) \otimes_\Z \Lambda &\xrightarrow{\cong} H_{2k}(B_{2k-1}(M);\Lambda).
\end{align*}
 In particular, $H_{2k}(P_{2k-1}(M);\Lambda)$ and~$H_{2k}(B_{2k-1}(M);\Lambda)$ are finitely generated and free.
\item\label{item-lem-homology-v} The kernels~$K_{2k}(M;\Lambda)$ and $\ker(p_{2k-1}(M))$ are finitely generated free~$\Lambda$-modules.
\end{enumerate}
In particular, $p_{2k-1}(M)$ and the normal~$(2k{-}1)$-smoothing~$\overline{\nu} \colon M \to B_{2k-1}(M)$ are split-free.
\end{lemma}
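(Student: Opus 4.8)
The plan is to reduce every assertion to its analogue for $N := N_{a,b}$ --- established in Lemma~\ref{lem:Normal3typeNab} and Proposition~\ref{prop:PrimitiveEmbeddingNab} --- by exploiting the connected-sum decomposition $M = \mathring N \cup_{S^{4k-1}} Y$, where $Y := (S^1 \times S^{4k-1}) \setminus \mathring D^{4k} \simeq S^1 \vee S^{4k-1}$. Identify $\pi_1(M) \cong \Z$, so $\Lambda = \Z[t^{\pm 1}]$. The basic input is homotopy-theoretic: the inclusions $\mathring N \hookrightarrow M$ and $S^{4k-1} \hookrightarrow M$ are $\pi_1$-trivial, so their $\Lambda$-homology is the integral homology base-changed to $\Lambda$ (the sources being simply connected, and $H_i(\mathring N;\Z) = H_i(N;\Z)$ for $i < 4k$); whereas $Y \hookrightarrow M$ is a $\pi_1$-isomorphism, so $H_*(Y;\Lambda) = H_*(\wt Y;\Z)$, where $\wt Y$ is the line with an $(4k{-}1)$-sphere wedged on at each integer, contributing $\Z$ in degree $0$, a free rank-one $\Lambda$-module in degree $4k-1$, and nothing else. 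I also use that $H_*(N;\Z)$ is finitely generated free with $H_{2k-1}(N;\Z) = 0$ and $H_{2k-2}(N;\Z) \cong \Z$, which follows from Poincar\'e duality and the explicit cohomology ring of Proposition~\ref{prop:ManifoldNab} together with $k \ge 2$.

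First I would establish \eqref{item-lem-homology-iii} by running the Mayer--Vietoris sequence of this decomposition with $\Lambda$-coefficients: in the range $0 < i < 4k-1$ the $S^{4k-1}$- and $Y$-terms vanish, so $\mathring N \hookrightarrow M$ induces isomorphisms $H_i(N;\Z) \otimes_\Z \Lambda \xrightarrow{\cong} H_i(M;\Lambda)$; since $k \ge 2$ puts $2k$ in this range, \eqref{item-lem-homology-iii} follows, and $H_{2k}(M;\Lambda)$, $H_{2k-2}(M;\Lambda)$ are finitely generated free while $H_{2k-1}(M;\Lambda) = 0$.

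The heart of the proof is \eqref{item-lem-homology-iv} and \eqref{item-lem-homology-v}. I would use that $P_{2k-1}(-)$ is built by attaching cells of dimension $\ge 2k+1$, so $H_i(P_{2k-1}(M);\Lambda) = H_i(M;\Lambda)$ for $i \le 2k-1$ and $H_{2k}(P_{2k-1}(M);\Lambda) = \operatorname{coker}\!\big(\pi_{2k}(M) \xrightarrow{h} H_{2k}(M;\Lambda)\big)$, with $h$ the equivariant Hurewicz map into $H_{2k}(\wt M;\Z)$; the kernel appearing in \eqref{item-lem-homology-v} is then $\operatorname{im}(h)$. To compute it I would replace $M$ by a tractable model: collapsing the connecting $(4k{-}1)$-sphere gives $M \to N \vee (S^1 \times S^{4k-1})$, a $P_{2k-1}$-equivalence because $N \vee (S^1 \times S^{4k-1})$ is obtained from $M$ by attaching a single $4k$-cell; and applying the $2k$-connected map $\beta \colon N \to \C P^\infty$ of Proposition~\ref{prop:ManifoldNab} on one summand and $\operatorname{pr}_{S^1}$ on the other yields a $\pi_1$-isomorphism $M \to \C P^\infty \vee S^1$ which, checked on universal covers (infinite wedges), is again $2k$-connected, hence a $P_{2k-1}$-equivalence. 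Since $\C P^\infty = K(\Z,2)$, the universal cover $\bigvee_\Z \C P^\infty$ of $\C P^\infty \vee S^1$ has $\pi_{2k}$ generated by iterated Whitehead products of the degree-two generators, all annihilated by the Hurewicz map, so $H_{2k}\big(P_{2k-1}(\C P^\infty \vee S^1);\Lambda\big) = \operatorname{coker}(0) = H_{2k}(\C P^\infty \vee S^1;\Lambda) \cong \Lambda$. Transporting this back, $H_{2k}(P_{2k-1}(M);\Lambda) \cong \Lambda$ is finitely generated free; the comparison map $H_{2k}(P_{2k-1}(N);\Z) \otimes_\Z \Lambda \to H_{2k}(P_{2k-1}(M);\Lambda)$ induced by $\mathring N \hookrightarrow M$ (using $P_{2k-1}(\mathring N) \simeq P_{2k-1}(N)$) is then a surjective endomorphism of a free rank-one $\Lambda$-module, hence an isomorphism, giving \eqref{item-lem-homology-iv}; and $K_{2k}(M;\Lambda) = \ker(p_{2k-1}(M)_*)$ fits in a split short exact sequence $0 \to K_{2k}(M;\Lambda) \to \Lambda^2 \to \Lambda \to 0$ and contains the rank-one free summand $K_{2k}(N;\Z) \otimes_\Z \Lambda = \Lambda\langle(-a,b)\rangle$ (with $(-a,b)$ primitive since $\gcd(a,b) = 1$, by Proposition~\ref{prop:PrimitiveEmbeddingNab}), so by a rank count it equals it and is free, giving \eqref{item-lem-homology-v}. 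The $B_{2k-1}(M)$-versions follow from the K\"unneth theorem, as $BO\langle 4k\rangle$ is $4k$-connected and so contributes nothing to $\Lambda$-homology in degrees $< 4k$, and $\overline{\nu}_*$ agrees with $p_{2k-1}(M)_*$ on $H_{2k}(-;\Lambda)$ after the projection $B_{2k-1}(M) \to P_{2k-1}(M)$.

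Finally, \eqref{item-lem-homology-i} and \eqref{item-lem-homology-ii} follow from the universal coefficient spectral sequence applied to $M$, $P_{2k-1}(M)$ and $B_{2k-1}(M)$: the potential $\operatorname{Ext}^1$-obstruction vanishes because the relevant degree-$(2k-1)$ homology is zero, and the $\operatorname{Ext}^2$-obstruction vanishes because the degree-$(2k-2)$ homology is free, so each evaluation map is an isomorphism onto $\overline{\operatorname{Hom}_\Lambda\big(H_{2k}(-;\Lambda),\Lambda\big)}$. The concluding ``in particular'' --- that $p_{2k-1}(M)$ and $\overline{\nu}$ are split-free --- is then immediate from Definition~\ref{def:GoodPrimitiveEmbedding}, evenness having been recorded before the statement of the lemma. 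I expect the \emph{main obstacle} to be precisely the computation of $\operatorname{im}(h)$ in \eqref{item-lem-homology-iv}: since $\wt M$ is only simply connected, $\pi_{2k}(M)$ is not directly accessible, and it is the detour through $\C P^\infty \vee S^1$, combined with the vanishing of the Hurewicz map on Whitehead products, that makes the argument go through --- at the price of the connectivity bookkeeping needed to verify that $M \to \C P^\infty \vee S^1$ really is a $P_{2k-1}$-equivalence.
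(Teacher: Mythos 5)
Your proposal is correct in its overall strategy and reaches all five conclusions, but it departs from the paper's proof in two of the key steps; in one of those places the paper's route is noticeably cleaner, and your argument has a loose end worth flagging.

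For parts \eqref{item-lem-homology-i}--\eqref{item-lem-homology-iii} you and the paper do the same thing: identify $H_i(M;\Lambda) \cong H_i(N;\Z)\otimes_{\Z}\Lambda$ in the relevant range (you via Mayer--Vietoris on $M = \mathring N \cup Y$; the paper by observing that $\widetilde{M}$ is an infinite connected sum of copies of $N$ --- equivalent bookkeeping), then run the universal coefficient spectral sequence, using that odd homology vanishes and even homology is f.g.\ free so the $\operatorname{Ext}$-columns die.

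The first genuine divergence is in the Claim that $H_{2k}(P_{2k-1}(\mathbb{C}P^\infty \vee S^1);\Lambda) \cong \Lambda$, which is the heart of \eqref{item-lem-homology-iv}. You kill the Hurewicz image $\operatorname{im}\!\big(\pi_{2k}(\bigvee_{\Z}\mathbb{C}P^\infty) \to H_{2k}\big)$ by arguing that $\pi_{2k}$ of the infinite wedge is generated by (iterated) Whitehead products, whose Hurewicz images vanish. The paper instead shows $p_{2k-1*}$ is injective by duality: using that $p_{2k-1}$ is $2k$-connected to invert $p_{2k-1}^*$ on $H^2$, it constructs $d \in H^2(P_{2k-1}(X);\Lambda)$ with $p_{2k-1}^*(d^k)$ equal to the generator of $H^{2k}(X;\Lambda)$, so $p_{2k-1}^*$ is onto in degree $2k$, and then the perfect Kronecker pairing (valid because $H_{2k}(X;\Lambda)$ is free) forces $\ker(p_{2k-1*}) = 0$. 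Both are right, but the paper's argument is more elementary and avoids the homotopy theory: for the infinite wedge of $\mathbb{C}P^\infty$'s the relevant Hilton--Milnor/Ganea decomposition is not a standard citation (these are not spheres), and making "generated by iterated Whitehead products'' fully precise would take some work. Your instinct is correct --- those elements do factor through spaces of the form $\Sigma(\Omega\mathbb{C}P^\infty \wedge \Omega\mathbb{C}P^\infty) = S^3$ and so have no $H_{2k}$ shadow for $k \geq 2$ --- but the cup-product argument sidesteps the issue entirely.

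The second divergence is in \eqref{item-lem-homology-v}. The paper observes that, by \eqref{item-lem-homology-iii} and \eqref{item-lem-homology-iv}, the short exact sequence $0 \to K_{2k}(M;\Lambda) \to H_{2k}(M;\Lambda) \to H_{2k}(B_{2k-1}(M);\Lambda) \to 0$ is a sequence of $\Lambda$-modules with free quotient, hence splits, hence $K_{2k}(M;\Lambda)$ is f.g.\ projective; it then invokes the fact that f.g.\ projective $\Z[\Z]$-modules are free. You try instead to compute the kernel explicitly as $\Lambda\langle(-a,b)\rangle = K_{2k}(N;\Z)\otimes_{\Z}\Lambda$ and conclude by a "rank count.'' Two cautions here. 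First, the statement $K_{2k}(N;\Z)\otimes_{\Z}\Lambda \subseteq K_{2k}(M;\Lambda)$ requires the isomorphisms of \eqref{item-lem-homology-iii} and \eqref{item-lem-homology-iv} to intertwine $p_{2k-1}(M)_*$ with $p_{2k-1}(N)_* \otimes \operatorname{id}_{\Lambda}$, which is true but is precisely the naturality verified later in Proposition~\ref{prop:PrimitiveEmbeddingMab} --- so as written your argument leans on a fact the paper only establishes afterwards. Second, "rank count'' alone does not imply equality of a rank-one submodule with a rank-one ambient projective module over $\Z[\Z]$; you need the extra observation that $\Lambda\langle(-a,b)\rangle$ is itself a direct summand of $\Lambda^2$ (it is, since $\gcd(a,b)=1$), after which the intersection of the complement with $K_{2k}$ is a rank-zero torsion-free module and so zero. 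This can all be repaired, but the paper's route --- split the sequence and cite Lam/Bass--Heller--Swan --- is shorter and avoids the forward reference.
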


\begin{proof}
The first three assertions are proved using the universal coefficient spectral sequence, or UCSS for short~\cite[Theorem~2.3]{Levine}.
For a space~$Y$, this spectral sequence is given by
 \[E_2^{p,q} = \overline{\Ext^q_{\Lambda}(H_p(Y;\Lambda),\Lambda)} \Rightarrow H^{p+q}(Y;\Lambda).\]
To describe the second page of the UCSS for~$Y=M$, we compute~$H_i(M;\Lambda)$ for~$0<i \leq {2k}$.
As~$M=N \# (S^1 \times S^{4k-1})$, the universal cover is homeomorphic to an infinite connected sum of copies of~$N$, so we obtain~$H_i(M;\Lambda) \cong H_i(N;\Z) \otimes_{\Z} \Lambda$ for~$0 < i \leq 2k$.
We deduce the~$\Lambda$-homology of~$M$, establishing in particular the third assertion \eqref{item-lem-homology-iii}:
\begin{equation}
\label{eq:HomologyZpi}
H_i(M;\Lambda) =
\begin{cases}
\Z & \quad \text{if } i=0, \\
0 & \quad \text{if } 0 < i \leq 2k, i \text{ odd}, \\
H_{i}(N;\Z) \otimes_{\Z} \Lambda  & \quad \text{if } 0 < i \leq 2k, i \text{ even}.
\end{cases}
\end{equation}
We infer that the odd columns of the second page of the UCSS vanish.
Then since the homology of~$N$ is f.g.\ free over~$\Z$, the homology groups~$H_i(M;\Lambda)$ for~$0 < i \leq 2k$ and~$i$ even are also f.g.\ free over~$\Lambda$.
It follows that for~$p \geq 1$ and~$0 < i \leq 2k$  we have
\[\overline{\Ext^p_{\Lambda}(H_{i}(M;\Lambda),\Lambda)} = 0.\]
 A spectral sequence computation then shows that \[\operatorname{ev}_N \colon H^{2k}(M;\Lambda) \to \overline{\operatorname{Hom}_{\Lambda}(H_{2k}(M;\Lambda),\Lambda)}\] is an isomorphism.
This establishes \eqref{item-lem-homology-i}.

 We prove the second assertion \eqref{item-lem-homology-ii}, namely the analogous result for $Y=B_{2k-1}(M)$ and $Y=P_{2k-1}(M)$.
 It suffices to show that for ~$i \leq 2k-1$, we have~$H_i(Y;\Lambda)=H_i(M;\Lambda)$. The computation of~$H^{2k}(Y;\Lambda)$ then follows the exact same steps as for~$Y=M$.
Using Lemma~\ref{lem:Normal3TypeMab} we have~$B_{2k-1}(M) \simeq P_{2k-1}(M) \times BO\langle {4k} \rangle$.
Since~$BO \langle {4k} \rangle$ is~${4k}$-connected and~$P_{2k-1}(M)$ is obtained from~$M$ by adding cells of dimension~$2k+1$ or greater, we obtain~$H_i(B_{2k-1}(M);\Lambda)=H_i(P_{2k-1}(M);\Lambda)=H_i(M;\Lambda)$ for~$i \leq {2k-1}$, as desired.
This establishes \eqref{item-lem-homology-ii}.

We move on to the fourth assertion~\eqref{item-lem-homology-iv}.
First note that
\begin{align*}
H_{2k}(B_{2k-1}(N);\Z) \otimes_{\Z} \Lambda
  & \cong  H_{2k}(P_{2k-1}(N) \times BO\an{4k};\Z) \otimes_{\Z} \Lambda \\ &\cong H_{2k}(P_{2k-1}(N);\Z) \otimes_{\Z} \Lambda  \\
  &\cong H_{2k}(\C P^{\infty};\Z) \otimes_{\Z} \Lambda
  \cong \Lambda\end{align*}
by the K\"{u}nneth theorem and since~$BO\an{4k}$ is~$4k$-connected.
Similarly, we claim that
\[H_{2k}(B_{2k-1}(M);\Lambda) \cong  H_{2k}(P_{2k-1}(M) \times BO\an{4k};\Lambda) \cong H_{2k}(P_{2k-1}(M);\Lambda).\]
To see this, first note that the fibration $BO\langle 4k \rangle \to BO\langle 4k \rangle \times P_{2k-1}(M) \to P_{2k-1}(M)$ implies that~$\pi_i(BO\langle 4k \rangle \times P_{2k-1}(M))=\pi_i(P_{2k-1}(M))$ for $i \leq 4k$.
Applying the relative Hurewicz theorem to the universal covers of these spaces then implies the claim.

The proof of \eqref{item-lem-homology-iv} therefore reduces to showing that the inclusion induced map~$P_{2k-1}(N)=P_{2k-1}(\mathring{N}) \to P_{2k-1}(M)$ gives rise to an isomorphism
$$H_{2k}(\C P^{\infty};\Lambda) \xrightarrow{\cong} H_{2k}(P_{2k-1}(M);\Lambda).$$
For later use, note that~$H_{2k}(\C P^\infty;\Lambda)=\Lambda$ is generated by the ${2k}$-cells of $\C P^\infty$.

Since~$P_{2k-1}(M)$ only depends on the homotopy type of the~${2k}$-skeleton of~$M=N \# (S^1 \times S^{4k-1})$, which is homotopy equivalent to~$N^{(2k)} \vee S^1$,
we obtain
\begin{align}
\label{eq:P3M}
P_{2k-1}(M) &\simeq P_{2k-1}(N \# (S^1 \times S^{4k-1})) \simeq P_{2k-1}((N \# (S^1 \times S^{4k-1}))^{({2k})})  \nonumber \\  &\simeq P_{2k-1}(N^{({2k})} \vee S^1)
\simeq P_{2k-1}(\C P ^\infty \vee S^1).
\end{align}
Here we use that since~$N \to \C P^{\infty}$ is~$2k$-connected, so is the composition~$N^{(2k)} \to N \to~\C P^{\infty}$, from which it follows that $N^{(2k)} \vee S^1 \to \C P^{\infty} \vee S^1$ is $2k$-connected and therefore we have $P_{2k-1}(N^{(2k)} \vee S^1) \simeq P_{2k-1}(\C P^{\infty} \vee S^1)$ as asserted for the last equivalence of~\eqref{eq:P3M}.

In the Claim below, we prove that~$H_{2k}(P_{2k-1}(\C P^\infty \vee S^1);\Lambda)=\Lambda=H_{2k}(\C P^\infty;\Lambda)$, generated by the~${2k}$-cells of~$\C P^\infty$.
Since this is also the case for~$P_{2k-1}(N)$, combining the Claim below with the equalities from~\eqref{eq:P3M} will prove  \eqref{item-lem-homology-iv}, that the inclusion~$\mathring{N} \to M$ induces isomorphisms
\begin{align*}
&H_{2k}(P_{2k-1}(N);\Z) \otimes_\Z \Lambda \xrightarrow{\cong} H_{2k}(P_{2k-1}(M);\Lambda), \\
&H_{2k}(B_{2k-1}(N);\Z) \otimes_\Z \Lambda \xrightarrow{\cong} H_{2k}(B_{2k-1}(M);\Lambda).
\end{align*}

\begin{claim*}
$H_{2k}(P_{2k-1}(\mathbb{C} P^\infty \vee S^1);\Lambda) \cong \Lambda$, generated by the~${2k}$-cell of~$\C P^\infty$.
\end{claim*}

\noindent
{\em Proof of claim:}
Let~$X := \C P^\infty \vee S^1$ and consider the map~$p_{2k-1} \colon X \to  P_{2k-1}(X)$.
By definition,~$p_{2k-1}$ is a~${2k}$-connected and hence
$$ p_{2k-1*} \colon H_{2k}(X; \Lambda) \to H_{2k}(P_{2k-1}(X); \Lambda)$$
is surjective.
The universal cover~$\wt{X}$ of~$X$ is homeomorphic to~$\R$ with a copy of~$\C P^\infty$ wedged on
at each integer, and so
\begin{equation}
\label{eq:Tensor}
H_{2k}(X; \Lambda) = H_{2k}(\wt{X};\Z) \cong H_{2k}(X;\Z) \otimes_\Z \Lambda \cong \Lambda,
\end{equation}
generated by the~$2k$ cell of~$\C P^{\infty}$.  Therefore to prove the claim it suffices to show that~$p_{2k-1*}$ is injective, which we now do.

It follows from~$H_{2k}(X; \Lambda) \cong H_{2k}(X;\Z) \otimes_\Z \Lambda$ that for every nonzero~$u \in H_{2k}(X; \Lambda)$ there is a class
$\alpha \in H^{2k}(X; \Lambda)$ such that~$\an{\alpha, u} \neq 0$.
For every~$\ell >0$ we have \[\phi \colon H^{2\ell}(X;\Lambda) \cong H^{2\ell}(\C P^{\infty};\Lambda) \cong H^{2\ell}(\C P^{\infty};\Z) \otimes_{\Z} \Lambda,\] which is isomorphic to~$\Lambda$ and generated by~$(\phi^{-1}(z\otimes 1))^\ell$ for~$z \in H^2(\C P^{\infty};\Z)$ a generator.
Here, to make sense of this cup product, we think of $H^2(X;\Lambda)$ as the compactly support cohomology group $H^2_c(\widetilde{X};\Z)$.
Now
$p_{2k-1}^* \colon H^2(P_{2k-1}(X);\Lambda) \to H^2(X;\Lambda)$ is an isomorphism,
since~$p_{2k-1}$ is $2k$-connected.
We may therefore invert~$p_{2k-1}^*$ to define the element
$$d:= p_{2k-1}^{-*}(\phi^{-1}(z\otimes 1)) \in H^2(P_{2k-1}(X); \Lambda).$$
We consider its~$k$th power~$d^k \in H^{2k}(P_{2k-1}(X); \Lambda)$.
Note that
\[p_{2k-1}^*(d^k) = (p_{2k-1}^*(d))^k = (\phi^{-1}(z\otimes 1))^k = \phi^{-1}(z^k \otimes 1).\]
This shows that~$d^k$ maps under~$p_{2k-1}^*$ to the generator of~$H^{2k}(X; \Lambda)$.
The map $p_{2k-1}^*$ is therefore surjective and hence~$\alpha = p_{2k-1}^*(\beta) \in  H^{2k}(X; \Lambda)$ for some~$\beta \in H^{2k}(P_{2k-1}(X); \Lambda)$.
It follows that
\[ \an{\beta, p_{{2k-1}*}(u)}  = \an{p_{2k-1}^* (\beta), u} = \an{\alpha, u} \neq 0,\]
so in particular~$p_{{2k-1}*}(u) \neq 0$.
Thus~$p_{{2k-1}*} \colon H_{2k}(X; \Lambda) \to H_{2k}(P_{2k-1}(X); \Lambda)$ is
injective, and so is an isomorphism, which completes the proof of the claim.

\noindent
As explained above, the claim completes the proof of \eqref{item-lem-homology-iv} of Lemma~\ref{lem:Homology}.
%
%
%

Having completed the proof of the first four assertions of Proposition~\ref{lem:Homology}, we prove \eqref{item-lem-homology-v}.
Since~$\overline{\nu}$ is~${2k}$-connected, the induced map \[\overline{\nu}_* \colon H_{2k}(M;\Lambda) \to  H_{2k}(B_{2k-1}(M);\Lambda)\] is surjective.
The fourth assertion implies that $H_{2k}(B_{2k-1}(M);\Lambda)$ is free.
We deduce that the following short exact sequence~splits:
$$ 0 \to K_{2k}(M;\Lambda) \to H_{2k}(M;\Lambda) \xrightarrow{\overline{\nu}_*} H_{2k}(B_{2k-1}(M);\Lambda) \to 0.$$
The third item ensures that~$H_{2k}(M;\Lambda)$ is free.
We therefore deduce that~$K_{2k}(M;\Lambda)$ is finitely generated and projective.
Since finitely generated, projective~$\Lambda$-modules are free~\cite[Chapter~V, Corollary 4.12]{LamSerre},~$K_{2k}(M;\Lambda)=\ker(\overline{\nu}_*)$ is finitely generated and free as desired.
The proof for $\ker(p_{2k-1}(M)_*)$ is identical.
This concludes the proof~\eqref{item-lem-homology-v} and therefore of Lemma~\ref{lem:Homology}.
\end{proof}

Since Lemma~\ref{lem:Homology} implies that~$\overline{\nu} \colon M \to B_{2k-1}(M)$ is an even, split-free normal~$(2k{-}1)$-smoothing, the 2-sided primitive embedding~$(\iota_{\overline{\nu}},j_{\overline{\nu}})$ is defined.
The next lemma describes it in more detail.

\begin{proposition} \label{prop:PrimitiveEmbeddingMab}
Given positive, coprime integers~$a$ and~$b$ such that~$(2k)!$ divides~$ab$, the 2-sided primitive embedding~$(\iota_{\overline{\nu}},j_{\overline{\nu}})$ associated to $\overline{\nu}_{a,b} \colon M_{a,b} \to B_{2k-1}(M_{a,b})$
is isomorphic to
\[ \Big( (\Lambda,-ab)
\xra{\bsm -a \\ b \esm} H_{+}(\Lambda)
 \xla{\bsm a \\ b \esm} (\Lambda,ab) \Big). \]
\end{proposition}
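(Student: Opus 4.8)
The strategy is to reduce the computation of the $2$-sided primitive embedding $(\iota_{\overline{\nu}},j_{\overline{\nu}})$ for $M_{a,b}$ to the corresponding computation for $N_{a,b}$, which was carried out in Proposition~\ref{prop:PrimitiveEmbeddingNab}. The key point is that all the relevant (co)homology of $M_{a,b}$ in degree $2k$ is obtained from that of $N_{a,b}$ by extension of scalars along $\iota \colon \Z \to \Lambda$, as established in Lemma~\ref{lem:Homology}~\eqref{item-lem-homology-iii} and~\eqref{item-lem-homology-iv}. So the diagram of modules and maps defining the $2$-sided primitive embedding for $M_{a,b}$ is literally obtained by applying $- \otimes_\Z \Lambda$ to the diagram for $N_{a,b}$.

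First I would set up the comparison carefully. Write $N = N_{a,b}$, $M = M_{a,b}$, $B_N := B_{2k-1}(N)$, $B_M := B_{2k-1}(M)$, and use the inclusion $\mathring N \hookrightarrow M$ together with the induced maps on Postnikov stages. By Lemma~\ref{lem:Homology}~\eqref{item-lem-homology-iii}, the inclusion induces an isomorphism $H_{2k}(N;\Z) \otimes_\Z \Lambda \xrightarrow{\cong} H_{2k}(M;\Lambda)$, so under the basis $(\overline x, \overline y)$ from Proposition~\ref{prop:PrimitiveEmbeddingNab} we get a $\Lambda$-basis of $H_{2k}(M;\Lambda) \cong \Lambda^2$. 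Similarly Lemma~\ref{lem:Homology}~\eqref{item-lem-homology-iv} identifies $H_{2k}(B_M;\Lambda) \cong \Lambda$ and, dually using Lemma~\ref{lem:Homology}~\eqref{item-lem-homology-ii}, $H^{2k}(B_M;\Lambda) \cong \Lambda$ with generator $v^k \otimes 1$. I would then check that the map $\overline{\nu}_* \colon H_{2k}(M;\Lambda) \to H_{2k}(B_M;\Lambda)$ is the extension of scalars of $\overline{\nu}_* \colon H_{2k}(N;\Z) \to H_{2k}(B_N;\Z)$; this follows because $\overline{\nu}$ restricted to $\mathring N$ agrees with $\overline{\nu}_{a,b}$ on $N$ (as $\overline\nu = p_{2k-1}(M)\times\nu_{2k}$ and on the $2k$-skeleton $M$ and $N$ agree away from the $S^1$ factor), together with the naturality of the identifications in Lemma~\ref{lem:Homology}. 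Hence with respect to these bases $\overline{\nu}_*$ is represented by $\bsm b & a \esm$, its kernel $K_{2k}(M;\Lambda)$ is the free rank one summand generated by $\bsm -a \\ b \esm$, and the pulled-back quadratic form is $(-ab)$; this gives $\iota_{\overline{\nu}}$. For $j_{\overline{\nu}} = \operatorname{PD} \circ \overline{\nu}^*$, I would use that Poincaré duality and $\overline{\nu}^*$ are both compatible with the extension of scalars (Poincaré duality for $M$ with $\Lambda$-coefficients restricts on the $N$-part exactly as in the simply-connected case up to the $S^1\times S^{4k-1}$ summand, whose contribution vanishes in the relevant degrees), so $j_{\overline{\nu}}(v^k) = \operatorname{PD}(z^k) = a\overline x + b \overline y$, i.e.\ $j_{\overline{\nu}}$ is represented by $\bsm a \\ b \esm$ with induced form $(ab)$. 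Finally that $(\iota_{\overline{\nu}},j_{\overline{\nu}})$ is a $2$-sided primitive embedding is guaranteed by Lemma~\ref{lem:GoodImpliesSplit} since we verified in Lemma~\ref{lem:Homology} that $\overline{\nu}$ is even and split-free and $H_{2k}(M;\Lambda)$ carries a nonsingular (hyperbolic) form.

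The main obstacle I anticipate is bookkeeping rather than conceptual: one must make sure that the basis of $H_{2k}(M;\Lambda)$ coming from Lemma~\ref{lem:Homology}~\eqref{item-lem-homology-iii} is compatible with the evaluation/duality identifications of Lemma~\ref{lem:Homology}~\eqref{item-lem-homology-i}--\eqref{item-lem-homology-ii} in the same way that the bases $(x_*,y_*)$ and $(\overline x,\overline y)$ were compatible over $\Z$ in the proof of Proposition~\ref{prop:PrimitiveEmbeddingNab} (recall there $x_* = \overline y$, $y_* = \overline x$). Concretely, the subtle point is tracking how $\overline{\nu}^*$ on second cohomology — here best understood via compactly supported cohomology $H^2_c(\widetilde M;\Z)$, as in the proof of the Claim in Lemma~\ref{lem:Homology} — interacts with the cup product structure so that $v^k$ really is sent to $\operatorname{PD}(z^k)$. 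Once these identifications are pinned down, the matrix computations are identical to those in Proposition~\ref{prop:PrimitiveEmbeddingNab} with $\Z$ replaced by $\Lambda$, and the isomorphism class of $(\iota_{\overline{\nu}},j_{\overline{\nu}})$ is exactly $\bigl((\Lambda,-ab) \xra{\bsm -a \\ b \esm} H_+(\Lambda) \xla{\bsm a \\ b \esm} (\Lambda,ab)\bigr)$, as claimed. Alternatively, and perhaps more cleanly, I would phrase the whole argument as: the $2$-sided primitive embedding of $(M,\overline{\nu})$ is the extension of scalars along $\iota \colon \Z \to \Lambda$ of the $2$-sided primitive embedding of $(N,\overline{\nu}_{a,b})$ computed in Proposition~\ref{prop:PrimitiveEmbeddingNab}, and extension of scalars applied to $\bigl((\Z,-ab) \xra{\bsm -a \\ b \esm} H_+(\Z) \xla{\bsm a \\ b \esm} (\Z,ab)\bigr)$ yields precisely the stated embedding over $\Lambda$.
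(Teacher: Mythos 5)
Your proposal is correct and follows essentially the same route as the paper's proof: reduce to the simply-connected computation for $N_{a,b}$ from Proposition~\ref{prop:PrimitiveEmbeddingNab} by using Lemma~\ref{lem:Homology}~\eqref{item-lem-homology-iii} and~\eqref{item-lem-homology-iv} to identify the relevant $\Lambda$-modules as extensions of scalars, then carry out the same matrix computations over $\Lambda$ with the tensored-up bases. The paper's proof phrases this exactly as ``tensoring up over $\Lambda$'' and makes the same point you flag as the subtle one, namely that $\overline{\nu}(M)^* = \overline{\nu}(N)^* \otimes 1$ under the chosen identifications.
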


\begin{proof}
For ease of notation, set
\[\left( \overline{\nu} \colon M \to B_{2k-1}(M)\right):=\left( \overline{\nu}_{a,b} \colon M_{a,b} \to B_{2k-1}(M_{a,b})\right).\]
In Proposition~\ref{prop:PrimitiveEmbeddingNab}, an analogous statement was proven for the simply-connected $N:=~N_{a, b}$ by performing an explicit computation using the bases~$H^{2k}(N;\Z)=\Z^2 \langle x,y \rangle$ and~$H_{2k}(N;\Z)=\Z^2 \langle \overline{x},\overline{y} \rangle$ and~$H^{2k}(B_{2k-1}(N);\Z)= \Z\langle v^2 \rangle$, where~$v:=\overline{\nu}^{-*}(z)$ with~$z$ the generator of~$H^2(N;\Z)=\Z$.
These bases led to the computation of~$j_{\overline{\nu}}$, while taking the hom-dual bases led to the computation of~$\iota_{\overline{\nu}}$.
Our aim is to show that these statements can be ``tensored over~$\Lambda$''.

We start with the computation of~$j_{\overline{\nu}}$.
Applying the third and fourth assertions of Lemma~\ref{lem:Homology}, we know that~$H_{2k}(M;\Lambda)= H_{2k}(N;\Z) \otimes_\Z \Lambda$ and~$H_{2k}(B_{2k-1}(M);\Lambda)= H_{2k}(B_{2k-1}(N);\Z) \otimes_\Z \Lambda$.
Additionally applying the first and second isomorphisms of Lemma~\ref{lem:Homology} leads to the identifications
\begin{align*}
H^{2k}(B_{2k-1}(M);\Lambda) &\cong \overline{\Hom_\Lambda(H_{2k}(B_{2k-1}(M);\Lambda),\Lambda)} \\
&=\overline{\Hom_\Lambda(H_{2k}(B_{2k-1}(N);\Z),\Z) \otimes_\Z \Lambda} \\ &=H^{2k}(B_{2k-1}(N);\Z) \otimes_\Z \Lambda; \\
H^{2k}(M;\Lambda) &\cong \overline{\Hom_\Lambda(H_{2k}(M;\Lambda),\Lambda)} =\overline{\Hom_\Lambda(H_{2k}(N;\Z),\Z) \otimes_\Z \Lambda}
\\ &=H^{2k}(N;\Z) \otimes_\Z \Lambda.
\end{align*}
Base the $\Lambda$-modules $H^{2k}(M;\Lambda)=\Lambda^2$, $H_{2k}(M;\Lambda)=\Lambda^2$, and $H^{2k}(B_{2k-1}(M);\Lambda)= \Lambda$ with the tensored up bases~$(x \otimes 1, y \otimes 1), (\overline{x} \otimes 1, \overline{y} \otimes 1)$ and~$v^k \otimes 1$ respectively, so that~$z^k \otimes 1 =a(x \otimes 1)+b(y \otimes 1)$.
The computation of~$j_{\overline{\nu}}$ now proceeds as in the simply-connected case:
$$ j_{\overline{\nu}}(v^k \otimes 1)=\operatorname{PD} \circ \overline{\nu}^*(v^k \otimes 1)=\operatorname{PD}(z^k \otimes 1)=\operatorname{PD}(ax \otimes 1+by \otimes 1)=a \overline{x} \otimes 1+b \overline{y} \otimes 1.$$
Note that in the second equality, we use that~$\overline{\nu}(M)^*=\overline{\nu}(N)^* \otimes 1$: we have~$H_{2k}(M;\Lambda)=H_{2k}(N;\Z) \otimes_\Z \Lambda$ as well as
\[H_{2k}(B_{2k-1}(M);\Lambda)=H_{2k}(B_{2k-1}(N);\Z) \otimes_\Z \Lambda =H_{2k}(\CPI;\Z) \otimes_\Z \Lambda,\] and in both cases, the normal smoothings come from the Postnikov tower induced map.

The simply-connected procedure can also be applied to compute~$\iota_{\overline{\nu}}$.
Indeed, we can use Lemma~\ref{lem:Homology} to base~$H_{2k}(B_{2k-1}(M);\Lambda)$ and~$H_{2k}(M;\Lambda)$ with the bases~$v^k_* \otimes 1$ and~$(x_* \otimes 1,y_* \otimes 1)$ that are hom-dual to the aforementioned ones on the cohomology groups.
With respect to these bases,~$\overline{\nu}_*$ is represented by~$\bsm b \\ a \esm$.
Taking the kernel, we deduce that
$K_{2k}(M;\Lambda)= \Lambda \langle \bsm -a \\ b \esm \rangle$, and the conclusion follows.
\end{proof}

\begin{remark}
\label{rem:ItsAllinTheCup}
Let $\overline{\nu} \colon M \to B$ be as in Proposition~\ref{prop:PrimitiveEmbeddingMab} and let $\overline{\nu}' \colon M' \to B$ be another normal~$(2k{-}1)$-smoothing with the same homology and intersection form as $M$.
For instance,~$M'$ could be a manifold obtained by the realisation process of Corollary~\ref{cor:realisePrimitive}.
Thinking of~$H^*(M';\Lambda)$ as the compactly supported cohomology group $H^*_c(\widetilde{M}';\Z)$,
we argue that the data of $\PE(M',\overline{\nu}')$ is determined by the cup product
\begin{align*}
\cup^k  \colon H^2(M';\Lambda) &\to H^{2k}(M';\Lambda) \\
x & \mapsto x^k.
\end{align*}
To see this, note that item~\eqref{item:AllinNu*} of Remark~\ref{rem:DecomposoTopo} asserts that the data of $\PE(M',\overline{\nu}')$ is determined by the map $\overline{\nu}'^* \colon H^{2k}(B;\Lambda) \to H^{2k}(M';\Lambda)$ and use the naturality of the cup product to obtain the following commutative diagram
$$
\xymatrix{
H^{2k}(B;\Lambda) \ar[r]^{\overline{\nu}^*}& H^{2k}(M;\Lambda) \\
H^{2}(B;\Lambda) \ar[r]^{\overline{\nu}^*,\cong}\ar[u]^{\cup^k}& H^{2}(M';\Lambda).\ar[u]^{\cup^k}
}
$$
The left hand side $\cup^k$ is determined by $B$, while the bottom isomorphism is determined by choosing once and for all a generator of $H^2(B;\Lambda)=H^2(\C P^\infty;\Lambda) \cong H^2(\C P^\infty) \otimes_\Z \Lambda=\Lambda$ and basing $H^2(M;\Lambda)=\Lambda$ by the image $z$ of that generator.
This shows that the data of~$\PE(M',\overline{\nu}')$ is determined by $\cup^k$ and thus by $z^k \in H^{2k}(M;\Lambda)=\Lambda^2$.
\end{remark}

For~$b=1$ we can explicitly compute the 2-sided primitive embedding and boundary automorphism associated to~$\overline{\nu}_{a,1}$.

\begin{corollary}
\label{cor:BoundaryAutomorphismIsTrivialZZ}
If~$(2k)!$ divides~$a$, then the~$2$-sided primitive embedding~$(\iota_{\overline{\nu}},j_{\overline{\nu}})$ associated to the~$(2k{-}1)$-smoothing~$\overline{\nu}_{a,1} \colon M_{a,1} \to B_{2k-1}(M_{a,1})$ is isomorphic to
$$(\Lambda,-a) \xra{\bsm 1 \\ 0 \esm}  (\Lambda,-a) \cup_{\id} (\Lambda,a) \xla{\bsm 1 \\ 2a \esm} (\Lambda,a).$$
In particular, the boundary automorphism~$\delta_{\overline{\nu}}$ is trivial in~$\lAut(\partial (\Lambda,-a))$.
\end{corollary}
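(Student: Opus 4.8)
The plan is to deduce the corollary from Proposition~\ref{prop:PrimitiveEmbeddingMab} and Theorem~\ref{thm:bisoembprp} by a single explicit change of coordinates. First I would apply Proposition~\ref{prop:PrimitiveEmbeddingMab} with $b=1$: since $\gcd(a,1)=1$ and $(2k)!$ divides $a=a\cdot1$ by hypothesis, that proposition gives that the $2$-sided primitive embedding $(\iota_{\overline{\nu}},j_{\overline{\nu}})$ associated to $\overline{\nu}_{a,1}$ is isomorphic to $(\Lambda,-a)\xra{\bsm-a\\1\esm}H_{+}(\Lambda)\xla{\bsm a\\1\esm}(\Lambda,a)$. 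Since $q=2k$ is even we have $\varepsilon=1$, so by Example~\ref{ex:ComponentsBoundary} the $a$-, $b$- and $s$-components of the identity automorphism $\id\in\Aut(\partial(\Lambda,-a))$ are $1$, $1$ and $0$; hence by Definition~\ref{def:AlgebraicGluing} the form $(\Lambda,-a)\cup_{\id}(\Lambda,a)$ has underlying module $\Lambda^{2}$ and matrix $\bsm-a&0\\1&0\esm$. Moreover the symmetrisation of $(\Lambda,-a)$ is $\lambda'=-2a$, so the canonical inclusions of Lemma~\ref{lem:IsometricEmbedding} into this union are $j_{\id}=\bsm1\\0\esm$ and $j_{\id}'=\bsm b\\-\lambda'\esm=\bsm1\\2a\esm$.

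Next I would write down the map
\[
\phi:=\bsm0&1\\1&a\esm\colon H_{+}(\Lambda)\longrightarrow(\Lambda,-a)\cup_{\id}(\Lambda,a).
\]
A one-line matrix computation gives $\phi^{*}\bsm-a&0\\1&0\esm\phi=\bsm0&1\\0&0\esm$, so $\phi$ preserves the quadratic refinements, and $\det\phi=-1\in Z$, so $\phi$ is a simple isometry. Since in addition $\phi\bsm-a\\1\esm=\bsm1\\0\esm$ and $\phi\bsm a\\1\esm=\bsm1\\2a\esm$, the isometry $\phi$ fits into a commutative square identifying the $2$-sided primitive embedding of the previous paragraph with $(\Lambda,-a)\xra{\bsm1\\0\esm}(\Lambda,-a)\cup_{\id}(\Lambda,a)\xla{\bsm1\\2a\esm}(\Lambda,a)$, which together with the previous paragraph proves the first assertion of the corollary.

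For the final statement, note that the $2$-sided primitive embedding just displayed is by construction the pair of canonical inclusions $(j_{\id},j_{\id}')$ of Lemma~\ref{lem:IsometricEmbedding} associated to $\id\in\Aut(\partial(\Lambda,-a))$; equivalently, it equals $\Pr(\id)$ in the sense of Construction~\ref{con:Pr}, taking the auxiliary isometry $k$ there to be the identity. In the present case $(H^{q}(B;\Lambda),-\theta_{M})=(\Lambda,-a)=(K_{q}(M;\Lambda),\theta_{M})$, so $\delta_{\overline{\nu}}=\delta(\iota_{\overline{\nu}},j_{\overline{\nu}})$ genuinely lies in $\Aut(\partial(\Lambda,-a))$ and it makes sense to speak of its class in $\lAut(\partial(\Lambda,-a))$. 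Since $\delta$ of Construction~\ref{con:delta} depends only on the isomorphism class of a $2$-sided primitive embedding, we get $\delta_{\overline{\nu}}=\delta(\Pr(\id))$, and since by Theorem~\ref{thm:bisoembprp} the map $\delta$ is inverse to $\Pr$ after passing to $\lIso$, this yields $[\delta_{\overline{\nu}}]_{\ell}=[\delta(\Pr(\id))]_{\ell}=[\id]_{\ell}$ in $\lAut(\partial(\Lambda,-a))$, i.e.\ $\delta_{\overline{\nu}}$ is trivial there.

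The one point that needs care, and hence the main potential pitfall, is bookkeeping with the sign and side conventions: that $\varepsilon=+1$ in this dimension; that $(\Lambda,-a)\cup_{\id}(\Lambda,a)$ must be read as $(V,\theta)\cup_{f}(V',-\theta')$ with $(V,\theta)=(V',\theta')=(\Lambda,-a)$ and $f=\partial(\id_{(\Lambda,-a)})$; and that the embedding of Proposition~\ref{prop:PrimitiveEmbeddingMab} carries the $(\Lambda,-a)$-summand on the left and the $(\Lambda,a)$-summand on the right, so that the target of $\delta_{\overline{\nu}}$ really is $\Aut(\partial(\Lambda,-a))$ and ``trivial in $\lAut$'' is a meaningful assertion. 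Beyond these checks, no new geometric or algebraic input is required.
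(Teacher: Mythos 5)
Your proposal is correct and takes essentially the same route as the paper: both specialise Proposition~\ref{prop:PrimitiveEmbeddingMab} to $b=1$, observe that $(\Lambda,-a)\cup_{\id}(\Lambda,a)$ has matrix $\bsm -a&0\\1&0\esm$ with the canonical inclusions $\bsm 1\\0\esm$ and $\bsm 1\\2a\esm$, conjugate by the isometry $\bsm 0&1\\1&a\esm \colon H_+(\Lambda)\to(\Lambda,-a)\cup_{\id}(\Lambda,a)$, and then conclude via Theorem~\ref{thm:bisoembprp} that $\delta_{\overline{\nu}}$ is trivial in $\lAut(\partial(\Lambda,-a))$. (Incidentally, your matrix $\bsm 0&1\\1&a\esm$ agrees with the paper's prose and is the one that actually makes the square commute; the displayed diagram in the paper writes $\bsm 0&1\\1&-a\esm$, which appears to be a typo.)
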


\begin{proof}
For~$b=1$, Proposition~\ref{prop:PrimitiveEmbeddingMab} implies that the 2-sided primitive embedding associated to~$\overline{\nu}_{a,1}$~is
$$  (j_{a,1},\iota_{a,1})=\left( (\Lambda,-a)  \xrightarrow{\bsm -a \\ 1 \esm} (\Lambda^2,\bsm 0&1\\ 0&0 \esm) \xleftarrow{\bsm a \\ 1 \esm}  (\Lambda,a) \right).~$$
The quadratic form on the union~$(\Lambda,-a) \cup_{\id} (\Lambda,a)$ is represented by~$\bsm -a&0 \\ 1 &0 \esm$.
A quick verification shows that the isometry~$\bsm 0&1\\ 1& a  \esm
\colon H_+(\Lambda) \to (\Lambda,-a) \cup_{\id} (\Lambda,a)$ provides the desired isomorphism of~2-sided primitive embeddings:
$$
\xymatrix{
(\Lambda,-a) \ar[r]^-{\bsm 1 \\ 0 \esm}  & (\Lambda,-a) \cup_{\id} (\Lambda,a) &  (\Lambda,a) \ar[l]_-{\bsm 1 \\ 2a \esm}\\
(\Lambda,-a) \ar[r]^{\bsm -a \\ 1 \esm} \ar[u]^=& H_+(\Lambda) \ar[u]_{\bsm 0&1\\1&-a \esm} & (\Lambda,a). \ar[l]_-{\bsm a \\ 1 \esm}\ar[u]^=
}
$$
The assertion on boundary isomorphisms now follows from the correspondence between 2-sided primitive embeddings and boundary isomorphisms (Theorem~\ref{thm:bisoembprp}).
\end{proof}

\section[Proof of Theorem 9.1]{Proof of Theorem~\ref{thm:InfiniteStableClassPi1Z}}

We can now prove Theorem \ref{thm:InfiniteStableClassPi1Z}, which states the existence of infinitely many stably parallelisable
${4k}$-manifolds~$M_1,M_2,\ldots$ with~$\pi_1(M_i)=\Z$ that are stably parallelisable, have hyperbolic intersection form, equal Euler characteristic, are pairwise not stably diffeomorphic, and satisfy the following: for each~$i$, there are infinitely many~${4k}$-manifolds~$M_i^1,M_i^2,\ldots$ with~$\pi_1(M_i^j)=\Z$ that are stably parallelisable, have hyperbolic intersection form, are all stably diffeomorphic to~$M_i$, but are pairwise not homotopy equivalent.

\begin{proof}[Proof of Theorem \ref{thm:InfiniteStableClassPi1Z}]
Fix $k \geq 2$.
Given an integer $i>0$, define integers $p = p(i):= -i(2k)!$ and $q = q(i):= (1-4p^2)>0$.
For each $i$, the closed smooth~${4k}$-manifold
\[ M_i = M_{pq,1} : =N_{pq,1} \# (S^1 \times S^{4k-1}) \]
is stably parallelisable and has fundamental group~$\Z$.
The description of the cohomology ring of~$N_{pq, 1}$ from Proposition~\ref{prop:ManifoldNab} implies that the~$M_{pq, 1}$ are pairwise not stably diffeomorphic.
Next, consider the~$(2k{-}1)$-smoothing \[\overline{\nu}_{pq, 1} \colon M_{pq, 1} \to B_{2k-1}(M_{pq, 1}) = P_{2k-1}(M_{pq, 1}) \times BO\an{4k}\]
 constructed in Lemma~\ref{lem:Normal3TypeMab}.
Corollary~\ref{cor:BoundaryAutomorphismIsTrivialZZ} shows that the boundary automorphism associated to~$(M_{pq, 1},\overline{\nu}_{pq, 1})$ is trivial in~$\bAut(\partial(\Lambda,-q))$.
As~$pq=p(1-4p^2)$, Theorem~\ref{cor:infiniteAutHyperbolic} shows that the boundary automorphism group~$\bAut_{H_+(\Lambda)}(\partial (\Lambda, pq))$ is infinite.
For each $n \in \Z$, we constructed a boundary automorphism $[t_n]$, and we proved that the map
\[\Z \to \bAut_{H_+(\Lambda)}(\partial (\Lambda, pq));\;\; n \mapsto [t_n]\]
 is injective.
Recall from Construction~\ref{cons:Action} that there is an action of~$\Aut(\pi_1(M_{pq, 1}))=\Z_2$
on~$\operatorname{bPrim}^{\Z}:=\bigcup_{[m,v,v']} \bTwoPrim_m(v,v')$.
Use the bijection from Theorem~\ref{thm:bisoembprp} to pull this action back to~$\bIso^{\Z}:=\bigcup_{[m,v,v']} \bIso_m(v,v')$.
Under the bijection from Theorem~\ref{thm:bisoembprp}, $[t_n]$ is sent to the class of the primitive embedding $(j_n,j'_n)$ from Example~\ref{ex:M_iPE}.  Using Construction~\ref{cons:Action}, we see that the action of the generator of $\Aut(\Z)$ sends $(j_n,j'_n)$ to $(j_{-n},j'_{-n})$. It follows that the action of the generator $\Aut(\Z)$ on $[t_n]$ yields $[t_{-n}]$.

Using that~$\bAut_{H_+(\Lambda)}(\partial (\Lambda,-q))$ is infinite and applying our realisation result, we will construct infinitely many homotopically inequivalent manifolds.

Corollary~\ref{cor:realisePrimitive} implies the following: for each~$[f] \in \bAut_{H_+(\Lambda)}(\partial(\Lambda, pq))$, there is a normal~$(2k{-}1)$-smoothing~$(M_{pq, 1}^f,\overline{\nu}_{pq, 1}^f)$ such that the~${4k}$-manifold~$M_{pq, 1}^f$ has hyperbolic intersection form, is stably diffeomorphic to~$M_{pq, 1}$, and the boundary isomorphism associated to~$(M_{pq, 1}^f,\overline{\nu}_{pq, 1}^f)$~is
\[\delta_{pq, 1}^f=[f]  \in \operatorname{bAut}_{H_+(\Lambda)}(\partial(\Lambda, pq)).\]

The action by $\Aut(\pi_1(M_{pq, 1})) = \Aut(\Z) = \Z_2$ will lead to some of these boundary automorphisms being identified in $\bAut^{\Z}/\Z_2$.
For our infinite family of elements $\{[t_n] \mid n \in \Z\}$, we saw that the action of $\Z_2$ identifies $[t_n]$ with $[t_{-n}]$. Therefore $\mathcal{F} := \{[t_{n}] \mid n \in \mathbb{N}_0\}$ is an infinite family of elements of $\operatorname{bAut}_{H_+(\Lambda)}(\partial(\Lambda, pq))$ that are pairwise distinct in $\bAut^\Z/\Z_2$.
%
%
In particular, for any $f_1,f_2 \in \mathcal{F}$, the $2$-sided primitive embeddings $\PE(M_{pq, 1}^{f_1},\overline{\nu}_{pq, 1}^{f_1})$ and~$\PE(M_{pq, 1}^{f_2},\overline{\nu}_{pq, 1}^{f_2})$ do not agree in $\bTwoPrim^\Z/\Z_2$.

For a contradiction, assume that some pair of manifolds among the~$M_{pq, 1}^f$ is homotopy equivalent, say $M_{pq, 1}^{f_1}$ and $M_{pq, 1}^{f_2}$.
By Lemma~\ref{lem:OriReversal}, the homotopy equivalence must be orientation preserving:
indeed each manifold~$M_{pq, 1}^f$ is stably diffeomorphic to~$M_{pq, 1}$, a manifold with~$H^2(M_{pq, 1};\Z)=\Z\langle z\rangle$, $z^{2k} \neq 0$, and $z^{2k} \cap [M_{pq, 1}] >0$.
For an orientation preserving homotopy equivalence, Corollary~\ref{cor:HomotopyInvariance} implies that~$\PE(M_{pq, 1}^{f_1},\overline{\nu}_{pq, 1}^{f_1})$ and~$\PE(M_{pq, 1}^{f_2},\overline{\nu}_{pq, 1}^{f_2})$  agree in~$\bTwoPrim^{\Z}/\Z_2$.
This is a contradiction,
%
concluding the proof of the first part of Theorem \ref{thm:InfiniteStableClassPi1Z}.

To see that $M_i^j \# W_1$ is diffeomorphic to $M_i^j \# W_1$ for any pair $\{j, j'\}$
we note that each $M_i^j$ is produced from $M_i = M_i^0$ by applying
Theorem~\ref{thm:Realisation2} to some $f \in \Aut_{H_+(\Lambda)}(\partial(\Lambda, pq))$.
The proof of Theorem~\ref{thm:Realisation2} then shows that
$M_i^j \# W_1 \cong M_i \# W_1$.

To see that $M_i^j$ is homotopy equivalent to $N \# (S^1 \times S^{4k-1})$ for some simply-connected
$4k$-manifold $N$ if and only if $j = 0$, we consider
the primitive embedding $\PE(M_{pq, 1}^{f},\overline{\nu}_{pq, 1}^{f})$,
where $[f] = [t_n] \in \bAut(\partial(\Lambda, pq))$.
If there is a homotopy equivalence $M_i^j \simeq N \# (S^1 \times S^{4k-1})$ then
$\PE(M_{pq, 1}^{f},\overline{\nu}_{pq, 1}^{f})$ will be extended over the inclusion of
rings $\iota \colon \Z \to \Lambda$.
But by Example~\ref{ex:M_iPE}, $\PE(M_{pq, 1}^{f},\overline{\nu}_{pq, 1}^{f})$ is extended over
$\iota$ if and only if $n = 0$; i.e.~if and only if $j = 0$.
\end{proof}


\chapter[Extended symmetric forms]{Extended symmetric forms and the homotopy stable class}\label{section:proof-theorem-extended}
In this chapter we prove Theorem~\ref{thm:Extended}. For the convenience of the reader we recall the terminology and the statement as well as prove a preliminary result.

Fix a unital ring with involution $R$.
An \emph{extended symmetric form over $R$} is a triple
$(H, \lambda, \nu)$, where $\lambda \colon H \times H \to R$ is a sesquilinear form
and $\nu \colon H \to Q$ is an $R$-module homomorphism.
For example, for any $Q$,  the standard hyperbolic form $(H_g, \lambda_g)$ on $\Lambda^{2g}$ and the zero homomorphism $H_g \to Q$ define the extended symmetric form $(H_g, \lambda_g, 0)$.
Two extended symmetric forms $(H, \lambda, \nu)$ and $(H', \lambda', \nu')$ are \emph{isometric} if there is an isometry $f \colon (H,\lambda) \to (H',\lambda')$ and an isomorphism $h \colon Q \to Q'$ such that $\nu' f=h\nu$.

%

\begin{proposition}\label{prop:t}
Consider the set $\operatorname{bES}^{\Z}$ of isometry classes of extended symmetric forms $(H,\lambda,\nu \colon H \to Q)$ with
$H$ and $Q$ f.g.\ free, $\mathrm{rank}(H) = 2 \mathrm{rank}(Q)$, $\lambda$ even, and $\nu$ surjective.
The assignment
\[
\big(H,\lambda,\nu \colon H \to Q \big)  \mapsto \big(\ker(\nu) \xhookrightarrow{i_\nu} H  \xhookleftarrow{j_\nu}  Q^*\big)
\]
determines a map
\[t \colon \operatorname{bES}^{\Z} \rightarrow \bTwoPrim^{\Z}\]
from $\operatorname{bES}^{\Z}$ to the set of isomorphism classes of 2-sided primitive embeddings $\bTwoPrim^{\Z}$.
\end{proposition}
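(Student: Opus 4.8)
The plan is to verify directly that the assignment produces a well-defined $2$-sided primitive embedding, so that the formula genuinely lands in $\bTwoPrim^\Z$, and that isomorphic extended symmetric forms are sent to isomorphic $2$-sided primitive embeddings. First I would unpack the data: given $(H,\lambda,\nu\colon H\to Q)$ in $\operatorname{bES}^\Z$, set $\theta$ to be the unique quadratic refinement of $\lambda$ (which exists since $\lambda$ is even; over $\Z$ with the trivial involution this is the standard fact that even symmetric forms have a unique quadratic refinement), so that $(H,\theta)$ is an $\varepsilon$-quadratic form with $\varepsilon=+1$. I would then check that $i_\nu\colon\ker(\nu)\hookrightarrow H$ is a split isometric injection: since $\nu$ is surjective onto the free module $Q$, the sequence $0\to\ker(\nu)\to H\xrightarrow{\nu}Q\to 0$ splits, so $\ker(\nu)$ is a direct summand, and it is automatically a subform with the restricted quadratic form; finite generation and freeness of $\ker(\nu)$ follow since it is a summand of the f.g.\ free module $H$ (here $\Z$ being a PID, or just the splitting, suffices). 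For $j_\nu$, I would take $j_\nu$ to be the composite $Q^*\xrightarrow{\nu^*}H^*\xrightarrow{\lambda^{-1}}H$ — wait, $\lambda$ need not be nonsingular; instead $j_\nu$ should be characterised by the adjunction $\lambda(j_\nu(\varphi),x)=\varphi(\nu(x))$ for all $x\in H$, $\varphi\in Q^*$, which is exactly the manifold-level formula \eqref{eq:TopPrimitiveEmbedding} read algebraically. I would need the hypothesis that the extended form is "nonsingular enough" for $j_\nu$ to be a well-defined split injection; the cleanest route is to mirror the proof of Lemma~\ref{lem:GoodImpliesSplit}, using that $\nu^*\colon Q^*\to H^*$ is a split injection (as $\nu$ is a split surjection) and that the relevant evaluation maps are isomorphisms because $(H,\theta)$ is nonsingular. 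This is presumably guaranteed by the definition of $\operatorname{bES}^\Z$ together with the rank condition $\mathrm{rank}(H)=2\,\mathrm{rank}(Q)$, which forces $\ker(\nu)$ and $Q^*$ to have complementary rank inside $H$.

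The key structural step, and the one I expect to be the main obstacle, is proving the orthogonality relation $i_\nu(\ker\nu)^\perp = j_\nu(Q^*)$ inside $(H,\lambda)$. One inclusion is formal: if $x\in\ker(\nu)$ and $\varphi\in Q^*$, then $\lambda(j_\nu(\varphi),x)=\varphi(\nu(x))=\varphi(0)=0$, so $j_\nu(Q^*)\subseteq\ker(\nu)^\perp$. The reverse inclusion $\ker(\nu)^\perp\subseteq j_\nu(Q^*)$ requires a diagram chase exactly as in the second half of the proof of Lemma~\ref{lem:GoodImpliesSplit}: given $y\in H$ with $\lambda(y,z)=0$ for all $z\in\ker(\nu)$, one argues that the adjoint $\lambda^\flat(y)\in H^*$ lies in $\ker(i_\nu^*)=\operatorname{im}(\nu^*)$ (using exactness of the dualised split sequence and freeness of $\ker(\nu)$, so no $\operatorname{Ext}$ obstruction), hence $\lambda^\flat(y)=\nu^*(\varphi)$ for some $\varphi\in Q^*$; then nonsingularity of $(H,\theta)$ (equivalently, bijectivity of $\lambda^\flat$ after identifying $Q^*$ appropriately — here I must be careful because $\lambda$ itself need not be invertible, so the argument should go through the evaluation/duality maps of the ambient nonsingular form, not $\lambda$ directly) gives $y=j_\nu(\varphi)$. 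The subtlety is that $\lambda$ on $H$ is the symmetrisation of a nonsingular $\theta$, so $\lambda^\flat\colon H\to H^*$ is in fact an isomorphism (over $\Z$, even $+1$-quadratic nonsingular means the symmetrisation is unimodular), which removes the subtlety; I would state this observation up front.

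Having established that $i_\nu\hookrightarrow H\hookleftarrow j_\nu$ is a $2$-sided primitive embedding, it remains to check functoriality on isomorphism classes. Given an isomorphism of extended symmetric forms, i.e.\ an isometry $g\colon(H,\lambda)\to(H',\lambda')$ and an isomorphism $h\colon Q\to Q'$ with $\nu'g=h\nu$, I would verify that $g$ restricts to an isomorphism $\ker(\nu)\xrightarrow{\cong}\ker(\nu')$ (immediate from $\nu'g=h\nu$ and $h$ injective), that $g$ is an isometry $(\ker\nu,\theta|)\to(\ker\nu',\theta'|)$ (restriction of an isometry), and that $g\circ j_\nu = j_{\nu'}\circ (h^{-*})$ — this last identity follows by pairing both sides against arbitrary $x'\in H'$ and using the defining adjunction for $j_\nu,j_{\nu'}$ together with $\nu'g=h\nu$ and the fact that $g$ is an isometry. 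These three facts assemble into the commuting square required for isomorphism of $2$-sided primitive embeddings (with the identification $Q^*\cong Q'^*$ via $h^{-*}$, which is where the superscript-$\Z$ bookkeeping of $\bTwoPrim^\Z$ as a universal collection over isometry classes of triples comes in). Since everything is natural, $t$ is well-defined on isomorphism classes, completing the proof. I do not expect any step beyond the orthogonality lemma to require real work; the overall structure is a faithful algebraic transcription of Lemma~\ref{lem:GoodImpliesSplit} and Remark~\ref{rem:DecomposoTopo}(iii).
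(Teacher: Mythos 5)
Your proposal is correct and follows essentially the same approach as the paper: $i_\nu$ is the inclusion of $\ker(\nu)$, $j_\nu = \widehat{\lambda}^{-1}\circ\nu^*$, the $2$-sided primitive embedding property is obtained by the same argument as in Lemma~\ref{lem:GoodImpliesSplit}, and well-definedness on isomorphism classes is checked via the same commuting diagram with vertical maps $f|$, $f$, $g^*$. Your back-and-forth about whether $\widehat{\lambda}$ is invertible is unnecessary: the paper uses $\widehat{\lambda}^{-1}$ directly, and as you observe at the end, nonsingularity of the middle form in a $2$-sided primitive embedding is required by definition, so it is built into the codomain $\bTwoPrim^{\Z}$.
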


The latter set can be thought of as the union of $\bTwoPrim_m(v,v')$ over all isomorphism classes of $m$, $v$, and $v'$.

\begin{proof}
We consider extended symmetric forms~$(H,\lambda,\nu \colon H \to Q)$ where $H$ and $Q$ are f.g.\ free, $\mathrm{rank}(H) =  2 \mathrm{rank}(Q)$,
$\lambda$ even, and $\nu$ surjective.
The inclusion~$i_{\nu} \colon \ker(\nu) \subseteq H$ and the composition
of~$\nu^* \colon Q^* \to H^*$ with~$\widehat{\lambda}^{-1} \colon H^* \to H$ (the inverse of the adjoint of~$\lambda$) produce
a pair $(i_\nu, j_\nu := \wh \lambda^{-1} \circ \nu^*)$ of $\Lambda$-homomorphisms that we claim fit into a $2$-sided primitive embedding
\[
\ker(\nu) \xhookrightarrow{i_\nu} H  \xhookleftarrow{j_\nu}  Q^*.
\]
Here we use the assumption that $\lambda$ is an even symmetric form, and that the orientation character is trivial to see that $(H,\lambda)$ admits a uniquely determined quadratic refinement~$\theta$.
Use~$i_{\nu}$ and~$j_{\nu}$ to pull the quadratic form~$\theta$ back from~$H$ to~$\ker(\nu)$ and~$Q^*$ respectively.
This leads to the quadratic forms:
\[
(\ker(\nu),\theta):=(\ker(\nu),i_{\nu}^*\theta i_\nu) \text{ and } (Q^*,\theta):=(Q^*,j_{\nu}^*\theta j_{\nu}).
\]
Observe that both~$i_\nu$ and~$j_\nu$ are injective:~$i_\nu$ is an inclusion, while for~$j_\nu$ use that the dual of a surjective map is injective.
The same verification as the one made in the proof of Lemma~\ref{lem:GoodImpliesSplit} implies that we have a 2-sided primitive embedding
\[t(H,\lambda,\nu):=\left( (\ker(\nu),\theta)
\xhra{i_\nu} (H,\theta)  \xhla{j_\nu} (Q^*,\theta) \right)\]
as claimed.

Next we claim that $t$ is well defined on isomorphism classes of extended symmetric forms with $H$ and $Q$ f.g.\ free,
$\mathrm{rank}(H) = 2 \mathrm{rank}(Q)$, $\lambda$ even, and $\nu$ surjective.
To prove this, assume that~$(f,g)$ is an isometry between extended symmetric forms $(H_1,\lambda_1,\nu_1)$ and  $(H_2,\lambda_2,\nu_2)$ in $\operatorname{bES}^{\Z}$.
We must show that~$t(H_1,\lambda_1,\nu_1)$ and~$t(H_2,\lambda_2,\nu_2)$ agree in~$\bTwoPrim^{\Z}$.
By assumption, we have an isometry~$f \colon (H_1,\lambda_1) \to (H_2,\lambda_2)$ and an isomorphism~$g \colon Q_1 \to Q_2$ such that~$\nu_2  f =g  \nu_1$.
Since $\theta_1$ and $\theta_2$ are determined by $\lambda_1$ and $\lambda_2$ respectively, it follows that $f_*$ also determines an isometry of the quadratic refinements.
In particular,~$f$ restricts to an isometry~$f| \colon (\ker(\nu_1),\theta_1) \to (\ker(\nu_2),\theta_2)$.
Additionally, the isomorphism~$g \colon Q_1 \to Q_2$ induces an isomorphism~$g^* \colon Q_2^* \to Q_1^*$ and consider the following diagram:
\[
\xymatrix@R+0.5cm{
(\ker(\nu_1),\theta_1) \ar[r]^{i_{\nu_1}} \ar[d]^{f_|}_\cong &(H_1,\theta_1) \ar[d]^{f}_\cong& \ar[l]_{j_{\nu_1}} (Q_1^*,\theta_1), \\
(\ker(\nu_2),\theta_2) \ar[r]^{i_{\nu_2}} &(H_2,\theta_2) & \ar[l]_{j_{\nu_2}} (Q_2^*,\theta_2). \ar[u]^\cong_{g^*}
}
\]
The left square commutes because~$i_{\nu_1}$ and $i_{\nu_2}$ are inclusion maps.
The right square is seen to commute by using successively the definition of~$j_{\nu_1}$, the fact that~$\nu_2  f =g  \nu_1$, the fact that~$f$ is an isometry, and the definition of~$j_{\nu_2}$:
\[ f  j_{\nu_1}  g^*
=f  (\widehat{\lambda}_1^{-1}  \nu_1^*)  g^*
=f  \widehat{\lambda}_1^{-1}  f^*  \nu_2^*
=\widehat{\lambda}_2^{-1}    \nu_2^*
= j_{\nu_2}.
\]
One can then check that $g^*$ is in fact an isometry of the quadratic forms.
This completes the proof that the map $t$ is well-defined on isomorphism classes, and therefore completes the proof of the proposition.
\end{proof}

We use this map in the proof of Theorem~\ref{thm:Extended}, whose statement we recall for the convenience of the reader.

The \emph{stable class} of an extended symmetric form $(H, \lambda, \nu)$ is the set of isometry classes of extended symmetric forms over~$R$ that become isometric to
$(H, \lambda, \nu)$ after stabilisation with $(H_g, \lambda_g, 0)$ for some $g \geq 0$:
\[ \SC(H, \lambda, \nu) := \{(H', \lambda', \nu') \mid (H', \lambda', \nu') \cong_{\text{st}} (H, \lambda, \nu) \}/\text{isometry}.\]
For $q=2k$ even, if we stabilise a normal $(q{-}1)$-smoothing~$(M, \overline \nu)$ by adding $(W_g, \overline \nu_g)$, where $\overline \nu_g$ is the trivial $B$-structure on $W_g$, then the effect on the extended symmetric form $\ol \lambda(M, \overline \nu) := (H_{2k}(M; \Lambda), \lambda_{M}, \overline \nu_*)$ is to stabilise
by $(H_g, \lambda_g, 0)$:
\[ \ol \lambda(M \# W_g, \overline \nu \# \overline \nu_g) = \ol \lambda(M, \overline \nu) \oplus (H_g, \lambda_g, 0).\]
Here we set $\Lambda := \Z[\pi_1(B)]$.
Theorem~\ref{thm:Extended} from the introduction now reads as follows.
%

\begin{theorem}\label{thm:Extended-later}
For the $4k$-dimensional manifolds~$M_i$ with $\pi_1(M_i) \cong \Z$, $k \geq 2$, constructed in Chapter~\ref{sec:examples} for the proof of Theorem \ref{thm:InfiniteStableClassPi1ZIntro},
\[ \ol \lambda \colon \SC_+(M_i)/L_{4k+1}(\Lambda) \to \SC(\ol \lambda(M_i,\ol{\nu}_i))\]
is surjective and has infinite image.  Moreover $\Aut(\Z) = \{\pm 1\}$ is finite, so after further identifying two extended symmetric forms related by changing the identification of the fundamental group with $\Z$, the image remains infinite.
\end{theorem}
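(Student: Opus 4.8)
\textbf{Proof plan for Theorem~\ref{thm:Extended-later}.}
The plan is to reduce the statement to what has already been established about $2$-sided primitive embeddings in Section~\ref{sec:examples}, via the map $t$ of Proposition~\ref{prop:t}. First I would unpack the definitions: by construction in Section~\ref{sec:examples}, each $M_i = M_{pq,1}$ carries the normal $(2k{-}1)$-smoothing $\overline{\nu}_i \colon M_i \to B_{2k-1}(M_i) = P_{2k-1}(M_i) \times BO\langle 4k\rangle$, and by Lemma~\ref{lem:Homology} this is an even, split-free normal $(2k{-}1)$-smoothing with $H_{2k}(M_i;\Lambda)$ free and hyperbolic intersection form. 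Assumption~(iii) preceding the statement of Theorem~\ref{thm:Extended} identifies $Q = H_{2k}(P_{2k-1}(M_i);\Lambda)$ (which by Lemma~\ref{lem:Homology}~\eqref{item-lem-homology-iv} is free of rank $1$), so the extended symmetric form $\overline{\lambda}(M_i,\overline{\nu}_i) = (H_{2k}(M_i;\Lambda),\lambda_{M_i},\overline{\nu}_{i*})$ lies in $\operatorname{bES}^{\Z}$: the rank condition $\mathrm{rank}(H)=2\,\mathrm{rank}(Q)$ holds, $\lambda_{M_i}$ is even, and $\overline{\nu}_{i*}$ is surjective since $\overline{\nu}_i$ is $2k$-connected. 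The point is that every element of $\SC(\overline{\lambda}(M_i,\overline{\nu}_i))$ also lies in $\operatorname{bES}^{\Z}$ (stabilising by $(H_g,\lambda_g,0)$ preserves all the structural hypotheses), so $t$ restricts to a map $\SC(\overline{\lambda}(M_i,\overline{\nu}_i)) \to \bTwoPrim^{\Z}$, and I would check that applying $t$ to $\overline{\lambda}(M_i,\overline{\nu}_i)$ recovers exactly $\PE(M_i,\overline{\nu}_i)$ up to the canonical identifications — this is essentially the content of Definition~\ref{def:PrimitiveEmbeddingManifold} together with Poincar\'e duality and Lemma~\ref{lem:Homology}~\eqref{item-lem-homology-i}.

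Next I would assemble the commuting square relating the geometric and algebraic pictures. On the geometric side, the realisation machinery (Corollary~\ref{cor:realisePrimitive}, invoked in the proof of Theorem~\ref{thm:InfiniteStableClassPi1Z}) produces, for each $[f] \in \bAut_{H_+(\Lambda)}(\partial(\Lambda,pq))$, a manifold $M_i^f$ stably diffeomorphic to $M_i$ with hyperbolic intersection form, whose associated $2$-sided primitive embedding $\PE(M_i^f,\overline{\nu}_i^f)$ realises $[f]$ under the bijection $\Pr$ of Theorem~\ref{thm:bisoembprp}. Passing to extended symmetric forms, $M_i^f$ has the same homology and intersection form as $M_i$ (Corollary~\ref{cor:realisePrimitive}(ii)), and $\overline{\lambda}(M_i^f,\overline{\nu}_i^f) \in \operatorname{bES}^{\Z}$ with $t(\overline{\lambda}(M_i^f,\overline{\nu}_i^f)) = \PE(M_i^f,\overline{\nu}_i^f)$; since $M_i^f \in \SC_+(M_i)$, its extended symmetric form lies in $\SC(\overline{\lambda}(M_i,\overline{\nu}_i))$. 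This already handles \emph{infinite image} and \emph{surjectivity}: for infinite image, take the infinite family $\mathcal{F} = \{[t_n] \mid n \in \mathbb{N}_0\}$ from the proof of Theorem~\ref{thm:InfiniteStableClassPi1Z}, which are pairwise distinct in $\bTwoPrim^{\Z}/\Z_2$ (this uses Proposition~\ref{prop:InfinitebAut}); since $t$ is functorial and compatible with the $\Aut(\Z)$-action (one checks that $t$ intertwines the coefficient-twisting action on $\operatorname{bES}^{\Z}$ with the action of Construction~\ref{cons:Action} on $\bTwoPrim^{\Z}$), the classes $\overline{\lambda}(M_i^{t_n},\overline{\nu}_i^{t_n})$ are pairwise distinct in $\SC(\overline{\lambda}(M_i,\overline{\nu}_i))$ even after modding out by the change-of-identification $\Z_2$-action, and they all lie in the image of $\overline{\lambda}$ composed through $\SC_+(M_i)/L_{4k+1}(\Lambda)$. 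For surjectivity, given any $(H',\lambda',\nu') \in \SC(\overline{\lambda}(M_i,\overline{\nu}_i))$, I would use $t$ and Theorem~\ref{thm:bisoembprp} to convert it to a boundary isomorphism in $\bIso_m(\partial v, \partial v')$ with the same induced (split-free, even) data as $\overline{\nu}_i$, verify it lies in $\bAut_{H_+(\Lambda^r)}(\partial(V,\theta))$ after stabilisation, apply Corollary~\ref{cor:realisePrimitive} (or more precisely Theorem~\ref{thm:Realisation2}) to realise it by a manifold $M'$ stably diffeomorphic to $M_i$, and check $\overline{\lambda}(M',\overline{\nu}') \cong_{\mathrm{st}} (H',\lambda',\nu')$; the factorisation through $\SC_+(M_i)/L_{4k+1}(\Lambda)$ is then automatic from the discussion preceding Theorem~\ref{thm:Extended}.

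The last sentence of the theorem — that the image remains infinite after identifying extended symmetric forms related by an automorphism of $\pi_1 = \Z$ — is handled by the same $\Z_2$-equivariance argument: the generator of $\Aut(\Z)$ acts on $\operatorname{bES}^{\Z}$ by coefficient-twisting, $t$ sends this to the Construction~\ref{cons:Action} action which sends $\PE(M_i^{t_n},\overline{\nu}_i^{t_n})$ to (the class of) $\PE(M_i^{t_{-n}},\overline{\nu}_i^{t_{-n}})$, so the orbits of $\{\overline{\lambda}(M_i^{t_n},\overline{\nu}_i^{t_n}) : n \in \mathbb{N}_0\}$ under $\Z_2$ are still infinitely many (exactly the argument producing the infinite set $\mathcal{F}$ in the proof of Theorem~\ref{thm:InfiniteStableClassPi1Z}, transported along $t$). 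The main obstacle, I expect, is not any single deep step but rather the bookkeeping of all the canonical identifications: one must carefully track that $t(\overline{\lambda}(M,\overline{\nu})) = \PE(M,\overline{\nu})$ \emph{as elements of $\bTwoPrim^{\Z}$} (not merely up to some unspecified isomorphism), that $t$ is genuinely $\Aut(\Z)$-equivariant with respect to the two twisting actions, and that passing to $0$-stable classes on the extended-form side corresponds correctly to the stabilisations implicit in the $\bTwoPrim$ picture — in particular that $t$ descends to stable classes. Verifying $t$ descends to stable classes requires checking that $t(\overline{\lambda} \oplus (H_g,\lambda_g,0))$ and $t(\overline{\lambda})$ agree in $\bTwoPrim^{\Z}$ after the appropriate hyperbolic stabilisation, which follows from unwinding Definition~\ref{def:PrimitiveEmbedding} but needs to be stated cleanly. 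Once this compatibility package is in place, infinite image, surjectivity, and the final sentence all drop out of the results already proved in Sections~\ref{sec:PrimitiveEmbeddings}, \ref{sec:RealisationPrimitive}, and~\ref{sec:examples}.
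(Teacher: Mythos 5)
Your treatment of the infinite-image claim is essentially the paper's argument: identify $t(\ol\lambda(M_i^j,\ol\nu_i^j))$ with $\PE(M_i^j,\ol\nu_i^j)$, cite the pairwise-distinctness of these classes in $\bTwoPrim^{\Z}/\Aut(\Z)$ established in the proof of Theorem~\ref{thm:InfiniteStableClassPi1Z}, and conclude by well-definedness of $t$. Likewise for the final sentence about $\Aut(\Z)$: both you and the paper observe that the $\Z_2$-action can identify elements at most in pairs.

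For surjectivity, however, your route differs substantially from the paper's, and this is where I would push back. The paper does \emph{not} pass through $t$, $\delta$, boundary isomorphisms, or Theorem~\ref{thm:Realisation2}. Instead it runs a direct Hambleton--Kreck-style surgery argument (modelled on~\cite[Lemma 4.1]{HambletonKreckFiniteGroup}): given $(H',\lambda',\nu') \in \SC(\ol\lambda(M_i,\ol\nu_i))$, stabilise $M_i$ to $M_i \# W_r$, pull back the stabilising hyperbolic summand $H_+(\Lambda)^{\oplus r}$ to a hyperbolic summand $G$ of the intersection form of $M_i\# W_r$ that maps to zero under $\nu_i$, represent a lagrangian of $G$ by framed embedded $2k$-spheres, and surger. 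The resulting manifold visibly has the right extended symmetric form (via $\alpha|_{G^\perp}$ and $\beta$), and stable diffeomorphism to $M_i$ is then automatic from \cite[Theorem C]{KreckSurgeryAndDuality}.

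Your proposed route through $\delta$ and Theorem~\ref{thm:Realisation2} has a genuine gap you haven't resolved. Theorem~\ref{thm:Realisation2} and Corollary~\ref{cor:realisePrimitive} take as input a boundary \emph{automorphism} $f \in \Aut_{H_\varepsilon(\Lambda^n)}(\partial(V,\theta))$, but $\delta\bigl(t(H',\lambda',\nu')\bigr)$ is a boundary \emph{isomorphism} between two different (if isometric) forms. To convert it to a boundary automorphism you would need the ``difference'' construction $[\delta_{\ol\nu_i}]_\ell^{-1}\circ[\delta']_\ell$ of Remark~\ref{rem:CompositionrAut} and Corollary~\ref{cor:DeltaSurgery}, which you do not mention. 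More importantly, the realisation theorems only pin down the output boundary isomorphism up to the $\lIso$/$\bAut$ equivalence, whereas you need to conclude $\ol\lambda(M',\ol\nu')\cong_{\rm st}(H',\lambda',\nu')$ on the nose. Closing this gap requires showing that $t$ is \emph{injective} (modulo the appropriate stabilisations and identifications), i.e.\ that the $\bTwoPrim$ class recovers the isomorphism class of the extended symmetric form --- this is plausible (one can dualise $j_\nu$ back to $\nu$ using the nonsingular form) but it is a nontrivial additional lemma, and neither your sketch nor the paper supplies it. The paper's direct surgery argument sidesteps all of this precision bookkeeping, which is precisely why it is the approach taken.
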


\begin{proof}
%
In this proof $\pi = \Z$ and $\Lambda := \Z[\pi] = \Z[\Z]$.
We start by showing that the image is infinite. We will use the map $t$ from Proposition~\ref{prop:t}.
By inspection of the construction, if $(H,\lambda,\nu \colon H \to Q) = \ol \lambda(M,\ol{\nu})$ then $t(H,\lambda,\nu)$
agrees with the 2-sided primitive embedding corresponding to $(M,\ol{\nu})$ in $\rTwoPrim$ (and therefore in $\bTwoPrim$); recall Definition~\ref{def:PrimitiveEmbeddingManifold}.
To see this,  set $q=2k$ and note that the following diagram commutes:
$$
\xymatrix{
K_q(M;\Lambda)  \ar[r]^{\iota_{\overline{\nu}}} \ar[d]_= &  H_q(M;\Lambda) \ar[d]_= &H^q(M;\Lambda) \ar[l]_{\operatorname{PD}} \ar[d]_\cong^{\operatorname{ev}} &\ar[l]_-{\overline{\nu}^*} H^q(B;\Lambda) \ar[d]_\cong^{\operatorname{ev}} \\
K_q(M;\Lambda) \ar[r]^{\iota_{\overline{\nu}}}&  H_q(M;\Lambda) &H_q(M;\Lambda)^* \ar[l]_{\widehat{\lambda}^{-1}} &\ar[l]_-{\overline{\nu}^*} H_q(B;\Lambda)^*.
}
$$

By Theorem \ref{thm:InfiniteStableClassPi1ZIntro}, we know that there are infinitely many manifolds~$M_i^1,M_i^2,\ldots$ each of which is stably diffeomorphic to~$M_i$ but that are pairwise not homotopy equivalent.
We detected that the $M_i^j$ are not homotopy equivalent by using the Postnikov primitive embedding of $M_i^j$,
which coincides with $t(\ol \lambda(M_i^j)) \in \bTwoPrim^{\Z}$; see Section~\ref{sub:PPE}.
Since the $t(\ol \lambda(M_i^j))$ are pairwise distinct, the $\ol \lambda(M_i^j)$ must be pairwise distinct in $\operatorname{bES}^{\Z}$.
This concludes the proof of the fact that $\ol \lambda$ has infinite image.
The action of the automorphisms $\pi=\Z$ can at most identify these elements in pairs, so indeed we have infinitely many pairwise not homotopy equivalent manifolds.

Now we show that~$\ol \lambda$ is surjective.
The proof is analogous to the proof of \cite[Lemma 4.1]{HambletonKreckFiniteGroup}.
Let~$(H,\lambda,\nu)$ be an extended symmetric form representing an element of $\SC(\ol \lambda(M_i,\ol{\nu}_i))$,
where $\ol{\nu}_i \colon M_i \to B_i = P_{2k-1}(M_i) \times BO\langle 4k \rangle$ is a normal $(2k{-}1)$-smoothing.
Since $(H,\lambda,\nu) \in \SC(\ol \lambda(M_i,\ol{\nu}_i))$, there is an isometry $\alpha$ and an isomorphism $\beta$ that fit into the following commutative diagram
\[
\xymatrix@R+0.5cm{
(H_q(M_i;\Lambda),\lambda_{M_i}) \oplus H^+(\Lambda)^{\oplus r} \ar[r]^-{\bsm \overline{\nu}_i &0 \esm} \ar[d]^-{\alpha}_-{\cong}& H_q(B_i;\Lambda)  \ar[d]^-\beta_-{\cong} \\
(H,\lambda) \oplus H^+(\Lambda)^{\oplus r} \ar[r]^-{\bsm \nu &0 \esm} & Q. \\
}
\]
Perform $r$ trivial $(2k{-}1)$ surgeries on $(M_i,\overline{\nu}_i)$ to obtain $(M_i \# W_r, \ol \nu_i \# \nu_r)$,
where $\overline \nu_r$ is the trivial $B_i$-structure on $W_r$
and a cobordism over $B_i$ to $M_i$.
The intersection form of $M_i \# W_r$ is $(H_q(M_i;\Lambda),\lambda_{M_i}) \oplus H^+(\Lambda)^{\oplus r}$.
Then consider \[G:= \alpha^{-1}(H^+(\Lambda)^{\oplus r}) \subseteq \lambda_{M_i} \oplus H^+(\Lambda)^{\oplus r}.\]
This is a hyperbolic summand, since $\alpha$ is an isometry.
Surgery on a collection of framed embedded $2k$-spheres in $M_i\# W_r$
representing a lagrangian for $G$ produces a $4k$-manifold $M'_i$ with intersection pairing isomorphic to $G^\perp \cong (H,\lambda)$, via $\alpha|_{G^{\perp}}$. We may find such framed embedded spheres since we have a lagrangian that maps trivially to $H_{2k}(B_i;\Lambda)$, under the normal smoothing $\ol{\nu}_i$.  Also note that the vanishing of the intersection form implies the vanishing of the quadratic enhancement.
We have that $\beta \circ (\ol{\nu}_i \oplus 0)(G) = \{0\}$ by commutativity of the diagram. Hence
\[\xymatrix{G^\perp  \ar[rr]^-{\alpha|_{G^{\perp}}}
\ar[dr]_{\beta \circ \bsm \overline{\nu}_i & 0 \esm} &&
(H,\lambda) \ar[dl]^-{\nu} \\
& Q &
}\]
commutes and the extended symmetric form of $M'_i$ is isomorphic to $(H,\lambda,\nu)$ as desired.
Since the second set of surgeries is performed on spheres which lie in the kernel of the map
 $\ol{\nu}_i \# \ol \nu_r \colon H_{2k}(M_i \# W_r;\Lambda) \to H_{2k}(B_i;\Lambda)$, the trace of the surgeries is a cobordism over $B_i$. The map to $B_i$ factors through the normal $(2k{-}1)$-type $P_{2k-1}(M_i) \times BO \langle 2k \rangle$, so we deduce that $M_i$ and $M'_i$ are bordant over their $(2k{-}1)$ type and have the same Euler characteristic. It then follows from \cite[Theorem~C]{KreckSurgeryAndDuality} that $M_i$ and $M'_i$ are stably diffeomorphic,
so we have $M'_i \in \SC_+(M_i)$ and therefore $\ol \lambda(M_i') \in \SC(\ol \lambda(M_i,\ol{\nu}_i))$. Alternatively one can directly see from the construction of $M'_i$ that it stabilises to $M_i \# W_r$.
This completes the proof that $\ol \lambda \colon \SC_+(M_i) \to \SC(\ol \lambda(M_i,\ol{\nu}_i))$ is surjective.
\end{proof}



\backmatter
\bibliographystyle{alpha}
\bibliography{biblio}

\end{document}